\newtheorem{theorem}{Theorem}[section]
\newtheorem{prop}[theorem]{Proposition}
\newtheorem{lem}[theorem]{Lemma}
\newtheorem{cor}[theorem]{Corollary}
\theoremstyle{definition}
\newtheorem{defn}[theorem]{Definition}
\newtheorem{eg}[theorem]{Example}
\newtheorem{remark}[theorem]{Remark}
\newtheorem{problem}[theorem]{Problem}
\newcommand{\Aut}{\operatorname{Aut}}
\newcommand{\SAut}{\operatorname{SAut}}
\newcommand{\AGL}{\operatorname{AGL}}
\newcommand{\blobb}{\circle*{1}}
\newcommand{\Ham}{\operatorname{Ham}}
\newcommand{\Sym}{\operatorname{Sym}}
\newcommand{\E}{\mathcal{E}}
\newcommand{\quotient}{/\!\!/}
\newcommand{\dsl}[2]{\mathfrak D(#1,#2)}
\begin{document}
\title{The geometry of diagonal groups}
\author[Bailey, Cameron, Praeger and Schneider]{R. A. Bailey}
\author[]{Peter J. Cameron}
\address{School of Mathematics and Statistics\\
University of St Andrews\\
St Andrews, Fife KY16 9SS, UK}
\email{rab24@st-andrews.ac.uk, pjc20@st-andrews.ac.uk}
\author[]{Cheryl E. Praeger}
\address{Department of Mathematics and Statistics\\
University of Western Australia\\
Perth WA 6009, Australia}
\email{cheryl.praeger@uwa.edu.au}
\author[]{Csaba Schneider}
\address{Departimento di Matem\'atica\\
Instituto de Ci\^encias Exatas\\
Universidade Federal de Minas Gerais\\
Belo Horizonte, MG, Brazil}
\email{csaba.schneider@gmail.com}

\subjclass{20B05 (primary); 20B07, 20B15, 05B15, 62K15 (secondary)}
\keywords{automorphism, Cartesian lattice, diagonal graph, diagonal group,
diagonal semilattice, Hamming graph, Latin cube, Latin square, O'Nan--Scott
Theorem, partition semilattice, primitive permutation group}

\begin{abstract}
Diagonal groups are one of the classes of finite primitive permutation groups
occurring in the conclusion of the O'Nan--Scott theorem. Several of the
other classes have been described as the automorphism groups of geometric
or combinatorial structures such as affine spaces or Cartesian decompositions,
but such structures for diagonal groups have not been studied in general.

The main purpose of this paper is to describe and characterise such
structures, which we call \emph{diagonal semilattices}. Unlike the 
diagonal groups in the O'Nan--Scott theorem, which are defined over finite
characteristically simple groups, our construction works over arbitrary 
groups, finite or infinite.

A diagonal semilattice depends on a dimension~$m$ and a group~$T$. For
$m=2$, it is a Latin square, the Cayley table of~$T$, though in fact any
Latin square satisfies our combinatorial axioms. However, for \mbox{$m\geqslant3$}, 
the group $T$ emerges naturally and uniquely from the axioms. (The situation
somewhat resembles projective geometry, where projective planes exist in great
profusion but higher-dimensional structures are coordinatised by an algebraic
object, a division ring.)

A diagonal semilattice is contained in the partition lattice on a
set~$\Omega$, and we provide an introduction to the calculus of partitions. Many
of the concepts and constructions come from experimental design in statistics.

We also determine when a diagonal group can be primitive, or quasiprimitive
(these conditions turn out to be equivalent for diagonal groups).

Associated with the diagonal semilattice is a graph, the diagonal graph,
which has the same automorphism group as the diagonal semilattice except
in four small cases with $m\leqslant3$. The class of diagonal graphs includes some
well-known families, Latin-square graphs and folded cubes, and is potentially
of interest. We obtain partial results on the chromatic number of a
diagonal graph, and mention an application to the synchronization property
of permutation groups.
\end{abstract}

\subjclass{20B05 (primary); 20B07, 20B15, 05B15, 62K15 (secondary)}
\keywords{automorphism, Cartesian lattice, diagonal graph, diagonal group,
diagonal semilattice, Hamming graph, Latin cube, Latin square, O'Nan--Scott
Theorem, partition semilattice, primitive permutation group}

\maketitle

\section{Introduction}
\label{sec:start}

\subsection{The landscape}\label{sect:landsc}

In this paper, we give a combinatorial description of the structures on
which diagonal groups, including those arising in the O'Nan--Scott Theorem,
act.

This is a rich area, with links not only to finite group theory (as in the
O'Nan--Scott Theorem) but also to designed experiments, and the combinatorics
of Latin squares and their higher-dimensional generalisations. We do not
restrict our study to the finite case.

Partitions lie at the heart of this study. We express the Latin hypercubes
we need in terms of partitions, and our final structure for diagonal groups
can be regarded as a join-semilattice of partitions. Cartesian products of sets 
can be described in terms of the partitions induced by the coordinate projection
maps
and this approach was introduced  into the study of primitive permutation groups 
by L.~G.~Kov\'acs~\cite{kov:decomp}. He called the collection of these coordinate partitions 
a ``system  of product imprimitivity''. The concept was further developed 
in~\cite{ps:cartesian} where the same object was called a ``Cartesian decomposition''. 
In preparation for introducing the join-semilattice of partitions for the diagonal 
groups, we view Cartesian decompositions as lattices of partitions of the 
underlying set. 

Along the way, we also discuss a number of conditions on families of partitions
that have been considered in the literature, especially the statistical
literature.

\subsection{Outline of the paper}

As said above, our aim is to describe the geometry and combinatorics underlying
diagonal groups, in general. In the O'Nan--Scott Theorem, the diagonal groups
$D(T,m)$ depend on a non-abelian simple group $T$ and a positive integer~$m$.
But these groups can be defined for an arbitrary group $T$, finite or infinite,
and we investigate them in full generality.

Our purpose is to describe the structures on which diagonal groups act. This
takes two forms: descriptive, and axiomatic. In the former, we start with a
group $T$ and a positive integer $m$, build the structure on which the group
acts, and study its properties. The axiomatic approach is captured by the
following theorem, to be proved in Section~\ref{sec:diag}. Undefined terms
such as Cartesian lattice, Latin square,
paratopism, and diagonal semilattice will be introduced later, so that when
we get to the point of proving the theorem its statement should be clear.
We mention here that the automorphism group of a Cartesian lattice is,
in the simplest case,
a wreath product of two symmetric groups in its product action, while the
automorphism group of a diagonal semilattice $\dsl Tm$ is the diagonal group $D(T,m)$;
Latin squares, on the other hand, may (and usually do) have only the trivial
group of automorphisms.

\begin{theorem}\label{thm:main}
Let $\Omega$ be a set with $|\Omega|>1$, and $m$ an integer at least $2$. Let $Q_0,\ldots,Q_m$
be $m+1$ partitions of $\Omega$ satisfying the following property: any $m$
of them are the minimal non-trivial partitions in a Cartesian lattice on
$\Omega$.
\begin{enumerate}\itemsep0pt
\item If $m=2$, then the three partitions are the row, column, and letter
partitions of a Latin square on $\Omega$, unique up to paratopism.
\item If $m>2$, then there is a group $T$, unique up to isomorphism,
such that $Q_0,\ldots,Q_m$ are the minimal non-trivial partitions in a diagonal
semilattice $\dsl Tm$ on $\Omega$.
\end{enumerate}
\end{theorem}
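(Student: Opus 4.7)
This is essentially a restatement of the Latin square axioms. A $2$-dimensional Cartesian lattice on $\Omega$ amounts to a bijection $\Omega\to R\times C$ whose minimal partitions are the fibres of the two projections, so two partitions $Q_i,Q_j$ form such a lattice if and only if every part of $Q_i$ meets every part of $Q_j$ in exactly one point. Imposing this on each of the three pairs from $\{Q_0,Q_1,Q_2\}$ shows that every point of $\Omega$ is labelled by a triple of parts, any two of which determine the third: precisely a Latin square with rows, columns and symbols indexed by the parts of $Q_0$, $Q_1$, $Q_2$. Since the three partitions play symmetric roles, the Latin square is determined only up to independent relabellings of rows, columns and symbols \emph{together with} permutations of their three roles---that is, up to paratopism.

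\textbf{Part (b): $m\geq 3$.} Using the Cartesian lattice generated by $Q_1,\ldots,Q_m$, identify $\Omega$ with a product $T_1\times\cdots\times T_m$ so that $Q_i$ is the $i$-th projection partition. The symmetry of the hypotheses forces each $|T_i|$ to equal a common value $n$; fix bijections $T_i\cong T$ for some set $T$ of size $n$, and label the $n$ parts of $Q_0$ by $T$ as well. For each $i\in\{1,\ldots,m\}$, the Cartesian lattice on $\{Q_0\}\cup\{Q_j:j\neq i\}$ expresses $Q_0$ as the fibres of a function $\phi:T^m\to T$ that is bijective in each coordinate separately with the others fixed. Thus $(T,\phi)$ is an $m$-ary quasigroup.

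The heart of the argument is to show that this $m$-ary quasigroup is group-derived. I would pick a reference element $e\in T$, restrict $\phi$ to the slice $t_3=\cdots=t_m=e$, and extract a binary operation $\ast$ on $T$ from the resulting Latin square. The remaining Cartesian lattice conditions---those obtained by deleting some $Q_i$ with $i\geq 3$---yield compatibility relations between different two-dimensional slices of $\phi$. Using a third coordinate as a ``carrier'' for intermediate products, one can express both $(a\ast b)\ast c$ and $a\ast(b\ast c)$ as values of $\phi$ on suitably arranged inputs and prove them equal, establishing associativity. Once $(T,\ast)$ is a group, $\phi$ is pinned down up to coordinate bijections as the standard diagonal map, identifying $Q_0,\ldots,Q_m$ with the minimal non-trivial partitions of $\dsl Tm$. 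Uniqueness of $T$ up to isomorphism follows because alternative choices of the identifications above differ only by coordinate relabellings, which induce a group isomorphism.

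\textbf{Main obstacle.} The principal hurdle is the associativity step: proving $(a\ast b)\ast c=a\ast(b\ast c)$ directly from the Cartesian lattice conditions, with no \emph{a priori} group to work with. This is exactly why $m\geq 3$ is essential: the third coordinate supplies the extra degree of freedom needed to carry an intermediate product through the combinatorial structure, whereas in dimension $2$ (part (a)) any Latin square is admissible.
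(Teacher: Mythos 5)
Your part (a) is correct and agrees with the paper, which treats the case $m=2$ as definitional. In part (b), however, your reduction misidentifies the combinatorial object, and this breaks the whole plan. Once $\{Q_1,\ldots,Q_m\}$ is used to identify $\Omega$ with $T^m$, each $Q_i$ with $i\geqslant1$ is a minimal partition of that Cartesian lattice, so it has $|T|^{m-1}$ parts, each of size $|T|$; by the symmetry of the hypotheses the same is true of $Q_0$. Hence $Q_0$ is \emph{not} the kernel of a map $\phi\colon T^m\to T$ that is bijective in each coordinate (an $m$-ary quasigroup, with $|T|$ letters each occurring $|T|^{m-1}$ times): it is a partition into $|T|^{m-1}$ letters each occurring $|T|$ times. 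For $m=3$ this is a Latin cube of sort (LC2) in the terminology of Section~\ref{sec:whatis}, which the paper is at pains to distinguish from the $m$-ary-quasigroup sort (LC0). The two counts coincide only when $m=2$, which is why the Latin-square intuition is misleading here. The binary operations one can actually extract do not live on slices of a map to $T$; they live on the parts of the joins such as $Q_0\vee Q_k$ (the partitions $L^{ij}$ of Section~\ref{sec:LC}), each of which has $|T|$ parts, and the argument has to be rebuilt around those.

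Two further gaps would remain even after that repair. First, any binary operation read off from such data depends on arbitrary labellings and is therefore only determined up to isotopy, and associativity is not an isotopy invariant; so proving $(a\ast b)\ast c=a\ast(b\ast c)$ ``directly'' is the wrong target. The paper instead verifies the quadrangle criterion (Definition~\ref{def:quad}), which is isotopy-invariant, for one of the derived quasigroups, using a functional equation linking the three operations attached to $L^{12}$, $L^{13}$, $L^{23}$; it then invokes Theorem~\ref{thm:frolov} to get a group up to isotopy and only afterwards fixes the labellings to make everything a coset partition (Theorem~\ref{thm:bingo}). Second, the paper does not run a single slice argument uniformly in $m$: for $m\geqslant4$ it inducts on $m$ via the quotient construction of Proposition~\ref{p:quots}, and the induction step (Theorem~\ref{th:QQ}) must glue together isomorphisms of the quotients $\mathcal{P}\quotient Q_i\cong\mathfrak{D}(T,m-1)$, which requires first determining $\Aut(\mathfrak{D}(T,m))$ (Theorem~\ref{t:autDTm} and Corollary~\ref{c:forinduction}). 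Your sketch gives no mechanism for this gluing, and that is where most of the work in the higher-dimensional case lies.
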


The case $m=3$ in Theorem~\ref{thm:main}(b) can be phrased in the language 
of Latin cubes and may thus  be of independent interest. The proof is in
Theorems~\ref{thm:bingo} and \ref{th:upfront} (see also
Theorem~\ref{thm:regnice}). See Section~\ref{sec:whatis} for the definition
of a regular Latin cube of sort (LC2). 

\begin{theorem}
  \label{thm:bingo_}
Consider a Latin cube of sort (LC2) on an underlying set~$\Omega$,
  with coordinate partitions $P_1$, $P_2$ and $P_3$, and letter partition~$L$.
Then the Latin cube is regular if and only if there is a group~$T$ such that, up to relabelling the letters
  and the three sets of coordinates,
  $\Omega=T^3$ and  $L$ is the coset partition defined
  by the diagonal subgroup $\{(t,t,t) \mid t \in T\}$.
Moreover, $T$ is unique up to group isomorphism.
\end{theorem}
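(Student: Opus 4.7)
The plan is to recognise Theorem~\ref{thm:bingo_} as the $m=3$ specialisation of Theorem~\ref{thm:main}(b), written in the vocabulary of Latin cubes rather than partition semilattices. The main task is therefore to set up a dictionary between regular Latin cubes of sort (LC2) and quadruples of partitions $(Q_0,Q_1,Q_2,Q_3)$ satisfying the Cartesian-lattice hypothesis of Theorem~\ref{thm:main}, and then to apply Theorem~\ref{thm:main}(b) with $m=3$. Uniqueness of the group $T$ will then be inherited directly from the uniqueness clause of Theorem~\ref{thm:main}(b).

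For the \emph{if} direction, take $\Omega=T^3$, let $P_i$ be the partition by the $i$-th coordinate, and let $L$ be the partition of $\Omega$ into right cosets of the diagonal subgroup $\{(t,t,t)\mid t\in T\}$. I would check by direct inspection that $(P_1,P_2,P_3,L)$ is a Latin cube of sort (LC2): two triples lie in the same part of $L$ iff they differ by a diagonal element, which forces each axis-parallel line to meet each part of $L$ in exactly one cell. Regularity would follow from the observation that any axis-aligned plane, once indexed by letters instead of one of the three coordinates, recovers the Cayley table of $T$, and Cayley tables are the paradigm of regular Latin squares.

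The \emph{only if} direction is where the substantive work lies. Setting $Q_0=L$ and $Q_i=P_i$ for $i=1,2,3$, the four three-element subfamilies of $\{Q_0,Q_1,Q_2,Q_3\}$ must each form the minimal non-trivial partitions of a Cartesian lattice on $\Omega$. The subfamily $\{P_1,P_2,P_3\}$ does so by virtue of the Latin-cube axioms alone. Each of the remaining three triples involves $L$, and it is precisely the regularity hypothesis on the cube that promotes these triples to the same status. Verifying this translation is, I expect, the principal obstacle: one must unpack the (LC2) axioms together with the definition of regularity from Section~\ref{sec:whatis} and check that they supply the required orthogonality and transversality properties, namely that any two of $\{L,P_1,P_2,P_3\}$ are orthogonal with meet the partition into singletons and join the universal partition, and that each three of them decompose $\Omega$ as a Cartesian product.

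Once all four triples have been shown to satisfy the hypothesis of Theorem~\ref{thm:main}, conclusion~(b) with $m=3$ supplies a group $T$, unique up to isomorphism, together with a relabelling $\Omega\leftrightarrow T^3$ under which the partitions $P_1,P_2,P_3$ become the coordinate partitions and $L$ becomes the coset partition of the diagonal subgroup $\{(t,t,t)\mid t\in T\}$. This is exactly the conclusion of Theorem~\ref{thm:bingo_}, and the uniqueness clause of Theorem~\ref{thm:main}(b) transfers verbatim to give uniqueness of $T$ up to group isomorphism.
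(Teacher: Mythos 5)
There is a genuine gap: your proposal is circular. In this paper the case $m=3$ of Theorem~\ref{thm:main}(b) is not an independent resource that you can invoke --- it is \emph{proved} by Theorem~\ref{thm:bingo}, which together with Theorem~\ref{th:upfront} is precisely the statement you are asked to prove. (The induction proving Theorem~\ref{thm:main} has base case $m=3$, and the paper explicitly reduces that base case to Theorem~\ref{thm:bingo}.) So when you write that the conclusion ``is exactly the conclusion of Theorem~\ref{thm:bingo_}'' after citing Theorem~\ref{thm:main}(b) with $m=3$, you have deferred the entire substantive content --- extracting a binary operation from the partitions, verifying the quadrangle criterion, and invoking the Frolov/Albert theorems to obtain a group unique up to isomorphism --- to a theorem whose proof is the very argument you were supposed to supply. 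What you correctly identify as ``the principal obstacle,'' namely translating regularity into the statement that each of the four triples generates a Cartesian lattice, is in the paper only the preparatory step (Theorem~\ref{thm:regnice}, via Lemmas~\ref{lem:lc0}--\ref{lem:lc4}); the real work is the coordinatisation argument in the proof of Theorem~\ref{thm:bingo}, which your outline omits entirely.

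A secondary but concrete error: your dictionary $Q_i=P_i$ for $i=1,2,3$ and $Q_0=L$ is wrong. The hypothesis of Theorem~\ref{thm:main} concerns the \emph{minimal} non-trivial partitions of a Cartesian lattice, and for the standard cube these are the line partitions $P_j\wedge P_k$ (with $n^2$ parts of size $n$, matching $L$), not the layer partitions $P_i$ (with $n$ parts of size $n^2$). Indeed $P_1\vee P_2=U$, so $\{P_1,P_2,P_3\}$ cannot be the set of minimal partitions of any three-dimensional Cartesian lattice. The correct correspondence, used in Theorem~\ref{thm:bingo} and recorded in Table~\ref{tab:coset}, is $Q_0=L$ and $Q_i=P_j\wedge P_k$ for $\{i,j,k\}=\{1,2,3\}$. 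Your sketch of the ``if'' direction is also too coarse: regularity is a condition on the letter sets of parallel \emph{lines}, and the paper establishes it by computing $P_{G_{12}}\vee P_{\delta(T,3)}$ as a coset partition and applying Proposition~\ref{prop:commeq}, not by inspecting planes.
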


Theorem~\ref{thm:main}
has a similar form to the axiomatisation of projective geometry
(see \cite{vy}). We give simple axioms, and show that diagonal structures of smallest
dimension satisfying them are ``wild'' and exist in great profusion, while
higher-dimensional structures can be completely described in terms of an
algebraic object. In our case, the algebraic object is a group, whereas, 
for projective geometry, it is a division ring, also called a skew field.
Note that the group emerges naturally from the combinatorial axioms.

In Section~\ref{sec:prelim}, we describe the preliminaries required.
Section~\ref{sec:Cart} revisits Cartesian decompositions, as described
in~\cite{ps:cartesian}, and defines Cartesian lattices.
Section~\ref{sec:LC} specialises to the case that $m=3$.  Not only does this
show that this case is very different from $m=2$; it also underpins the
proof by induction of Theorem~\ref{thm:main}, which is given in
Section~\ref{sec:diag}.

In the last two sections, we give further results on diagonal groups. In
Section~\ref{s:pqp}, we determine which diagonal groups are primitive,
and which are quasiprimitive (these two conditions turn out to be equivalent).
In Section~\ref{s:diaggraph}, we define a graph having a given diagonal 
group as its automorphism group (except for four small diagonal groups),
examine some of its graph-theoretic properties, and briefly describe the
application of this to synchronization properties of permutation groups
from~\cite{bccsz} (finite primitive diagonal groups with $m\geqslant2$ are 
non-synchronizing).

The final section poses a few open problems related to this work.

\subsection{Diagonal groups}\label{sect:diaggroups}

In this section we define the diagonal groups, in two ways: a ``homogeneous''
construction, where all factors are alike but the action is on a coset space;
and an ``inhomogeneous'' version
which gives an alternative way of labelling the elements of the underlying set
which is better for calculation
even though one of the factors has to be treated differently.

Let $T$ be a group with $|T|>1$, and $m$ an integer with $m\geqslant1$. We define the 
\emph{pre-diagonal group} $\widehat D(T,m)$ as the semidirect 
product of $T^{m+1}$ by $\Aut(T)\times S_{m+1}$, where $\Aut(T)$ (the
automorphism group of $T$) acts in the same way on each factor, and $S_{m+1}$
(the symmetric group of degree $m+1$) permutes the factors.

Let $\delta(T,m+1)$ be the diagonal subgroup $\{(t,t,\ldots,t) \mid t\in T\}$
of $T^{m+1}$,
and $\widehat H=\delta(T,m+1)\rtimes (\Aut(T)\times S_{m+1})$.
We represent $\widehat D(T,m)$ as a permutation group on the set of
right cosets of $\widehat H$. If $T$ is finite, the degree of this
permutation representation is $|T|^m$. In general, the action is not
faithful, since $\delta(T,m+1)$ (acting by conjugation)
induces inner automorphisms of $T^{m+1}$, which agree with the inner
automorphisms induced by $\Aut(T)$. 
In fact, if $m\geqslant 2$ or $T$ is non-abelian, then the kernel of the $\widehat D(T,m)$-action 
is 
\begin{align}\label{eq:K}
  \begin{split}
    \widehat K
    &=\{(t,\ldots,t)\alpha\in T^{m+1}\rtimes \Aut(T)\mid t\in T\mbox{ and}\\ 
  &\mbox{$\alpha$ is the 
  inner automorphism induced by $
  t^{-1}$}\},
  \end{split}
\end{align}
and so $\widehat K\cong T$. Thus, if, in addition, $T$ is finite,
then the order of the permutation group induced by 
$\widehat D(T,m)$ is $|\widehat D(T,m)|/| \widehat K|=
|T|^m(|\Aut(T)|\times|S_{m+1}|)$. If $m=1$ and $T$ is abelian, then 
the factor $S_2$ induces the inversion automorphism $t\mapsto t^{-1}$ on $T$ and 
the permutation group induced by $\widehat D(T,m)$ is the holomorph 
$T\rtimes \Aut(T)$. 

We define the \emph{diagonal group} $D(T,m)$ to be the permutation group
induced by $\widehat D(T,m)$ on the set of right cosets of $\widehat H$ as above.
So $D(T,m)\cong \widehat D(T,m)/\widehat K$. 

To move to a more explicit representation of $D(T,m)$,
we choose coset representatives
for $\delta(T,m+1)$ in $T^{m+1}$. A convenient choice is to number the direct 
factors of
$T^{m+1}$ as $T_0,T_1,\ldots,T_m$, and use representatives of
the form $(1,t_1,\ldots,t_m)$, with $t_i\in T_i$. We will denote this
representative by $[t_1,\ldots,t_m]$, and let $\Omega$ be the set of all
such symbols. Thus, as a set, $\Omega$ is bijective with~$T^m$.

\begin{remark}\label{rem:diaggens}
Now we can describe the action of $\widehat D(T,m)$ on $\Omega$ as follows.
\begin{itemize}\itemsep0pt
\item[(I)] For $1\leqslant i\leqslant m$, the factor $T_i$ acts by right multiplication
  on symbols in the $i$th position in elements of $\Omega$. 
\item[(II)] $T_0$ acts by simultaneous left multiplication of all coordinates by
the inverse. This is because, for $x\in T_0$, $x$ maps the coset containing
$(1,t_1,\ldots,t_m)$ to the coset containing $(x,t_1,\ldots,t_m)$, which is
the same as the coset containing $(1,x^{-1}t_1,\ldots,x^{-1}t_m)$.
\item[(III)] Automorphisms of $T$ act simultaneously on all coordinates; but
inner automorphisms are identified with the action of elements in the diagonal
subgroup $\delta(T,m+1)$ (the element $(x,x,\ldots,x)$ maps the coset containing
$(1,t_1,\ldots,t_m)$ to the coset containing $(x,t_1x,\ldots,t_mx)$, which is
the same as the coset containing $(1,x^{-1}t_1x,\ldots,x^{-1}t_mx)$).
\item[(IV)] Elements of $S_m$ (fixing coordinate $0$) act by permuting the 
coordinates in elements of $\Omega$. 
\item[(V)] Consider the element of $S_{m+1}$ which transposes coordinates $0$ and~$1$.
This maps the coset containing $(1,t_1,t_2,\ldots,t_m)$ to the coset containing
the tuple $(t_1,1,t_2\ldots,t_m)$, which
also contains
$(1,t_1^{-1},t_1^{-1}t_2,\ldots,t_1^{-1}t_m)$. So the action of this
transposition is
\[[t_1,t_2,\ldots,t_m]\mapsto[t_1^{-1},t_1^{-1}t_2,\ldots,t_1^{-1}t_m].\]
Now $S_m$ and this transposition generate $S_{m+1}$.
\end{itemize}
\end{remark}

By~\eqref{eq:K}, the kernel $\widehat K$ of the $\widehat D(T,m)$-action
on~$\Omega$ is 
contained in the subgroup generated by elements of type (I)--(III).

For example, in the case when $m=1$, the set $\Omega$ is bijective  with
$T$; the factor $T_1$ acts by right multiplication, $T_0$ acts by left
multiplication by the inverse, automorphisms act in the natural way, and
transposition of the coordinates acts as inversion.

The following theorem states that the diagonal group $D(T,m)$  can be 
viewed as the automorphism group of the corresponding diagonal join-semilattice 
$\dsl Tm$ and the diagonal graph $\Gamma_D(T,m)$ defined in 
Sections~\ref{sec:diag1} and~\ref{sec:dgds}, respectively. The two parts of
this theorem comprise Theorem~\ref{t:autDTm} and Corollary~\ref{c:sameag}
respectively.

\begin{theorem}
  Let $T$ be a non-trivial group, $m\geqslant 2$, let $\dsl Tm$ be the diagonal semilattice and 
  $\Gamma_D(T,m)$ the diagonal graph. Then the following are valid.
  \begin{enumerate}
    \item The automorphism group of $\dsl Tm$ is $D(T,m)$.
    \item If $(|T|,m)\not\in\{(2,2),(3,2),(4,2),(2,3)\}$, then the automorphism group of $\Gamma_D(T,m)$ is $D(T,m)$. 
  \end{enumerate} 
\end{theorem}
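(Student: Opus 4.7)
The proof naturally splits into two parts, mirroring the two claims.

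For part (a), I would prove both containments. The inclusion $D(T,m) \leq \Aut(\dsl Tm)$ is a direct verification using the generators listed in Remark~\ref{rem:diaggens}: each of types (I)--(III) preserves each of the minimal non-trivial partitions $Q_0, \ldots, Q_m$ individually, while types (IV) and (V) permute the $Q_i$ according to the natural $S_{m+1}$-action on coordinates. Since $\dsl Tm$ is the join-semilattice generated by $\{Q_0,\ldots,Q_m\}$, preserving this set as a whole forces $\dsl Tm$ to be preserved.

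For the reverse inclusion, take $\sigma \in \Aut(\dsl Tm)$. The partitions $Q_0,\ldots,Q_m$ are characterised order-theoretically inside $\dsl Tm$ as its minimal non-trivial elements, so $\sigma$ induces a permutation of $\{Q_0,\ldots,Q_m\}$. Composing with a suitable element of $S_{m+1} \subset D(T,m)$, I may assume $\sigma$ fixes each $Q_i$ setwise. Then $Q_1,\ldots,Q_m$ pin down the Cartesian lattice, so $\sigma$ factors through the coordinate factors; composing further with elements of types (I) and (II), I may assume $\sigma$ also fixes the base point $[1,\ldots,1]$. At this stage, the uniqueness clause in Theorem~\ref{thm:main}(b) --- specifically, the uniqueness of the group $T$ extracted from the diagonal partition $Q_0$ --- forces $\sigma$ to be induced by a group automorphism of $T$, i.e.\ an element of type (III), completing the argument.

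For part (b), the containment $D(T,m) \leq \Aut(\Gamma_D(T,m))$ is immediate from part (a), since by construction the edges of $\Gamma_D(T,m)$ are precisely the pairs of vertices lying in a common part of some $Q_i$, so every automorphism of $\dsl Tm$ induces a graph automorphism. The substantive content is the reverse. My plan is to show that, away from the four listed exceptions, each part of each $Q_i$ is a \emph{maximal} clique of $\Gamma_D(T,m)$ of a specific size $|T|$, and that the set of all such maximal cliques is precisely $\bigcup_i Q_i$. A graph automorphism then permutes these cliques, induces a permutation of $\{Q_0,\ldots,Q_m\}$, and thus lifts to an automorphism of $\dsl Tm$; applying part (a) finishes the proof. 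The four exceptional pairs $(|T|,m)\in\{(2,2),(3,2),(4,2),(2,3)\}$ are exactly the parameter values where this clique characterisation fails: for instance, $(2,3)$ produces the folded $3$-cube, whose clique structure admits extra symmetries, and small-order Latin-square graphs similarly acquire additional maximal cliques from small subgroups or from coincidences among the parts.

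The main obstacle will be the clique analysis in part (b): showing that, outside the four exceptional cases, no ``spurious'' maximal clique appears beyond the parts of the $Q_i$, and conversely that each $Q_i$-part is maximal. This requires a careful combinatorial inspection of how cliques from different $Q_i$ can intersect, together with lower bounds ensuring $|T|$ and $m$ are large enough to preclude extra cliques arising from subgroups of $T$ or small-parameter coincidences. Once this identification is in place, the rest of part (b) reduces cleanly to part (a).
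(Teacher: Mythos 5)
Your reduction in part (a) is sound up to the last step, and it is a genuinely different route from the paper's: the paper proves Theorem~\ref{t:autDTm} by induction on $m$ (base case $m=2$ via the loop-automorphism argument, induction step by showing that a strong automorphism fixing every part of $Q_m$ and a point is trivial, chasing fixed points through the Cartesian decompositions $\{P_{ij}\}$), whereas you argue globally. But your final step fails as written. The uniqueness clause of Theorem~\ref{thm:main}(b) only says that $T$ is determined up to isomorphism by the special set; it says nothing about which basepoint-fixing permutations of $\Omega$ preserve the structure, which is exactly what you need. Worse, in the paper that uniqueness statement is itself proved (for $m\geqslant4$, via Corollary~\ref{c:forinduction}) using the automorphism theorem you are trying to establish, so the appeal is circular. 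The gap is fillable without it: once you know $\sigma=(\tau_1,\ldots,\tau_m)$ acts coordinatewise with each $\tau_i(1)=1$, preservation of $Q_0$ (whose parts are the sets on which $u_it_i^{-1}$ is constant in $i$) forces, on setting all $t_i=1$, that $\tau_1=\cdots=\tau_m=\tau$, and then $\tau(xt)=\tau(x)\tau(t)$ for all $x,t$, so $\tau\in\Aut(T)$. Make that computation explicit instead of citing Theorem~\ref{thm:main}.

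In part (b) you have put the weight in the wrong place. Identifying the maximal cliques of $\Gamma_D(T,m)$ with the parts of the $Q_i$ is the easy half for $m\geqslant3$; the substantive step in the paper's Theorem~\ref{t:autdiaggraph} is showing that the partition of these cliques into the $m+1$ \emph{types} is recoverable from the graph, so that a graph automorphism really does permute $\{Q_0,\ldots,Q_m\}$ rather than scrambling cliques of different types. Your proposed tool --- ``how cliques from different $Q_i$ can intersect'' --- settles this only for $m=2$, where two maximal cliques have the same type if and only if they are disjoint (Proposition~\ref{p:lsgraphaut}). For $m\geqslant3$, parts of different $Q_i$ are typically disjoint too, and $\Omega$ admits partitions into maximal cliques of mixed type, so disjointness does not determine type and your inference ``induces a permutation of $\{Q_0,\ldots,Q_m\}$'' does not follow. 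The paper closes this gap by classifying adjacency between disjoint lines: two adjacent lines of the same type are joined by a perfect matching ($|T|$ edges), while adjacent lines of different types are joined by at most two edges (exactly one when $m>3$); types are then the connected components under first-kind adjacency, and the listed exceptional parameters are precisely where this dichotomy degenerates. Some argument of this kind is indispensable before part (b) can be reduced to part (a).
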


\subsection{History}

The celebrated O'Nan--Scott Theorem describes the socle (the product of the
minimal normal subgroups) of a finite permutation group. Its original form
was different; it was a necessary condition for a finite permutation group
of degree~$n$ to be a maximal subgroup of the symmetric or alternating
group of degree~$n$. Since the maximal intransitive and imprimitive subgroups
are easily described, attention focuses on the primitive maximal subgroups.

The theorem was proved independently by Michael O'Nan and Leonard Scott,
and announced by them at the Santa Cruz conference on finite groups in 1979.
(Although both papers appeared in the preliminary conference proceedings, the
final published version contained only Scott's paper.) However, the roots
of the theorem are much older; a partial result appears in Jordan's
\textit{Trait\'e des Substitutions} \cite{jordan}
in 1870.  The extension to arbitrary primitive groups is due to Aschbacher
and Scott~\cite{aschsc} and independently to Kov\'acs~\cite{kov:sd}. Further
information on the history of the theorem is given in 
\cite[Chapter 7]{ps:cartesian} and~\cite[Sections~1--4]{kovacs}.

For our point of view, and avoiding various complications, the theorem
can be stated as follows:

\begin{theorem}\label{thm:ons}
Let $G$ be a primitive permutation group on a finite set $\Omega$. Then one
of the following four conditions holds:
\begin{enumerate}
\item $G$ is contained in an affine group $\AGL(d,p)\leqslant\Sym(\Omega)$,
with $d\geqslant1$ and  $p$ prime, and so preserves the affine geometry of
dimension $d$ over the field with $p$ elements with point set $\Omega$;
\item $G$ is contained in a wreath product in its product action contained in
$\Sym(\Omega)$, and so preserves a Cartesian decomposition of $\Omega$;
\item $G$ is contained in the diagonal group $D(T,m)\leqslant\Sym(\Omega)$,
with $T$ a non-abelian finite simple group and $m\geqslant1$;
\item $G$ is almost simple (that is, $T\leqslant G\leqslant\Aut(T)$, where $T$
is a non-abelian finite simple group). 
\end{enumerate}
\end{theorem}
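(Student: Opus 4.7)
The plan is to analyse the structure of the socle $N=\operatorname{soc}(G)$, which for a primitive group $G$ is nontrivial, and then to carry out a case division according to whether $N$ is abelian or not and, in the latter case, according to how the point stabiliser $H=G_\omega$ meets $N$. Fix $\omega\in\Omega$ and put $H=G_\omega$. Since $G$ is primitive, $H$ is maximal in $G$, every nontrivial normal subgroup of $G$ is transitive on $\Omega$, and a standard argument shows that $N$ is either a single minimal normal subgroup or the product of two isomorphic nonabelian minimal normal subgroups. In either case $N$ is characteristically simple and hence isomorphic to $T^k$ for some simple group $T$ and some $k\geqslant 1$.

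If $T$ is abelian then $T\cong\mathbb{Z}/p$ for a prime $p$, so $N$ is an elementary abelian $p$-group acting regularly on $\Omega$ (as $N$ is transitive and abelian hence regular). Identifying $\Omega$ with $N$ we get $G\leqslant N\rtimes H\leqslant\operatorname{AGL}(d,p)$, where $d=k$ and $H\leqslant\operatorname{GL}(d,p)$ acts irreducibly by maximality of $H$. This yields case~(a). From now on assume $T$ is nonabelian simple and write $N=T_1\times\cdots\times T_k$ with $T_i\cong T$. Conjugation induces a transitive action of $G$ on $\{T_1,\ldots,T_k\}$ (transitive because the orbits would otherwise give proper normal subgroups of $G$ contradicting minimality of $N$, except in the two-minimal-normal case which is handled symmetrically and feeds into the diagonal case). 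If $k=1$ then $N=T$ is simple, $C_G(N)=1$ since $N$ is self-centralising in $\operatorname{Sym}(\Omega)$ for primitive nonregular actions, and the conjugation map embeds $G$ into $\operatorname{Aut}(T)$ with $T\leqslant G$, giving case~(d).

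It remains to treat $k\geqslant 2$. The key invariant is the collection of projections $\pi_i(H\cap N)\leqslant T_i$. Two subcases arise. If each $\pi_i(H\cap N)=T_i$ (full projections), then a short argument using the simplicity of $T$ and the fact that $H\cap N$ is a subdirect product of the $T_i$ shows that $H\cap N$ is a direct product of ``diagonal'' subgroups corresponding to a $G$-invariant partition of $\{T_1,\ldots,T_k\}$. Primitivity of $G$ forces this partition to be trivial, so $H\cap N$ is a full diagonal subgroup of $N$; choosing coset representatives exactly as in Section~\ref{sect:diaggroups} identifies the action of $G$ on $\Omega$ with a subgroup of $D(T,m)$ with $m+1=k$, giving case~(c). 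If, on the other hand, some $\pi_i(H\cap N)$ is a proper subgroup $K_i<T_i$, then applying conjugation by elements of $G$ permuting the factors shows $\pi_j(H\cap N)$ is a proper subgroup conjugate to $K_i$ for every $j$, and $H\cap N=K_1\times\cdots\times K_k$ up to a $G$-invariant partition of the factors. This partition, together with the transitive action of $G$ on blocks, yields a Cartesian decomposition $\Omega\cong\Delta^\ell$ preserved by $G$, embedding $G$ into the wreath product $\operatorname{Sym}(\Delta)\wr S_\ell$ in its product action, which is case~(b).

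The main obstacle is the non-full-projection subcase: one must verify both that the $K_i$ really do combine into a direct product (so that $H\cap N$ is a ``Cartesian'' subgroup) and that the resulting Cartesian decomposition is preserved by all of $G$, not just $N$. This requires combining the $G$-action on the index set $\{1,\ldots,k\}$ with the internal conjugation action and is where the bulk of the technical work lies; the diagonal-versus-wreath dichotomy ultimately rests on whether $H\cap N$ projects onto the simple factors or onto proper subgroups of them, and this is exactly the dichotomy that the rest of the paper exploits to axiomatise case~(c) combinatorially.
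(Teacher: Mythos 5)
The first thing to say is that the paper does not prove Theorem~\ref{thm:ons} at all: it is quoted as a classical result, with the proof delegated to Aschbacher--Scott~\cite{aschsc}, Kov\'acs~\cite{kov:sd} and \cite[Chapter~7]{ps:cartesian}. So your proposal is being judged against the literature rather than against an argument in the text. Your sketch follows the standard route (socle analysis, abelian versus non-abelian, then the structure of $H\cap N$), and the affine case and the $k=1$ almost simple case are fine. But there is a genuine error in the full-projection branch. When every projection $\pi_i(H\cap N)$ equals $T_i$, Scott's lemma does give that $H\cap N$ is a product of full diagonal subgroups (strips) supported on the blocks of a $G$-invariant partition of $\{T_1,\ldots,T_k\}$; however, primitivity of $G$ on $\Omega$ does \emph{not} force that partition to be trivial. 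A $G$-invariant partition of the set of simple factors is not a congruence on $\Omega$, so primitivity says nothing directly about it, and the compound diagonal type is precisely the case where the partition has $\ell\geqslant 2$ blocks each of size at least $2$: such groups are primitive, and they belong to case~(b) of the theorem (they preserve the Cartesian decomposition $\Omega\cong\Delta^\ell$ with $\Delta$ a diagonal coset space $T^{r-1}$, so $G\leqslant\Sym(\Delta)\wr S_\ell$ in product action), not to case~(c). As written, your argument would show that compound diagonal primitive groups do not exist. The correct dichotomy in that branch is: partition trivial gives case~(c); partition non-trivial gives case~(b).

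Two further points need repair. First, when the socle is the product of two minimal normal subgroups, each $\cong T^{k/2}$ and regular, the containment is in the holomorph of $T^{k/2}$; this lands in case~(c) (as $D(T,1)$) only when $k=2$, and for $k>2$ it again lands in case~(b) via a product action, so ``feeds into the diagonal case'' is not accurate. Second, in the proper-projection branch the identity $H\cap N=K_1\times\cdots\times K_k$ with $K_i=\pi_i(H\cap N)$ is not automatic: a subgroup of $T_1\times\cdots\times T_k$ with proper projections need not be the product of its projections (it could, for instance, be a diagonal inside $K_1\times K_2$), and establishing the product structure, together with the $G$-invariance of the resulting Cartesian decomposition, is exactly the substantive content of \cite{aschsc,kov:sd} and \cite[Chapter~7]{ps:cartesian}. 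You flag this as ``the bulk of the technical work'', which is fair, but combined with the compound diagonal error the case division as you have drawn it does not close up into the four stated conclusions.
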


Note that, in the first three cases of the theorem, the action of the group
is specified; indeed, in the first two cases, we have a geometric or
combinatorial structure which is preserved by the group. (Cartesian
decompositions are described in detail in~\cite{ps:cartesian}.) One of our
aims in this paper is to provide a similar structure preserved by diagonal
groups, although our construction is not restricted to the case where $T$ is
simple, or even finite.

It is clear that the Classification of Finite Simple Groups had a great
effect on the applicability of the O'Nan--Scott Theorem to the study of
finite primitive permutation groups; indeed, the landscape of the subject
and its applications has been completely transformed by CFSG.

In Section~\ref{s:pqp} we characterise primitive and quasiprimitive diagonal
groups as follows.

\begin{theorem}\label{th:primaut}
  Suppose that $T$ is a non-trivial group, $m\geqslant 2$, and consider $D(T,m)$
  as a permutation group on $\Omega=T^{m}$. Then the following
  are equivalent.
  \begin{enumerate}
  \item $D(T,m)$ is a primitive permutation group;
  \item $D(T,m)$ is a quasiprimitive permutation group;
  \item $T$ is a characteristically simple group, and, if $T$ is
    an elementary abelian $p$-group, then $p\nmid(m+1)$.
    \end{enumerate}
\end{theorem}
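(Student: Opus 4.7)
The implication (a)$\Rightarrow$(b) is immediate, since every primitive permutation group is quasiprimitive. The plan is to prove (b)$\Rightarrow$(c) by contraposition, exhibiting in each ``failure case'' for (c) a non-trivial intransitive normal subgroup of $D(T,m)$; and to prove (c)$\Rightarrow$(a) by classifying the $(\Aut(T)\times S_{m+1})$-invariant overgroups of $\delta(T,m+1)$ inside $T^{m+1}$.

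\emph{Step 1: (b)$\Rightarrow$(c).} Assume (c) fails. Then either $T$ has a proper non-trivial characteristic subgroup $N$, or $T\cong\mathbb F_p^d$ with $p\mid(m+1)$. In the first case, $N^{m+1}\leqslant T^{m+1}$ is normalised by $\Aut(T)\times S_{m+1}$; using Remark~\ref{rem:diaggens}, the orbit of $[1,\ldots,1]$ under $N^{m+1}$ is $\{[n_0^{-1}n_1,\ldots,n_0^{-1}n_m]:n_i\in N\}=N^m\subsetneq T^m$, so the image of $N^{m+1}$ in $D(T,m)$ is a non-trivial intransitive normal subgroup. In the second case, the augmentation subgroup
\[A=\Bigl\{(t_0,\ldots,t_m)\in T^{m+1}\,\Big|\,\sum_{i=0}^m t_i=0\Bigr\}\]
contains $\delta(T,m+1)$ (because $(m+1)t=0$), is $\Aut(T)\times S_{m+1}$-invariant, and a direct computation shows its orbit on $[0,\ldots,0]$ is the proper subset $\{(u_1,\ldots,u_m)\in T^m:\sum u_i=0\}$. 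Either way, $D(T,m)$ is not quasiprimitive.

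\emph{Step 2: (c)$\Rightarrow$(a), abelian case.} The stabiliser of $[1,\ldots,1]$ in $\widehat D(T,m)$ is $\widehat H=\delta(T,m+1)\rtimes(\Aut(T)\times S_{m+1})$, so blocks of imprimitivity correspond bijectively to $(\Aut(T)\times S_{m+1})$-invariant subgroups $M$ with $\delta(T,m+1)\leqslant M\leqslant T^{m+1}$; it therefore suffices to show $M\in\{\delta(T,m+1),T^{m+1}\}$. When $T\cong\mathbb F_p^d$ with $p\nmid(m+1)$, write $T^{m+1}\cong T\otimes_{\mathbb F_p}\mathbb F_p^{m+1}$. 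Absolute irreducibility of the natural $\Aut(T)$-module (its endomorphism algebra is $\mathbb F_p$) forces every such $M$ to have the form $T\otimes V$ for an $S_{m+1}$-invariant $V\leqslant\mathbb F_p^{m+1}$, and $\delta\leqslant M$ translates to $\langle(1,\ldots,1)\rangle\leqslant V$. Since $p\nmid(m+1)$, the permutation module decomposes as $\mathbb F_p^{m+1}=\langle(1,\ldots,1)\rangle\oplus A_0$ with both summands simple (the second being the standard $S_{m+1}$-module, irreducible under this hypothesis), hence $V\in\{\langle(1,\ldots,1)\rangle,\mathbb F_p^{m+1}\}$ and $M\in\{\delta,T^{m+1}\}$.

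\emph{Step 3: (c)$\Rightarrow$(a), non-abelian case; main obstacle.} When $T=S^k$ with $S$ non-abelian simple, index the $k(m+1)$ copies of $S$ by $[k]\times\{0,\ldots,m\}$; then $\delta(T,m+1)$ is the row-constant subgroup. The classification of subdirect subgroups of $S^{k(m+1)}$ expresses any such $M$ as $\prod_{B\in\pi}\Delta_B^{\phi_B}$ for a partition $\pi$ of $[k]\times\{0,\ldots,m\}$ and twist automorphisms $\phi_B$. The containment $\delta(T,m+1)\leqslant M$ forces every block of $\pi$ to lie inside a single row (any cross-row block would demand a fixed relation between independently varying row-entries $s_{i_1},s_{i_2}\in S$) and forces each $\phi_B$ to be trivial. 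The $S_{m+1}$-action on columns then restricts the partition on each row to be $S_{m+1}$-invariant, hence trivial or discrete; and the $S_k\leqslant\Aut(T)$-action on rows forces uniformity across rows, leaving only the row partition $\pi_0$ (giving $M=\delta(T,m+1)$) and the discrete partition (giving $M=T^{m+1}$). This non-abelian case is the main obstacle: one must handle the twisting data and the partition structure simultaneously, using the inclusion of $\delta(T,m+1)$ to rigidify both, before the $\Aut(T)\times S_{m+1}$-symmetry can eliminate intermediate partitions.
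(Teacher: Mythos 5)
Your Step 1 is essentially the paper's argument for (b)$\Rightarrow$(c) (characteristic subgroup $N$ gives the intransitive normal subgroup $N^{m+1}$; the sum-zero subgroup $\Gamma$ handles the case $p\mid(m+1)$), and your reduction of (c)$\Rightarrow$(a) to classifying the $(\Aut(T)\times S_{m+1})$-invariant subgroups between $\delta(T,m+1)$ and $T^{m+1}$ is a valid reformulation of maximality of the point stabiliser. However, there are two gaps, one of them serious.

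The serious one is in Step 3. You assume that a non-abelian characteristically simple group has the form $S^k$ with $S$ simple, and then invoke the classification of subdirect subgroups of a finite direct power of a non-abelian simple group. This is true for \emph{finite} characteristically simple groups, but the theorem is stated for arbitrary non-trivial groups $T$, and the paper is explicit that infinite characteristically simple groups need not be of this form (McLain's group is cited as the first example, and the non-FCR case is even singled out as an open problem in Section~\ref{s:problems}). Your argument therefore does not cover all groups satisfying condition (c). The paper's proof avoids this entirely: it never decomposes $T$, but instead shows directly that any subgroup $X$ properly containing $G_\omega$ has $X\cap T^{m+1}$ projecting onto $T_i\times T_j$ for all $i\ne j$ (using that $\{t^\psi\mid\psi\in\Aut(T)\}$ generates a characteristic, hence the whole, subgroup, together with the $2$-transitivity of $S_{m+1}$ on the factors), and then applies Lemma~\ref{comminside} on perfect subgroups --- non-abelian characteristically simple groups are perfect --- to conclude $X\cap T^{m+1}=T^{m+1}$. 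That argument works for every non-abelian characteristically simple $T$, FCR or not.

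The lesser gap is in Step 2: condition (c) also admits abelian characteristically simple groups that are \emph{not} elementary abelian, namely the non-trivial uniquely divisible groups ($\mathbb{Q}$-vector spaces), as in Lemma~\ref{lem:charab}(a). Your module-theoretic argument only treats $T\cong\mathbb{F}_p^d$. The same tensor-decomposition argument goes through verbatim over $\mathbb{Q}$ (where $\mathbb{Q}^{m+1}$ splits as the trivial module plus the standard module, both irreducible), so this is easily repaired, but as written the case is simply missing. For comparison, the paper's abelian case runs through Lemma~\ref{lem:prabreg} (primitivity of a group with an abelian regular normal subgroup is equivalent to irreducibility of that subgroup under the point stabiliser) together with the explicit decomposition $T^{m+1}=\Gamma\oplus\Delta$ and the $H$-irreducibility of $\Gamma$ proved in Lemma~\ref{lem:charab}(c); your subspace-lattice argument is an acceptable alternative once both fields are covered.
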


Diagonal groups and the structures they preserve have occurred in other
places too. Diagonal groups with $m=1$ (which in fact are not covered by
our analysis) feature in the paper ``Counterexamples to a theorem of Cauchy''
by Peter Neumann, Charles Sims and James Wiegold~\cite{nsw}, while
diagonal groups over the group $T=C_2$ are automorphism groups of the
\emph{folded cubes}, a class of distance-transitive graphs, see~\cite[p.~264]{bcn}.

Much less explicit information is available about related questions on infinite symmetric groups. 
Some maximal subgroups of infinite symmetric groups have been associated
with structures such as subsets, partitions~\cite{braziletal,macn,macpr},
and Cartesian decompositions~\cite{covmpmek}.  
However, it is still not known if infinite symmetric groups have 
maximal subgroups that are analogues of the maximal subgroups of simple
diagonal type in finite symmetric or alternating groups. If $T$ is a possibly
infinite simple group,  then the diagonal group $D(T,m)$ is primitive and,
by~\cite[Theorem~1.1]{uniform}, it cannot be embedded into a wreath product in
product action. On the other hand, if $\Omega$ is a countable set, then, by 
\cite[Theorem~1.1]{macpr}, simple diagonal type groups are properly contained
in maximal subgroups of $\Sym(\Omega)$. (This containment is proper since the
diagonal group itself is not maximal; its product with the finitary symmetric
group properly contains it.)

\section{Preliminaries}
\label{sec:prelim}

\subsection{The lattice of partitions}
\label{sec:part}
A partially ordered set (often abbreviated to \textit{poset}) is a set
equipped with a partial order, which we here write as $\preccurlyeq$.
A finite poset
is often represented by a \emph{Hasse diagram}.
This is a diagram drawn as a graph in the plane. The vertices of the diagram
are the elements of the poset; if $q$ \emph{covers} $p$ (that is, if $p\prec q$
but there is no element $r$ with $p \prec r \prec q$),
there is an edge joining $p$ to~$q$,
with $q$ above $p$ in the plane (that is, with larger $y$-coordinate).
Figure~\ref{f:hasse} represents the divisors of $36$, ordered by divisibility.

\begin{figure}[htbp]
\begin{center}
\setlength{\unitlength}{1mm}
\begin{picture}(20,20)
\multiput(0,10)(5,-5){3}{\circle*{2}}
\multiput(5,15)(5,-5){3}{\circle*{2}}
\multiput(10,20)(5,-5){3}{\circle*{2}}
\multiput(0,10)(5,-5){3}{\line(1,1){10}}
\multiput(0,10)(5,5){3}{\line(1,-1){10}}
\end{picture}
\end{center}
\caption{\label{f:hasse}A Hasse diagram}
\end{figure}

In a partially ordered set with order relation $\preccurlyeq$,
we say that an element $c$ is the \emph{meet}, or \emph{infimum},
of $a$ and $b$ if
\begin{itemize}
\renewcommand{\itemsep}{0pt}
\item $c\preccurlyeq a$ and $c\preccurlyeq b$;
\item for all $d$, $d\preccurlyeq a$ and $d\preccurlyeq b$ implies
  $d\preccurlyeq c$.
\end{itemize}
The meet of $a$ and $b$, if it exists, is unique; we write it $a\wedge b$.

Dually, $x$ is the \emph{join}, or \emph{supremum} of $a$ and $b$ if
\begin{itemize}
\item $a\preccurlyeq x$ and $b\preccurlyeq x$;
\item for all $y$, if $a\preccurlyeq y$ and $b\preccurlyeq y$,
  then $x\preccurlyeq y$.
\end{itemize}
Again the join, if it exists, is unique, and is written $a\vee b$.

The terms ``join'' and ``supremum'' will be used interchangeably. 
Likewise, so will  the terms ``meet'' and ``infimum''.

In an arbitrary poset, meets and joins may not exist. A poset in which every
pair of elements has a meet and a join is called a \emph{lattice}.
A subset of a lattice which is closed under taking joins is called a
\emph{join-semilattice}.

The poset shown in Figure~\ref{f:hasse} is a lattice. Taking it as described
as the set of divisors of $36$ ordered by divisibility, meet and join are
greatest common divisor and least common multiple respectively.

In a lattice, an easy induction shows that suprema and infima of arbitrary
finite sets exist and are unique. In particular, in a finite lattice there is
a unique minimal element and a unique maximal element. (In an infinite lattice,
the existence of least and greatest elements is usually assumed. But all
lattices in this paper will be finite.)

\medskip

The most important example for us is the \emph{partition lattice} on a set
$\Omega$, whose elements are all the partitions of $\Omega$.  There are
(at least) three different ways of thinking about partitions.  In one
approach, used in \cite{rab:as,pjc:ctta,ps:cartesian},
a partition of
$\Omega$ is a set $P$ of pairwise disjoint subsets of $\Omega$, called \textit{parts}
or \textit{blocks}, whose union is $\Omega$.
For $\omega$ in $\Omega$, we write $P[\omega]$ for the unique part of $P$
which contains~$\omega$.

A second approach uses equivalence relations. The ``Equivalence Relation
Theorem'' \cite[Section 3.8]{pjc:ctta} asserts that, if $R$ is an equivalence
relation on a set~$\Omega$, then the equivalence classes of~$R$ form a partition
of~$\Omega$. Conversely, if $P$~is a partition of~$\Omega$ then there is a
unique equivalence relation~$R$ whose equivalence classes are the parts of~$P$.
We call $R$ the \textit{underlying equivalence relation} of~$P$. We write
$x\equiv_Py$ to mean that $x$ and $y$ lie in the same part of~$P$ (and so are
equivalent in the corresponding relation).

The third approach to partitions, as kernels of functions,
is explained near the end of this subsection.

The ordering on partitions is given by
\begin{quote}
$P\preccurlyeq Q$ if and only if every part of $P$ is contained in a part of $Q$.
\end{quote}
Note that $P\preccurlyeq Q$ if and only if $R_P\subseteq R_Q$, where $R_P$
and $R_Q$ are the equivalence relations corresponding to $P$ and $Q$, and
a relation is regarded as a set of ordered pairs.

For any two partitions $P$ and $Q$, the parts of $P\wedge Q$ are all
\emph{non-empty} intersections of a part of $P$ and a part of $Q$. The join
is a little harder to define. The two elements $\alpha$, $\beta$ in $\Omega$
lie in the same part of $P\vee Q$ if and only if there is a finite sequence
$(\omega_0,\omega_1,\ldots,\omega_m)$ of elements of $\Omega$,
with $\omega_0=\alpha$ and $\omega_m=\beta$,  such that $\omega_i$ and
$\omega_{i+1}$ lie in the same part of $P$ if $i$ is even, and
in the same part of $Q$ if $i$ is odd. In other words, there is a walk of finite
length from $\alpha$ to~$\beta$ in which each step remains within a part of
either $P$ or~$Q$.

In the partition lattice on $\Omega$, the unique least element is the partition
(denoted by $E$) with all parts of size~$1$, 
and the unique greatest element (denoted by $U$) is
the partition with a single part $\Omega$. 
In a sublattice
of this, we shall call an element \textit{minimal} if it is minimal subject
to being different from~$E$.

(Warning: in some of the literature that we cite, this partial order is
written as~$\succcurlyeq$.  Correspondingly, the Hasse diagram is the other
way up and the meanings of $\wedge$ and $\vee$ are interchanged.)

For a partition~$P$, we denote by $|P|$ the number of parts of~$P$.
For example, $|P|=1$ if and only if $P=U$.  In the infinite case, we interpret
$|P|$ as the cardinality of the set of parts of~$P$.

There is a connection between partitions and functions which will be important
to us. Let $F\colon\Omega\to\mathcal{T}$ be a function, where $\mathcal{T}$
is an auxiliary set. We will assume, without loss of generality,
that $F$ is onto. Associated with~$F$ is a partition of $\Omega$,
sometimes denoted by $\widetilde F$, whose
parts are the inverse images of the elements of $\mathcal{T}$; in other words,
two points of $\Omega$ lie in the same part of~$\widetilde F$ if and only if they
have the same image under~$F$.  In areas of algebra such as semigroup theory
and universal algebra, the partition~$\widetilde F$ is  referred to as the
\emph{kernel} of $F$.

This point of view is common in experimental design in statistics, where
$\Omega$~is the set of experimental units, $\mathcal{T}$~the set of treatments
being compared, and $F(\omega)$~is the treatment applied to the unit~$\omega$:
see~\cite{rab:design}.
For example, an element $\omega$ in $\Omega$ might be a plot in an agricultural
field, or a single run of an industrial machine, or one person for one month.
The outcomes to be measured are thought of as functions on $\Omega$,
but variables like $F$ which partition~$\Omega$ in ways that may
affect the outcome are called \textit{factors}.  If $F$ is a factor, then the
values $F(\omega)$, for $\omega$ in $\Omega$, are called \textit{levels}
of~$F$.  In this context,
usually no distinction is made between the function~$F$ and the
partition $\widetilde F$ of $\Omega$ which it defines.

If $F\colon\Omega\to\mathcal{T}$ and $G\colon\Omega\to\mathcal{S}$ are two
functions on $\Omega$, then the partition $\widetilde F\wedge\widetilde G$ is the
kernel of the function $F\times G\colon\Omega\to\mathcal{T}\times\mathcal{S}$,
where $(F\times G)(\omega)=(F(\omega),G(\omega))$. In other words,
$\widetilde{F\times G}=\widetilde{F}\wedge\widetilde{G}$.

\begin{defn}
One type of partition which we make use of is the (right) \emph{coset
partition} of a group relative to a subgroup. Let $H$ be a subgroup of a 
group~$G$, and let $P_H$ be the partition of $G$ into right cosets of $H$.
\end{defn}

We gather a few basic properties of coset partitions.
\begin{prop}
\label{prop:coset}
  \begin{enumerate}
\item
If $H$ is a normal subgroup of $G$, then $P_H$ is the kernel (in the general
sense defined earlier) of the natural homomorphism from $G$ to $G/H$.
\item
$P_H\wedge P_K=P_{H\cap K}$.
\item 
$P_H\vee P_K=P_{\langle H,K\rangle}$.
\item
The map $H\mapsto P_H$ is an isomorphism from the lattice of subgroups of~$G$
to a sublattice of the partition lattice on~$G$.
\end{enumerate}
\end{prop}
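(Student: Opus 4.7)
The plan is to base everything on the simple algebraic reformulation: two elements $g_1,g_2\in G$ lie in the same part of $P_H$ if and only if $g_1g_2^{-1}\in H$. Under this reformulation, (a) is immediate: when $H\trianglelefteq G$, the natural surjection $\pi\colon G\to G/H$ satisfies $\pi(g_1)=\pi(g_2)$ iff $g_1g_2^{-1}\in H$, so the kernel of $\pi$ in the sense defined just above the definition of coset partitions is exactly $P_H$.

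For part~(b), I would argue directly from the description of meet in the partition lattice: the equivalence relation underlying $P_H\wedge P_K$ is $\equiv_{P_H}\cap\equiv_{P_K}$. So $g_1\equiv_{P_H\wedge P_K}g_2$ iff $g_1g_2^{-1}\in H$ and $g_1g_2^{-1}\in K$, iff $g_1g_2^{-1}\in H\cap K$, which is precisely $g_1\equiv_{P_{H\cap K}}g_2$.

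Part~(c) is the main obstacle, because the join in the partition lattice is defined via finite alternating walks rather than by a clean algebraic condition. I would use the walk description from Section~\ref{sec:part}: $g_1\equiv_{P_H\vee P_K}g_2$ iff there exist $x_0=g_1,x_1,\ldots,x_n=g_2$ in $G$ with, for each $i$, $x_{i+1}x_i^{-1}\in H\cup K$. Telescoping gives
\[
g_2g_1^{-1}=(x_nx_{n-1}^{-1})(x_{n-1}x_{n-2}^{-1})\cdots(x_1x_0^{-1})\in\langle H\cup K\rangle=\langle H,K\rangle,
\]
so $g_2\in P_{\langle H,K\rangle}[g_1]$. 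Conversely, if $g_2g_1^{-1}\in\langle H,K\rangle$, write $g_2g_1^{-1}=h_n\cdots h_1$ with each $h_i\in H\cup K$ (using that $H$ and $K$ are closed under inverses), and set $x_i=h_i\cdots h_1g_1$; then $x_ix_{i-1}^{-1}=h_i\in H\cup K$, and $x_n=g_2$, exhibiting the required walk. One should note that the walk in the definition strictly alternates $P$ and $Q$, but consecutive factors in the same subgroup can be combined, and trivial steps inserted where necessary, so the two notions of connectedness coincide.

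Finally, for part~(d), the map $H\mapsto P_H$ is injective because the part of $P_H$ containing the identity of $G$ is $H$ itself, so $P_H$ determines~$H$. Combining with (b) and (c), the map preserves $\wedge$ and $\vee$, and hence is a lattice isomorphism onto its image, which is a sublattice of the partition lattice on~$G$.
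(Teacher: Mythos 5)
Your proof is correct and follows essentially the same route as the paper, which disposes of (a) and (b) as ``clear'', justifies (c) by the one-line observation that elements of $\langle H,K\rangle$ are composed of elements from $H$ and $K$, and deduces (d) from (b), (c) and injectivity. Your write-up simply fills in the details the paper omits, in particular the telescoping argument for (c) and the careful handling of the alternating-walk definition of the join.
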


\begin{proof}
(a) and (b) are clear. (c) holds because elements of $\langle H,K\rangle$
are composed of elements from $H$ and $K$. Finally, (d) follows from (b) and
(c) and the fact that the map is injective. 
\end{proof}

Subgroup lattices of groups have been extensively investigated: see, for
example, Suzuki~\cite{suzuki:book}.

\subsection{Latin squares}
\label{sec:LS}

A \emph{Latin square} of order~$n$ is usually defined as an $n\times n$
array~$\Lambda$ with entries from an alphabet~$T$ of size~$n$
with the property that each letter in~$T$ occurs once in each row and once
in each column of~$\Lambda$.

The diagonal structures in this paper can be regarded as generalisations, where
the dimension is not restricted to be $2$, and the alphabet is allowed to be
infinite. To ease our way in, we re-formulate the definition as follows. For
this definition we regard $T$ as indexing the rows and columns as well as the
letters. This form of the definition allows the structures to be infinite.

A \emph{Latin square} consists of a pair of sets $\Omega$ and $T$, together
with three functions $F_1,F_2,F_3\colon\Omega\to T$, with the property that, if
$i$ and $j$ are any two of $\{1,2,3\}$, the map
$F_i\times F_j\colon\Omega\to T\times T$ is a bijection.

We recover the original definition by specifying that the $(i,j)$ entry
of~$\Lambda$ is equal to~$k$ if the unique point $\omega$ of $\Omega$ for which
$F_1(\omega)=i$ and $F_2(\omega)=j$ satisfies $F_3(\omega)=k$. Conversely,
given the original definition, if we index rows and columns with $T$, then
$\Omega$ is the set of cells of the array, and $F_1,F_2,F_3$ map a cell to its
row, column, and entry respectively.

In the second version of the definition,
the set~$T$ acts as an index set for rows, columns and
entries of the square. We will need the freedom to change the indices
independently; so we now rephrase the definition in terms of the
three partitions $P_i=\widetilde F_i$ ($i=1,2,3$).

Two partitions $P_1$ and $P_2$ of $\Omega$ form a \emph{grid} if, 
for all $p_i\in P_i$ ($i=1,2$), there is a unique point of $\Omega$ lying in
both $p_1$ and $p_2$. In other words, there is a bijection $F$ from
$P_1\times P_2$ to $\Omega$ so that $F(p_1,p_2)$ is the unique point in
$p_1\cap p_2$. This implies that $P_1\wedge P_2=E$ and $P_1\vee P_2=U$, but
the converse is not true.
For example, if $\Omega = \{1,2,3,4,5,6\}$ the partitions
$P_1 =\{\{1,2\},\{3,4\},\{5,6\}\}$ and $P_2=\{\{1,3\}, \{2,5\}, \{4,6\}\}$
have these properties but do not form a grid.

Three partitions $P_1,P_2,P_3$ of $\Omega$ form a \emph{Latin square} if
any two of them form a grid. 

This third version of the definition is the one that we shall mostly use
in this paper.

\begin{prop}
\label{p:order}
If $\{P_1,P_2,P_3\}$ is a Latin square on $\Omega$, then $|P_1|=|P_2|=|P_3|$,
and this cardinality is also the cardinality of any part of any of the three
partitions.
\end{prop}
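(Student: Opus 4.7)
The plan is to exploit the grid property pairwise and then combine the resulting cardinality equalities. Let me set up notation first. Fix any two of the three partitions, say $P_i$ and $P_j$ with $i\neq j$. By hypothesis they form a grid, so for every part $p_i\in P_i$ and every part $p_j\in P_j$ the intersection $p_i\cap p_j$ consists of exactly one element of $\Omega$.

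First I would show: if $P_i$ and $P_j$ form a grid, then every part of $P_i$ has cardinality $|P_j|$. The argument is immediate: fix a part $p_i\in P_i$. Because $P_j$ is a partition of $\Omega$, every element of $p_i$ lies in exactly one part of $P_j$; this gives a map from $p_i$ to $P_j$ sending $\omega\in p_i$ to $P_j[\omega]$. The grid condition says each part $p_j\in P_j$ meets $p_i$ in exactly one point, which means this map is a bijection. Hence $|p_i|=|P_j|$.

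Applying this observation to each of the three pairs $(P_1,P_2)$, $(P_1,P_3)$, $(P_2,P_3)$ yields: every part of $P_1$ has cardinality both $|P_2|$ and $|P_3|$; every part of $P_2$ has cardinality both $|P_1|$ and $|P_3|$; every part of $P_3$ has cardinality both $|P_1|$ and $|P_2|$. Comparing the two expressions in each line gives $|P_2|=|P_3|$, $|P_1|=|P_3|$, and $|P_1|=|P_2|$, so all three partitions have the same number of parts. Moreover, this common value coincides with the cardinality of each part of each of the three partitions, which is exactly the second assertion. There is no real obstacle here; the only small point to state carefully is that the grid-induced map $p_i\to P_j$, $\omega\mapsto P_j[\omega]$, is a bijection (surjectivity and injectivity both come directly from the ``exactly one point'' clause in the definition of a grid), and that the argument is valid without any finiteness assumption on $\Omega$.
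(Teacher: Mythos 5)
Your proof is correct and is essentially the paper's argument: both establish, for each pair forming a grid, a bijection between a fixed part of one partition and the set of parts of the other (you use $\omega\mapsto P_j[\omega]$, the paper uses its inverse $p_j\mapsto$ the unique point of $p_i\cap p_j$), and then combine the resulting equalities over the three pairs. No gaps, and as you note the argument needs no finiteness assumption.
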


\begin{proof}
Let $F_{ij}$ be the bijection from $P_i\times P_j$ to $\Omega$, for
$i,j\in\{1,2,3\}$, $i\ne j$.
For any part~$p_1$ of~$P_1$,
there is a bijection $\phi$ between $P_2$ and~$p_1$:
simply put $\phi(p_2) = F_{12}(p_1,p_2) \in p_1$ for  each part $p_2$ of~$P_2$.
Similarly there is a bijection~$\psi$ between $P_3$ and $p_1$
defined by $\psi(p_3) = F_{13}(p_1,p_3) \in p_1$ for  each part $p_3$ of~$P_3$.
Thus $|P_2|=|P_3|=|p_1|$, and $\psi^{-1}\phi$ is an explicit bijection
from $P_2$ to $P_3$.
Similar bijections are defined by any part~$p_2$ of $P_2$ and any part~$p_3$
of~$P_3$.
The result follows. 
\end{proof}

The three partitions are usually called \emph{rows}, \emph{columns} and
\emph{letters}, and denoted by $R,C,L$ respectively. This refers to the
first definition of the Latin square as a square array of letters. Thus,
the Hasse diagram of the three partitions is shown in Figure~\ref{f:ls}.

\begin{figure}[htbp]
\begin{center}
\setlength{\unitlength}{1mm}
\begin{picture}(30,30)
\multiput(15,5)(0,20){2}{\circle*{2}}
\multiput(5,15)(10,0){3}{\circle*{2}}
\multiput(5,15)(10,10){2}{\line(1,-1){10}}
\multiput(5,15)(10,-10){2}{\line(1,1){10}}
\put(15,5){\line(0,1){20}}
\put(10,3){$E$}
\put(10,13){$C$}
\put(10,23){$U$}
\put(0,13){$R$}
\put(27,13){$L$}
\end{picture}
\end{center}
\caption{\label{f:ls}A Latin square}
\end{figure}

The number defined in Proposition~\ref{p:order} is called the \emph{order} of
the Latin square. So, with our
second definition, the order of the Latin square is $|T|$.

\medskip

Note that the number of Latin squares of order $n$ grows faster than the
exponential of $n^2$, and the vast majority of these (for large $n$) are
not Cayley tables of groups. We digress slightly to discuss this.

The number of Latin squares of order $n$ is a rapidly growing function, so
rapid that allowing for paratopism (the natural notion of isomorphism for
Latin squares, regarded as sets of partitions; see before
Theorem~\ref{thm:albert} for the definition) does not affect the leading
asymptotics. There is
an elementary proof based on Hall's Marriage Theorem that the number is at least
\[n!(n-1)!\cdots1!\geqslant(n/c)^{n^2/2}\]
for a constant $c$. The van der Waerden permanent conjecture (proved by
Egory\v{c}ev and Falikman~\cite{e:vdwpc,f:vdwpc}) improves the
lower bound to $(n/c)^{n^2}$. An elementary argument using only Lagrange's
and Cayley's Theorems shows that the number of groups of order $n$ is much
smaller; the upper bound is $n^{n\log n}$. This has been improved to
$n^{(c\log n)^2}$ by Neumann~\cite{pmn:enum}. (His theorem was conditional on a
fact about finite simple groups, which follows from the classification of these
groups.) The elementary arguments referred to, which suffice for our claim,
can be found in \cite[Sections~6.3,~6.5]{pjc:ctta}.

Indeed, much more is true: almost all Latin squares have trivial
autoparatopism groups~\cite{pjc:asymm,mw}, whereas
the autoparatopism group of the Cayley table of a group of order~$n$
is the diagonal group, which has order at least
$6n^2$, as we shall see at the end of Section~\ref{sect:lsautgp}.

\medskip

There is a graph associated with a Latin square, as follows:  see
\cite{bose:SRG,pjc:rsrg,phelps}. The
vertex set is $\Omega$; two vertices are adjacent if they lie in the same part
of one of the partitions $P_1,P_2,P_3$. (Note that, if points lie in the same
part of more than one of these partitions, then the points are equal.)
This is the \emph{Latin-square graph} associated with the Latin square.
In the finite case,
if $|T|=n$, then it is a  regular graph with $n^2$~vertices, valency
$3(n-1)$, in which two adjacent vertices have $n$~common neighbours and two
non-adjacent vertices have $6$ common neighbours.
Any regular finite graph with the property that the number of common
neighbours of vertices $v$ and $w$ depends only on whether or not $v$ and $w$
are adjacent is called \textit{strongly regular}: see \cite{bose:SRG,pjc:rsrg}.
Its parameters are the number of vertices, the valency, and the numbers of
common neighbours of adjacent and non-adjacent vertices respectively. Indeed,
Latin-square graphs form one of the most prolific classes of strongly regular
graphs: the number of such graphs on a square number of vertices grows faster
than exponentially, in view of Proposition~\ref{p:lsgraphaut} below.

A \emph{clique} is a set of vertices, any two adjacent;
a \textit{maximum clique} means a maximal clique (with respect to inclusion)
such that there is no clique of strictly larger size.  Thus a maximum clique
must be maximal, but the converse is not necessarily true. The following
result is well-known; we sketch a proof.

\begin{prop}
A Latin square of order $n>4$ can be recovered uniquely from its Latin-square
graph, up to the order of the three partitions and permutations of the rows,
columns and letters.
\label{p:lsgraphaut}
\end{prop}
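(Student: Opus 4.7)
The plan is to reconstruct the three partitions $P_1,P_2,P_3$ combinatorially from $\Gamma$, first by identifying the $3n$ \emph{line cliques} (the parts of the $P_i$) as subsets of the vertex set, and then by grouping these cliques into three classes. Each part of each $P_i$ is evidently a clique of size~$n$ in $\Gamma$; by the grid property used in Proposition~\ref{p:order}, two line cliques from the same $P_i$ are disjoint, while two from different partitions meet in exactly one vertex. In particular, every edge of $\Gamma$ lies in a unique line clique.

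The heart of the argument is a local analysis of the common neighbourhood of an edge $\{v,w\}$, lying in a line clique $p$ belonging to some $P_i$. Writing $p_\ell^x$ for the $P_\ell$-part containing a vertex $x$, and $\{j,k\}=\{1,2,3\}\setminus\{i\}$, I would use the grid properties of the three pairs of partitions to list the common neighbours: exactly $n-2$ come from $p\setminus\{v,w\}$, and the remaining two are the ``cross'' vertices $u_1\in p_j^v\cap p_k^w$ and $u_2\in p_k^v\cap p_j^w$, giving $n$ in total (consistent with the strong-regularity parameter). A short grid-based check -- any edge from $u_1$ to a vertex of $p\setminus\{v,w\}$ would force that vertex to lie in $\{v,w\}$ -- shows that each of $u_1,u_2$ is non-adjacent to every one of the $n-2$ line vertices. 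Hence the subgraph induced on $N(v)\cap N(w)$ has $n-2$ line vertices each of in-subgraph degree $n-3$, and two cross vertices each of in-subgraph degree at most~$1$.

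The hypothesis $n>4$ enters precisely here: $n-3\geq 2>1$, so line and cross vertices are distinguishable from $\Gamma$ alone, and the line clique $p$ through $\{v,w\}$ is recovered as $\{v,w\}\cup\{u\in N(v)\cap N(w):\deg_{N(v)\cap N(w)}(u)\geq 2\}$. Letting $\{v,w\}$ range over all edges produces the entire collection of $3n$ line cliques. To reassemble the three partitions, I would declare two line cliques equivalent exactly when they are disjoint; by the first paragraph this is the relation ``belonging to the same $P_i$'', an equivalence relation with three classes, and these classes are $P_1,P_2,P_3$. This recovers the unordered triple of partitions, leaving only the freedom to permute the three r\^oles and independently relabel the parts within each -- exactly the uniqueness claimed.

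The step I expect to be the main obstacle is the degree analysis in the second paragraph: one has to verify carefully, via the grid property, that the two cross vertices $u_1,u_2$ really are non-adjacent to \emph{every} vertex of $p\setminus\{v,w\}$, since this is what underpins the separation $n-3>1$ and hence the bound $n>4$ (at $n=4$ the two classes collapse to common in-subgraph degree~$1$ and the reconstruction method fails).
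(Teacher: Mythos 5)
Your proof is correct. Both you and the paper reduce the problem to the same two steps -- first recover the $3n$ line cliques (the parts of $P_1,P_2,P_3$) from the graph alone, then group them into the three partitions via the observation that two line cliques lie in the same $P_i$ if and only if they are disjoint -- but you carry out the first step by a genuinely different mechanism. The paper simply observes that for $n>4$ every clique of size greater than $4$ extends to a unique line clique, so the maximum cliques \emph{are} the line cliques (the sporadic maximal cliques, coming from triangles of cells pairwise collinear in different directions and from intercalates, have size at most $4$). You instead perform a local analysis of $N(v)\cap N(w)$ for each edge $\{v,w\}$: the $n$ common neighbours split into $n-2$ line vertices of internal degree $n-3$ and two cross vertices of internal degree at most $1$, and your grid-based verification that the cross vertices are non-adjacent to all line vertices is sound (the cross vertex in $p_j^v\cap p_k^w$ can meet the line $p$ only in $v$ or $w$). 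Your route is longer but more explicit: it pinpoints the threshold $n>4$ as exactly the inequality $n-3>1$ and makes the failure at $n=4$ transparent, whereas the paper's argument requires the (stated but not detailed) classification of cliques of size greater than $4$. Both arguments are complete for the purposes of the proposition.
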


\begin{proof}
If $n>4$, then any clique of size greater than~$4$ is contained in a unique
clique which is a part of one of the three partitions~$P_i$ for
$i=1,2,3$. In particular, the maximum cliques are the parts of the three
partitions.

Two maximum cliques are parts of the same partition if and only if they are
disjoint (since parts of different partitions intersect in a unique point).
So we can recover the three partitions $P_i$ ($i=1,2,3$) uniquely up to order.
\end{proof}

This proof shows why the condition $n>4$ is necessary. Any Latin-square graph
contains cliques of size $3$ consisting of three cells, two in the same row,
two in the same column, and two having the same entry; and there may also be
cliques of size $4$ consisting of the cells of an \emph{intercalate}, a 
Latin subsquare of order~$2$.

We examine what happens for $n\leqslant 4$.

\begin{itemize}
  \renewcommand{\itemsep}{0pt}
\item For $n=2$, the unique Latin square is the Cayley table of the group $C_2$;
its Latin-square graph is the complete graph $K_4$.
\item For $n=3$, the unique Latin square is the Cayley table of $C_3$. The
Latin-square graph is the complete tripartite graph $K_{3,3,3}$: the nine
vertices are partitioned into three parts of size~$3$, and the edges join all
pairs of points in different parts.
\item For $n=4$, there are two Latin squares up to isotopy, the Cayley tables
  of the Klein group and the cyclic group. Their Latin-square graphs
  are most easily identified by
looking at their complements, which are strongly regular graphs on $16$ points
with parameters $(16,6,2,2)$: that is, all vertices have valency~$6$, and any
two vertices have just two common neighbours. Shrikhande~\cite{shrikhande}
showed that there are exactly two such graphs: the $4\times4$ square lattice
graph, sometimes written as $L_2(4)$, which is the line graph~$L(K_{4,4})$
of the complete bipartite graph $K_{4,4}$; and one further graph now called
the \emph{Shrikhande graph}. See Brouwer~\cite{Brouwer} for a detailed
description of this graph.
\end{itemize}

Latin-square graphs were introduced in two seminal papers by Bruck and Bose
in the \emph{Pacific Journal of Mathematics} in 1963~\cite{bose:SRG,Bruck:net}.
A special case of Bruck's main result is that a strongly regular graph having
the parameters $(n^2, 3(n-1), n, 6)$ associated with a Latin-square graph of
order~$n$ must actually be a Latin-square graph, provided that $n>23$.

\subsection{Quasigroups}
\label{sesc:quasi}

A \emph{quasigroup} consists of a set $T$ with a binary operation $\circ$ in
which each of the equations $a\circ x=b$ and $y\circ a=b$ has a unique solution
$x$ or $y$ for any given $a,b\in T$. These solutions are denoted by
$a\backslash b$ and $b/a$ respectively.

According to the second of our three equivalent definitions,
a quasigroup $(T,\circ)$ gives rise to a Latin square 
$(F_1,F_2,F_3)$ by the rules that $\Omega=T\times T$ and,
for $(a,b)$ in $\Omega$,
$F_1(a,b)=a$, $F_2(a,b)=b$, and $F_3(a,b)=a\circ b$.
Conversely, a Latin square with rows, columns and letters indexed by a
set $T$ induces a quasigroup structure
on $T$ by the rule that, if we use the pair $(F_1,F_2)$ to identify $\Omega$
with $T\times T$, then $F_3$ maps the pair $(a,b)$ to $a\circ b$. (More
formally, $F_1(\omega)\circ F_2(\omega)=F_3(\omega)$ for all $\omega\in\Omega$.)

In terms of partitions, if $a,b\in T$, and the unique point lying in the part
of $P_1$ labelled $a$ and the part of $P_2$ labelled $b$ also lies in the 
part of $P_3$ labelled~$c$, then $a\circ b=c$.

In the usual representation of a Latin square as a square array, the Latin
square is the \emph{Cayley table} of the quasigroup.

Any permutation of $T$ induces a quasigroup isomorphism, by simply
re\-labelling the elements.  However, the Latin square property is also
preserved if we choose three permutations
$\alpha_1$, $\alpha_2$, $\alpha_3$ of $T$ independently and define new functions
$G_1$, $G_2$, $G_3$ by $G_i(\omega)=(F_i(\omega))\alpha_i$ for $i=1,2,3$.
(Note that we write permutations on the right, but most other functions on
the left.)
Such a triple of maps is called an
\emph{isotopism} of the Latin square or quasigroup.

We can look at this another way. Each map $F_i$ defines a partition
$P_i$ of~$\Omega$, in which two points lie in the same part if their
images under $F_i$ are equal. Permuting elements of the three image sets
independently has no effect on the partitions. So an isotopism class of
quasigroups corresponds to a Latin square (using the partition definition)
with arbitrary labellings of rows, columns and letters by $T$.

A \emph{loop} is a quasigroup with a two-sided identity. Any quasigroup is
isotopic to a loop, as observed by Albert~\cite{albert}: indeed, any element
$e$ of the quasigroup can be chosen to be the identity. (Use the letters in
the row and column of a fixed cell containing $e$ as column, respectively row,
labels.)

A different equivalence on Latin squares is obtained by applying
a permutation to the three functions $F_1,F_2,F_3$. Two Latin squares (or
quasigroups) are said to be \emph{conjugate}~\cite{kd}  or \emph{parastrophic}
\cite{shch:quasigroups} if they are related by such a
permutation. For example, the transposition of $F_1$ and $F_2$ corresponds
(under the original definition) to transposition (as matrix) of the Latin
square. Other conjugations are slightly harder to define: for example, 
the $(F_1,F_3)$ conjugate is the square in which the $(i,j)$ entry is $k$ if
and only if the $(k,j)$ entry of the original square is $i$.

Combining the operations of isotopism and conjugation gives the relation of
\emph{paratopism}. The paratopisms form the group $\Sym(T)\wr S_3$. Given a
Latin square or quasigroup, its \emph{autoparatopism group} is the group of 
all those paratopisms which preserve it, in the sense that they map the set
$\{(x,y,x\circ y):x,y\in T\}$ of triples to itself. This coincides with the
automorphism group of the Latin square (as set of partitions): take $\Omega$
to be the set of triples and let the three partitions correspond to the values
in the three positions. An autoparatopism is called an \emph{autotopism} if it
is an isotopism. See \cite{paratop} for details. 

In the case of groups, a conjugation can be attained
by applying a suitable isotopism, and so the following result is a direct
consequence of Albert's well-known theorem~\cite[Theorem~2]{albert}.

\begin{theorem}\label{thm:albert}
  If $\Lambda$ and $\Lambda'$ are Latin squares, isotopic to Cayley tables
  of groups $G$ and $G'$ respectively, and if some paratopism maps $\Lambda$
  to $\Lambda'$, then the groups $G$ and $G'$ are isomorphic. 
\end{theorem}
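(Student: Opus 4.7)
The plan is to reduce the paratopism relating $\Lambda$ and $\Lambda'$ to an isotopism and then invoke Albert's theorem directly. Since both the hypothesis and the conclusion are invariant under isotopism of $\Lambda$ and $\Lambda'$ separately, I may assume that $\Lambda$ and $\Lambda'$ are literally the Cayley tables of $G$ and $G'$. The paratopism group $\Sym(T)\wr S_3$ is the semidirect product of the isotopism group $\Sym(T)^3$ with~$S_3$, so the given paratopism factors as $\pi=\iota\tau$ for some parastrophe $\tau\in S_3$ and some isotopism~$\iota$.

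The key step is the observation made in the remark immediately preceding the theorem: for a Cayley table of a group, every parastrophe is itself isotopic to the original Cayley table. It suffices to verify this on two generators of~$S_3$, say the transpositions of $F_1,F_2$ and of $F_2,F_3$. Viewing $\Lambda$ as the set of triples $\{(x,y,xy):x,y\in G\}$, the first transposition produces $\{(y,x,xy)\}$, which is the Cayley table of~$G^{\mathrm{op}}$; applying $z\mapsto z^{-1}$ in all three coordinates sends $\{(x,y,xy)\}$ to $\{(x^{-1},y^{-1},(yx)^{-1})\}$, and this coincides with the target set after the re-indexing $x'=y^{-1}$, $y'=x^{-1}$. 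The second transposition produces $\{(x,xy,y)\}=\{(u,v,u^{-1}v)\}$, and the isotopism $(\mathrm{inv},\mathrm{id},\mathrm{id})$ sends $\{(x,y,xy)\}$ to $\{(x^{-1},y,xy)\}$, which matches after the re-indexing $u=x^{-1}$, $v=y$. Since these two transpositions generate~$S_3$ and a composition of isotopisms is an isotopism, every parastrophe of $\Lambda$ is isotopic to~$\Lambda$.

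Combining the pieces, let $\iota_0$ be an isotopism from $\tau(\Lambda)$ back to~$\Lambda$. Then $\iota\iota_0^{-1}$ is an isotopism from $\Lambda$ to $\Lambda'$; that is, the Cayley tables of $G$ and $G'$ are isotopic as Latin squares. Albert's original theorem~\cite[Theorem~2]{albert}, which states that isotopic Cayley tables of groups arise from isomorphic groups, then yields $G\cong G'$. The only slight obstacle is the casework in the key step, but once the triples are written out explicitly the inversion map handles both generating parastrophes, so the argument is essentially mechanical and the reduction to Albert's theorem is immediate.
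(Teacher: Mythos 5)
Your proof is correct and takes essentially the same route as the paper, which disposes of the theorem with the one-line remark that for group Cayley tables each conjugate (parastrophe) can be attained by a suitable isotopism, so that Albert's theorem applies; you have simply written out that verification on the two generating transpositions of $S_3$. (One minor slip: coordinate-wise inversion sends the third entry of $(x,y,xy)$ to $(xy)^{-1}=y^{-1}x^{-1}$, not $(yx)^{-1}$, but your subsequent re-indexing $x'=y^{-1}$, $y'=x^{-1}$ is consistent with the correct expression, so the argument stands.)
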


Except for a small number of exceptional cases, the autoparatopism group of 
a Latin square coincides with the automorphism group of its Latin-square graph.

\begin{prop}
\label{p:autlsg}
  Let $\Lambda$ be a Latin square of order $n>4$. Then the automorphism group
  of the Latin-square graph of $\Lambda$ is isomorphic to the autoparatopism
  group of~$\Lambda$.
\end{prop}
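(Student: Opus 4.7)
The plan is to realise both groups as subgroups of $\Sym(\Omega)$ and show that they coincide. As discussed immediately before Theorem~\ref{thm:albert}, the autoparatopism group of~$\Lambda$ can be identified with the subgroup $\Pi$ of $\Sym(\Omega)$ consisting of those permutations that preserve the unordered triple $\{P_1,P_2,P_3\}$; that is, for each $g\in\Pi$ there is some $\sigma\in S_3$ (depending on~$g$) with $g(P_i)=P_{\sigma(i)}$ for $i=1,2,3$. By definition $\Aut(\Gamma)$ is also a subgroup of $\Sym(\Omega)$, where $\Gamma$ denotes the Latin-square graph of~$\Lambda$.

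The inclusion $\Pi\subseteq\Aut(\Gamma)$ is essentially immediate: if $g\in\Pi$, then $g$ sends each part of each~$P_i$ to a part of some~$P_j$, so it preserves the edge relation of~$\Gamma$, since adjacency is defined by the condition of lying in a common part of one of $P_1$, $P_2$, $P_3$.

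For the reverse inclusion $\Aut(\Gamma)\subseteq\Pi$, take $g\in\Aut(\Gamma)$. By the proof of Proposition~\ref{p:lsgraphaut}, the hypothesis $n>4$ guarantees that the $3n$ parts of $P_1$, $P_2$, $P_3$ are exactly the maximum cliques of~$\Gamma$, and that two maximum cliques lie in the same~$P_i$ if and only if they are disjoint. Since $g$ permutes maximum cliques and preserves disjointness, it induces a permutation $\sigma\in S_3$ of the three partitions with $g(P_i)=P_{\sigma(i)}$ for each~$i$. Therefore $g\in\Pi$, completing the proof of equality.

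The main obstacle is the second inclusion, which hinges on recovering the partition structure purely from the graph; this is precisely what Proposition~\ref{p:lsgraphaut} delivers, and the condition $n>4$ is indispensable, as intercalates produce stray $4$-cliques at $n=4$ and the complete (tri\-partite) graphs at $n\leqslant 3$ have much larger automorphism groups than the autoparatopism group of the underlying Latin square.
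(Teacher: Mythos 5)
Your proof is correct and follows essentially the same route as the paper's: the forward inclusion is immediate from the definition of adjacency, and the reverse inclusion is exactly the content of Proposition~\ref{p:lsgraphaut}. You have simply spelled out the details that the paper leaves implicit.
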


\begin{proof}
It is clear that autoparatopisms of $\Lambda$ induce automorphisms of its
graph. The converse follows from Proposition~\ref{p:lsgraphaut}. 
\end{proof}

A question which will be of great importance to us is the following: How do
we recognise Cayley tables of groups among Latin squares? The answer is given
by the following theorem, proved in \cite{brandt,frolov}.  We first need
a definition, which is given in the statement of \cite[Theorem~1.2.1]{DK:book}.

\begin{defn}
  \label{def:quad}
  A Latin square satisfies the \textit{quadrangle criterion}, if, for all
  choices of $i_1$, $i_2$, $j_1$, $j_2$, $i_1'$, $i_2'$, $j_1'$ and $j_2'$,
  if the letter in $(i_1,j_1)$ is equal to the letter in $(i_1',j_1')$,
  the letter in $(i_1,j_2)$ is equal to the letter in $(i_1',j_2')$,
  and the letter in $(i_2,j_1)$ is equal to the letter in $(i_2',j_1')$,
  then the letter in $(i_2,j_2)$ is equal to the letter in $(i_2',j_2')$.
\end{defn}

In other words, any pair of rows and pair of columns define four entries in the
Latin square; if two pairs of rows and two pairs of columns have the property that
three of the four entries are equal, then the fourth entries are also equal.
If $(T,\circ)$ is a quasigroup, it satisfies the quadrangle criterion if and
only if, for any $a_1,a_2,b_1,b_2,a_1',a_2',b_1',b_2'\in T$, if
$a_1\circ b_1=a_1'\circ b_1'$, $a_1\circ b_2=a_1'\circ b_2'$, and
$a_2\circ b_1=a_2'\circ b_1'$, then $a_2\circ b_2=a_2'\circ b_2'$.

\begin{theorem}
\label{thm:frolov}
  Let $(T,\circ)$ be a quasigroup. Then $(T,\circ)$ is isotopic to a group if
  and only if it satisfies the quadrangle criterion. 
\end{theorem}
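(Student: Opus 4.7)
The plan has two parts, corresponding to the two directions of the biconditional. For the easy direction (group isotope implies QC), I would first verify QC directly in any group: from $a_1b_1=a_1'b_1'$ and $a_1b_2=a_1'b_2'$ one reads off
\[a_1^{-1}a_1'=b_1(b_1')^{-1}=b_2(b_2')^{-1},\]
which rearranges to $b_1^{-1}b_2=(b_1')^{-1}b_2'$; combining this with $a_2b_1=a_2'b_1'$ gives $a_2b_2=a_2b_1\cdot b_1^{-1}b_2=a_2'b_1'\cdot(b_1')^{-1}b_2'=a_2'b_2'$. Next I would show that QC is preserved under isotopism: an isotopism $(\alpha,\beta,\gamma)$ between $(T,\circ)$ and $(T',*)$ satisfies $(x\circ y)\gamma=(x\alpha)*(y\beta)$, and since $\gamma$ is a bijection, each of the four equalities in QC for $(T,\circ)$ translates via $\alpha,\beta,\gamma$ to the corresponding equality in QC for $(T',*)$ and vice versa. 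Thus any quasigroup isotopic to a group satisfies QC.

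For the converse, suppose $(T,\circ)$ satisfies QC. Fix any element $e\in T$ and pass to the Albert loop isotope $(T,\cdot)$ with identity $e\circ e$, defined by $A\cdot B:=A_1\circ B_1$ where $A=A_1\circ e$ and $B=e\circ B_1$; that this is a loop is the content of Albert's trick recalled earlier in the paper, and by the preservation lemma above $(T,\cdot)$ inherits QC from $(T,\circ)$. It therefore suffices to show that $(T,\cdot)$ is associative, since an associative loop is a group, and $(T,\circ)$ is by construction isotopic to $(T,\cdot)$.

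To verify associativity, take $A,B,C\in T$ and write $A=A_1\circ e$, $B=B_2\circ e=e\circ B_1$, $C=e\circ C_1$. Unwinding the definition of $\cdot$ twice gives $(A\cdot B)\cdot C=u\circ C_1$, where $u$ is the unique element with $u\circ e=A_1\circ B_1$, and $A\cdot(B\cdot C)=A_1\circ v$, where $v$ is the unique element with $e\circ v=B_2\circ C_1$. Now apply the quadrangle criterion in $(T,\circ)$ to the configuration
\[a_1=B_2,\ a_1'=e,\ a_2=u,\ a_2'=A_1,\ b_1=e,\ b_1'=B_1,\ b_2=C_1,\ b_2'=v.\]
The three premises of QC read $B_2\circ e=e\circ B_1$, $B_2\circ C_1=e\circ v$, and $u\circ e=A_1\circ B_1$, which are exactly the defining identities for $B_1,B_2$, for $v$, and for $u$ respectively, and hence all hold. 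QC then forces $u\circ C_1=A_1\circ v$, i.e.\ $(A\cdot B)\cdot C=A\cdot(B\cdot C)$.

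The only genuine difficulty is locating the correct quadrangle: one must arrange the eight entries so that three of the four corner equations coincide with the three defining identities (for $u$, for $v$, and the isotope compatibility $B_2\circ e=e\circ B_1$), leaving the desired associativity equation as the fourth, forced corner. Once this alignment is spotted the rest is bookkeeping, and the theorem follows.
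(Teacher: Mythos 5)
Your proof is correct. Note that the paper itself does not prove this theorem; it simply cites D\'enes and Keedwell, so there is no in-paper argument to compare against --- but what you have written is essentially the classical proof from that source: the direct group computation plus isotopy-invariance of the quadrangle criterion for one direction, and Albert's loop isotope with a quadrangle forcing associativity for the other. Your choice of quadrangle is right, and all three premises do reduce to the defining identities for $u$, $v$ and $B_1,B_2$ as claimed. One small redundancy: you remark that $(T,\cdot)$ inherits the quadrangle criterion from $(T,\circ)$, but you never use this, since you apply the criterion directly in $(T,\circ)$; that sentence can be deleted.
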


In \cite{DK:book}, the ``only if'' part of this result is proved in its
Theorem 1.2.1 and the converse is proved in the text following Theorem~1.2.1.

A Latin square which satisfies the quadrangle criterion is called a
\textit{Cayley matrix} in~\cite{DOP:quad}.

If $(T, \circ)$ is isotopic to a group then we may assume that the rows,
columns and letters have been labelled in such a way that $a \circ b= a^{-1}b$
for all $a$, $b$ in~$T$.  We shall use this format in the proof of
Theorems~\ref{t:autDT2} and~\ref{thm:bingo}.

\subsection{Automorphism groups}\label{sect:lsautgp}

Given a Latin square $\Lambda=\{R,C,L\}$ on a set $\Omega$, an
\emph{automorphism} of $\Lambda$ is a permutation of $\Omega$ preserving
the set of three partitions; it is a \emph{strong automorphism} if it
fixes the three partitions individually. (These maps are also called
\emph{autoparatopisms} and \emph{autotopisms}, as noted in the preceding
section.)
We will generalise this definition later, in Definition~\ref{def:weak}.
We denote the groups of automorphisms and strong automorphisms by
$\Aut(\Lambda)$ and $\SAut(\Lambda)$ respectively.

In this section we verify that, if $\Lambda$ is the Cayley table of a group
$T$, then $\Aut(\Lambda)$ is the diagonal group $D(T,2)$ defined in 
Section~\ref{sect:diaggroups}.

We begin with a principle which we will use several times.

\begin{prop}
  Suppose that the group $G$ acts transitively on a set~$\Omega$.
    Let $H$ be a subgroup of $G$, and assume that
\begin{itemize}
      \renewcommand{\itemsep}{0pt}
\item $H$ is also transitive on $\Omega$;
\item $G_\alpha=H_\alpha$, for some $\alpha\in\Omega$.
\end{itemize}
Then $G=H$.
\label{p:subgp}
\end{prop}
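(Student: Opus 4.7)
The plan is to show $G \subseteq H$ pointwise, since the reverse inclusion is given by hypothesis. Pick any $g \in G$ and aim to write $g$ as a product of an element of $H_\alpha$ and an element of $H$. The key leverage is that the stabilizer hypothesis $G_\alpha = H_\alpha$ means any element of $G$ fixing $\alpha$ already lies in $H$, so it suffices to modify $g$ by an element of $H$ until the result fixes $\alpha$.

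Concretely, I would argue as follows. Given $g \in G$, consider the image $\alpha g \in \Omega$ (or $g\alpha$, depending on the action convention used in this paper). By the transitivity of $H$ on $\Omega$, there exists $h \in H$ such that $\alpha h = \alpha g$. Then $gh^{-1}$ (respectively $h^{-1}g$) fixes $\alpha$, so $gh^{-1} \in G_\alpha$. Applying the second hypothesis $G_\alpha = H_\alpha$, we obtain $gh^{-1} \in H_\alpha \subseteq H$, and hence $g = (gh^{-1})\cdot h \in H$. Since $g$ was arbitrary, $G \subseteq H$, and combined with $H \leqslant G$ this gives $G = H$.

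There is essentially no obstacle here beyond bookkeeping with the action convention (left versus right actions, which the paper handles by writing permutations on the right). I would not invoke orbit--stabilizer counting $|G| = |G_\alpha|\cdot|\Omega|$, because the paper allows $\Omega$ and $T$ to be infinite throughout, and the coset-representative argument above works uniformly in the infinite case.
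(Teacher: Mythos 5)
Your proof is correct and is essentially the paper's argument: the paper chooses a set of coset representatives for $G_\alpha$ in $G$ lying inside $H$ (which is exactly your element $h$ with $\alpha h=\alpha g$) and concludes $G=\langle G_\alpha,X\rangle=\langle H_\alpha,X\rangle\leqslant H$. Your element-by-element phrasing of the same decomposition $g=(gh^{-1})h$ is fine, and you are right that no counting is needed, so the argument works for infinite $\Omega$.
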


\begin{proof}
The transitivity of $H$ on $\Omega$ means that we can choose a set $X$ of
coset representatives for $G_\alpha$ in $G$ such that $X\subseteq H$. Then
$H=\langle H_\alpha,X\rangle=\langle G_\alpha,X\rangle=G$. 
\end{proof}

The next result applies to any Latin square. As noted earlier, given a
Latin square $\Lambda$, there is a loop $Q$ whose Cayley table is $\Lambda$.

\begin{prop}
Let $\Lambda$ be the Cayley table of a loop $Q$ with identity $e$. Then
the subgroup $\SAut(\Lambda)$ fixing the cell in row and
column $e$ is equal to the automorphism group of $Q$.
\label{p:autlatin}
\end{prop}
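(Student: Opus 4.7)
The plan is to unwrap a strong automorphism as an autotopism in the quasigroup sense, translate the fixed-cell condition into the statement that $e$ is fixed by all three component permutations, and then apply the loop identity twice to collapse the three permutations into a single automorphism of $Q$.

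First I would fix notation. Any $\sigma\in\SAut(\Lambda)$ preserves each of $R$, $C$ and $L$ as a partition, so it permutes their parts; labelling rows, columns and letters by the elements of $Q$ in the obvious way, $\sigma$ induces permutations $\alpha_1,\alpha_2,\alpha_3\in\Sym(Q)$. Because $\sigma$ is a permutation of cells, it must send the cell $(a,b)$ to the cell $(a\alpha_1,b\alpha_2)$, and the entry there, $(a\alpha_1)\circ(b\alpha_2)$, must coincide with the image under $\alpha_3$ of the original entry $a\circ b$. Thus
\[(a\alpha_1)\circ(b\alpha_2)=(a\circ b)\alpha_3\qquad\text{for all }a,b\in Q,\]
so $(\alpha_1,\alpha_2,\alpha_3)$ is an autotopism; conversely any such triple lifts to a unique element of $\SAut(\Lambda)$.

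Next I would impose that $\sigma$ fixes the cell $(e,e)$. This cell lies in row $e$, column $e$, and carries the letter $e\circ e=e$, so the condition is exactly that $e\alpha_1=e\alpha_2=e\alpha_3=e$. Substituting $b=e$ into the autotopism identity and using that $e$ is a right identity in $Q$ gives $a\alpha_1=a\alpha_3$ for every $a$, whence $\alpha_1=\alpha_3$; symmetrically, $a=e$ and the left-identity property yield $\alpha_2=\alpha_3$. Writing $\alpha$ for this common permutation, the autotopism identity collapses to $(a\alpha)\circ(b\alpha)=(a\circ b)\alpha$, which is precisely the statement that $\alpha\in\Aut(Q)$.

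The reverse inclusion is immediate: any $\alpha\in\Aut(Q)$ fixes the (unique) two-sided identity $e$, and the diagonal triple $(\alpha,\alpha,\alpha)$ trivially satisfies the autotopism identity, hence gives an element of $\SAut(\Lambda)$ stabilising $(e,e)$. I expect no real obstacle here; the only point demanding a moment of care is recognising that stabilising the cell $(e,e)$ forces the three labels $e$ (as row, column, and letter) to be individually fixed, after which the two substitutions $a=e$ and $b=e$ and the loop axioms finish the argument at once.
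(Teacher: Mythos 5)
Your proof is correct and follows essentially the same route as the paper's: identify a strong automorphism with an autotopism $(\alpha_1,\alpha_2,\alpha_3)$, use the fixed cell $(e,e)$ to force each $\alpha_i$ to fix $e$, and then substitute $a=e$ and $b=e$ into the autotopism identity to collapse the triple to a single permutation, which the identity then exhibits as an automorphism of $Q$. The converse via the diagonal triple $(\alpha,\alpha,\alpha)$ is also exactly the paper's argument.
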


\begin{proof}
A strong automorphism of $\Lambda$ is given by an isotopism $(\rho,\sigma,\tau)$
of $Q$, where $\rho$, $\sigma$, and $\tau$ are permutations of rows, columns
and letters, satisfying
\[(ab)\tau=(a\rho)(b\sigma)\]
for all $a,b\in Q$. If this isotopism fixes the element $(e,e)$ of $\Omega$,
then substituting
$a=e$ in the displayed equation shows that $b\tau=b\sigma$ for all $b\in Q$,
and so $\tau=\sigma$. Similarly, substituting $b=e$ shows that $\tau=\rho$.
Now the displayed equation shows that $\tau$ is an automorphism of $Q$.

Conversely, if $\tau$ is an automorphism of $Q$, then $(\tau,\tau,\tau)$ is
a strong automorphism of $\Lambda$ fixing the cell $(e,e)$. 
\end{proof}

\begin{theorem}
Let $\Lambda$ be the Cayley table of a group $T$. Then $\Aut(\Lambda)$
is the diagonal group $D(T,2)$.
\label{t:autDT2}
\end{theorem}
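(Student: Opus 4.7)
The plan is to identify $\Omega$ with $T\times T$ via $[t_1,t_2]\leftrightarrow(t_1,t_2)$ and to check that the three minimal partitions arising from the coset construction of $D(T,2)$ match the row, column, and letter partitions of $\Lambda$ under the labelling $a\circ b=a^{-1}b$ (as indicated at the end of Section~\ref{sesc:quasi}). Concretely, let $Q_i$ be the partition whose parts are the orbits of the factor $T_i$ in the action described in Remark~\ref{rem:diaggens}. Using (I) and (II) one reads off that $Q_2$-parts consist of $[t_1,t_2]$ with $t_1$ fixed (rows), $Q_1$-parts of $[t_1,t_2]$ with $t_2$ fixed (columns), and $Q_0$-parts of $\{[ut_1,ut_2]:u\in T\}$, whose invariant is $t_1^{-1}t_2$ (letters). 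Thus $\{Q_0,Q_1,Q_2\}$ is precisely the partition-triple defining~$\Lambda$.

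First I would prove $D(T,2)\leqslant\Aut(\Lambda)$ by going through the five generator types (I)--(V) of Remark~\ref{rem:diaggens} and verifying that each permutes the set $\{Q_0,Q_1,Q_2\}$: types (I)--(IV) fix each $Q_i$ individually, while the extra transposition (V), given by $[t_1,t_2]\mapsto[t_1^{-1},t_1^{-1}t_2]$, swaps $Q_0$ with $Q_1$ and fixes $Q_2$ (an easy direct check using the invariants above).

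For the reverse inclusion I would invoke Proposition~\ref{p:subgp} with base-point $\alpha=[1,1]$. Both groups are transitive on $\Omega$, since $D(T,2)$ already contains $T_1\times T_2$ acting regularly by right multiplication; so it suffices to match the stabilisers at $\alpha$. Observe that the whole subgroup $\Aut(T)\times S_3$ of $\widehat D(T,2)$ fixes $\alpha$: $\Aut(T)$ fixes the identity componentwise, the swap of coordinates $1$ and $2$ sends $[1,1]$ to $[1,1]$, and the swap of coordinates $0$ and $1$ gives $[1,1]\mapsto[1^{-1},1^{-1}\!\cdot\!1]=[1,1]$. In particular $D(T,2)_\alpha$ realises every permutation of $\{Q_0,Q_1,Q_2\}$. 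Given $g\in\Aut(\Lambda)_\alpha$, let $\pi\in S_3$ be the permutation of the three partitions that $g$ induces and pick $h\in D(T,2)_\alpha$ inducing the same $\pi$. Then $gh^{-1}$ is a strong automorphism of $\Lambda$ fixing the identity cell, so by Proposition~\ref{p:autlatin} it lies in $\Aut(T)\leqslant D(T,2)$, whence $g\in D(T,2)$.

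The one point that needs a little care is the matching in the first paragraph, especially checking that generator~(V) really interchanges $Q_0$ and $Q_1$ (the two ``non-obvious'' partitions), since this is what guarantees that $D(T,2)_\alpha$ surjects onto $S_3$ in the stabiliser argument. Once that correspondence is pinned down, the proof is essentially a routine two-step stabiliser match via Proposition~\ref{p:subgp}, and it works uniformly whether $T$ is finite or infinite.
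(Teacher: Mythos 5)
Your proof is correct and follows essentially the same route as the paper: establish $D(T,2)\leqslant\Aut(\Lambda)$ by checking the generators against the row/column/letter partitions, then match point stabilisers at the identity cell via Proposition~\ref{p:subgp} together with Proposition~\ref{p:autlatin}. One slip worth fixing: for $m=2$ the type~(IV) generator $(x,y)\mapsto(y,x)$ does \emph{not} fix each $Q_i$ individually --- it interchanges $Q_1$ and $Q_2$ --- and this is in fact exactly what you need (and implicitly use) so that the two coordinate transpositions in the stabiliser of $[1,1]$ generate the full $S_3$ on $\{Q_0,Q_1,Q_2\}$ rather than just $\langle(Q_0\,Q_1)\rangle$.
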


\begin{proof}
First, we show that $D(T,2)$ is a subgroup of $\Aut(\Lambda)$. 
 We take $\Omega=T\times T$ and
 represent $\Lambda=\{R,C,L\}$ as follows, using notation introduced in
 Section~\ref{sec:part}:
\begin{itemize}
    \renewcommand{\itemsep}{0pt}
\item $(x,y)\equiv_R(u,v)$ if and only if $x=u$;
\item $(x,y)\equiv_C(u,v)$ if and only if $y=v$;
\item $(x,y)\equiv_L(u,v)$ if and only if $x^{-1}y=u^{-1}v$.
\end{itemize}
(As an array, we take the $(x,y)$ entry to be $x^{-1}y$. As noted at the end
of Section~\ref{sesc:quasi}, this
is isotopic to the usual representation of the Cayley table.)

Routine verification shows that the generators of $D(T,2)$ given in
Section~\ref{sect:diaggroups} of types (I)--(III) preserve these
relations, while the map $(x,y)\mapsto(y,x)$ interchanges $R$ and $C$
while fixing $L$, and the map $(x,y)\mapsto(x^{-1},x^{-1}y)$ interchanges $C$
and $L$ while fixing $R$. (Here is one case: the element $(a,b,c)$ in $T^3$ maps
$(x,y)$ to $(a^{-1}xb,a^{-1}yc)$. If $x=u$ then $a^{-1}xb=a^{-1}ub$, and
if $x^{-1}y=u^{-1}v$ then $(a^{-1}xb)^{-1}a^{-1}yc=(a^{-1}ub)^{-1}a^{-1}vc$.)
Thus $D(T,2)\leqslant\Aut(\Lambda)$.

Now we apply Proposition~\ref{p:subgp} in two stages.
\begin{itemize}
\item First, take $G=\Aut(\Lambda)$ and $H=D(T,2)$. Then $G$ and $H$ both induce
$S_3$ on the set of three partitions; so it suffices to prove that the
group of strong automorphisms of $\Lambda$ is generated by elements of
types (I)--(III) in $D(T,2)$.
\item Second, take $G$ to be $\SAut(\Lambda)$,
and $H$ the group generated by translations and automorphisms of $T$
(the elements of type (I)--(III) in Remark~\ref{rem:diaggens}). Both $G$
and $H$ act transitively on $\Omega$, so it is enough to show that the
stabilisers of a cell (which we can take to be $(1,1)$) in $G$ and $H$ are
equal. Consideration of elements of types (I)--(III)
shows that $H_{(1,1)}=\Aut(T)$,
while Proposition~\ref{p:autlatin} shows that $G_{(1,1)}=\Aut(T)$.
\end{itemize}
The statement at the end of the second stage completes the proof. 
\end{proof}

It follows from Proposition~\ref{p:lsgraphaut}
that, if $n>4$, the automorphism group of
the Latin-square graph derived from the Cayley table of a group $T$ of order~$n$
is also the diagonal group $D(T,2)$. For $n\leqslant4$, we described the
Latin-square graphs at the end of Section~\ref{sec:LS}. For the groups $C_2$,
$C_3$, and $C_2\times C_2$, the graphs are $K_4$, $K_{3,3,3}$, and 
$L(K_{4,4})$ respectively, with automorphism groups $S_4$,
$S_3\wr S_3$, and $S_4\wr S_2$ respectively. However, the automorphism group
of the Shrikhande graph is the group $D(C_4,2)$, with order $192$.
(The order of the automorphism group is $192$, see Brouwer~\cite{Brouwer},
and it contains $D(C_4,2)$, also with order $192$, as a subgroup.)

It also follows from Proposition~\ref{p:lsgraphaut} that,
if $T$ is a group, then the automorphism group of 
the Latin-square graph is transitive on the vertex set. Vertex-transitivity 
does not, however, 
characterise Latin-square graphs that correspond to groups, as can be 
seen by considering the examples in~\cite{wanlesspage}; the smallest example
which is not a group has order~$6$.

Finally, we justify the assertion made earlier, that the Cayley table of a
group of order $n$, as a Latin square, has at least $6n^2$ automorphisms. By 
Theorem~\ref{t:autDT2}, this automorphism group is the diagonal group
$D(T,2)$; this group has a quotient $S_3$ acting on the three partitions, and
the group of strong automorphisms contains the right multiplications by 
elements of $T^2$.

\subsection{More on partitions}\label{sect:moreparts}

Most of the work that we cite in this subsection has been about partitions of
finite sets.
See \cite[Sections 2--4]{rab:BCC} for a recent summary of this material.

\begin{defn}
  \label{def:uniform}
  A partition~$P$ of a  set~$\Omega$ is \emph{uniform} if all its
  parts have the same size in the sense that, whenever $\Gamma_1$
  and $\Gamma_2$ are parts of $P$,  there is a bijection from $\Gamma_1$
  onto $\Gamma_2$.
  \end{defn}

Many other words are used for this property for finite sets $\Omega$. 
Tjur \cite{tjur84,tjur91} calls
such a partition \emph{balanced}.  Behrendt \cite{behr} calls them
\emph{homogeneous}, but this conflicts with the use of this word
in \cite{ps:cartesian}.  Duquenne \cite{duq} calls them \textit{regular}, 
as does Aschbacher~\cite{asch_over}, while Preece \cite{DAP:Oz} calls them
\emph{proper}.

Statistical work has made much use of the notion of orthogonality between
pairs of partitions.  Here we explain it in the finite case, before
attempting to find a generalisation that works for infinite sets.

When $\Omega$ is finite, let $V$ be the real vector space $\mathbb{R}^\Omega$
with the usual inner product.  Subspaces $V_1$ and $V_2$ of $V$ are defined
in \cite{tjur84} to be \textit{geometrically orthogonal} to each other if
$V_1 \cap(V_1 \cap V_2)^\perp \perp V_2 \cap(V_1\cap V_2)^\perp$.
This is equivalent to saying that the matrices $M_1$ and $M_2$ of orthogonal
projection onto $V_1$ and $V_2$ commute.
If $V_i$ is the set of vectors which are constant on each part of partition
$P_i$ then we say that partition $P_1$ is \textit{orthogonal} to partition $P_2$
if $V_1$ is geometrically orthogonal to $V_2$.

Here are two nice results in the finite case.  See, for example,
\cite[Chapter 6]{rab:as}, \cite[Chapter 10]{rab:design} and \cite{tjur84}.

\begin{theorem}
  For $i=1$, $2$, let $P_i$ be a partition of the finite set $\Omega$ with
  projection matrix $M_i$.  If $P_1$ is orthogonal to $P_2$ then the matrix
  of orthogonal projection onto the  subspace consisting of those
  vectors which are constant on each part of the partition $P_1 \vee P_2$ is
  $M_1M_2$. 
  \end{theorem}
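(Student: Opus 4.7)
The plan is to show that $M_1M_2$ is itself an orthogonal projection, then identify its range with the space $V_{1\vee 2}$ of vectors constant on each part of $P_1\vee P_2$.

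First I would observe that, by the hypothesis of orthogonality together with the characterisation given in the paragraph preceding the theorem, the projection matrices satisfy $M_1M_2 = M_2M_1$. Using this, I would verify that $M_1M_2$ is a symmetric idempotent: symmetry follows from $(M_1M_2)^\top = M_2^\top M_1^\top = M_2M_1 = M_1M_2$, and idempotency from $(M_1M_2)(M_1M_2) = M_1(M_2M_1)M_2 = M_1^2 M_2^2 = M_1M_2$. A symmetric idempotent is precisely an orthogonal projection, so $M_1M_2$ is the orthogonal projection onto its range.

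Next I would identify the range. If $v = M_1M_2 u$ for some $u$, then $v$ lies in $V_1$ (the image of $M_1$), and $v = M_2M_1 u$ lies in $V_2$ as well, so $\operatorname{range}(M_1M_2) \subseteq V_1\cap V_2$. Conversely, if $v\in V_1\cap V_2$, then $M_iv=v$ for $i=1,2$, hence $M_1M_2 v = v$, so $v\in\operatorname{range}(M_1M_2)$. Therefore $\operatorname{range}(M_1M_2) = V_1\cap V_2$.

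Finally I would identify $V_1\cap V_2$ with $V_{1\vee 2}$. A vector $v$ lies in $V_i$ iff it is constant on each part of $P_i$; so $v\in V_1\cap V_2$ iff $v$ is constant on every part of $P_1$ and on every part of $P_2$. Using the description of $P_1\vee P_2$ via alternating chains recalled in Section~\ref{sec:part} (two points $\alpha,\beta$ lie in the same part of $P_1\vee P_2$ iff there is a finite sequence $\omega_0=\alpha,\omega_1,\ldots,\omega_k=\beta$ with consecutive terms in a common part of $P_1$ or $P_2$), such a vector must take equal values at $\alpha$ and $\beta$. Thus $V_1\cap V_2 = V_{1\vee 2}$, completing the proof. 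The only mildly subtle step is the matrix manipulation in the first paragraph, where commutativity of $M_1$ and $M_2$ is used twice; everything else is a direct translation between partitions and the subspaces they define.
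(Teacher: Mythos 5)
Your proof is correct. The paper does not actually prove this statement itself — it cites it from the statistical literature (Tjur and Bailey's books) — so there is no in-paper argument to compare against; yours is the standard one: commutativity of the projections (which the paper's preceding paragraph takes as the working definition of geometric orthogonality) gives that $M_1M_2$ is a symmetric idempotent, its range is $V_1\cap V_2$, and $V_1\cap V_2=V_{1\vee 2}$ via the alternating-chain description of the join. The only step you leave implicit is the reverse inclusion $V_{1\vee 2}\subseteq V_1\cap V_2$, which is immediate because $P_i\preccurlyeq P_1\vee P_2$ means every part of $P_i$ lies inside a part of $P_1\vee P_2$, so a vector constant on parts of the join is constant on parts of each $P_i$; it would be worth one sentence to close that loop.
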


\begin{theorem}
  \label{thm:addon}
  If $P_1$, $P_2$ and $P_3$ are pairwise orthogonal partitions of a finite
  set $\Omega$ then $P_1\vee P_2$ is orthogonal to $P_3$. 
\end{theorem}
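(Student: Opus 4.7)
The plan is to reduce the statement to the matrix formulation of orthogonality and then apply the theorem stated immediately before. Let $M_i$ denote the matrix of orthogonal projection onto $V_i=\{v\in\mathbb{R}^\Omega : v\text{ is constant on each part of }P_i\}$, for $i=1,2,3$. By Tjur's characterisation quoted in the excerpt, $P_i$ is orthogonal to $P_j$ if and only if $M_iM_j=M_jM_i$. The hypothesis therefore becomes that $M_1$, $M_2$, $M_3$ pairwise commute.

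Next I would invoke the preceding theorem: since $P_1$ is orthogonal to $P_2$, the orthogonal projection onto the subspace $V_{12}$ of vectors that are constant on each part of $P_1\vee P_2$ is precisely $M_1M_2$. (If one wants to check this internally, note that $M_1M_2$ is symmetric, since $(M_1M_2)^\top=M_2^\top M_1^\top=M_2M_1=M_1M_2$, and idempotent, since $(M_1M_2)^2=M_1M_2M_1M_2=M_1^2M_2^2=M_1M_2$; moreover its image lies in $V_1\cap V_2$, and conversely every vector of $V_1\cap V_2$ is fixed by both $M_1$ and $M_2$, hence by $M_1M_2$.)

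To prove that $P_1\vee P_2$ is orthogonal to $P_3$, it now suffices to show that $M_1M_2$ commutes with $M_3$. Using the pairwise commutativities $M_2M_3=M_3M_2$ and $M_1M_3=M_3M_1$ in turn,
\[
(M_1M_2)M_3 = M_1(M_2M_3) = M_1(M_3M_2) = (M_1M_3)M_2 = (M_3M_1)M_2 = M_3(M_1M_2),
\]
as required.

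There is no real obstacle: the result is essentially a one-line consequence of the previous theorem together with the definition of orthogonality via commuting projections. The only points that require any care are verifying (or simply citing) that $M_1M_2$ is itself an orthogonal projection, so that the preceding theorem legitimately applies to identify it as the projection onto $V_{P_1\vee P_2}$, and chaining the two pairwise commutativities in the correct order. Note that this argument uses finiteness of $\Omega$ only insofar as the ambient inner product space $\mathbb{R}^\Omega$ and the notion of orthogonal projection are set up as in the statement preceding the theorem.
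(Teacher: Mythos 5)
Your proof is correct. The paper itself gives no proof of this theorem (it only cites \cite{rab:as}, \cite{rab:design} and \cite{tjur84}), and your argument --- identifying the projection onto the $P_1\vee P_2$ subspace as $M_1M_2$ via the preceding theorem and then chaining the pairwise commutativities --- is exactly the standard argument those sources use, so there is nothing to add.
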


Let $\mathcal{S}$ be a set of partitions of $\Omega$ which are pairwise
orthogonal. A consequence of Theorem~\ref{thm:addon} is that, if $P_1$ and
$P_2$ are in $\mathcal{S}$, then $P_1 \vee P_2$ can be added to $\mathcal{S}$
without destroying orthogonality.  This is one motivation for the
following definition.

\begin{defn}
  \label{def:tjur}
  A set of partitions of a finite set $\Omega$ is a \emph{Tjur block structure}
  if every pair of its elements is orthogonal, it is closed under taking
  suprema, and it contains $E$.
\end{defn}

Thus the set of partitions in a Tjur block structure forms a join-semi\-lattice.

The following definition is more restrictive, but is widely used by
statisticians, based on the work of many people, including
Nelder \cite{JAN:OBS},
Throckmorton \cite{Thr61} and Zyskind \cite{Zy62}.

\begin{defn}
  A set of partitions of a finite set $\Omega$ is an \emph{orthogonal
    block structure} if it is a Tjur block structure, all of its partitions
  are uniform, it is closed under taking infima, and it contains $U$.
\end{defn}

The set of partitions in an orthogonal block structure forms a lattice.

These notions have been used by combinatorialists and group theorists as
well as statisticians.  For example, as explained in Section~\ref{sec:LS},
a Latin square can be regarded as an orthogonal block structure with the
partition lattice shown in Figure~\ref{f:ls}.

The following theorem shows how subgroups of a group can give rise to a Tjur
block structure: see \cite[Section 8.6]{rab:as} and
Proposition~\ref{prop:coset}(c).

\begin{theorem}
  Given two subgroups $H$, $K$ of a finite group $G$, the partitions
 $P_H$ and $P_K$ into right
cosets of $H$ and $K$ are orthogonal if and only if $HK=KH$ (that is, if and
only if $HK$ is a subgroup of $G$). If this happens, then the join of these
two partitions is the partition $P_{HK}$ into right cosets of $HK$. 
\end{theorem}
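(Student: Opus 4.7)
The plan is to work directly with projection matrices, since the theorem is stated for finite $G$ and orthogonality of $P_H$ and $P_K$ is, by the definition given in this section, exactly commutation of the matrices $M_H, M_K$ of orthogonal projection onto the spaces of functions constant on the parts. Indexing rows and columns of these matrices by elements of $G$, one has $(M_H)_{x,y}=1/|H|$ if $Hx=Hy$ (equivalently $xy^{-1}\in H$) and $0$ otherwise, and likewise for $K$. So the whole question reduces to computing the product matrices entrywise.

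The key computation is
\[
(M_HM_K)_{x,y}
\;=\;\frac{1}{|H|\,|K|}\,\bigl|\{z\in G:xz^{-1}\in H,\ zy^{-1}\in K\}\bigr|
\;=\;\frac{|Hx\cap Ky|}{|H|\,|K|},
\]
and symmetrically $(M_KM_H)_{x,y}=|Kx\cap Hy|/(|H||K|)$. Now I use two easy facts. First, a non-empty intersection of a right coset of $H$ and a right coset of $K$ is a right coset of $H\cap K$, so $|Hx\cap Ky|$ is either $0$ or $|H\cap K|$ (and similarly for $|Kx\cap Hy|$); this means that $M_HM_K=M_KM_H$ if and only if the two intersections are non-empty on exactly the same set of pairs $(x,y)$. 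Second, $Hx\cap Ky\neq\emptyset$ iff there exist $h\in H$, $k\in K$ with $hx=ky$, i.e.\ iff $xy^{-1}\in H^{-1}K=HK$; and dually $Kx\cap Hy\neq\emptyset$ iff $xy^{-1}\in KH$. Therefore commutation of $M_H$ and $M_K$ is equivalent to the set equality $HK=KH$, which gives the first statement and (by a standard argument, already assumed in the text) is also equivalent to $HK$ being a subgroup.

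For the identification of the join when $HK=KH$, I would appeal to Proposition~\ref{prop:coset}(c), which gives $P_H\vee P_K=P_{\langle H,K\rangle}$, together with the elementary fact that if $HK=KH$ then $\langle H,K\rangle=HK$ (any word $h_1k_1h_2k_2\cdots$ in $H\cup K$ can be collapsed to a single product in $HK$ by repeatedly applying $kh=h'k'$). Hence $P_H\vee P_K=P_{HK}$, as claimed.

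There is no real obstacle; the only place to be careful is the sidedness convention. Once one fixes that parts of $P_H$ are the right cosets $Hg$ (so $x\equiv_{P_H}y\iff xy^{-1}\in H$), the identifications $Hx\cap Ky\neq\emptyset\iff xy^{-1}\in HK$ and $Kx\cap Hy\neq\emptyset\iff xy^{-1}\in KH$ fall out cleanly, and everything else is immediate.
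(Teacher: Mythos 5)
Your proof is correct. Note that the paper itself offers no proof of this theorem: it simply points the reader to \cite[Section 8.6]{rab:as} and to Proposition~\ref{prop:coset}(c), so there is no internal argument to compare yours against. What you have written is the standard (and essentially only) argument: since the paper defines orthogonality of partitions via commutation of the projection matrices, writing $(M_H)_{x,y}=1/|H|$ when $xy^{-1}\in H$ and computing $(M_HM_K)_{x,y}=|Hx\cap Ky|/(|H||K|)$ reduces everything to the observation that a non-empty intersection of right cosets of $H$ and $K$ is a right coset of $H\cap K$ (so both products take only the values $0$ and $|H\cap K|/(|H||K|)$), together with the equivalences $Hx\cap Ky\ne\emptyset\iff xy^{-1}\in HK$ and $Kx\cap Hy\ne\emptyset\iff xy^{-1}\in KH$. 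Your handling of the join via Proposition~\ref{prop:coset}(c) and the identity $\langle H,K\rangle=HK$ when $HK=KH$ is exactly the route the paper's citation intends. The one convention to watch, which you flagged correctly, is the sidedness of the cosets; with right cosets and the relation $xy^{-1}\in H$ everything is consistent. The argument is complete and fills the gap the paper leaves to its references.
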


An orthogonal block structure is called a \textit{distributive block structure}
or a \textit{poset block structure} if each of $\wedge$ and $\vee$ is
distributive over the other.

The following definition is taken from \cite{rab:as}.

\begin{defn}
  \label{def:weak}
  An \textit{automorphism} of a set of
partitions is a permutation of the underlying set that preserves the set of
partitions.  Such an automorphism is a \textit{strong automorphism} if it
preserves each of the partitions.
  \end{defn}

The group of strong automorphisms of a poset block structure
is a \textit{generalised wreath product} of symmetric groups: see
\cite{GWP,tulliobook}.  One of the aims of the present paper is to
describe the automorphism group of the set of partitions defined by a
diagonal semilattice.

In \cite{CSCPWT}, Cheng and Tsai  state that the desirable properties of
a collection
of partitions of a finite set are that it is a Tjur block structure,
all the partitions are uniform, and it contains $U$.  This sits between Tjur
block structures and orthogonal block structures but does not seem to have been
named.

Of course, this theory needs a notion of inner product.  If the set is
infinite we
would have to consider the vector space whose vectors have all but finitely
many entries zero.  But if $V_i$ is the set of vectors which are constant on
each part of partition $P_i$ and if each part of $P_i$ is infinite then $V_i$
is the zero subspace.  So we need to find a different definition that will
cover the infinite case.

We noted in Section~\ref{sec:part} that each partition is defined by its
underlying equivalence relation.  If $R_1$ and $R_2$ are two equivalence
relations on $\Omega$ then their composition $R_1 \circ R_2$ is the relation
defined by 
\[
  \omega _1 (R_1 \circ R_2) \omega_2\mbox{ if and only if }
  \exists \omega_3\in\Omega\mbox{ such that } \omega_1 R_1 \omega_3\mbox{ and }\omega_3 R_2 \omega_2.
\]

\begin{prop}
\label{prop:commeq}
  Let $P_1$ and $P_2$ be partitions of $\Omega$ with underlying equivalence
  relations $R_1$ and $R_2$ respectively. For each part $\Gamma$ of $P_1$,
  denote by $\mathcal{B}_\Gamma$ the set of parts of $P_2$ whose intersection
  with $\Gamma$ is not empty.
  The following are equivalent.
(Recall that $P[\omega]$ is the part of $P$ containing $\omega$.)
  \begin{enumerate}
  \item
    The equivalence relations $R_1$ and $R_2$ commute with each other
    in the sense that
    $R_1 \circ R_2 = R_2 \circ R_1$.
  \item The relation $R_1 \circ R_2$ is an equivalence relation.
  \item For all $\omega_1$ and $\omega_2$ in $\Omega$, the set
$P_1[\omega_1] \cap P_2[\omega_2]$ is non-empty if and only if the set
      $P_2[\omega_1]\cap P_1[\omega_2]$ is non-empty.
  \item
      Modulo the parts of $P_1 \wedge P_2$, the restrictions of $P_1$
      and $P_2$ to any part of $P_1 \vee P_2$ form a grid.
      In other words, if $\Gamma$ and $\Xi$ are parts of $P_1$ and $P_2$
      respectively, both contained in the same part of $P_1\vee P_2$, then
      $\Gamma \cap \Xi \ne \emptyset$.
  \item For all parts $\Gamma$ and $\Delta$ of $P_1$, the sets
     $\mathcal{B}_\Gamma$ and $\mathcal{B}_\Delta$ are either equal or disjoint.
  \item If $\Gamma$ is a part of $P_1$ contained in a part $\Theta$
      of $P_1\vee P_2$ then $\Theta$ is the union of the parts of $P_2$
      in $\mathcal{B}_\Gamma$. 
    \end{enumerate}
  \end{prop}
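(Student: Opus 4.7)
The plan is to establish the chain of equivalences (a) $\Leftrightarrow$ (b) $\Leftrightarrow$ (c) $\Leftrightarrow$ (d) $\Leftrightarrow$ (f) $\Leftrightarrow$ (e), handling the first three translation-style equivalences quickly and then working through the combinatorial statements about parts and the join.

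First, since $R_1$ and $R_2$ are symmetric we have $(R_1\circ R_2)^{-1}=R_2\circ R_1$, so (a) is the statement that $R_1\circ R_2$ is symmetric. Reflexivity of $R_1\circ R_2$ is automatic and, given (a), transitivity is a one-line calculation using $R_i\circ R_i=R_i$; conversely every equivalence relation is symmetric, forcing (a). This yields (a) $\Leftrightarrow$ (b). For (a) $\Leftrightarrow$ (c), unpack $\omega_1\,(R_1\circ R_2)\,\omega_2$ as the existence of $\omega_3\in P_1[\omega_1]\cap P_2[\omega_2]$, and similarly for $R_2\circ R_1$; (a) then becomes (c) verbatim.

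The hinge is (c) $\Leftrightarrow$ (d), which I see as the main obstacle. The implication (d) $\Rightarrow$ (c) follows by applying (d) to $\Gamma=P_1[\omega_2]$ and $\Xi=P_2[\omega_1]$, both contained in the common join block of $\omega_1$ and $\omega_2$. For (c) $\Rightarrow$ (d), given $\Gamma$ and $\Xi$ lying in a common part of $P_1\vee P_2$ with $\gamma\in\Gamma$ and $\xi\in\Xi$, I will exhibit a zig-zag walk $\gamma=\omega_0,\omega_1,\ldots,\omega_k=\xi$ in which each consecutive pair lies in a common part of $P_1$ or of $P_2$, and induct on the length $k$. At the inductive step, the walk of length $k-1$ from $\gamma$ to $\omega_{k-1}$ yields some $\omega^{**}$ with $\gamma\equiv_{P_1}\omega^{**}\equiv_{P_2}\omega_{k-1}$; a case split on whether the final step $\omega_{k-1}\omega_k$ is a $P_1$-step or a $P_2$-step then reduces either to applying (c) (precisely, to interchange an $R_2$-step with a following $R_1$-step) or to concatenating two $R_2$-equivalences. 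In either case the walk shortens, and one eventually produces a point $\omega^{***}\in P_1[\gamma]\cap P_2[\xi]=\Gamma\cap\Xi$.

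Once (c) $\Leftrightarrow$ (d) is in hand, the remaining equivalences are book-keeping. For (d) $\Leftrightarrow$ (f): the inclusion $\bigcup\mathcal{B}_\Gamma\subseteq\Theta$ is automatic, while (d) applied to $\Gamma$ and $P_2[\omega]$ for every $\omega\in\Theta$ gives the reverse; conversely, given $\Gamma,\Xi\subseteq\Theta$ and any $\omega\in\Xi$, (f) forces $P_2[\omega]=\Xi$ to meet $\Gamma$. For (f) $\Rightarrow$ (e): under (f), $\mathcal{B}_\Gamma$ equals the set of all $P_2$-parts contained in the join block of $\Gamma$, so $\mathcal{B}_\Gamma$ and $\mathcal{B}_\Delta$ coincide when $\Gamma$ and $\Delta$ lie in a common join block and are disjoint otherwise. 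For (e) $\Rightarrow$ (f), take a zig-zag walk from $\gamma\in\Gamma$ to an arbitrary $\omega\in\Theta$; successive $P_1$-parts $\Gamma_i,\Gamma_{i+1}$ on the walk share the $P_2$-part $P_2[\omega_i]=P_2[\omega_{i+1}]$ at a $P_2$-step (and are equal at a $P_1$-step), so by (e) all the $\mathcal{B}_{\Gamma_i}$ coincide with $\mathcal{B}_\Gamma$; then $P_2[\omega]\in\mathcal{B}_{\Gamma_k}=\mathcal{B}_\Gamma$, so $\omega\in\bigcup\mathcal{B}_\Gamma$, closing the chain.
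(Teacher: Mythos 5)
Your proof is correct. Note, though, that the paper states Proposition~\ref{prop:commeq} without any proof at all (it is treated as routine, with only the explanatory remark about ``modulo the parts of $P_1\wedge P_2$'' following the statement), so there is no in-paper argument to compare yours against; what you have supplied is a valid self-contained verification. Each link in your chain (a)$\Leftrightarrow$(b)$\Leftrightarrow$(c)$\Leftrightarrow$(d)$\Leftrightarrow$(f)$\Leftrightarrow$(e) checks out: the identity $(R_1\circ R_2)^{-1}=R_2\circ R_1$ together with $R_i\circ R_i=R_i$ handles (a)$\Leftrightarrow$(b); the translation to non-emptiness of $P_1[\omega_1]\cap P_2[\omega_2]$ gives (c); and the inductive zig-zag argument for (c)$\Rightarrow$(d) works as described, since each application of (c) converts a pattern $\equiv_{P_1}\!\cdot\equiv_{P_2}\!\cdot\equiv_{P_1}$ into $\equiv_{P_1}\!\cdot\equiv_{P_2}$. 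One stylistic shortcut you might note: given (a), the relation $R_1\circ R_2$ is an equivalence relation containing both $R_1$ and $R_2$, hence contains $R_{P_1\vee P_2}$, and the reverse containment is automatic; so $R_1\circ R_2=R_{P_1\vee P_2}$, which yields (d) immediately without the explicit induction on walk length. The bookkeeping for (d)$\Leftrightarrow$(f) and (f)$\Leftrightarrow$(e) is also sound, including the observation that under (f) the set $\mathcal{B}_\Gamma$ is exactly the set of $P_2$-parts contained in the join-part of $\Gamma$.
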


In part (d), ``modulo the parts of $P_1\wedge P_2$'' means that, if each of
these parts is contracted to a point, the result is a grid as defined earlier.
In the finite case, if $P_1$ is orthogonal to $P_2$ then their underlying
equivalence relations $R_1$ and $R_2$ commute.

We need a concept that is the same as orthogonality in the
finite case (at least, in the Cheng--Tsai case).

\begin{defn}
  \label{def:compatible}
  Two uniform partitions $P$ and $Q$ of a set $\Omega$ (which may be finite or
  infinite) are \emph{compatible} if
  \begin{enumerate}
  \item their underlying equivalence relations commute, and
    \item their infimum $P\wedge Q$  is uniform.
    \end{enumerate}
\end{defn}

  If the partitions $P$, $Q$ and $R$ of a set $\Omega$ are pairwise
  compatible then the equivalence of statements (a) and (f) of 
  Proposition~\ref{prop:commeq} 
  shows that
  $P\vee Q$ and $R$ satisfy condition~(a) in
  the definition of compatibility.  Unfortunately,  they may not satisfy
  condition~(b), as the following example shows,
  so the analogue of Theorem~\ref{thm:addon} for compatibility is not true in
  general. However, it is true if we restrict attention to join-semilattices
  of partitions where all infima are uniform.  This is the case for
  Cartesian lattices and for semilattices defined
  by diagonal structures (whose definitions follow in
  Sections~\ref{sec:firstcd}  and \ref{sec:diag1} respectively).
  It is also true for group semilattices: if $P_H$ and $P_K$ are the 
  partitions of a group $G$ into right cosets of subgroups $H$ and $K$
  respectively, then $P_H\wedge P_K = P_{H \cap K}$,
  as remarked in Proposition~\ref{prop:coset}.

  \begin{eg}
  \label{eg:badeg}
  Let $\Omega$ consist of the $12$ cells in the three $2 \times 2$ squares
  shown in Figure~\ref{fig:badeg}.  Let $P$ be the partition of $\Omega$
  into six rows, $Q$ the partition into six columns, and $R$ the partition
  into  six letters.

  \begin{figure}
    \[
    \begin{array}{c@{\qquad}c@{\qquad}c}
      \begin{array}{|c|c|}
        \hline
        A & B\\
        \hline
        B & A\\
        \hline
      \end{array}
      &
            \begin{array}{|c|c|}
        \hline
        C & D\\
        \hline
        E & F\\
        \hline
            \end{array}
                  &
            \begin{array}{|c|c|}
        \hline
        C & D\\
        \hline
        E & F\\
        \hline
        \end{array}
      \end{array}
    \]
    \caption{Partitions in Example~\ref{eg:badeg}}
    \label{fig:badeg}
    \end{figure}
  Then $P\wedge Q = P\wedge R = Q \wedge R=E$, so each infimum is uniform.
  The squares are the parts of the supremum $P\vee Q$.
  For each pair of $P$, $Q$ and~$R$, their
  underlying equivalence relations commute.  However, the parts
  of $(P\vee Q)\wedge R$ in the first square have size two, while all of the
  others have size one.
\end{eg}

\section{Cartesian structures}
\label{sec:Cart}

We remarked just before Proposition~\ref{p:order} that three partitions of
$\Omega$ form a Latin square if and only if any two form a grid. The main
theorem of this paper is a generalisation of this fact to higher-dimensional
objects, which can be regarded as Latin hypercubes. Before
we get there, we need to consider the higher-dimensional analogue of grids.

\subsection{Cartesian decompositions and Cartesian lattices}
\label{sec:firstcd}

Cartesian decompositions are defined on \cite[p.~4]{ps:cartesian}. Since we
shall be taking a slightly different approach, we introduce these objects
rather briefly; we show that they are equivalent to those in our approach,
in the sense that each can be constructed from the other in a standard way,
and the automorphism groups of corresponding objects are the same.

\begin{defn}
\label{def:cart}
  A \emph{Cartesian decomposition} of a set~$\Omega$, of dimension~$n$, is a
set~$\mathcal{E}$ of $n$ partitions $P_1,\ldots,P_n$ of $\Omega$ such that
$|P_i|\geqslant2$ for all $i$, and for all $p_i\in P_i$ for $i=1,\ldots,n$,
\[|p_1\cap\cdots\cap p_n|=1.\]
A Cartesian decomposition is \emph{trivial} if $n=1$; in this case $P_1$ is
the partition of $\Omega$ into singletons.
\end{defn}

For the rest of this subsection, $P_1,\ldots,P_n$ form a Cartesian decomposition
of $\Omega$.

\begin{prop}\label{prop:CDbij}
There is a well-defined bijection between $\Omega$ and
$P_1\times\cdots\times P_n$, given by
\[\omega\mapsto(p_1,\ldots,p_n)\]
if and only if $\omega\in p_i$ for $i=1,\ldots,n$.
\end{prop}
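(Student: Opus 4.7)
The plan is to verify in turn that the assignment $\omega\mapsto(p_1,\ldots,p_n)$ with $\omega\in p_i$ for all $i$ is (i) well-defined as a function $\Omega\to P_1\times\cdots\times P_n$, (ii) surjective, and (iii) injective. Each of these follows directly from the definition of a Cartesian decomposition in Definition~\ref{def:cart}, so the proof will be short; the only content is locating which clause of the definition does each piece of work.

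For well-definedness, I will use the defining property of a partition: for every $\omega\in\Omega$ and each $i\in\{1,\ldots,n\}$, there is a unique part of $P_i$ containing $\omega$, namely $P_i[\omega]$ in the notation introduced in Section~\ref{sec:part}. Hence the rule $\omega\mapsto(P_1[\omega],\ldots,P_n[\omega])$ gives a genuine function from $\Omega$ to $P_1\times\cdots\times P_n$, and by construction this is precisely the assignment described in the statement.

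For surjectivity and injectivity, I will invoke the condition $|p_1\cap\cdots\cap p_n|=1$ from Definition~\ref{def:cart}. Given any tuple $(p_1,\ldots,p_n)\in P_1\times\cdots\times P_n$, this condition guarantees that the intersection is non-empty, so there is some $\omega$ lying in every $p_i$; that establishes surjectivity. Conversely, if $\omega_1$ and $\omega_2$ both map to the same tuple $(p_1,\ldots,p_n)$, then $\{\omega_1,\omega_2\}\subseteq p_1\cap\cdots\cap p_n$, and since this set has cardinality exactly one, $\omega_1=\omega_2$; that gives injectivity.

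There is no real obstacle here: the proposition is essentially a restatement of Definition~\ref{def:cart} in the language of bijections, and the only subtlety is being careful that ``belongs to a unique part'' is being used for well-definedness while ``$|p_1\cap\cdots\cap p_n|=1$'' is being used for the bijection property. I would present the three verifications in a single short paragraph.
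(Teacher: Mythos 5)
Your proof is correct and is exactly the argument the paper has in mind: the paper states Proposition~\ref{prop:CDbij} without proof, treating it as an immediate consequence of Definition~\ref{def:cart}, and your three verifications (well-definedness from the uniqueness of the part $P_i[\omega]$, surjectivity from the non-emptiness and injectivity from the uniqueness in $|p_1\cap\cdots\cap p_n|=1$) supply precisely the omitted details.
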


For simplicity, we adapt the notation in Section~\ref{sec:part} by
writing $\equiv_i$ for the equivalence relation $\equiv_{P_i}$ underlying
the partition~$P_i$.
For any subset $J$ of the index set $\{1,\ldots,n\}$, define a partition
$P_J$ of $\Omega$ corresponding to the following equivalence relation
$\equiv_{P_J}$ written as $\equiv_J$:
\[\omega_1\equiv_J\omega_2 \Leftrightarrow (\forall i\in J)\ 
\omega_1\equiv_i\omega_2.\]
In other words, $P_J=\bigwedge_{i\in J}P_i$. 

\begin{prop}
\label{p:antiiso}
  For all $J,K\subseteq \{1,\ldots,n\}$, we have
\[P_{J\cup K}=P_J\wedge P_K,\quad\hbox{and}\quad P_{J\cap K}=P_J\vee P_K.\]
Moreover, the equivalence relations $\equiv_J$ and $\equiv_K$ commute with
each other.

\end{prop}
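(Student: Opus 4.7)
The plan is to split the proposition into three separate assertions — the meet identity, the join identity, and the commutativity — and to prove them using the coordinate bijection of Proposition~\ref{prop:CDbij}. The meet identity $P_{J\cup K}=P_J\wedge P_K$ is immediate from the definitions: $\omega_1\equiv_{J\cup K}\omega_2$ unfolds as the conjunction of $\omega_1\equiv_i\omega_2$ over $i\in J\cup K$, and this conjunction splits as the conjunction over $J$ together with the conjunction over $K$.

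The join identity and the commutativity both rest on the bijection $\omega\mapsto(\omega_{(1)},\ldots,\omega_{(n)})$ from Proposition~\ref{prop:CDbij}, where I write $\omega_{(i)}$ for the $i$th coordinate. Under this bijection, $\omega\equiv_J\omega'$ holds precisely when the two tuples agree on $J$. The easy inclusion $P_J\vee P_K\preccurlyeq P_{J\cap K}$ follows because agreement on $J$ (or on $K$) forces agreement on $J\cap K$, so both $P_J$ and $P_K$ refine $P_{J\cap K}$, hence so does their join. For the reverse inclusion, I would suppose $\omega_1\equiv_{J\cap K}\omega_2$ and construct the unique $\omega_3\in\Omega$ whose coordinates satisfy $(\omega_3)_{(i)}=(\omega_1)_{(i)}$ for $i\in K$ and $(\omega_3)_{(i)}=(\omega_2)_{(i)}$ for $i\notin K$. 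By construction $\omega_1\equiv_K\omega_3$; and $\omega_3\equiv_J\omega_2$ because on $J\setminus K$ the definition gives $(\omega_3)_{(i)}=(\omega_2)_{(i)}$ directly, while on $J\cap K$ the hypothesis $(\omega_1)_{(i)}=(\omega_2)_{(i)}$ combined with the definition yields $(\omega_3)_{(i)}=(\omega_1)_{(i)}=(\omega_2)_{(i)}$. Hence a single $K$-step followed by a single $J$-step links $\omega_1$ to $\omega_2$, giving $P_{J\cap K}\preccurlyeq P_J\vee P_K$.

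The same construction yields commutativity: if $\omega_1\,(\equiv_J\circ\equiv_K)\,\omega_2$ via some intermediate $\omega_4$, then for each $i\in J\cap K$ the chain $(\omega_1)_{(i)}=(\omega_4)_{(i)}=(\omega_2)_{(i)}$ shows $\omega_1\equiv_{J\cap K}\omega_2$, and the $\omega_3$ produced above witnesses $\omega_1\,(\equiv_K\circ\equiv_J)\,\omega_2$; the reverse inclusion is symmetric. There is no serious obstacle here — the proof is pure bookkeeping with coordinates — but the one pitfall to avoid is attempting a purely lattice-theoretic argument for the join identity, which fails in a general partition lattice, where joins are defined by possibly long zig-zag walks. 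The Cartesian structure is exactly what collapses every such walk to two steps.
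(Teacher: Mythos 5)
Your proof is correct. The paper itself gives no proof of this proposition --- it is stated as routine, with details deferred to the cited book on Cartesian decompositions --- so there is nothing to compare against line by line; your coordinatewise argument via the bijection of Proposition~\ref{prop:CDbij} is exactly the standard one, and the key step (constructing the point $\omega_3$ whose coordinates are spliced from $\omega_1$ on $K$ and $\omega_2$ off $K$, which exists and is unique precisely because of the Cartesian decomposition property) correctly handles both the nontrivial inclusion $P_{J\cap K}\preccurlyeq P_J\vee P_K$ and the commutativity of $\equiv_J$ and $\equiv_K$ in one stroke. Your closing remark is also apt: it is the Cartesian structure that collapses the general zig-zag definition of the join to a two-step walk, and compare condition~(d) of Proposition~\ref{prop:commeq}, which your argument in effect verifies.
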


It follows from this proposition that the partitions $P_J$, for
$J\subseteq\{1,\ldots,n\}$, form a lattice (a sublattice of the partition
lattice on $\Omega$), which is anti-isomorphic to the Boolean lattice of
subsets of $\{1,\ldots,n\}$ by the map $J\mapsto P_J$. We call this lattice
the \emph{Cartesian lattice} defined by the Cartesian decomposition.

For more details we refer to the book~\cite{ps:cartesian}.

Following \cite{JAN:OBS},
most statisticians would call such a  lattice a \textit{completely crossed
  orthogonal block structure}:  see \cite{rab:DCC}.
It is called a \textit{complete factorial structure} in \cite{RAB:LAA}.

(Warning: a different common meaning of \textit{Cartesian lattice} is
$\mathbb{Z}^n$: for example, see \cite{Rand:CL}.)

The $P_i$ are the maximal non-trivial elements of this lattice. Our approach is
based on considering the dual description, the minimal non-trivial elements of
the lattice; these are the partitions $Q_1,\ldots,Q_n$, where
\[Q_i=P_{\{1,\ldots,n\}\setminus\{i\}}=\bigwedge_{j\ne i}P_j\]
and $Q_1,\ldots,Q_n$ generate the Cartesian lattice by repeatedly forming 
joins (see Proposition~\ref{p:antiiso}).

\subsection{Hamming graphs and Cartesian decompositions}
\label{sec:HGCD}

The Hamming graph is so-called because of its use in coding theory. The
vertex set is the set of all $n$-tuples over an alphabet $A$; more briefly,
the vertex set is $A^n$. Elements of $A^n$ will be written as
${a}=(a_1,\ldots,a_n)$. Two vertices $a$ and $b$ are joined if
they agree in all but one coordinate, that is, if
there exists~$i$ such that $a_i\ne b_i$ but $a_j=b_j$ for $j\ne i$.
We denote this graph by $\Ham(n,A)$.

The alphabet $A$ may be finite or infinite, but we restrict the number~$n$
to be finite. There is a more general form, involving alphabets
$A_1,\ldots,A_n$; here the $n$-tuples $a$ are required to satisfy $a_i\in A_i$
for $i=1,\ldots,n$ (that is, the vertex set is $A_1\times\cdots\times A_n$);
the adjacency rule is the same. We will call this a \emph{mixed-alphabet
Hamming graph}, denoted $\Ham(A_1,\ldots,A_n)$.

A Hamming graph is connected, and the graph distance between two vertices
$a$ and $b$ is the number of coordinates where they differ:
\[d({a},{b})=|\{i\mid a_i\ne b_i\}|.\]

\begin{theorem}\label{th:cdham}
\begin{enumerate}
\item Given a Cartesian decomposition of~$\Omega$, a unique mixed-alpha\-bet
Hamming graph can be constructed from it.
\item Given a mixed-alphabet Hamming graph on $\Omega$, a unique Cartesian
decomposition of~$\Omega$ can be constructed from it.
\item The Cartesian decomposition and the Hamming graph referred to above
  have the same automorphism group.
\end{enumerate}
\end{theorem}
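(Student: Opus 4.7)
The overall plan is to show that the two structures are interconvertible by canonical constructions, and then that every automorphism of one side corresponds to an automorphism of the other. Part (a) is essentially a renaming: the bijection $\Omega \to P_1 \times \cdots \times P_n$ provided by Proposition~\ref{prop:CDbij} lets me set $A_i := P_i$, and the Hamming graph $\Ham(A_1,\ldots,A_n)$ is then a well-defined graph with vertex set $\Omega$ whose construction is forced at every step, which delivers uniqueness.

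For part (b), the obvious candidate partitions are $P_1,\ldots,P_n$, where $\omega \equiv_{P_i} \omega'$ if and only if $\omega$ and $\omega'$ agree in the $i$-th coordinate of the product $A_1\times\cdots\times A_n$. That $\{P_1,\ldots,P_n\}$ is a Cartesian decomposition is immediate, since $|P_i|=|A_i|\geqslant 2$ and any intersection $p_1\cap\cdots\cap p_n$ picks out a unique tuple. The substantive point is to show these partitions are determined by the abstract graph, so that the recovered decomposition really is unique as a family of partitions of~$\Omega$. I plan to do this by extracting the parallel classes of edges in a purely graph-theoretic way: declare edges $e, e'$ equivalent when they are opposite sides of a common 4-cycle, and take the transitive closure. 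Two verifications are then required: first, opposite edges of a 4-cycle in a Hamming graph always change the same coordinate, so that distinct directions are never merged; second, when $n\geqslant 2$, any two edges changing the same coordinate $i$ can be linked by a chain of 4-cycles obtained by translating in another coordinate $j\neq i$. This yields exactly $n$ equivalence classes of edges, and each $P_i$ is then recovered as the partition into connected components of the subgraph of $\Gamma$ obtained by deleting the class-$i$ edges. The degenerate case $n=1$ is handled separately: $\Ham(1,A_1)=K_{|A_1|}$ and the unique (trivial) Cartesian decomposition is $\{E\}$.

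For part (c), write $\Gamma$ for the Hamming graph associated with $\mathcal{E}=\{P_1,\ldots,P_n\}$. An element $\sigma\in\Aut(\mathcal{E})$ permutes the $P_i$'s via some $\pi\in S_n$, and since adjacency in $\Gamma$ is precisely the property ``lies in distinct parts of exactly one $P_i$'', $\sigma$ preserves adjacency and lies in $\Aut(\Gamma)$. Conversely, any $\sigma\in\Aut(\Gamma)$ sends 4-cycles to 4-cycles, hence preserves both the ``opposite edges of a 4-cycle'' relation and its transitive closure, and so permutes the $n$ edge-direction classes. By the graph-theoretic recovery of the $P_i$ from part~(b), $\sigma$ must permute $\{P_1,\ldots,P_n\}$ and therefore belongs to $\Aut(\mathcal{E})$. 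The crux of the whole argument, and the only step that is not a direct unravelling of the definitions, is the combinatorial fact about 4-cycles used in~(b); once it is in place, the canonicity needed in~(b) and the automorphism correspondence in~(c) both follow cleanly.
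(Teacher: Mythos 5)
Your part (a) and the forward inclusion in part (c) are fine, and your first verification (that opposite edges of any $4$-cycle in a Hamming graph change the same coordinate) is correct. The gap is in your second verification, and it propagates into the converse direction of (c). In a Hamming graph there are only two kinds of $4$-cycles: those alternating between two directions $i\ne j$, and those contained in a single maximal clique $C(a,i)=\{b\mid b_j=a_j\hbox{ for }j\ne i\}$ (which exist only when $|A_i|\geqslant4$). In the alternating kind, the two opposite $i$-edges have not only the same direction but the same unordered pair of $i$th coordinates $\{u_i,v_i\}$ --- translating in a coordinate $j\ne i$ never changes this pair, which is exactly what your justification would need it to do. In the degenerate kind, opposite edges have disjoint pairs. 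Hence the transitive closure of your relation refines the partition of edges by direction into classes labelled by a direction together with a pair of symbols modulo the ``disjointness'' closure on pairs; this collapses to one class per direction only when $|A_i|=2$ or $|A_i|\geqslant5$. For $|A_i|\in\{3,4\}$ you get three classes per direction: concretely, in $\Ham(2,A)$ with $|A|=3$ (the $3\times3$ rook's graph) your relation has six classes rather than two, so deleting the class-$i$ edges does not return the coordinate partitions, and the argument in (c) that every graph automorphism permutes $\{P_1,\ldots,P_n\}$ breaks down.

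The theorem is still true; what is needed is a different graph-theoretic invariant. The paper works with maximal cliques rather than edges: every maximal clique is some $C(a,i)$, and two maximal cliques of the same type are distinguished from two of different types by the distance distribution between them (same type: a bijection realising the minimum cross-distance $\delta$, with all other cross-distances equal to $\delta+1$; different types: a unique closest pair, with the values $\delta$, $\delta+1$, $\delta+2$ all occurring). This makes the partition of maximal cliques into types invariant under graph automorphisms for every alphabet of size at least $2$, and parts (b) and (c) follow. If you prefer to stay with your edge relation, it can be repaired by enlarging the generating relation to ``opposite edges of a $4$-cycle, or two edges of a common triangle'': all three edges of a triangle necessarily change the same coordinate (two different directions among them would force two vertices to differ in two coordinates), and two $i$-edges over the same base with intersecting symbol pairs lie in a common triangle, which restores the connectivity you need for $|A_i|\geqslant3$.
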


The constructions from Cartesian decomposition to Hamming graph and back are
specified in the proof below.

\begin{proof}
  Note that the trivial Cartesian decomposition of $\Omega$ corresponds to the complete 
  graph and the automorphism group of both is the symmetric group $\Sym(\Omega)$. 
  Thus in the rest of the proof we assume that the Cartesian decomposition 
  in item~(a) is non-trivial and the Hamming graph in item~(b) is constructed with 
  $n\geqslant 2$. 
  \begin{enumerate}
    \item
  Let $\mathcal{E}=\{P_1,\ldots,P_n\}$ be a Cartesian decomposition
 of $\Omega$ of dimension~$n$:
each $P_i$ is a partition of $\Omega$.  By Proposition~\ref{prop:CDbij},
 there is a bijection $\phi$ from $\Omega$ to
$P_1\times\cdots\times P_n$: a point $a$ in $\Omega$ corresponds to
$(p_1,\ldots,p_n)$, where $p_i$ is the part of $P_i$ containing~$a$.
 Also, by Proposition~\ref{p:antiiso} and the subsequent discussion,
 the minimal partitions in the
Cartesian lattice generated by $P_1,\ldots,P_n$ have the form
\[Q_i=\bigwedge_{j\ne i}P_j\]
for $i=1,\ldots,n$; so $a$ and $b$ in $\Omega$ lie in the same part
of $Q_i$ if their
images under $\phi$ agree in all coordinates except the $i$th. So, if we define
$a$ and $b$ to be adjacent if they are in the same part of $Q_i$ for some
$i$, the resultant graph is isomorphic (by $\phi$) to the mixed-alphabet
Hamming graph on $P_1\times\cdots\times P_n$.

\item
Let $\Gamma$ be a mixed-alphabet Hamming graph on
$A_1\times\cdots\times A_n$. Without loss of generality, $|A_i|>1$ for all $i$
(we can discard any coordinate where this fails). We establish various facts
about $\Gamma$; these facts correspond to the claims on pages 271--276 of~\cite{ps:cartesian}.

Any maximal  clique in $\Gamma$ has the form
\[C({a},i)=\{{b}\in A_1\times\cdots\times A_n\mid b_j=a_j\hbox{ for }j\ne i\},\]
for some ${a}\in\Omega$, $i\in\{1,\ldots,n\}$. Clearly all vertices in
$C({a},i)$ are adjacent in~$\Gamma$. If ${b},{c}$ are distinct vertices in
$C({a},i)$, then
$b_i\ne c_i$, so no vertex outside $C({a},i)$ can be joined to both.
Moreover, if any two vertices are joined, they differ in a unique coordinate
$i$, and so there is some $a$ in $\Omega$ such that
they both lie in $C({a},i)$ for that value of~$i$.
Let $C=C({a},i)$ and $C'=C({b},j)$ be two maximal cliques.
Put $\delta = \min\{d({ x},{ y})\mid { x}\in C,{ y}\in C'\}$.
  \begin{itemize}
  \item 
    If $i=j$, then there is a bijection $\theta\colon C\to C'$ such
    that $d({ v},\theta({ v}))=\delta$ and
    $d({v},{ w})=\delta +1$ for ${ v}$ in $C$, ${ w}$ in $C'$ and
  ${ w}\ne\theta({v})$.
  (Here $\theta$ maps a vertex in $C$ to the unique vertex
in $C'$ with the same $i$th coordinate.)
\item If $i\ne j$, then there are unique ${ v}$ in $C$ and ${ w}$ in $C'$ with
  $d({ v},{ w})= \delta$;
  and distances between vertices in
  $C$ and $C'$ are $\delta$, $\delta+1$ and  $\delta+2$,
  with all values realised. (Here ${ v}$ and ${ w}$ are
the vertices which agree in both the $i$th and $j$th coordinates; if two
vertices agree in just one of these, their distance is $\delta+1$, otherwise it
is $\delta+2$.)
\end{itemize}
See also claims 3--4 on pages 273--274 of~\cite{ps:cartesian}.

It is a consequence of the above that the partition of the maximal cliques into \emph{types}, where
$C({a},i)$ has type $i$, is invariant under graph automorphisms; each type forms a
partition $Q_i$ of $\Omega$.

By Proposition~\ref{p:antiiso} and the discussion following it, the maximal non-trivial partitions in the
sublattice generated by $Q_1,\ldots,Q_n$ form a Cartesian decomposition
of~$\Omega$.

\item This is clear, since no arbitrary choices were made in either construction. 
\end{enumerate}
\end{proof}

We can describe this automorphism group precisely. Details will be given
in the case where all alphabets are the same; we deal briefly with the
mixed-alphabet case at the end.

Given a set $\Omega=A^n$, the wreath product $\Sym(A)\wr S_n$ acts on
$\Omega$: the $i$th factor of the base group $\Sym(A)^n$ acts on the entries
in the $i$th coordinate of points of $\Omega$, while $S_n$ permutes the
coordinates. (Here $S_n$ denotes $\Sym(\{1,\ldots,n\})$.)

\begin{cor}
The automorphism group of the Hamming graph $\Ham(n,A)$ is the wreath product
$\Sym(A)\wr S_n$ just described.
\end{cor}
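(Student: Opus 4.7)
The plan is to prove both containments $\Sym(A)\wr S_n \leqslant \Aut(\Ham(n,A))$ and the reverse. The forward inclusion is a direct check: an element of the base group $\Sym(A)^n$ permutes letters within a single coordinate, which cannot change the number of coordinates on which two vertices disagree; and an element of the top group $S_n$ permutes coordinates, again preserving this count. Since adjacency is defined by differing in exactly one coordinate, both preserve adjacency.

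For the reverse inclusion, I would invoke the construction in the proof of Theorem~\ref{th:cdham}(b). There it was shown that the maximal cliques of $\Ham(n,A)$ fall into $n$ \emph{types}, and the type-partition is canonically defined from the graph (through the distance behaviour of pairs of maximal cliques). The parts of the minimal partition $Q_i$ in the associated Cartesian lattice are exactly the maximal cliques of type $i$. Hence any $g\in\Aut(\Ham(n,A))$ permutes the set $\{Q_1,\ldots,Q_n\}$, giving a homomorphism $\varphi\colon\Aut(\Ham(n,A))\to S_n$. The section $S_n\hookrightarrow \Aut(\Ham(n,A))$ by coordinate permutation (already in the forward inclusion) shows that $\varphi$ is surjective.

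It remains to identify the kernel of $\varphi$ with the base group $\Sym(A)^n$. Suppose $g$ preserves each $Q_i$ individually. Using the anti-isomorphism between the Cartesian lattice and the Boolean lattice of subsets of $\{1,\ldots,n\}$ given in Proposition~\ref{p:antiiso}, one has
\[
P_i \;=\; P_{\bigcap_{j\ne i}(\{1,\ldots,n\}\setminus\{j\})} \;=\; \bigvee_{j\ne i} P_{\{1,\ldots,n\}\setminus\{j\}} \;=\; \bigvee_{j\ne i} Q_j,
\]
so $g$ also preserves each coordinate partition $P_i$. The parts of $P_i$ are indexed naturally by $A$ (via the Cartesian decomposition bijection of Proposition~\ref{prop:CDbij}), so $g$ induces a permutation $\sigma_i\in\Sym(A)$ on the parts of $P_i$. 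A vertex $(a_1,\ldots,a_n)$ lies in the unique part of $P_i$ indexed by $a_i$, so $g(a_1,\ldots,a_n)$ lies in the part indexed by $\sigma_i(a_i)$, which forces its $i$-th coordinate to equal $\sigma_i(a_i)$. Thus $g=(\sigma_1,\ldots,\sigma_n)$ acts coordinate-wise, giving $\ker\varphi=\Sym(A)^n$. Combining with the split surjection onto $S_n$, we conclude $\Aut(\Ham(n,A))=\Sym(A)\wr S_n$.

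The main obstacle is simply ensuring the kernel identification is rigorous; all the hard structural work (that the partitions $Q_i$ are canonically determined by the graph) has already been done inside the proof of Theorem~\ref{th:cdham}, so the present argument is essentially a clean bookkeeping reduction via the lattice anti-isomorphism.
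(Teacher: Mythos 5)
Your proof is correct. The first half follows the paper exactly: like the paper, you reduce to the Cartesian decomposition via Theorem~\ref{th:cdham} (the canonical type-partition of the maximal cliques showing that every graph automorphism permutes $Q_1,\ldots,Q_n$, which is the content of part~(c) of that theorem). Where you diverge is in the second half: the paper at that point simply cites \cite[Lemma~5.1]{ps:cartesian} for the fact that the stabiliser of the natural Cartesian decomposition in $\Sym(A^n)$ is $\Sym(A)\wr S_n$, whereas you prove this from scratch. Your kernel computation is sound: preserving each $Q_j$ setwise forces preservation of each $P_i=\bigvee_{j\ne i}Q_j$ by Proposition~\ref{p:antiiso}, the induced permutations $\sigma_i$ of the parts of $P_i$ determine $g$ coordinatewise by Proposition~\ref{prop:CDbij}, and the split surjection onto $S_n$ assembles the wreath product. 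What your route buys is a self-contained argument within the paper's own lattice-theoretic framework; what the paper's route buys is brevity, at the price of an external reference. (A very minor remark: your clique-type argument tacitly assumes $n\geqslant2$, matching the paper's own separate treatment of the trivial case $n=1$, where the graph is complete and the statement is immediate.)
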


\begin{proof}
By Theorem~\ref{th:cdham}(c), the automorphism group of $\Ham(n,A)$ coincides
with the stabiliser in $\Sym(A^n)$ of the natural Cartesian decomposition $\E$
of the set $A^n$. By~\cite[Lemma~5.1]{ps:cartesian},
the stabiliser of $\E$ in $\Sym(A^n)$ is $\Sym(A)\wr S_n$. 
\end{proof}

In the mixed alphabet case, only one change needs to be made. Permutations
of the coordinates must preserve the cardinality of the alphabets associated
with the coordinate: that is, $g\in S_n$ induces an automorphism of the
Hamming graph if and only if $ig=j$ implies $|A_i|=|A_j|$ for all $i,j$.
(This condition is clearly necessary. For sufficiency, if $|A_i|=|A_j|$,
then we may actually identify $A_i$ and $A_j$.)

So if $\{1,\ldots,n\}=I_1\cup\cdots\cup I_r$, where $I_k$ is the non-empty set
of those indices for which the corresponding alphabet has some given cardinality,
then the group $\Aut(\Ham(A_1,\ldots,A_n))$ is the direct product of $r$ groups, each
a wreath product $\Sym(A_{i_k})\wr\Sym(I_k)$, acting in its product action,
where $i_k$ is a member of $I_k$.

Part~(c) of Theorem~\ref{th:cdham} was also proved in~\cite[Theorem~12.3]{ps:cartesian}.
Our proof is a simplified version of the proof presented in~\cite{ps:cartesian}
and is included here as a nice application of the lattice theoretical framework
developed in Section~\ref{sec:prelim}. The automorphism group of the mixed-alphabet Hamming graph can also be determined 
using the characterisation of the automorphism groups of Cartesian products of graphs.
The first such characterisations were given by Sabidussi~\cite{Sabidussi} and
Vizing~\cite{Vizing}; see also~\cite[Theorem~6.6]{grhandbook}. 
The recent preprint~\cite{MZ} gives a self-contained  elementary proof in the case of 
finite Hamming graphs.

\section{Latin cubes}
\label{sec:LC}

\subsection{What is a Latin cube?}
\label{sec:whatis}
As pointed out in \cite{dap75oz,dap83enc,dap89jas,DAcube},
there have been many different definitions of
a Latin cube (that is, a three-dimensional generalisation of a Latin square) 
and of a Latin hypercube (a higher-dimensional generalisation).
Typically, the underlying set $\Omega$ is a Cartesian product
$\Omega_1 \times \Omega_2 \times\cdots \times \Omega_m$
where $\left|\Omega_1\right| = \left|\Omega_2\right| = \cdots =
\left|\Omega_m\right|$.  As for Latin squares in Section~\ref{sec:LS}, we often
seek to relabel the elements of $\Omega_1$, \ldots, $\Omega_m$ so that
$\Omega = T^m$ for some set~$T$.  The possible
conditions are concisely summarised in \cite{CRC}.  The alphabet is
a set of letters of cardinality $\left|T\right|^a$ with
$1\leqslant a\leqslant m-1$, and the \emph{type} is $b$ with
$1\leqslant b\leqslant m-a$. The definition is that if the values of any $b$
coordinates are fixed then all letters in the given alphabet occur
equally often on the subset of $\Omega$ so defined (which can be regarded
as a $(m-b)$-dimensional array, so that the $|T|^b$ arrays of this form
partition $T^m$; these are parallel lines or planes in a cubical array
according as $b=2$ or $b=1$).

One extreme case has $a=1$ and $b=m-1$.
This definition is certainly in current use
when $m \in \{3,4\}$: for example, see \cite{MWcube,MulWeb}.
The hypercubes in \cite{LMW}
have $a=1$ but allow smaller values of $b$.
The other extreme has $a=m-1$ and $b=1$,
which is what we have here.
(Unfortunately, the meaning of the phrase ``Latin hypercube design'' in
Statistics has completely changed in the last thirty years.  For example,
see \cite{tang2009,tang93}.)

Fortunately, it suffices for us to consider Latin cubes, where $m=3$.
Let $P_1$, $P_2$ and $P_3$ be the partitions which give the standard Cartesian
decomposition of the cube $\Omega_1 \times \Omega_2 \times \Omega_3$.
Following~\cite{DAcube}, we call the parts of
$P_1$, $P_2$ and $P_3$ \textit{layers}, and the parts of $P_1\wedge P_2$,
$P_1\wedge P_3$ and $P_2\wedge P_3$ \textit{lines}.  Thus a layer is a slice
of the cube parallel to one of the faces.
Two lines $\ell_1$ and
$\ell_2$ are said to be \textit{parallel} if there is some
$\{i,j\}\subset \{1,2,3\}$ with $i\ne j$ such that $\ell_1$ and $\ell_2$
are both parts of $P_i \wedge P_j$.

The definitions in \cite{CRC,DAcube} give us the following three possibilities
for the case that $|\Omega_i|=n$ for $i$ in $\{1,2,3\}$.
\begin{itemize}
\item[(LC0)]
  There are $n$ letters, each of which occurs once per line.
\item[(LC1)]
  There are $n$ letters, each of which occurs $n$ times per layer.
\item[(LC2)]
  There are $n^2$ letters, each of which occurs once per layer.
\end{itemize}
Because of the meaning of \textit{type} given in the first
paragraph of this section, we shall call
these possibilities \textit{sorts} of Latin cube.
Thus Latin cubes of sort (LC0) are a special case of Latin cubes of
sort (LC1), but Latin cubes of sort (LC2) are quite different.

Sort (LC0) is the definition of Latin cube used in
\cite{rab:as,ball,dscube,gupta,MWcube,MulWeb}, among many others in
Combinatorics and Statistics.
Fisher used sort (LC1) in \cite{RAF42}, where he gave constructions using
abelian groups.  Kishen called this a Latin cube
\textit{of first order}, and those of sort (LC2) Latin cubes \textit{of
  second order}, in \cite{kish42,kish50}.

Two of these sorts have alternative descriptions using the language of this
paper.  Let $L$ be the partition into letters. Then a Latin cube has sort
(LC0) if and only if $\{L,P_i,P_j\}$ is a Cartesian decomposition of the cube
whenever $i\ne j$ and $\{i,j\} \subset \{1,2,3\}$.
A Latin cube has sort (LC2) if and only if $\{L,P_i\}$
is a Cartesian decomposition of the cube for $i=1$, $2$, $3$.

The following definition is taken from \cite{DAcube}.
\begin{defn}
\label{def:reg}
  A Latin cube of sort (LC2) is \textit{regular} if, whenever $\ell_1$ and
  $\ell_2$ are parallel lines in the cube, the set of letters occurring in
  $\ell_1$ is either exactly the same as the set of letters occurring
  in $\ell_2$ or disjoint from it. 
  \end{defn}

(Warning: the word \textit{regular} is used by some authors with quite
a different meaning for some Latin cubes of sorts (LC0) and (LC1).)

\subsection{Some examples of Latin cubes of sort (LC2)}

In these examples, the cube is coordinatised by functions $f_1$, $f_2$ and
$f_3$ from $\Omega$ to $\Omega_1$, $\Omega_2$ and $\Omega_3$
whose kernels are the partitions $P_1$, $P_2$ and $P_3$.
For example, in Figure~\ref{fig:2}, one part of $P_1$ is $f_1^{-1}(2)$.
A statistician would typically write this as ``$f_1=2$''.
For ease of reading, we adopt the statisticians' notation.

\begin{eg}
  \label{eg:2}
  When $n=2$, the definition of Latin cube of sort (LC2) 
  forces the two occurrences of each of the four letters to be in
  diagonally opposite cells
  of the cube.  Thus, up to permutation of the letters, the only possibility
  is that shown in Figure~\ref{fig:2}.

  \begin{figure}
    \[
    \begin{array}{c@{\qquad}c}
      \begin{array}{c|c|c|}
        \multicolumn{1}{c}{} & \multicolumn{1}{c}{f_2=1} &
        \multicolumn{1}{c}{f_2=2}\\
     \cline{2-3}
     f_1=1 & A & B\\
     \cline{2-3}
     f_1=2 & C & D\\
     \cline{2-3}
      \end{array}
      &
      \begin{array}{c|c|c|}
     \multicolumn{1}{c}{} & \multicolumn{1}{c}{f_2=1} & \multicolumn{1}{c}{f_2=2}\\
     \cline{2-3}
     f_1=1 & D & C\\
     \cline{2-3}
     f_1=2 & B & A\\
     \cline{2-3}
      \end{array}
      \\[10\jot]
    \quad  f_3=1 &\quad f_3=2
      \end{array}
    \]   
    \caption{The unique (up to isomorphism)
      Latin cube of sort (LC2) and order~$2$}
\label{fig:2}
  \end{figure}

  This Latin cube of sort (LC2) is regular.
  The set of letters on each line of $P_1\wedge P_2$ is either $\{A,D\}$ or
  $\{B,C\}$; the set of letters on each line of $P_1\wedge P_3$ is either
  $\{A,B\}$ or $\{C,D\}$; and the set of letters on each line of $P_2\wedge P_3$
  is either $\{A,C\}$ or $\{B,D\}$.
\end{eg}

\begin{eg}
\label{eg:nice}
Here $\Omega=T^3$, where $T$~is the additive group of $\mathbb{Z}_3$.
For $i=1$, $2$ and~$3$, the function $f_i$ picks out the $i$th coordinate
of $(t_1,t_2,t_3)$.  The column headed~$L$ in Table~\ref{tab:cube2}
shows how the nine letters are allocated to the cells of the cube.
The $P_3$-layer of the cube with $f_3=0$ is as follows.
\[
\begin{array}{c|c|c|c|}
\multicolumn{1}{c}{} & \multicolumn{1}{c}{f_2=0} & \multicolumn{1}{c}{f_2=1}
& \multicolumn{1}{c}{f_2=2}\\
\cline{2-4}
f_1=0 & A & D & G\\
\cline{2-4}
f_1=1 & I & C & F\\
\cline{2-4}
f_1=2 & E & H & B\\
\cline{2-4}
\end{array}
\ .
\]
It has each letter just once.

Similarly, the $P_3$-layer of the cube with $f_3=1$ is 
\[
\begin{array}{c|c|c|c|}
\multicolumn{1}{c}{} & \multicolumn{1}{c}{f_2=0} & \multicolumn{1}{c}{f_2=1}
& \multicolumn{1}{c}{f_2=2}\\
\cline{2-4}
f_1=0 & B & E & H\\
\cline{2-4}
f_1=1 & G & A & D\\
\cline{2-4}
f_1=2 & F & I & C\\
\cline{2-4}
\end{array}
\]
and
the $P_3$-layer of the cube with $f_3=2$ is 
\[
\begin{array}{c|c|c|c|}
\multicolumn{1}{c}{} & \multicolumn{1}{c}{f_2=0} & \multicolumn{1}{c}{f_2=1}
& \multicolumn{1}{c}{f_2=2}\\
\cline{2-4}
f_1=0 & C & F & I\\
\cline{2-4}
f_1=1 & H & B & E\\
\cline{2-4}
f_1=2 & D & G & A\\
\cline{2-4}
\end{array}
\ .
\]
Similarly you can check that if you take the $2$-dimensional $P_1$-layer
defined by any fixed value of $f_1$ then
every letter occurs just once, and the same thing happens for~$P_2$.

\begin{table}[htbp]
\[
\begin{array}{cccccccc}
  \mbox{partition}& P_1 & P_2 & P_3 & Q & R & S & L\\
  \mbox{function}& f_1 & f_2 & f_3 & -f_1+f_2 &-f_3+f_1 & -f_2+f_3\\
  \mbox{value} &  t_1 & t_2 & t_3 & -t_1+t_2 & -t_3+t_1 & -t_2+t_3 & \\
\hline
& 0 & 0 & 0 & 0 & 0 & 0 & A \\
& 0 & 0 & 1 & 0 & 2 & 1 & B \\
& 0 & 0 & 2 & 0 & 1 & 2 & C \\
& 0 & 1 & 0 & 1 & 0 & 2 &D \\
& 0 & 1 & 1 & 1 & 2 & 0 & E \\
& 0 & 1 & 2 & 1 & 1 & 1 & F \\
& 0 & 2 & 0 & 2 & 0 & 1 & G \\
& 0 & 2 & 1 & 2 & 2 & 2 & H \\
& 0 & 2 & 2 & 2 & 1 & 0 & I \\
& 1 & 0 & 0 & 2 & 1 & 0 & I \\
& 1 & 0 & 1 & 2 & 0 & 1 & G \\
& 1 & 0 & 2 & 2 & 2 & 2 & H \\
& 1 & 1 & 0 & 0 & 1 & 2 & C \\
& 1 & 1 & 1 & 0 & 0 & 0 & A \\
& 1 & 1 & 2 & 0 & 2 & 1 & B \\
& 1 & 2 & 0 & 1 & 1 & 1 & F \\
& 1 & 2 & 1 & 1 & 0 & 2 & D \\
& 1 & 2 & 2 & 1 & 2 & 0 & E \\
& 2 & 0 & 0 & 1 & 2 & 0 & E \\
& 2 & 0 & 1 & 1 & 1 & 1 & F \\
& 2 & 0 & 2 & 1 & 0 & 2 & D \\
& 2 & 1 & 0 & 2 & 2 & 2 & H \\
& 2 & 1 & 1 & 2 & 1 & 0 & I \\
& 2 & 1 & 2 & 2 & 0 & 1 & G \\
& 2 & 2 & 0 & 0 & 2 & 1 & B \\
& 2 & 2 & 1 & 0 & 1 & 2 & C \\
& 2 & 2 & 2 & 0 & 0 & 0 & A \\
\end{array}
\]
\caption{Some functions and partitions on the cells of the cube
in Example~\ref{eg:nice}}
\label{tab:cube2}
\end{table} 

In addition to satisfying the property of being a Latin cube of sort (LC2),
this combinatorial structure has three other good properties.
\begin{itemize}
\item
  It is a regular in the sense of Definition~\ref{def:reg}.
  The set of letters in any
  $P_1\wedge P_2$-line is $\{A,B,C\}$ or $\{D,E,F\}$ or $\{G,H,I\}$.
  For $P_1\wedge P_3$ the letter sets are $\{A,D,G\}$, $\{B,E,H\}$ and
  $\{C,F,I\}$; for $P_2\wedge P_3$ they are $\{A,E,I\}$, $\{B,F,G\}$ and
  $\{C,D,H\}$.

\item
  The supremum of $L$ and   $P_1\wedge P_2$ is the partition $Q$ shown in
  Table~\ref{tab:cube2}.  This is the kernel of the function which maps
  $(t_1,t_2,t_3)$ to $-t_1+t_2 = 2t_1+t_2$.
  Statisticians normally write this partition
  as $P_1^2P_2$.  Likewise, the supremum of $L$ and $P_1\wedge P_3$ is $R$,
  which statisticians might write as $P_3^2P_1$,
  and the supremum of $L$ and   $P_2\wedge P_3$ is $S$, written by statisticians
  as $P_2^2P_3$.  The partitions $P_1$, $P_2$, $P_3$,
  $Q$, $R$, $S$, $P_1\wedge P_2$, $P_1\wedge P_3$, $P_2\wedge P_3$ and $L$
  are pairwise compatible, in the sense of Definition~\ref{def:compatible}.
  Moreover, each of them is a coset partition defined by a subgroup of $T^3$.
\item
  In anticipation of the notation used in Section~\ref{sec:dag},
  it seems fairly natural to rename $P_1$, $P_2$, $P_3$, $Q$, $R$ and $S$
  as $P_{01}$, $P_{02}$, $P_{03}$, $P_{12}$, $P_{13}$ and $P_{23}$, in order.
  For each $i$ in $\{0,1,2,3\}$, the three partitions $P_{jk}$ which have
  $i$ as one of the subscripts, that is, $i\in \{j,k\}$,
  form a Cartesian decomposition of the underlying  set.
\end{itemize}

However, the set of ten partitions that we have named is not closed under
infima, so they do not form an orthogonal block structure.
For example, the set does not contain the infimum $P_3\wedge Q$.
This partition has nine parts of size three, one of
which consists of the cells $(0,0,0)$, $(1,1,0)$ and $(2,2,0)$,
as can be seen from Table~\ref{tab:cube2}.

\begin{figure}
  \begin{center}
    \setlength{\unitlength}{2mm}
    \begin{picture}(60,40)
      \put(5,15){\line(0,1){10}}
      \put(5,15){\line(1,1){10}}
      \put(5,15){\line(3,1){30}}
      \put(15,15){\line(-1,1){10}}
      \put(15,15){\line(1,1){10}}
      \put(15,15){\line(3,1){30}}
      \put(25,15){\line(-1,1){10}}
      \put(25,15){\line(0,1){10}}
      \put(25,15){\line(3,1){30}}
      \put(45,15){\line(-1,1){10}}
      \put(45,15){\line(0,1){10}}
      \put(45,15){\line(1,1){10}}
      \put(30,5){\line(-1,2){5}}
      \put(30,5){\line(-3,2){15}}
      \put(30,5){\line(3,2){15}}
      \curve(30,5,5,15)
      \put(30,35){\line(-1,-2){5}}
      \put(30,35){\line(1,-2){5}}
      \put(30,35){\line(-3,-2){15}}
      \put(30,35){\line(3,-2){15}}
      \curve(30,35,5,25)
      \curve(30,35,55,25)
      \put(5,15){\blobb}
      \put(4,15){\makebox(0,0)[r]{$P_1\wedge P_2$}}
      \put(15,15){\blobb}
      \put(14,15){\makebox(0,0)[r]{$P_1\wedge P_3$}}
      \put(25,15){\blobb}
      \put(24,15){\makebox(0,0)[r]{$P_2\wedge P_3$}}
      \put(45,15){\blobb}
      \put(47,15){\makebox(0,0){$L$}}
      \put(30,5){\blobb}
      \put(30,3){\makebox(0,0){$E$}}
\put(5,25){\blobb}
\put(3,25){\makebox(0,0){$P_1$}}
\put(15,25){\blobb}
\put(13,25){\makebox(0,0){$P_2$}}
\put(25,25){\blobb}
\put(23,25){\makebox(0,0){$P_3$}}
\put(35,25){\blobb}
\put(36,25){\makebox(0,0)[l]{$Q$}}
\put(45,25){\blobb}
\put(46,25){\makebox(0,0)[l]{$R$}}
\put(55,25){\blobb}
\put(56,25){\makebox(0,0)[l]{$S$}}
\put(30,35){\blobb}
\put(30,37){\makebox(0,0){$U$}}
    \end{picture}
  \end{center}
  \caption{Hasse diagram of the join-semilattice formed by the pairwise
    compatible partitions in Example~\ref{eg:nice}}
    \label{fig:nice}
  \end{figure}

Figure~\ref{fig:nice} shows the Hasse diagram of the join-semilattice formed
by these ten named partitions, along with the two trivial partitions $E$
and $U$.
This diagram, along with the knowledge of compatibility, makes it clear that
any three of the minimal partitions $P_1 \wedge P_2$, $P_1 \wedge P_3$,
$P_2\wedge P_3$ and $L$ give the minimal
partitions of the orthogonal block structure defined by
a Cartesian decomposition of dimension three of the underlying set $T^3$.
Note that, although the partition $E$ is the highest point in the diagram
which is below both $P_3$ and $Q$, it is not their infimum, because their
infimum is defined in the lattice of all partitions of this set.
\end{eg}

\begin{figure}
  \[
  \begin{array}{c@{\qquad}c@{\qquad}c}
    \begin{array}{|c|c|c|}
      \hline
      A & E & F\\
      \hline
      H & I & D\\
      \hline
      C & G & B\\
      \hline
    \end{array}
    &
        \begin{array}{|c|c|c|}
      \hline
      D & B & I\\
      \hline
      E & C & G\\
      \hline
     F & A & H\\
      \hline
        \end{array}
        &
            \begin{array}{|c|c|c|}
      \hline
      G & H & C\\
      \hline
      B & F & A\\
      \hline
      I & D & E\\
      \hline
      \end{array}
    \end{array}
  \]
  \caption{A Latin cube of sort (LC2) which is not regular}
  \label{fig:sax}
  \end{figure}

\begin{eg}
  \label{eg:sax}
Figure~\ref{fig:sax} shows an example which is not regular.  This was originally
given in \cite{saxena}.  To save space, the three $P_3$-layers are shown
side by side.

For example, there is one $P_1\wedge P_3$-line whose set of letters is
$\{A,E,F\}$ and another whose set of letters is $\{A,F,H\}$.
These are neither the same nor disjoint.
\end{eg}

If we write the group operation in Example~\ref{eg:nice} multiplicatively,
then the cells
$(t_1,t_2,t_3)$ and $(u_1,u_2,u_3)$ have the same letter if and only if
$t_1^{-1}t_2 = u_1^{-1}u_2$ and $t_1^{-1}t_3 = u_1^{-1}u_3$.  This means that
$(u_1,u_2,u_3) = (x,x,x)(t_1,t_2,t_3)$ where $x=u_1t_1^{-1}$, so that
$(t_1,t_2,t_3)$ and $(u_1,u_2,u_3)$ are in the same right coset of the
diagonal subgroup $\delta(T,3)$ introduced in Section~\ref{sect:diaggroups}.

The next theorem shows that this construction can be generalised to any group,
abelian or not, finite or infinite.

\begin{theorem}
  \label{th:upfront}
  Let $T$ be a non-trivial group. Identify the elements of $T^3$ with the cells of a cube
  in the natural way.  Let $\delta(T,3)$ be the diagonal subgroup
  $\{(t,t,t) \mid t \in T\}$.  Then the parts of the right coset partition
  $P_{\delta(T,3)}$ form the letters of a regular Latin cube of sort 
  (LC2).
\end{theorem}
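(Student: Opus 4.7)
The plan is to verify the two conditions separately: first that the right coset partition $L := P_{\delta(T,3)}$ yields a Latin cube of sort (LC2), and then that this Latin cube is regular. Throughout, I will use the description of the right cosets given by: two cells $(a_1,a_2,a_3)$ and $(b_1,b_2,b_3)$ of $T^3$ lie in the same right coset of $\delta(T,3)$ if and only if
\[
b_1 a_1^{-1} = b_2 a_2^{-1} = b_3 a_3^{-1},
\]
since the coset containing $(a_1,a_2,a_3)$ is exactly $\{(t a_1, t a_2, t a_3) \mid t\in T\}$. Each coset has size $|T|$ (the action of $\delta(T,3)$ by left multiplication is free), and there are $|T|^2$ cosets in total.

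First I will verify that $\{L, P_i\}$ is a Cartesian decomposition of $T^3$ for each $i\in\{1,2,3\}$. By symmetry in the coordinates, it suffices to argue for $i = 3$. A part of $P_3$ has the form $p_c = \{(a,b,c) \mid a,b\in T\}$ for some $c\in T$, and has size $|T|^2$. Given a right coset $\ell = \delta(T,3)\cdot(x_1,x_2,x_3)$, an element $(t x_1, t x_2, t x_3)$ of $\ell$ lies in $p_c$ if and only if $t x_3 = c$, which has the unique solution $t = c x_3^{-1}$. Thus $|\ell \cap p_c| = 1$, and so $\{L, P_3\}$ is a Cartesian decomposition; this gives sort (LC2).

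Next I turn to regularity. By symmetry it suffices to consider parallel lines of the form $\ell = p \cap q$ with $p$ a part of $P_1$ and $q$ a part of $P_2$; such a line consists of the cells $\{(a,b,c) \mid c \in T\}$ for fixed $a,b$. I will compare two such lines $\ell_1 = \{(a_1,b_1,c) \mid c\in T\}$ and $\ell_2 = \{(a_2,b_2,c') \mid c'\in T\}$. The letter on $(a_1,b_1,c)$ coincides with that on $(a_2,b_2,c')$ exactly when there exists $t\in T$ with $ta_1 = a_2$, $tb_1 = b_2$, and $tc = c'$, forcing $t = a_2a_1^{-1} = b_2b_1^{-1}$ and then $c' = tc$. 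Hence there are two cases: if $a_2a_1^{-1} = b_2b_1^{-1}$, set $t$ equal to this common value and observe that $c \mapsto tc$ is a bijection $T\to T$ that identifies the letter set of $\ell_1$ with that of $\ell_2$; if $a_2a_1^{-1}\neq b_2b_1^{-1}$, then no common letter can occur. Hence the letter sets of $\ell_1$ and $\ell_2$ are either equal or disjoint, which is exactly the regularity condition of Definition~\ref{def:reg}.

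The main subtlety is merely bookkeeping: making sure the right/left conventions for cosets are handled consistently, since the coset condition $b_i a_i^{-1} = $ constant is what controls both the Cartesian decomposition count and the regularity dichotomy. Once this is set up, both parts reduce to a one-line verification, so I expect no genuine obstacle.
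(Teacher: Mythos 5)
Your proposal is correct and follows essentially the same route as the paper: both parts reduce to the facts that $\delta(T,3)$ meets each layer-subgroup trivially and permutes with it, and that two parallel lines share a letter precisely when they lie in the same coset of the subgroup $\{(t,t,s)\mid t,s\in T\}$ (your condition $a_2a_1^{-1}=b_2b_1^{-1}$). The only difference is presentational: the paper invokes its general lemmas on coset partitions and commuting equivalence relations (Propositions~\ref{prop:coset} and~\ref{prop:commeq}), whereas you carry out the equivalent element-by-element verification directly.
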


\begin{proof}
  Let $H_1$ be the subgroup $\{(1,t_2,t_3) \mid t_2 \in T, \ t_3 \in T\}$
  of $T^3$.  Define subgroups $H_2$ and $H_3$ similarly. Let $i \in \{1,2,3\}$.
Then $H_i \cap \delta(T,3) = \{1\}$ and $H_i\delta(T,3) = \delta(T,3)H_i = T^3$.
  Proposition~\ref{prop:coset} shows that $P_{H_i} \wedge P_{\delta(T,3)} = E$ and
  $P_{H_i} \vee P_{\delta(T,3)} = U$.  Because $H_i\delta(T,3) = \delta(T,3)H_i$,
  Proposition~\ref{prop:commeq} (considering statements (a) and~(d)) shows that $\{P_{H_i}, P_{\delta(T,3)}\}$ is a
  Cartesian decomposition of $T^3$ of dimension two.  Hence the parts
  of $P_{\delta(T,3)}$ form the letters of a Latin cube $\Lambda$ of sort~(LC2).

  Put $G_{12} = H_1 \cap H_2$ and
  $K_{12} = \{(t_1,t_1,t_3) \mid t_1 \in T,\ t_3 \in T\}$.
  Then the parts of $P_{G_{12}}$ are lines of the cube parallel to the $z$-axis.
  Also, $G_{12} \cap \delta(T,3)=\{1\}$ and $G_{12}\delta(T,3) = \delta(T,3)G_{12}
    = K_{12}$, so Propositions~\ref{prop:coset} and~\ref{prop:commeq} show that
    $P_{G_{12}} \wedge P_{\delta(T,3)} = E$, $P_{G_{12}} \vee P_{\delta(T,3)} = P_{K_{12}}$,
    and the restrictions of $P_{G_{12}}$ and $P_{\delta(T,3)}$ to any part
    of $P_{K_{12}}$ form a grid. Therefore,  within each coset of~$K_{12}$,
    all lines have the same subset of letters.  By the definition of supremum,
    no line in any other coset of $K_{12}$ has any letters in common
    with these.

    Similar arguments apply to lines in each of the other two directions.
    Hence $\Lambda$ is regular. 
\end{proof}

The converse of this theorem is proved at the end of this section.

The set of partitions in Theorem~\ref{th:upfront} form a join-semilattice whose
Hasse diagram is the same as the one shown in Figure~\ref{fig:nice}, apart from
the naming of the partitions.  We call this a \textit{diagonal semilattice
  of dimension three}.  The generalisation to arbitrary dimensions is given
in Section~\ref{sec:diag}.

\subsection{Results for Latin cubes}

As we hinted in Section~\ref{sec:LS},
the vast majority of Latin squares of order at least $5$
are not  isotopic to Cayley tables of groups.  For $m\geqslant 3$, the situation
changes dramatically as soon as we impose some more, purely combinatorial,
constraints.  We continue to use the notation $\Omega$, $P_1$, $P_2$, $P_3$
and $L$ as in Section~\ref{sec:whatis}.

A Latin cube of sort (LC0) is called an \textit{extended Cayley table} of
the group~$T$ if $\Omega=T^3$ and the letter in cell $(t_1,t_2,t_3)$ is
$t_1t_2t_3$. Theorem~8.21 of \cite{rab:as} shows that, in the finite case,
for a Latin cube of sort (LC0), the set $\{P_1,P_2,P_3,L\}$ is contained in
the set of partitions of an orthogonal block structure if and only if the
cube is isomorphic to the extended Cayley table of an abelian group.
Now we will prove something similar for Latin cubes of sort (LC2), by
specifying a property of the set
\[\{ P_1, P_2, P_3, (P_1\wedge P_2)\vee L,
(P_1\wedge P_3)\vee L, (P_2\wedge P_3)\vee L\}\]
of six partitions.  We do not restrict this
to finite sets.  Also, because we do not insist on closure under infima,
it turns out that the group does not need to be abelian.

 In Lemmas~\ref{lem:lc0} and~\ref{lem:lc3},
 the assumption is that we have a Latin cube of sort~(LC2),
 and that $\{i,j,k\} = \{1,2,3\}$.  Write 
\[
  L^{ij}= L\vee(P_i\wedge P_j).
\]
To clarify the proofs, we shall use the following refinement of
Definition~\ref{def:reg}. Recall that we refer to the parts of $P_i\wedge P_j$  
as $P_i\wedge P_j$-lines.

\begin{defn}
  \label{def:refine}
  A Latin cube of sort (LC2) is \textit{$\{i,j\}$-regular} if,
  whenever $\ell_1$ and $\ell_2$ are distinct $P_i\wedge P_j$-lines,
  the set of letters occurring in
  $\ell_1$ is either exactly the same as the set of letters occurring
  in $\ell_2$ or disjoint from it. 
\end{defn}

\begin{lem}
  \label{lem:lc0}
  The following  conditions are equivalent.
        \begin{enumerate}
        \item
          The partition $L$ is  compatible with $P_i\wedge P_j$.
        \item
          The Latin cube is  $\{i,j\}$-regular.  
        \item  The restrictions of $P_i\wedge P_j$, $P_k$ and $L$ to any 
        part of $L^{ij}$ form a Latin square.
        \item Every pair of  distinct $P_i\wedge P_j$-lines in the same 
        part of $L^{ij}$ lie in distinct parts of $P_i$.
        \item  The restrictions of $P_i$, $P_k$ and $L$ to any 
        part of $L^{ij}$ form a Latin square.
        \item The set
        $\{P_i,P_k,L^{ij}\}$ is a Cartesian decomposition of $\Omega$ of
        dimension three. 
        \item Each part of  $P_i\wedge P_k\wedge L^{ij}$ has size  one. 
        \end{enumerate}
\end{lem}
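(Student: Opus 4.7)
My plan is to establish a cycle of implications through the seven statements, anchored on two preliminary observations. First, since $\{L,P_i\}$ is a Cartesian decomposition (which is precisely what sort (LC2) means) and every $P_i\wedge P_j$-line is contained in a single $P_i$-layer, each line meets each letter-part in at most one cell. Equivalently, the non-empty parts of $L\wedge(P_i\wedge P_j)$ are all singletons, so this infimum is automatically uniform, and Definition~\ref{def:compatible} collapses (a) to the statement that the equivalence relations underlying $L$ and $P_i\wedge P_j$ commute. Second, the Cartesian decompositions $\{L,P_h\}$ for $h\in\{i,j,k\}$ force all $P_h$-layers, all letter-parts, and all $P_h\wedge P_{h'}$-lines to have one common cardinality, say $\kappa$, with which I will count (finite or infinite).

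The equivalence (a) $\Leftrightarrow$ (b) then follows from the (a) $\Leftrightarrow$ (e) clause of Proposition~\ref{prop:commeq} applied to the pair $(L,P_i\wedge P_j)$: there $\mathcal{B}_\Gamma$ for an $L$-part $\Gamma$ is the set of lines meeting the letter~$\Gamma$, and ``$\mathcal{B}_\Gamma,\mathcal{B}_\Delta$ equal or disjoint'' is the letter-side dual of~(b). A short bipartite-incidence argument, using that two letters sharing a line lie in each other's line-sets (and conversely), shows the letter-side and line-side statements coincide. The implication (b) $\Rightarrow$ (d) is immediate: two distinct lines in a common $P_i$-layer have disjoint letter sets because each letter appears only once per layer, so they cannot share an $L^{ij}$-part under~(b). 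For (d) implying each of (c), (e), (f), (g), I fix an $L^{ij}$-part $A$ with letter-set $\mathcal{L}_A$ and line-set $\mathcal{M}_A$. Every letter of $\mathcal{L}_A$ has all its $\kappa$ occurrences inside~$A$, one per $P_i$-layer, and each such occurrence lies on a line in $\mathcal{M}_A$; combined with the injectivity of the layer map $\mathcal{M}_A\to P_i$ coming from~(d), this makes that map a bijection, forces $|\mathcal{L}_A|=\kappa=|\mathcal{M}_A|$, and shows that every line of~$A$ carries exactly the letters $\mathcal{L}_A$. A direct check from sort~(LC2) then shows that any two of $P_i\cap A$, $(P_i\wedge P_j)\cap A$, $P_k\cap A$, $L\cap A$ form a grid on~$A$, giving~(c) and~(e); the cardinality identity $|P_i|\cdot|P_k|\cdot|L^{ij}|=|\Omega|$ upgrades this to the Cartesian decomposition in~(f), and~(g) drops out of the grid ($P_i\cap A$ and $P_k\cap A$ meet in single cells).

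To close the cycle I would show that each of (c), (e), (f), (g) implies~(d). In every case one reads off a grid structure on the $L^{ij}$-part~$A$ whose common dimension is pinned to $\kappa$ by sort~(LC2) counting; two distinct lines of~$A$ lying in a single $P_i$-layer would then be forced by the Latin-square or Cartesian-decomposition property to carry every letter of $\mathcal{L}_A$, so each of those $\kappa$ letters would appear twice in that layer, a contradiction. The equivalence (f) $\Leftrightarrow$ (g) is bookkeeping given the cardinality identity. The step I expect to be most delicate is the dualisation underlying (a) $\Leftrightarrow$ (b): the condition in~(b) naturally partitions lines by letter-set, while Proposition~\ref{prop:commeq} naturally partitions letters by line-set, and one must verify carefully that these dual statements really do coincide in the sort~(LC2) setting.
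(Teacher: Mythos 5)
Most of your individual steps are sound and run close to the paper's own proof, which likewise pivots everything on condition~(d) and obtains (a)$\Leftrightarrow$(b) from Proposition~\ref{prop:commeq}. But the set of implications you declare does not establish the equivalence of all seven conditions. You prove (a)$\Leftrightarrow$(b), (b)$\Rightarrow$(d), (d)$\Rightarrow$ each of (c),(e),(f),(g), and each of (c),(e),(f),(g) $\Rightarrow$(d). No arrow leads from any of (c)--(g) back to (a) or (b), so ``closing the cycle'' by proving (c),(e),(f),(g)$\Rightarrow$(d) only shows that (c)--(g) are mutually equivalent and are consequences of (a); it does not show that they imply (a). The paper supplies exactly this missing arrow as an explicit (d)$\Rightarrow$(b): given two lines whose letter sets are neither equal nor disjoint, a letter $\lambda$ belonging to one but not the other is used to manufacture a third line $\ell_3$ (through the unique occurrence of $\lambda$ in the relevant $P_i$-layer) lying in the same part of $L^{ij}$ and the same part of $P_i$ as the first line, contradicting (d). Some such argument is required; as written your proof establishes only $(a)\Leftrightarrow(b)\Rightarrow\bigl((c)\Leftrightarrow(d)\Leftrightarrow(e)\Leftrightarrow(f)\Leftrightarrow(g)\bigr)$.

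The repair is cheap with what you already have: your intermediate claim in the (d)$\Rightarrow$(c) step --- that every line of an $L^{ij}$-part $A$ carries exactly the letter set $\mathcal{L}_A$ --- yields (d)$\Rightarrow$(b) at once, because $L\preccurlyeq L^{ij}$ forces distinct $L^{ij}$-parts to have disjoint letter sets; but you must say this. Two smaller points. First, the ``common cardinality $\kappa$'' observation is false as stated: a $P_h$-layer has $|L|$ cells while a letter or a line has $|P_h|$ cells, and these differ when $T$ is finite and non-trivial; what you actually use later (and what is true) is that the \emph{number} of $P_h$-layers equals the size of each letter and of each line. Second, the dualisation you flag as the delicate part of (a)$\Leftrightarrow$(b) is avoidable: condition~(a) of Proposition~\ref{prop:commeq} is symmetric in the two partitions, so apply the proposition with $P_i\wedge P_j$ in the first slot and $L$ in the second; its condition~(e) is then verbatim your condition~(b), which is how the paper proceeds.
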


\begin{proof} We prove this result without loss of generality for
  $i=1$, $j=2$, $k=3$.

  \begin{itemize}
          \item[(a)$\Leftrightarrow$(b)]
    By the definition of a Latin cube of sort (LC2),
    each part of $P_1\wedge P_2$  has either zero or one cells in common
    with each part of~$L$.  Therefore  ${P_1\wedge P_2 \wedge L}=E$,
    which is uniform, so Definition~\ref{def:compatible} shows that
    compatibility is the same as commutativity of the  equivalence relations
    underlying $P_1\wedge P_2$ and~$L$.
    Consider Proposition~\ref{prop:commeq} with $P_1\wedge P_2$ and $L$ in place
    of $P_1$ and $P_2$.  Condition~(a) of Proposition~\ref{prop:commeq}
    is the same as condition~(a) here; and condition~(e) of
    Proposition~\ref{prop:commeq} is the same as condition~(b) here. Thus
    Proposition~\ref{prop:commeq} gives us the result.

  \item[(a)$\Rightarrow$(c)]
   Let $\Delta$ be a part of $L^{12}$.  If $L$ is compatible with $P_1\wedge P_2$
   then, because ${P_1\wedge P_2 \wedge L}=E$,
   Proposition~\ref{prop:commeq} shows that
   the restrictions of $P_1\wedge P_2$ and $L$ to $\Delta$ form a Cartesian
  decomposition of $\Delta$.  Each part of $P_3$ has precisely one cell in
  common with each part of $P_1\wedge P_2$,
  because $\{P_1,P_2,P_3\}$ is a Cartesian decomposition of $\Omega$,
  and precisely one cell in common with each part of $L$,
  because the Latin cube has sort (LC2).
  Hence the restrictions of $P_1\wedge P_2$, $P_3$ and $L$ to $\Delta$
  form a Latin square. (Note that $P_3$ takes all of its values within $\Delta$,
  but neither $P_1\wedge P_2$ nor $L$ does.)
  
\item[(c)$\Rightarrow$(d)]
  Let $\ell_1$ and $\ell_2$ be distinct $P_1\wedge P_2$-lines
  that are contained in the same part $\Delta$ of $L^{12}$. Every letter
  which occurs in $\Delta$ occurs in both of these lines. If $\ell_1$ and
  $\ell_2$ are contained in the same part of $P_1$, then that $P_1$-layer
  contains at least two occurrences of some letters, which contradicts the
  fact that $L\wedge P_1=E$ for a Latin cube of sort (LC2).

\item[(d)$\Rightarrow$(e)]
  Let $\Delta$ be a part of $L^{12}$ and let $\lambda$ be a part of~$L$
  inside~$\Delta$.   Let $p_1$ and $p_3$  be parts of $P_1$ and $P_3$.
  Then $\left| p_1 \cap \lambda \right| = \left| p_3 \cap \lambda \right|=1$
  by definition of a Latin cube of sort (LC2).  Condition (d) specifies that
  $p_1 \cap \Delta$ is a part of $P_1 \wedge P_2$.  Therefore
  $(p_1 \cap \Delta) \cap p_3$ is a part of ${P_1 \wedge P_2 \wedge P_3}$, so
  $ \left |(p_1 \cap \Delta) \cap (p_3 \cap \Delta)\right |=
  \left |(p_1 \cap \Delta) \cap p_3\right| =1$.
  Thus the restrictions of $P_1$, $P_3$, and $L$ to $\Delta$ form a Latin
  square.

\item[(e)$\Rightarrow$(f)]
  Let $\Delta$, $p_1$ and $p_3$ be parts of $L^{12}$, $P_1$ and $P_3$
  respectively.  By the definition of a Latin cube of sort (LC2),
  $p_1 \cap \Delta$ and $p_3 \cap \Delta$ are both non-empty.  Thus
  condition (e) implies that $\left | p_1 \cap p_3 \cap \Delta \right|=1$.
  Hence $\{P_1, P_3, L^{12}\}$ is a Cartesian
  decomposition of dimension three.

\item[(f)$\Rightarrow$(g)] This follows immediately 
  from the definition of a Cartesian decomposition (Definition~\ref{def:cart}).

\item[(g)$\Rightarrow$(d)]
  If (d) is false then there is a part~$\Delta$  of $L^{12}$ which
  contains distinct
  $P_1\wedge P_2$-lines $\ell_1$ and $\ell_2$ in the same part~$p_1$ of~$P_1$.
  Let $p_3$ be any part of $P_3$. Then, since $\{P_1,P_2,P_3\}$ is a 
  Cartesian decomposition, $\left |p_3\cap \ell_1\right | =
  \left | p_3\cap \ell_2\right | =1$ and so
$\left| p_1\cap p_3 \cap \Delta \right | \geqslant 2$.  This contradicts~(g).

\item[(d)$\Rightarrow$(b)]
  If (b) is false, there are  distinct $P_1\wedge P_2$-lines $\ell_1$
  and $\ell_2$ 
  whose sets of letters $\Lambda_1$ and $\Lambda_2$ are neither the same nor
  disjoint.  Because $\Lambda_1 \cap \Lambda_2 \ne \emptyset$, $\ell_1$
  and $\ell_2$ are contained in the same part of $L^{12}$.

    Let $\lambda \in \Lambda_2 \setminus \Lambda_1$.  By definition of a Latin
    cube of sort (LC2),
    $\lambda$ occurs on precisely one cell~$\omega$
  in the $P_1$-layer which contains $\ell_1$.  By assumption, $\omega \notin
  \ell_1$.  Let $\ell_3$ be the $P_1\wedge P_2$-line containing~$\omega$.
  Then $\ell_3$ and $\ell_2$ are in the same part of $L^{12}$, as are
  $\ell_1$ and $\ell_2$.  Hence $\ell_1$ and $\ell_3$ are in the
  same part of $L^{12}$ and the same part of $P_1$.  This contradicts~(d). 
  \end{itemize}
  \end{proof}

\begin{lem}
  \label{lem:lc3}
  The set $\{P_i,L^{ik},L^{ij}\}$ is a Cartesian decomposition of $\Omega$ if
  and only if $L$ is compatible with both $P_i\wedge P_j$ and $P_i \wedge P_k$.
\end{lem}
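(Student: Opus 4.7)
The plan is to prove both directions by reducing everything to the characterisations in Lemma \ref{lem:lc0}, and to exploit the fact that the ambient Cartesian decomposition $\{P_1,P_2,P_3\}$ already gives us fine control over how the ``lines'' $p_i\cap p_j$ intersect individual layers.

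For the forward direction ($\Rightarrow$), I would use criterion (g) of Lemma \ref{lem:lc0}: it suffices to show that every part of $P_i\wedge P_k\wedge L^{ij}$ is a singleton (and symmetrically every part of $P_i\wedge P_j\wedge L^{ik}$). Suppose that $x$ and $y$ both lie in some common part $p_i\in P_i$, $p_k\in P_k$ and $\ell^{ij}\in L^{ij}$. Then $x\equiv_i y$ and $x\equiv_k y$, so $x\equiv_{P_i\wedge P_k} y$; since $L^{ik}=L\vee(P_i\wedge P_k)$ is coarser than $P_i\wedge P_k$, this forces $x$ and $y$ into a common part $\ell^{ik}$ of $L^{ik}$. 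But the hypothesis that $\{P_i,L^{ij},L^{ik}\}$ is a Cartesian decomposition gives $|p_i\cap \ell^{ij}\cap \ell^{ik}|=1$, so $x=y$. Swapping $j$ and $k$ yields compatibility of $L$ with $P_i\wedge P_k$.

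For the backward direction ($\Leftarrow$), Lemma \ref{lem:lc0}(f) converts the two compatibility hypotheses into the statements that $\{P_i,P_k,L^{ij}\}$ and $\{P_i,P_j,L^{ik}\}$ are each Cartesian decompositions. The main combinatorial content is then the following claim: for each $p_i\in P_i$ and $\ell^{ij}\in L^{ij}$, the intersection $p_i\cap \ell^{ij}$ consists of a single $(P_i\wedge P_j)$-line. By Proposition~\ref{prop:commeq}(f), the compatibility of $L$ and $P_i\wedge P_j$ makes $\ell^{ij}$ a union of $(P_i\wedge P_j)$-parts, each of which is contained in a single $P_i$-layer; so $p_i\cap \ell^{ij}$ is a union of $(P_i\wedge P_j)$-lines lying in $p_i$. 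If this union contained two distinct lines $p_i\cap p_j^{(1)}$ and $p_i\cap p_j^{(2)}$, then picking any $p_k\in P_k$ and using that $\{P_1,P_2,P_3\}$ is a Cartesian decomposition would give two distinct points in $p_i\cap p_k\cap \ell^{ij}$, contradicting the Cartesian decomposition $\{P_i,P_k,L^{ij}\}$.

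Granting the claim (and its symmetric version with $k$ in place of $j$), write $p_i\cap \ell^{ij}=p_i\cap p_j$ and $p_i\cap \ell^{ik}=p_i\cap p_k$ for uniquely determined $p_j\in P_j$ and $p_k\in P_k$. Then
\[
p_i\cap \ell^{ij}\cap \ell^{ik} \;=\; (p_i\cap p_j)\cap (p_i\cap p_k) \;=\; p_i\cap p_j\cap p_k,
\]
which has cardinality exactly~$1$ by the Cartesian decomposition $\{P_1,P_2,P_3\}$. The partitions $L^{ij}$ and $L^{ik}$ have at least two parts by Lemma~\ref{lem:lc0}(f), so this does yield a Cartesian decomposition of dimension three.

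The main obstacle I expect is in the backward direction, specifically in pinning down that $p_i\cap \ell^{ij}$ is exactly one $(P_i\wedge P_j)$-line rather than several; any finiteness-based counting argument must be avoided because the paper is working in full generality. The proposed argument uses only the Cartesian decomposition axiom (stated cardinality-free as ``intersection has exactly one point'') together with Proposition~\ref{prop:commeq}(f), so it works uniformly in the finite and infinite cases.
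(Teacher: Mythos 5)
Your proof is correct. The forward direction is in substance identical to the paper's: both rest on the equivalence (a)$\Leftrightarrow$(g) of Lemma~\ref{lem:lc0} together with the observation that $P_i\wedge P_k\preccurlyeq L^{ik}$; the only difference is that the paper argues contrapositively while you argue directly. In the backward direction you take a genuinely different route. The paper fixes a part $\Delta$ of $L^{ij}$, uses Lemma~\ref{lem:lc0} to obtain a Latin square on $\Delta$ from $P_i$, $P_k$ and the restriction $L^*$ of $L$, and then shows that the restriction of $L^{ik}$ to $\Delta$ coincides with $L^*$ (distinct letters of $L^*$ lie in distinct parts of $P_i\wedge P_k$, hence in distinct parts of $L^{ik}$), after which the Cartesian property is checked part by part. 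You instead show that each intersection $p_i\cap\ell^{ij}$ is exactly one part of $P_i\wedge P_j$ --- this is essentially condition~(d) of Lemma~\ref{lem:lc0}, which you could have cited rather than re-derived --- and likewise for $\ell^{ik}$, so the triple intersection collapses to $p_i\cap p_j\cap p_k$ and the count of $1$ is inherited from the ambient Cartesian decomposition $\{P_1,P_2,P_3\}$. Your route is shorter and avoids any discussion of letter sets; the paper's yields the additional structural fact that $L^{ik}$ restricted to a part of $L^{ij}$ is just $L$. Both arguments are cardinality-free and so valid in the infinite case. One cosmetic point: the fact that a part of $L^{ij}$ is a union of parts of $P_i\wedge P_j$ needs only $P_i\wedge P_j\preccurlyeq L^{ij}$, not Proposition~\ref{prop:commeq}.
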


\begin{proof}
  If $L$ is not compatible with $P_i\wedge P_j$, then
  Lemma~\ref{lem:lc0} shows that there is a part of
  ${P_i \wedge P_k \wedge L^{ij}}$ of size at least two.
  This is contained in a part of $P_i\wedge P_k$. Since $P_i \wedge P_k
\preccurlyeq L^{ik}$, it is also contained in a part of~$L^{ik}$.  Hence
  $\{P_i, L^{ij}, L^{ik}\}$ is not a Cartesian decomposition of~$\Omega$.
  Similarly, if $L$ is not compatible with $P_i\wedge P_k$ then
    $\{P_i, L^{ij}, L^{ik}\}$ is not a Cartesian decomposition of~$\Omega$.

  For the converse,  Lemma~\ref{lem:lc0} shows
  that if $L$ is compatible with
  $P_i\wedge P_j$ then $\{P_i, P_k, L^{ij}\}$ is a Cartesian decomposition of
  $\Omega$.  Let $\Delta$ be a part of $L^{ij}$, and let $L^*$ be the
  restriction of $L$ to $\Delta$.  Lemma~\ref{lem:lc0} shows that
  $P_i$, $P_k$ and $L^*$ form a Latin square on~$\Delta$.  Thus distinct
  letters in~$L^*$ occur only in distinct parts of $P_i \wedge P_k$.

  If $L$ is also compatible with $P_i\wedge P_k$, then Lemma~\ref{lem:lc0}
  shows that  each part of $L^{ik}$ is a union of parts of $P_i\wedge P_k$,
  any two of which are in different parts of $P_i$ and different parts of~$P_k$,
  and all of which have the same letters.
  Hence any two different letters in $L^*$
  are in different parts of~$L^{ik}$.  Since $\{P_i,P_k,L^{ij}\}$ is a Cartesian
  decomposition of~$\Omega$,
  every part of $P_i\wedge P_k$ has a non-empty intersection with~$\Delta$, and
so   every part of $L^{ik}$ has a non-empty intersection with~$\Delta$.
Since $L\prec L^{ik}$, such an intersection consists of one or more
parts of $L^*$ in $\Delta$. We have already noted that distinct
letters in $L^*$ are in different parts of $L^{ik}$, and so it follows that the
restriction of $L^{ik}$ to $\Delta$ is the same as~$L^*$.
  Hence the restrictions of $P_i$, $P_k$ and $L^{ik}$ to $\Delta$ form a Latin
  square on $\Delta$, and so the restrictions of $P_i$ and $L^{ik}$ to $\Delta$
  give a Cartesian decomposition of~$\Delta$.

  This is true for every part $\Delta$ of $L^{ij}$, and so it follows that
  $\{P_i, L^{ij}, L^{ik}\}$ is a Cartesian decomposition of~$\Omega$. 
\end{proof}

\begin{lem}
  \label{lem:lc4}
  The set $\{P_i, L^{ij},L^{ik}\}$ is a Cartesian decomposition of $\Omega$
  if and only if the set $\{P_i \wedge P_j, P_i\wedge P_k,L\}$
 generates a Cartesian lattice under taking suprema.
\end{lem}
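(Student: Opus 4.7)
The approach is to invoke the duality described in Section~\ref{sec:firstcd}: a triple of partitions forms a Cartesian decomposition of $\Omega$ if and only if their three pairwise meets are the minimal non-trivial elements of a Cartesian lattice (whose maximal non-trivial elements are the three partitions themselves). I will show that this bijection sends $\{P_i,L^{ij},L^{ik}\}$ to $\{P_i\wedge P_j, P_i\wedge P_k, L\}$, so the two conditions in the lemma are two sides of the same coin.

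First I compute the three pairwise joins of $\{P_i\wedge P_j, P_i\wedge P_k, L\}$. By the definitions of $L^{ij}$ and $L^{ik}$, two of them are $(P_i\wedge P_j)\vee L=L^{ij}$ and $(P_i\wedge P_k)\vee L=L^{ik}$. The third is $(P_i\wedge P_j)\vee(P_i\wedge P_k)=P_i$: the inequality $\preccurlyeq$ is immediate, and for the reverse I use that $\{P_1,P_2,P_3\}$ is a Cartesian decomposition, so any two points in a common $P_i$-part can be joined by a walk of length two -- one step along a $P_i\wedge P_k$-line adjusting the $j$-coordinate, then one along a $P_i\wedge P_j$-line adjusting the $k$-coordinate. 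This already settles the reverse implication of the lemma: if $\{P_i\wedge P_j, P_i\wedge P_k, L\}$ generates a Cartesian lattice, then its pairwise joins $\{P_i,L^{ij},L^{ik}\}$ are the associated Cartesian decomposition.

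For the forward direction, assume $\{P_i,L^{ij},L^{ik}\}$ is a Cartesian decomposition, so by Proposition~\ref{p:antiiso} the minimal non-trivial elements of the associated Cartesian lattice are the pairwise meets $L^{ij}\wedge L^{ik}$, $P_i\wedge L^{ik}$ and $P_i\wedge L^{ij}$. I would identify these with $L$, $P_i\wedge P_k$ and $P_i\wedge P_j$. Lemma~\ref{lem:lc3} gives that $L$ is compatible with both $P_i\wedge P_j$ and $P_i\wedge P_k$, so Lemma~\ref{lem:lc0}(c) and~(e) show that on each part $\Delta$ of $L^{ij}$ both $(P_i\wedge P_j, P_k, L)$ and $(P_i, P_k, L)$ form Latin squares. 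Since $(P_i\wedge P_j)|_\Delta\preccurlyeq P_i|_\Delta$, if a $P_i$-row of the second Latin square split into two or more $(P_i\wedge P_j)$-rows of the first, then that $P_i$-row would contain two distinct cells with the same letter (each $(P_i\wedge P_j)$-row contains one cell of every letter), contradicting the Latin square axiom for $(P_i, P_k, L)$. Hence $P_i\wedge L^{ij}=P_i\wedge P_j$, and symmetrically $P_i\wedge L^{ik}=P_i\wedge P_k$. Finally, $L^{ij}\wedge L^{ik}=L$ since both $L$-parts and $(L^{ij}\wedge L^{ik})$-parts meet each $P_i$-part in exactly one point -- the former because $\{P_i,L\}$ is a Cartesian decomposition (from sort (LC2)), the latter by the assumption -- so any $L$-part coincides with the $(L^{ij}\wedge L^{ik})$-part that contains it.

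I expect the main obstacle to be the equality $P_i\wedge L^{ij}=P_i\wedge P_j$ in the possibly infinite setting, where cardinality-based pigeonhole arguments on Latin-square rows fail; this is circumvented by the letter-counting argument above, which invokes only the Latin-square axioms and so remains valid in any cardinality.
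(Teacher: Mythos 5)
Your proof is correct and follows essentially the same route as the paper: one direction identifies the pairwise joins of $\{P_i\wedge P_j, P_i\wedge P_k, L\}$ as $P_i$, $L^{ij}$, $L^{ik}$, and the other identifies the pairwise meets of $\{P_i, L^{ij}, L^{ik}\}$ as $P_i\wedge P_j$, $P_i\wedge P_k$, $L$, using Lemmas~\ref{lem:lc0} and~\ref{lem:lc3}. The only local difference is that for $P_i\wedge L^{ij}=P_i\wedge P_j$ the paper compares the two Cartesian decompositions $\{P_k,P_i\wedge L^{ij}\}$ and $\{P_k,P_i\wedge P_j\}$ and uses the refinement $P_i\wedge P_j\preccurlyeq P_i\wedge L^{ij}$, whereas you count letters in the two Latin squares on a part of $L^{ij}$ (in effect re-deriving condition~(d) of Lemma~\ref{lem:lc0}); both arguments are cardinality-free and equally valid.
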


\begin{proof}
  If $\{P_i \wedge P_j, P_i\wedge P_k,L\}$ generates a Cartesian lattice under
  taking suprema then the maximal partitions in the Cartesian lattice are
  $(P_i \wedge P_j) \vee (P_i\wedge P_k)$,  $(P_i \wedge P_j) \vee L$ and
  $(P_i \wedge P_k) \vee L$. They form a Cartesian decomposition, and  
  are equal to $P_i$, $L^{ij}$ and $L^{ik}$
  respectively.

  Conversely, suppose that $\{P_i, L^{ij},L^{ik}\}$ is a Cartesian decomposition
  of~$\Omega$.  The minimal partitions in the corresponding Cartesian lattice
  are $P_i \wedge L^{ij}$, $P_i \wedge L^{ik}$ and $L^{ij} \wedge L^{ik}$.  Now,
  $L \preccurlyeq L^{ij}$ and $L \preccurlyeq L^{ik}$, so
  $L \preccurlyeq L^{ij} \wedge L^{ik}$.
  Because the Latin cube has sort~(LC2), $\{P_i,L\}$ and 
  $\{P_i,L^{ij}\wedge L^{ik}\}$ are both Cartesian decompositions of~$\Omega$.
  Since
  $L \preccurlyeq L^{ij} \wedge L^{ik}$, this forces $L=L^{ij}\wedge L^{ik}$. 

The identities of the other two infima are confirmed by a similar argument.
We have $P_i \wedge P_j \preccurlyeq P_i$, and
$P_i \wedge P_j \preccurlyeq L^{ij}$, by definition of~$L^{ij}$.  Therefore
  $P_i \wedge P_j \preccurlyeq P_i \wedge L^{ij}$.
Lemmas~\ref{lem:lc0} and~\ref{lem:lc3} show that $\{P_i,P_k,L^{ij}\}$ is a
Cartesian decomposition of~$\Omega$.  Therefore $\{P_k,P_i \wedge L^{ij}\}$
and $\{P_k, P_i \wedge P_j\}$ are both Cartesian decompositions of~$\Omega$.
Since $P_i \wedge P_j \preccurlyeq P_i \wedge L^{ij}$, this forces
$P_i \wedge P_j = P_i \wedge L^{ij}$.
Likewise, $P_i \wedge P_k = P_i \wedge L^{ik}$. 
\end{proof}

The following theorem is a direct consequence of 
Definitions~\ref{def:reg} and~\ref{def:refine} and
Lemmas~\ref{lem:lc0}, \ref{lem:lc3} and~\ref{lem:lc4}.

\begin{theorem}
\label{thm:regnice}
For a  Latin cube of sort~(LC2), the following conditions are equivalent.
\begin{enumerate}
\item
  The Latin cube is regular.
\item
  The Latin cube is $\{1,2\}$-regular, $\{1,3\}$-regular and $\{2,3\}$-regular.
\item
  The partition $L$ is compatible with each of $P_1\wedge P_2$, $P_1\wedge P_3$
  and $P_2\wedge P_3$.
\item Each of $\{P_1,P_2,P_3\}$,
  $\{P_1,L^{12},L^{13}\}$, $\{P_2,L^{12},L^{23}\}$ and $\{P_3, L^{13}, L^{23}\}$
  is a Cartesian decomposition.
\item
  Each of the sets ${\{P_1\wedge P_2, P_1 \wedge P_3, P_2\wedge P_3\}}$,
  ${\{P_1\wedge P_2, P_1 \wedge P_3, L\}}$, \linebreak
  ${\{P_1\wedge P_2,  P_2\wedge P_3, L\}}$ and
  ${\{P_1 \wedge P_3, P_2\wedge P_3, L\}}$
  generates a Cartesian lattice under taking suprema. 
\end{enumerate}
  \end{theorem}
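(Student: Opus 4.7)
The plan is to obtain the four equivalences by stringing together the three preparatory lemmas, indexed over the three choices of the pair~$\{i,j\}$ (or, dually, of the coordinate~$i$).

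First, $(a)\Leftrightarrow(b)$ is a matter of unpacking Definitions~\ref{def:reg} and~\ref{def:refine}: by the definition of parallel, two parallel lines are two $P_i\wedge P_j$-lines for some pair $\{i,j\}\subset\{1,2,3\}$, and so the global regularity condition breaks into the conjunction of $\{1,2\}$-regularity, $\{1,3\}$-regularity, and $\{2,3\}$-regularity.

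Next, I would derive $(b)\Leftrightarrow(c)$ by applying the equivalence (a)$\Leftrightarrow$(b) of Lemma~\ref{lem:lc0} once for each of the three pairs $\{i,j\}$: that equivalence says exactly that the cube is $\{i,j\}$-regular iff $L$ is compatible with $P_i\wedge P_j$. Taking the conjunction over the three pairs yields (c). For $(c)\Leftrightarrow(d)$, observe that $\{P_1,P_2,P_3\}$ is automatically a Cartesian decomposition by hypothesis. For each $i\in\{1,2,3\}$, with $\{i,j,k\}=\{1,2,3\}$, Lemma~\ref{lem:lc3} says that $\{P_i,L^{ij},L^{ik}\}$ is a Cartesian decomposition iff $L$ is compatible with both $P_i\wedge P_j$ and $P_i\wedge P_k$; collecting these three bi-implications, the three auxiliary Cartesian decompositions listed in (d) hold simultaneously iff $L$ is compatible with each of $P_1\wedge P_2$, $P_1\wedge P_3$ and $P_2\wedge P_3$, which is (c).

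Finally, for $(d)\Leftrightarrow(e)$ I would apply Lemma~\ref{lem:lc4} once for each $i\in\{1,2,3\}$, matching the Cartesian decomposition $\{P_i,L^{ij},L^{ik}\}$ in (d) with the generating set $\{P_i\wedge P_j,P_i\wedge P_k,L\}$ in (e). The fourth pair—$\{P_1,P_2,P_3\}$ matched with $\{P_1\wedge P_2,P_1\wedge P_3,P_2\wedge P_3\}$—is handled by the duality between the maximal and minimal nontrivial elements of a Cartesian lattice recorded at the end of Section~\ref{sec:firstcd} (Proposition~\ref{p:antiiso} and the subsequent discussion): the maximal elements $P_i$ of the lattice generated from below by $\{P_i\wedge P_j,P_i\wedge P_k,P_j\wedge P_k\}$ are recovered as the appropriate pairwise joins, so one set is a Cartesian decomposition iff the other generates a Cartesian lattice.

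There is no real obstacle here; the argument is a dictionary exercise in which the only thing to watch is that the three lemmas cover, between them, the six relevant pieces of information (three compatibilities, three Cartesian decompositions with an~$L^{ij}$). Writing out the implications in the cyclic order $(a)\Rightarrow(b)\Rightarrow(c)\Rightarrow(d)\Rightarrow(e)\Rightarrow(a)$, or alternatively as four separate bi-implications anchored at (c), will each work; I would favour the latter since every bi-implication is already an off-the-shelf consequence of a single lemma indexed over $i$ or~$\{i,j\}$.
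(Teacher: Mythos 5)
Your proposal is correct and follows exactly the route the paper intends: the paper states that Theorem~\ref{thm:regnice} is a direct consequence of Definitions~\ref{def:reg} and~\ref{def:refine} and Lemmas~\ref{lem:lc0}, \ref{lem:lc3} and~\ref{lem:lc4}, and your dictionary-style assembly (with the observation that $\{P_1,P_2,P_3\}$ being a Cartesian decomposition is already part of the definition of a Latin cube) is precisely that argument, written out.
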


  The condition that $\{P_1,P_2,P_3\}$ is a Cartesian decomposition 
  is a part of the definition of a Latin cube. This condition is 
  explicitly included in item~(d) of Theorem~\ref{thm:regnice} for clarity.

The final result in this section gives us the stepping stone for the proof of
Theorem~\ref{thm:main}.
The proof is quite detailed, and makes frequent use of the
relabelling techniques that we already saw in Sections~\ref{sec:LS}
and~\ref{sesc:quasi}.

\begin{theorem}
  \label{thm:bingo}
Consider a Latin cube of sort~(LC2) on an underlying set~$\Omega$,
  with coordinate partitions $P_1$, $P_2$ and $P_3$, and letter partition~$L$.
  If every three of $P_1 \wedge P_2$, $P_1 \wedge P_3$, $P_2\wedge P_3$ and $L$
  are the minimal  partitions in a Cartesian lattice on~$\Omega$
  then there is a group~$T$ such that, up to relabelling the letters
  and the three sets of coordinates,
  $\Omega=T^3$ and  $L$ is the coset partition defined
  by the diagonal subgroup $\{(t,t,t) \mid t \in T\}$.
Moreover, $T$~is unique up to group isomorphism.
\end{theorem}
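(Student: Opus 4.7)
By Theorem~\ref{thm:regnice}, the hypothesis implies that the Latin cube is regular, that each of $\{P_1,P_2,P_3\}$ and $\{P_i,L^{ij},L^{ik}\}$ (for each $\{i,j,k\}=\{1,2,3\}$) is a Cartesian decomposition of~$\Omega$, and, via the proof of Lemma~\ref{lem:lc4}, that $L=L^{ij}\wedge L^{ik}$ for each choice of indices. Using $\{P_1,P_2,P_3\}$, identify $\Omega$ with $T\times T\times T$ for an $n$-element set~$T$ with a distinguished element~$e$, taking the base cell to be $\omega_0=(e,e,e)$. Let $\alpha(a,b)$, $\beta(a,c)$ and $\gamma(b,c)$ denote the $L^{12}$-, $L^{13}$- and $L^{23}$-classes of $(a,b,c)$ respectively (each class set labeled by~$T$ so that the class of $\omega_0$ is~$e$). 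By Lemma~\ref{lem:lc0}(c), the part $\Delta_0$ of $L^{12}$ containing $\omega_0$ supports a Latin square; after relabeling the $P_2$-alphabet via the bijection sending each $P_1$-value $a$ to the unique $P_2$-coordinate of the line in $\Delta_0$ with $P_1$-coordinate~$a$ (this is a bijection of $T$ because the Cartesian decompositions $\{P_1,P_3,L^{12}\}$ and $\{P_2,P_3,L^{12}\}$ both restrict to grids on $\Delta_0$), we may assume $\Delta_0=\{(a,a,c):a,c\in T\}$, so that $\alpha(a,a)=e$ for every $a\in T$. The Latin square on $\Delta_0$ then has entry $\beta(a,c)$ at row~$a$, column~$c$, defining a loop $(T,\circ)$ via $a\circ c:=\beta(a,c)$ with two-sided identity~$e$.

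The heart of the proof is to verify the quadrangle criterion of Definition~\ref{def:quad} for $(T,\circ)$; Theorem~\ref{thm:frolov} will then promote $\circ$ to a group operation. Suppose $a_i\circ c_j=a_i'\circ c_j'$ for $(i,j)\in\{(1,1),(1,2),(2,1)\}$; equivalently, for each such $(i,j)$ the cells $(a_i,a_i,c_j)$ and $(a_i',a_i',c_j')$ of $\Delta_0$ share a letter, giving $\beta(a_i,c_j)=\beta(a_i',c_j')$ and $\gamma(a_i,c_j)=\gamma(a_i',c_j')$. Apply $L=L^{13}\wedge L^{23}$ to the cells $(a_1,a_2,c_1)$ and $(a_1',a_2',c_1')$: their $L^{13}$-classes match by the $(1,1)$ hypothesis and their $L^{23}$-classes match by the $(2,1)$ hypothesis, so they share an $L$-class, and consequently $\alpha(a_1,a_2)=\alpha(a_1',a_2')$. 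Next apply $L=L^{12}\wedge L^{13}$ to $(a_1,a_2,c_2)$ and $(a_1',a_2',c_2')$: the $L^{12}$-classes just matched, and the $L^{13}$-classes match by the $(1,2)$ hypothesis, so they share an $L$-class, yielding $\gamma(a_2,c_2)=\gamma(a_2',c_2')$. Finally apply $L=L^{12}\wedge L^{23}$ to $(a_2,a_2,c_2)$ and $(a_2',a_2',c_2')$: the $L^{12}$-classes both equal~$e$, and the $L^{23}$-classes match by the previous step, so they share an $L$-class, giving the fourth quadrangle equality $\beta(a_2,c_2)=\beta(a_2',c_2')$.

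Theorem~\ref{thm:frolov} now provides a group $(T,\cdot)$ isotopic to $(T,\circ)$; after relabeling the rows, columns, and letters of the Latin square on $\Delta_0$ to realize this isotopism, and a parallel relabeling of the $P_3$-alphabet to make the $L$-class $\lambda_0$ of $\omega_0$ become the diagonal $\{(t,t,t):t\in T\}$, we have $\Omega=T^3$. A direct computation, using the three functions $\alpha,\beta,\gamma$ expressed in terms of the group operation, then confirms that $L=P_{\delta(T,3)}$ (two cells have the same $L$-class precisely when they differ by simultaneous left multiplication of coordinates by a common element of~$T$). For uniqueness, if groups $T_1$ and $T_2$ both realize the structure, the Latin squares they induce on $\Delta_0$ are Cayley tables of $T_1$ and $T_2$ and are paratopic, so Theorem~\ref{thm:albert} gives $T_1\cong T_2$. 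The main obstacle is the three-step quadrangle verification; it crucially uses that all three identities $L=L^{ij}\wedge L^{ik}$ hold and that $\Delta_0$ has been arranged into diagonal form via the $P_2$-relabeling.
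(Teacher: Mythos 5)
Your proposal is correct in substance, and its central step takes a genuinely different route from the paper's. Both arguments share the same skeleton: regularity via Theorem~\ref{thm:regnice}, a relabelling of the $P_2$-alphabet that puts one part of $L^{12}$ into diagonal form, a quasigroup read off from one of the partitions $L^{ij}$, the quadrangle criterion, and then Theorems~\ref{thm:frolov} and~\ref{thm:albert}. Where you differ is in how the quadrangle criterion is verified. The paper extracts a single functional equation, $b\diamond c = (a\circ b)\diamond((c\star a)\sigma^{-1})$ (Equation~\eqref{eq:threeops}), from the fact that each letter class is a transversal of the $P_1$-layers, and then chases a $2\times 2$ subsquare through that equation. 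You instead use the three identities $L=L^{ij}\wedge L^{ik}$ (available from the proof of Lemma~\ref{lem:lc4}) as a ``two-out-of-three'' transfer principle --- agreement in any two of the $L^{ij}$-classes forces agreement in the third --- and chain three applications of it, anchored by $\alpha(a,a)=e$. I have checked the three steps and they are sound; this version is arguably more transparent about exactly which part of the hypothesis (all three Cartesian lattices containing $L$) is doing the work.

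Two places are compressed to the point of needing repair, though neither is fatal. First, $(T,\circ)$ as you define it is a quasigroup but not yet a loop with two-sided identity $e$: your labelling only guarantees $\beta(e,e)=e$, and making $e$ a two-sided identity would require further relabelling of the $P_1$- and $L^{13}$-alphabets (with $P_2$ relabelled in parallel so that $\Delta_0$ stays diagonal). This is harmless, since the quadrangle criterion is isotopy-invariant and your verification never invokes the identity, but the claim as written is unjustified. Second, the assertion that ``a direct computation confirms $L=P_{\delta(T,3)}$'' hides the real bookkeeping: you have pinned down $\beta$ as the group operation but have said nothing about how $\alpha$ and $\gamma$ depend on it, and without that the description of the $L$-classes does not follow. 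The gap can be filled with the same two-out-of-three principle: comparing $(a,b,c)$ with $(a',b',c')$ when the $\beta$-values agree, and using $\alpha(a,a)=\alpha(a',a')=e$, shows first that $\gamma(b,c)$ depends only on $b^{-1}c$ and then that $\alpha(a,b)$ depends only on $a^{-1}b$, after which $L=L^{12}\wedge L^{13}$ identifies $L$ as the right coset partition of the diagonal. This is essentially the content of the paper's Equations~\eqref{eq:plod}--\eqref{eq:plodon}, so the two arguments reconverge at the end; the uniqueness claim via Theorem~\ref{thm:albert} is handled the same way in both.
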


\begin{proof}
  Theorem~\ref{thm:regnice} shows that a Latin cube satisfying this condition
  must be regular.
  As $\{P_1,P_2,P_3\}$ is a Cartesian decomposition of $\Omega$ and,
  by Lemma~\ref{lem:lc0}, $\{P_i,P_j,L^{ik}\}$ is also a Cartesian 
  decomposition of~$\Omega$ whenever $\{i,j,k\} = \{1,2,3\}$,
  the cardinalities of $P_1$, $P_2$, $P_3$, $L^{12}$, $L^{13}$ and $L^{23}$
  must all be equal
  (using the argument in the proof of Proposition~\ref{p:order}). 
  Thus we may label the parts of each by the same set~$T$.
  We start by labelling the parts of $P_1$, $P_2$ and $P_3$.  This identifies
  $\Omega$ with $T^3$.  At first, these three labellings are arbitrary, but
  they are made more specific as the proof progresses.
   
    Let $(a,b,c)$ be a cell of the cube.  Because
    $P_1\wedge P_2 \preccurlyeq L^{12}$, the part of $L^{12}$ which contains
    cell $(a,b,c)$ does not depend on the value of~$c$.  Thus
    there is a binary operation $\circ$ from $T \times T$ to $T$ such that
    $a \circ b$ is the label of the part of $L^{12}$ containing
    $\{(a,b,c)\mid c \in T\}$; in other words,  $(a,b,c)$ is in part
    $a \circ b$ of $L^{12}$, irrespective of the value of $c$.
       Lemma~\ref{lem:lc0} and Proposition~\ref{p:order} show that,
    for each $a$ in $T$, the function $b \mapsto a \circ b$ is a bijection
    from $T$ to~$T$.  Similarly, for each $b$ in~$T$, the function
    $a \mapsto a \circ b$ is a bijection.
    Therefore $(T,\circ)$ is a quasigroup.

    Similarly, there are binary operations $\star$ and $\diamond$ on $T$
    such that the labels of the parts of $L^{13}$ and $L^{23}$ containing
    cell $(a,b,c)$ are $c \star a$ and $b \diamond c$ respectively.
    Moreover, $(T,\star)$ and $(T,\diamond)$ are both quasigroups.

    Now we start the process of making explicit bijections between some pairs
    of the six partitions.
    Choose any part of $P_1$ and label it $e$.  Then the labels of the parts
    of $L^{12}$  can be aligned with those of $P_2$ so that $e \circ b= b$ for
    all values of~$b$.
    In the quasigroup $(T, \star)$, we may use the column headed $e$ to give
    a permutation $\sigma$ of $T$ to align the labels of the parts of~$P_3$
    and those of~$L_{13}$ so that $c\star e = c\sigma$ for all values of~$c$.

  Let $(a,b,c)$ be a cell of the cube.  Because $\{L,P_1\}$ is a Cartesian
  decomposition of the cube, there is a unique cell $(e,b',c')$
  in the same part of  $L$ as $(a,b,c)$.  Then
  \begin{eqnarray*}
    a \circ b & = & e \circ b' = b',\\
    c \star a & = & c' \star e = c'\sigma, \quad \mbox{and}\\
    b \diamond c& =& b'\diamond c'.
  \end{eqnarray*}
  Hence
  \begin{equation}
    b\diamond c = (a\circ b) \diamond ((c \star a)\sigma^{-1})
    \label{eq:threeops}
  \end{equation}
  for all values of $a$, $b$ and $c$ in~$T$.

  The quasigroup $(T,\diamond)$ can be viewed as a Latin square with rows
  labelled by parts of $P_2$ and columns labelled by parts of $P_3$.
  Consider the $2 \times 2$ subsquare shown in Figure~\ref{fig:subsq}.  It has
  $b_1 \diamond c_1 = \lambda$, $b_1 \diamond c_2 = \mu$,
  $b_2 \diamond c_1 = \nu$ and $b_2 \diamond c_2 = \phi$.

  \begin{figure}
    \[
    \begin{array}{c|c|c|}
      \multicolumn{1}{c}{} & \multicolumn{1}{c}{c_1} & \multicolumn{1}{c}{c_2}\\
      \cline{2-3}
      b_1 & \lambda & \mu\\
      \cline{2-3}
      b_2 & \nu & \phi\\
      \cline{2-3}
      \end{array}
    \]
    \caption{A $2 \times 2$ subsquare of the Latin square defined by
      $(T,\diamond)$}
    \label{fig:subsq}
  \end{figure}

  Let $b_3$ be any row of this Latin square.
  Then there is a unique $a$ in $T$ such
  that $a \circ b_1=b_3$.  By Equation~(\ref{eq:threeops}),
  \begin{eqnarray*}
    b_3 \diamond ((c_1 \star a)\sigma^{-1}) & = &
    (a \circ b_1) \diamond ((c_1 \star a)\sigma^{-1})  = b_1 \diamond c_1
    = \lambda, \quad \mbox{and}\\
     b_3 \diamond ((c_2 \star a)\sigma^{-1}) & = &
    (a \circ b_1) \diamond ((c_2 \star a)\sigma^{-1})  = b_1 \diamond c_2
    = \mu.
  \end{eqnarray*}
  The unique occurrence of letter $\nu$ in column $(c_1\star a)\sigma^{-1}$ of
this Latin square  is in row~$b_4$, where $b_4= a \circ b_2$, because
  \[
      b_4 \diamond ((c_1 \star a)\sigma^{-1})  = 
    (a \circ b_2) \diamond ((c_1 \star a)\sigma^{-1})  = b_2 \diamond c_1
    = \nu.
    \]
    Now
    \[
      b_4 \diamond ((c_2 \star a)\sigma^{-1})  = 
    (a \circ b_2) \diamond ((c_2 \star a)\sigma^{-1})  = b_2 \diamond c_2
    = \phi.
    \]

    This shows that whenever the letters in three cells of a $2 \times 2$
    subsquare are known then the letter in the remaining cell is forced.
    That is, the Latin square $(T,\diamond)$ 
    satisfies the quadrangle criterion (Definition~\ref{def:quad}).
    By Theorem~\ref{thm:frolov}, this property proves that $(T,\diamond)$ is
    isotopic to the Cayley table of a group. By \cite[Theorem~2]{albert},
    this group is unique up to group isomorphism.

    As remarked at the end of Section~\ref{sesc:quasi}, we can now relabel the
    parts of $P_2$, $P_3$ and $L^{23}$ so that $b \diamond c = b^{-1}c$ for
    all $b$, $c$ in $T$.  Then Equation~(\ref{eq:threeops}) becomes
    $b^{-1} c = (a\circ b)^{-1} ((c \star a)\sigma^{-1})$, so that
    \begin{equation}
      (a\circ b) b^{-1} c = (c \star a)\sigma^{-1}
      \label{eq:plod}
      \end{equation}
    for all $a$, $b$, $c$ in $T$.
    Putting $b=c$ in Equation~(\ref{eq:plod}) gives
    \begin{equation}
      (a \circ c)\sigma = c \star a
      \label{eq:plodonon}
    \end{equation}
    for all $a$, $c$ in $T$, while putting $b=1$ gives
    \[
    ((a \circ 1) c)\sigma = c\star a
    \]
        for all $a$,  $c$ in $T$.
        Combining these gives
        \begin{equation}
          \label{eq:plodon}
          a \circ c = (a \circ 1)c = (c\star a)\sigma^{-1}
          \end{equation}
        for all $a,c\in T$.

        We have not yet made any explicit use of the labelling of the parts
        of $P_1$ other than $e$, with $e \circ 1=1$.
        The map $a \mapsto a \circ 1$ is a bijection
        from $T$ to $T$, so we may label the parts of $P_1$ in such a way
        that $e=1$ and $a \circ 1 = a^{-1}$ for all $a$ in $T$.
        Then Equation~(\ref{eq:plodon}) shows that $a \circ b = a^{-1}b$
        for all $a$, $b$ in $T$.

Now that we have fixed the labelling of the parts of $P_1$, $P_2$ and $P_3$,
it is clear that they are the partitions of $T^3$
into right cosets of the subgroups as shown in the first three rows of
Table~\ref{tab:coset}.

Consider the partition $L^{23}$.  For $\alpha =(a_1,b_1,c_1)$ and
$ \beta =(a_2,b_2,c_2)$ in~$T^3$, we have (using the notation in Section~\ref{sec:part})
\begin{eqnarray*}
L^{23}[\alpha] = L^{23}[\beta]
  & \iff & b_1 \diamond c_1 = b_2 \diamond c_2\\
& \iff & b_1^{-1}c_1 = b_2^{-1}c_2\\
& \iff & \mbox{$\alpha$ and $\beta$ are in the same right coset of $K_{23}$,}
\end{eqnarray*}
where $K_{23} = \{(t_1,t_2,t_2) \mid t_1 \in T,\ t_2 \in T\}$.  In other words,
$L^{23}$ is the coset partition of $T^3$ defined by $K_{23}$.

Since $a \circ b = a^{-1}b$, a similar argument shows that $L^{12}$ is the
coset partition of $T^3$ defined by $K_{12}$, where
$K_{12} = \{(t_1,t_1,t_2) \mid t_1 \in T,\ t_2 \in T\}$.

Equation~(\ref{eq:plodonon}) shows that the kernel of the function
$(c,a) \mapsto c \star a$ is the same as the kernel of the function
$(c,a) \mapsto a^{-1}c$, which is in turn the same as the kernel of the function
$(c,a) \mapsto c^{-1}a$.  It follows that $L^{13}$ is the
coset partition of $T^3$ defined by $K_{13}$, where
$K_{13} = \{(t_1,t_2,t_1) \mid t_1 \in T,\ t_2 \in T\}$.  

Thus the partitions $P_i$ and $L^{ij}$ are the partitions of $T^3$
into right cosets of the subgroups as shown in Table~\ref{tab:coset}.
Lemma~\ref{lem:lc4} shows that the letter partition~$L$ is equal to
$L^{ij} \wedge L^{ik}$ whenever $\{i,j,k\} = \{1,2,3\}$.
Consequently, $L$ is the partition into right cosets of the diagonal
subgroup $\{(t,t,t) \mid t \in T\}$. 
\end{proof}

\begin{table}[htbp]
  \[
  \begin{array}{crcl}
    \mbox{Partition} & \multicolumn{3}{c}{\mbox{Subgroup of $T^3$}}\\
      \hline
      P_1 & & & \{(1,t_2,t_3)\mid t_2 \in T, \ t_3 \in T\}\\
      P_2 & & & \{(t_1,1,t_3) \mid t_1 \in T, \ t_3 \in T\}\\
      P_3 & & & \{(t_1,t_2,1)\mid t_1 \in T, \ t_2 \in T\}\\
      L^{12} & K_{12} & = & \{(t_1,t_1,t_3) \mid t_1 \in T, \ t_3 \in T\}\\
      L^{13} & K_{13} & = & \{(t_1,t_2,t_1) \mid t_1 \in T, \ t_2 \in T\}\\
       L^{23} & K_{23} & = & \{(t_1,t_2,t_2) \mid t_1 \in T, \ t_2 \in T\}\\
\hline
P_1\wedge P_2 & & & \{(1,1,t):t\in T\}\\
P_1\wedge P_3 & & & \{(1,t,1):t\in T\}\\
P_2\wedge P_3 & & & \{(t,1,1):t\in T\}\\
L & \delta(T,3) & = & \{(t,t,t):t\in T\}
    \end{array}
  \]
  \caption{Coset partitions at the end of the proof of Theorem~\ref{thm:bingo} 
and some infima}
  \label{tab:coset}
  \end{table}

The converse of Theorem~\ref{thm:bingo} was given in Theorem~\ref{th:upfront}.

For $\{i,j,k\}= \{1,2,3\}$, let $H_i$ be the intersection of the subgroups of
$T^3$ corresponding to partitions $P_i$ and $L^{jk}$ in Table~\ref{tab:coset},
so that the parts of $P_i \wedge L^{jk}$ are the right cosets of $H_i$.
Then $H_1 = \{(1,t,t)\mid t \in T\}$ and $H_2 = \{(u,1,u)\mid u \in T\}$. If
$T$ is abelian then $H_1H_2=H_2H_1$ and so the right-coset partitions
of $H_1$ and $H_2$ are compatible.  If $T$ is not abelian then $H_1H_2 \ne
H_2H_1$ and so these coset partitions are not compatible.  Because we do not
want to restrict our theory to abelian groups, we do not require our collection
of partitions to be closed under infima.  Thus we require a join-semilattice
rather than a lattice.

\subsection{Automorphism groups}

\begin{theorem}
Suppose that a regular Latin cube $M$ of sort (LC2) arises from a group $T$
by the construction of Theorem~\ref{th:upfront}. Then the group of
automorphisms of $M$ is equal to the diagonal group $D(T,3)$.
\label{t:autDT3}
\end{theorem}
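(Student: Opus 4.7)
The plan is to imitate the proof of Theorem~\ref{t:autDT2}, applying Proposition~\ref{p:subgp} twice. By Theorems~\ref{thm:regnice} and~\ref{thm:bingo}, $M$ determines the diagonal semilattice $\dsl T3$, whose four minimal (``atomic'') partitions are $Q_0=L$ and $Q_i=P_j\wedge P_k$ for $\{i,j,k\}=\{1,2,3\}$; these are the right-coset partitions of the four subgroups of $T^3$ listed in Table~\ref{tab:coset}. Since each $P_i$ and each $L^{ij}$ is a join of two of these atoms, any permutation of $\Omega$ that preserves $M$ preserves the entire semilattice, and in particular permutes the four atoms.

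First I will verify that $D(T,3)\leqslant\Aut(M)$. Using the coset descriptions in Table~\ref{tab:coset}, one checks that each of the generator types (I)--(V) in Remark~\ref{rem:diaggens} permutes $\{Q_0,Q_1,Q_2,Q_3\}$: types (I)--(III) fix each $Q_i$ individually; type (IV) permutes $Q_1,Q_2,Q_3$ while fixing $Q_0=L$; and type (V) interchanges $Q_0$ with $Q_1$ (this is exactly the transposition of homogeneous coordinates $0$ and $1$) while fixing $Q_2,Q_3$. These verifications are direct and have the same flavour as those made in the proof of Theorem~\ref{t:autDT2}.

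For the reverse inclusion I apply Proposition~\ref{p:subgp} twice. First, $D(T,3)$ induces the full symmetric group $S_4$ on the atom set via its $S_{m+1}$ factor, so by the previous step so does $\Aut(M)$; it therefore suffices to show that the pointwise stabilisers $G$ and $H$ of $\{Q_0,\ldots,Q_3\}$ inside $\Aut(M)$ and $D(T,3)$ respectively agree. The group $H$ is generated by elements of types (I)--(III), and the subgroup $T_1\times T_2\times T_3$ from type~(I) already acts regularly on $\Omega=T^3$, so both $H$ and $G\supseteq H$ are transitive on $\Omega$. Hence it is enough to show that the stabilisers of $[1,1,1]$ in $G$ and in $H$ coincide; a routine check gives $H_{[1,1,1]}=\Aut(T)$, acting diagonally on the three coordinates.

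The main step---the Latin-cube analogue of Proposition~\ref{p:autlatin}---will be proving $G_{[1,1,1]}\subseteq\Aut(T)$. Given $\phi\in G_{[1,1,1]}$, preservation of each $P_i$ forces $\phi(t_1,t_2,t_3)=(t_1\sigma_1,t_2\sigma_2,t_3\sigma_3)$ for permutations $\sigma_1,\sigma_2,\sigma_3$ of $T$ that each fix~$1$. Applying the condition that $\phi$ preserves $L$ (the right-coset partition of $\delta(T,3)$) to the $L$-equivalent pair $(1,1,1),(s,s,s)$ forces $s\sigma_1=s\sigma_2=s\sigma_3$ for every $s\in T$, so the three permutations coincide in a common map $\sigma$; applying the same condition to the $L$-equivalent pair $(1,t,1),(a,ta,a)$ then yields $(t\sigma)(a\sigma)=(ta)\sigma$ for all $t,a\in T$, which is exactly the condition that $\sigma\in\Aut(T)$. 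Proposition~\ref{p:subgp} now gives $G=H$, completing the proof.
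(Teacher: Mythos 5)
Your proof is correct and follows essentially the same route as the paper: the two-stage reduction via Proposition~\ref{p:subgp} to showing that a strong automorphism fixing $[1,1,1]$ lies in $\Aut(T)$ is exactly the paper's strategy, and your explicit coordinatewise argument (the $\sigma_i$ coincide because $L[(1,1,1)]$ is preserved, and $\sigma$ is multiplicative because the other $L$-classes are preserved) fills in the detail that the paper's sketch delegates to the coordinatisation carried out in the proof of Theorem~\ref{thm:bingo}. One small slip: with the paper's right-coset convention the $L$-class of $(1,t,1)$ is $\{(a,at,a)\mid a\in T\}$ rather than $\{(a,ta,a)\mid a\in T\}$, so the resulting identity should read $(at)\sigma=(a\sigma)(t\sigma)$; the conclusion $\sigma\in\Aut(T)$ is unaffected.
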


\begin{proof}[Proof (sketch)]
It is clear from the proof of Theorem~\ref{th:upfront} that $D(T,3)$ is
a subgroup of $\Aut(M)$, and we have to prove equality.

Just as in the proof of Theorem~\ref{t:autDT2}, if $G$~denotes the
automorphism group of~$M$, then it suffices to prove that the group of strong
automorphisms of~$M$ fixing the cell $(1,1,1)$ is equal to $\Aut(T)$.

In the proof of Theorem~\ref{thm:bingo}, we choose a part of the partition
$P_1$ which will play the role of the identity of $T$, and using the partitions
we find bijections between the parts of the maximal partitions and show that
each naturally carries the structure of the group $T$. It is clear that
any automorphism of the Latin cube which fixes $(1,1,1)$ will preserve these
bijections, and hence will be an automorphism of $T$. So we have equality. 
\end{proof}

\begin{remark}
We will give an alternative proof of this theorem in the next section, in
Theorem~\ref{t:autDTm}.
\end{remark}

\section{Diagonal groups and diagonal semilattices}
\label{sec:diag}
\subsection{Diagonal semilattices}\label{sec:diag1}

Let $T$ be a group, and $m$ be an integer with $m\geqslant2$. Take $\Omega$ to
be the group~$T^m$. Following our convention in Section~\ref{sect:diaggroups},
we will now denote elements of $\Omega$ by $m$-tuples in square brackets.

Consider the following subgroups of $\Omega$:
\begin{itemize}
\item for $1\leqslant i\leqslant m$, $T_i$ is the $i$th coordinate subgroup, the set
of $m$-tuples with $j$th entry $1$ for $j\ne i$;
\item $T_0$ is the diagonal subgroup $\delta(T,m)$ of $T^m$, the set
$\{[t,t,\ldots,t] \mid t\in T\}$.
\end{itemize}
Let $Q_i$ be the partition of $\Omega$ into right cosets of $T_i$ for
$i=0,1,\ldots,m$.

Observe that, by Theorem~\ref{thm:bingo}, the partitions $P_2\wedge P_3$,
$P_1\wedge P_3$, $P_2\wedge P_3$ and $L$ arising from a regular Latin cube
of sort (LC2) are the coset partitions defined by the four subgroups $T_1$,
$T_2$, $T_3$, $T_0$ of $T^3$ just described in the case $m=3$ (see the last
four rows of Table~\ref{tab:coset}).

\begin{prop}
\label{p:diagsemi}
  \begin{enumerate}
\item The set $\{Q_0,\ldots,Q_m\}$ is invariant under the diagonal
group $D(T,m)$.
\item Any $m$ of the partitions $Q_0,\ldots,Q_m$ generate a
Cartesian lattice on $\Omega$ by taking suprema.
\end{enumerate}
\end{prop}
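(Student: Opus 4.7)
My plan is to move to the homogeneous description of $\widehat D(T,m)$, which exposes an $S_{m+1}$-symmetry that will make part~(b) reduce to a single case.  For (a), I first identify each $Q_i$ with the partition of $\Omega\cong T^{m+1}/\delta(T,m+1)$ induced by the subgroup $\widehat T_i\cdot\delta(T,m+1)$, where $\widehat T_i$ denotes the $i$th coordinate subgroup of $T^{m+1}$ for $i=0,1,\ldots,m$.  This is a direct check: two representatives $(1,t_1,\ldots,t_m)$ and $(1,s_1,\ldots,s_m)$ lie in the same right coset of $\widehat T_i\cdot\delta(T,m+1)$ iff the ratios $u_j^{-1}v_j$ (with $u_0=v_0=1$) are constant over $j\ne i$.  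For $i=0$ this recovers the diagonal coset partition on $T^m$, and for $i\ge 1$ it recovers the $i$th coordinate partition --- exactly the $Q_i$ of the proposition.

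With this reinterpretation, (a) falls out at once: $T^{m+1}$ acts by right multiplication, which preserves every right-coset partition setwise; $\Aut(T)$ acts diagonally and so fixes each $\widehat T_i$ and $\delta(T,m+1)$ individually; and $S_{m+1}$ permutes the $m+1$ coordinate subgroups while fixing the diagonal, hence permutes $\{Q_0,\ldots,Q_m\}$ --- in fact as the full symmetric group on that set, since the $Q_i$ are pairwise distinct when $|T|>1$.  For (b), this transitive $S_{m+1}$-action reduces the claim to a single choice of $m$ partitions, for which I take $\{Q_1,\ldots,Q_m\}$.  Letting $P_i^\star$ denote the kernel of the $i$th coordinate projection $T^m\to T$, the set $\{P_1^\star,\ldots,P_m^\star\}$ is visibly a Cartesian decomposition of $T^m$ in the sense of Definition~\ref{def:cart}, and an immediate unpacking shows $Q_i=\bigwedge_{j\ne i}P_j^\star$: both partitions identify two tuples precisely when they agree in every position other than the $i$th.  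Hence $Q_1,\ldots,Q_m$ are the minimal non-trivial partitions of this Cartesian lattice, and so generate it under suprema by Proposition~\ref{p:antiiso} and the subsequent discussion.

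The only real subtlety is the identification in the first paragraph: translating partitions defined via subgroups of $T^m$ (the inhomogeneous picture) into coset partitions coming from subgroups of $T^{m+1}$ (the homogeneous picture).  Once that dictionary is in place, everything reduces to the standard fact that right-coset partitions are preserved setwise both by right multiplication in the ambient group and by automorphisms fixing the subgroup.
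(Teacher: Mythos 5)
Your proof is correct and follows essentially the same route as the paper: part (a) comes down to checking invariance under the generators of $\widehat D(T,m)$, with the induced $S_{m+1}$-action supplying the key symmetry, and part (b) is reduced by that symmetry to the single case $\{Q_1,\ldots,Q_m\}$, which is the standard hypercube Cartesian lattice. The paper carries out (a) in the inhomogeneous coordinates, explicitly verifying that the transposition $(0\,1)$ acts as $[t_1,\ldots,t_m]\mapsto[t_1^{-1},t_1^{-1}t_2,\ldots,t_1^{-1}t_m]$ and swaps $Q_0$ with $Q_1$; your homogeneous reformulation makes that explicit check unnecessary, a small gain in transparency.

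One slip to fix in your ``direct check'': for \emph{right} cosets the same-coset condition is $uv^{-1}\in\widehat T_i\,\delta(T,m+1)$, i.e.\ that $u_jv_j^{-1}$ is constant over $j\ne i$, not $u_j^{-1}v_j$. As written, your formula in the case $i=0$ describes the partition into \emph{left} cosets of the diagonal, which differs from $Q_0$ when $T$ is non-abelian. The identification you are aiming at is nevertheless correct once the ratio is written the right way round, and nothing downstream depends on the garbled formula, since (a) uses only the abstract right-coset description and (b) only uses the partitions $Q_1,\ldots,Q_m$, for which the two conditions coincide.
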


\begin{proof}
  \begin{enumerate}
\item It is clear that the set of partitions is invariant under 
right translations by elements of $T^m$ and left translations by elements of
the diagonal subgroup $T_0$, by automorphisms of $T$ (acting in the same
way on all coordinates), and under the symmetric group $S_m$ permuting the
coordinates. Moreover, it can be checked that the map
\[[t_1,t_2,\ldots,t_m]\mapsto[t_1^{-1},t_1^{-1}t_2,\ldots,t_1^{-1}t_m]\]
interchanges $Q_0$ and $Q_1$ and fixes the other partitions. So we have
the symmetric group $S_{m+1}$ acting on the whole set
$\{Q_0,\ldots,Q_m\}$. These transformations generate the diagonal group
$D(T,m)$; see Remark~\ref{rem:diaggens}. 

\item The set $T^m$ naturally has the structure of an $m$-dimensional
hypercube, and $Q_1,\ldots,Q_m$ are the minimal partitions in the
corresponding Cartesian lattice. For any other set of $m$ partitions,
the assertion follows because the symmetric group $S_{m+1}$ preserves
the set of $m+1$ partitions. 
\end{enumerate}
  \end{proof}

\begin{defn}
  Given a group~$T$ and an integer~$m$ with $m\geqslant 2$, define the partitions
  $Q_0$, $Q_1$, \ldots, $Q_m$ as above.
    For each subset $I$ of $\{0, \ldots, m\}$, put $Q_I = \bigvee_{i\in I}Q_i$.
    The \emph{diagonal semilattice} $\mathfrak{D}(T,m)$ is  the set
    $\{Q_I \mid I \subseteq \{0,1,\ldots,m\}\}$ of partitions of the set $T^m$.
\end{defn}

Thus the diagonal semilattice $\mathfrak{D}(T,m)$ is the set-theoretic union
of the ${m+1}$ Cartesian lattices in Proposition~\ref{p:diagsemi}(b).
Clearly it admits the diagonal group $D(T,m)$ as a group of automorphisms.

\begin{prop}
  \label{p:dsjs}
$\mathfrak{D}(T,m)$ is a join-semilattice, that is, closed under taking
joins. For $m>2$ it is not closed under taking meets.
\end{prop}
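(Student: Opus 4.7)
The plan is to handle the two assertions separately. Closure under joins is essentially definitional: associativity and commutativity of joins in the partition lattice yield, for any $I, J \subseteq \{0, 1, \ldots, m\}$,
\[
  Q_I \vee Q_J = \Bigl(\bigvee_{i \in I} Q_i\Bigr) \vee \Bigl(\bigvee_{j \in J} Q_j\Bigr) = \bigvee_{k \in I \cup J} Q_k = Q_{I \cup J},
\]
which belongs to $\mathfrak{D}(T,m)$ by definition. So all the substance lies in establishing non-closure under meets when $m \geqslant 3$.

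For that, I plan to pass to the subgroup side using Proposition~\ref{prop:coset}. Iterating part~(c), every element of the diagonal semilattice can be written as $Q_I = P_{H_I}$ with $H_I := \langle T_i : i \in I\rangle \leqslant T^m$; then part~(b) gives $Q_I \wedge Q_J = P_{H_I \cap H_J}$ in the ambient partition lattice of $\Omega = T^m$. So it suffices to exhibit $I$ and $J$ for which $H_I \cap H_J$ is not of the form $H_K$ for any $K \subseteq \{0,1,\ldots,m\}$.

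My proposed choice is $I = \{1, 2, \ldots, m-1\}$ and $J = \{0, m\}$. A direct computation, parallel to the ones in the proof of Theorem~\ref{th:upfront}, gives
\[
  H_I = \{[t_1, \ldots, t_{m-1}, 1] : t_i \in T\} \quad\text{and}\quad H_J = \langle T_0, T_m\rangle = \{[t, \ldots, t, u] : t, u \in T\},
\]
whose intersection is $D := \{[t, \ldots, t, 1] : t \in T\}$, a subgroup of order $|T|$. To rule out $D = H_K$ for every $K$, the key auxiliary observation is that $|H_K| \geqslant |T|^2$ as soon as $|K| \geqslant 2$: for $K \subseteq \{1,\ldots,m\}$ the generators commute and meet trivially in distinct coordinates, and for $0 \in K$ the calculation inside the proof of Theorem~\ref{th:upfront} gives $T_0 \cap T_i = \{1\}$ and $T_0 T_i = T_i T_0$, so $|\langle T_0, T_i\rangle| = |T|^2$. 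Hence $|D| = |T|$ forces $|K| \leqslant 1$, restricting $H_K$ to the list consisting of the trivial subgroup and $T_0, T_1, \ldots, T_m$; none of these equals $D$ because $|T| > 1$ and $m \geqslant 3$ (the subgroup $D$ has its first $m-1 \geqslant 2$ coordinates tied together and its last coordinate trivial, which matches none of the candidates).

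The whole argument is short; the main obstacle is the bookkeeping in the last step, where care is needed to ensure that every candidate subgroup $H_K$ of order $|T|$ has been considered and that the counterexample $D$ is genuinely distinct from each of them.
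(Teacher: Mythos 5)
Your strategy coincides with the paper's: the join part is immediate from $Q_I\vee Q_J=Q_{I\cup J}$, and for the failure of meet-closure both you and the paper pass to subgroups of $T^m$ via Proposition~\ref{prop:coset}, writing $Q_I=P_{H_I}$ and exhibiting two members of the semilattice whose meet is the right-coset partition of a subgroup not equal to any $H_K$. (The paper takes $m=3$ and the pair $T_0T_1$, $T_2T_3$, whose intersection is $\{[1,y,y]\mid y\in T\}$, remarking that the example generalises; your pair $H_{\{1,\ldots,m-1\}}$, $H_{\{0,m\}}$ is the same witness up to the $S_{m+1}$-symmetry and is stated uniformly for all $m\geqslant 3$.)

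There is, however, one step that fails in the generality in which the proposition is stated: the paper places no finiteness hypothesis on $T$, and your exclusion of $D=H_K$ for $|K|\geqslant 2$ rests on the cardinality comparison $|H_K|\geqslant|T|^2>|T|=|D|$. When $T$ is infinite we have $|T|^2=|T|$, so this comparison proves nothing. The conclusion is still true and the repair is easy and needs no counting: for any non-empty $K$ choose $i\in K$; then $T_i\leqslant H_K$, while a direct check (using $m\geqslant 3$ for $1\leqslant i\leqslant m-1$, and the trivial last coordinate for $i=0$ and $i=m$) gives $T_i\cap D=\{1\}$. Since $|T|>1$ this shows $T_i\not\leqslant D$, hence $H_K\neq D$; and $H_{\emptyset}=\{1\}\neq D$. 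With that substitution your argument is complete for arbitrary $T$, finite or infinite.
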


\begin{proof}
  For each proper subset $I$ of $\{0, \ldots, m\}$, the partition~$Q_I$ occurs
in the Cartesian lattice generated by $\{Q_i \mid i \in K\}$
  for every subset $K$ of $\{0,\ldots,m\}$ which contains $I$ and has
  cardinality~$m$.

  Let $I$ and $J$ be two proper subsets of $\{0,\ldots,m\}$.  If
  $\left |I \cup J\right| \leqslant m$ then there is a subset~$K$ of
  $\{0, \ldots, m\}$ with $\left|K\right|=m$ and $I\cup J \subseteq K$.
Then $Q_I\vee Q_J = Q_{I\cup J}$ in the Cartesian lattice defined by $K$, 
and this supremum does not depend on the choice of $K$.  Therefore
$Q_I\vee Q_J \in \mathfrak{D}(T,m)$.

On the other hand, if $I\cup J=\{0,\ldots,m\}$, then
\[Q_I\vee Q_J = Q_0 \vee Q_1 \vee \cdots \vee Q_m \succcurlyeq
Q_1 \vee Q_2 \vee \cdots \vee Q_m = U. 
\]
Hence $Q_I\vee Q_J=U$, and so $Q_I\vee Q_J\in \mathfrak{D}(T,m)$.

If $m=3$, consider the subgroups
\[H=T_0T_1=\{[x,y,y] \mid x,y\in T\}\quad\mbox{ and }
\quad K=T_2T_3=\{[1,z,w] \mid z,w\in T\}.\]
If $P_H$ and $P_K$ are the corresponding coset partitions, then
\[P_H=Q_{\{0,1\}}\quad \mbox{ and } \quad P_K=Q_{\{2,3\}},\]
which are both in $\mathfrak{D}(T,3)$. Now, by Proposition~\ref{prop:coset},
\[P_H\wedge P_K=P_{H\cap K},\]
where $H\cap K=\{[1,y,y] \mid y\in T\}$; this is a subgroup of $T^m$, but the
coset partition $P_{H\cap K}$ does not belong to $\mathfrak{D}(T,3)$. This example is
easily generalised to larger values of $m$. 
\end{proof}

When $T$~is finite, Propositions~\ref{p:diagsemi}(b) and~\ref{p:dsjs}
show that $\mathfrak{D}(T,m)$ is a Tjur block structure but is not an
orthogonal block structure when $m>2$ (see Section~\ref{sect:moreparts}).

We will see in the next section that the property in
Proposition~\ref{p:diagsemi}(b) is exactly what is required for the
characterisation of diagonal semilattices. First, we extend
Definition~\ref{def:weak}.

\begin{defn}\label{def:isomsl}
  For $i=1$, $2$, let $\mathcal{P}_i$ be a finite set of partitions of a
  set $\Omega_i$.  Then $\mathcal{P}_1$ is \textit{isomorphic} to
  $\mathcal{P}_2$ if there is a bijection $\phi$ from $\Omega_1$ to $\Omega_2$
  which induces a bijection from $\mathcal{P}_1$ to $\mathcal{P}_2$ which
  preserves the relation $\preccurlyeq$.
\end{defn}

As we saw in Section~\ref{sec:LS}, this notion of isomorphism
is called \textit{paratopism} in the context of Latin squares.

\medskip

The remark before Proposition~\ref{p:diagsemi} shows that a regular Latin
cube of sort (LC2) ``generates'' a diagonal semilattice $\mathfrak{D}(T,3)$
for a group $T$, unique up to isomorphism. The next step is to consider larger
values of $m$.

\subsection{The theorem}\label{sect:mt}

We repeat our axiomatisation of diagonal structures from the introduction.
We emphasise to the reader that we do not assume a Cartesian decomposition on 
the set $\Omega$ at the start; the $m+1$ Cartesian decompositions are imposed by
the hypotheses of the theorem, and none is privileged.

\begin{theorem}\label{th:main}
Let $\Omega$ be a set with $|\Omega|>1$, and $m$ an integer at least $2$. Let $Q_0,\ldots,Q_m$
be $m+1$ partitions of $\Omega$ satisfying the following property: any $m$
of them are the minimal non-trivial partitions in a Cartesian lattice on
$\Omega$.
\begin{enumerate}
\item If $m=2$, then the three partitions are the row, column, and letter
partitions of a Latin square on $\Omega$, unique up to paratopism.
\item If $m>2$, then there is a group $T$, unique up to group isomorphism,
such that $Q_0,\ldots,Q_m$ are the minimal non-trivial partitions in a diagonal
semilattice $\mathfrak{D}(T,m)$ on $\Omega$.
\end{enumerate}
\end{theorem}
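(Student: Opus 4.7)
For part (a) with $m=2$, the hypothesis that any two of $Q_0, Q_1, Q_2$ generate a Cartesian lattice of dimension $2$ is exactly the condition that each pair forms a grid, which is the third equivalent definition of a Latin square given in Section~\ref{sec:LS}. Uniqueness up to paratopism is automatic because the hypothesis treats the three partitions symmetrically. For the base case $m=3$ of part (b), I would set $L := Q_0$ and $P_i := \bigvee_{j \in \{1,2,3\} \setminus \{i\}} Q_j$; by the Boolean anti-isomorphism of the Cartesian lattice generated by $\{Q_1,Q_2,Q_3\}$ we have $Q_1 = P_2 \wedge P_3$, $Q_2 = P_1 \wedge P_3$ and $Q_3 = P_1 \wedge P_2$, and a short check (using that, in the Cartesian lattice generated by any three of $Q_0, Q_1, Q_2, Q_3$, a minimal element together with the join of the other two minimal elements forms a grid) shows that $\{L, P_i\}$ is a Cartesian decomposition for each $i$. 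Hence $(P_1, P_2, P_3, L)$ is a Latin cube of sort (LC2), and the hypothesis becomes precisely that of Theorem~\ref{thm:bingo}, which yields the group $T$.

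For the inductive step with $m \geq 4$, assuming the theorem holds in dimension $m-1$, I would choose a part $\gamma$ of $P := Q_1 \vee Q_2 \vee \cdots \vee Q_{m-1}$ and consider the $m$ partitions of $\gamma$ given by
\[
R_0 := (Q_0 \vee Q_m)|_\gamma, \qquad R_i := Q_i|_\gamma \ \text{for } 1 \leq i \leq m-1.
\]
The restriction $Q_0|_\gamma$ itself is trivial, but $(Q_0 \vee Q_m)|_\gamma$ is not, since $Q_m$ permits a temporary step out of $\gamma$; as the example $\mathfrak{D}(T,m)$ suggests, this join plays on $\gamma$ the role of the ``diagonal'' partition of a dimension-$(m-1)$ diagonal structure. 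The central claim is that $\{R_0, R_1, \ldots, R_{m-1}\}$ satisfies the hypothesis of the theorem in dimension $m-1$, so that the inductive hypothesis furnishes a group $T$, unique up to isomorphism, together with an identification $\gamma \cong T^{m-1}$ under which the $R_i$ are the minimal non-trivial partitions of $\mathfrak{D}(T, m-1)$. Using the partition $Q_m$, which provides a transversal matching between the parts of $P$, one extends this to $\Omega \cong T^{m-1} \times T = T^m$ and verifies that $Q_0, \ldots, Q_m$ become the minimal non-trivial partitions of $\mathfrak{D}(T, m)$.

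For the subset $\{R_1, \ldots, R_{m-1}\}$ of the central claim (omitting $R_0$), it suffices that $P$ is a maximal non-trivial partition of the Cartesian lattice generated by $\{Q_1, \ldots, Q_m\}$: then $\gamma$ is a coordinate slice and the restriction inherits a Cartesian lattice of dimension $m-1$ with the stated minimal partitions. For a subset omitting $R_k$ with $1 \leq k \leq m-1$, I would work instead with the Cartesian lattice $\mathcal{L}_k$ generated by $\{Q_0, Q_1, \ldots, \widehat{Q_k}, \ldots, Q_m\}$, which does contain $Q_0 \vee Q_m$ as one of its partitions. The hard part of the proof will be that $P$ is \emph{not} a partition in $\mathcal{L}_k$, so $\gamma$ sits in $\mathcal{L}_k$ as a ``diagonal'' slice rather than a coordinate slice; establishing that the required restrictions to this diagonal slice form a Cartesian lattice of dimension $m-1$ on $\gamma$ will call for a delicate combinatorial analysis generalising the Latin-cube lemmas (Lemmas~\ref{lem:lc0}--\ref{lem:lc4}) that underpin Theorem~\ref{thm:bingo}.
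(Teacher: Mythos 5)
Your treatment of $m=2$ and of the base case $m=3$ is correct and matches the paper: the $m=2$ case is immediate from the definitions, and for $m=3$ setting $L=Q_0$ and $P_i=\bigvee_{j\ne i}Q_j$ turns the hypothesis into exactly that of Theorem~\ref{thm:bingo}. The inductive step, however, contains a genuine gap, and you have located it yourself: the central claim that $\{R_0,R_1,\ldots,R_{m-1}\}$ is a special set of dimension $m-1$ on $\gamma$ is only established for the one subset that omits $R_0$. For a subset omitting $R_k$ with $1\leqslant k\leqslant m-1$, the set $\gamma$ is a part of $Q_1\vee\cdots\vee Q_{m-1}$, which is not a partition belonging to the Cartesian lattice generated by $\{Q_0,\ldots,\widehat{Q_k},\ldots,Q_m\}$, so there is no general principle guaranteeing that restricting that lattice to $\gamma$ yields a Cartesian lattice of dimension $m-1$; your phrase ``will call for a delicate combinatorial analysis generalising Lemmas~\ref{lem:lc0}--\ref{lem:lc4}'' accurately describes work that has not been done, and that work is the mathematical heart of the induction rather than a routine verification. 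A second, smaller gap is the final gluing: even granted an identification $\gamma\cong T^{m-1}$ for each part $\gamma$ of $P$, these identifications must be chosen coherently so that $Q_0$ and $Q_m$ individually (not merely $(Q_0\vee Q_m)|_\gamma$) become the required coset partitions, and ``one extends this to $\Omega\cong T^m$ and verifies'' conceals the need for a lifting device. Uniqueness of $T$ independent of the choice of $\gamma$ and of the omitted index is also left unaddressed.

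The paper's induction avoids both difficulties by working with quotients rather than restrictions: for each $i$ it forms $\mathcal{P}\quotient Q_i$, the partitions lying above the \emph{minimal} partition $Q_i$, viewed as partitions of the set of parts of $Q_i$. Since $Q_i$ is a minimal element of every one of the $m$ relevant Cartesian lattices containing it, Proposition~\ref{p:quots} shows at once that $\mathcal{P}\quotient Q_i$ is generated by a special set of dimension $m-1$, with no analogue of your ``hard part''; independence of $T$ from $i$ follows by passing to a double quotient. The gluing is then carried out in Theorem~\ref{th:QQ} by combining isomorphisms defined on $\mathcal{P}\quotient Q_1$ and $\mathcal{P}\quotient Q_2$ into a point bijection via the grid structure on the parts of $Q_1\vee Q_2$; making the two quotient isomorphisms compatible requires Corollary~\ref{c:forinduction}, i.e.\ the ability to lift isomorphisms of $\mathcal{P}\quotient Q$ to isomorphisms of $\mathcal{P}$, which in turn rests on the computation $\Aut(\mathfrak{D}(T,m))=D(T,m)$ of Theorem~\ref{t:autDTm}. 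If you wish to pursue your restriction-based route, you should expect to need both a higher-dimensional analogue of the Latin-cube lemmas for the ``diagonal slice'' step and an ingredient of comparable strength to the automorphism-group computation at the gluing stage.
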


Note that the converse of the theorem is true: Latin squares (with ${m=2}$)
and diagonal semilattices have the property that their minimal non-trivial
partitions do satisfy our hypotheses.

The general proof for $m\geqslant 3$ is by induction, the base case being $m=3$.
The base case follows from Theorem~\ref{thm:bingo}, as discussed in the 
preceding subsection, while the induction step
is given in Subsection~\ref{s:mtinduction}.

\subsection{Setting up}

First, we give some notation.
Let $\mathcal{P}$ be a set of partitions of $\Omega$,
and $Q$ a partition of~$\Omega$. We denote by $\mathcal{P}\quotient Q$ the
following object: take all partitions $P\in\mathcal{P}$ which satisfy
$Q\preccurlyeq P$; then regard each such $P$ as a partition, not of~$\Omega$, but
of~$Q$ (that is, of the set of parts of $Q$).
Then $\mathcal P\quotient Q$ is the set of these partitions of~$Q$.
(We do not write this as $\mathcal{P}/Q$, because this notation has almost the
opposite meaning in the statistical literature cited in
Section~\ref{sec:prelim}.)
The next result is routine but should help to familiarise this concept.

Furthermore, we will temporarily call a set $\{Q_0,\ldots,Q_m\}$ of partitions
of~$\Omega$ satisfying the hypotheses of
Theorem~\ref{th:main}
a \emph{special set of dimension $m$}.

\begin{prop}\label{p:quots}
Let $\mathcal{P}$ be a set of partitions of $\Omega$, and $Q$ a minimal
non-trivial element of $\mathcal{P}$.
\begin{enumerate}
\item If $\mathcal{P}$ is an $m$-dimensional Cartesian lattice, then
$\mathcal{P}\quotient Q$ is an $(m-1)$-dimensional Cartesian lattice.
\item If $\mathcal{P}$ is the join-semilattice generated by an $m$-dimensional
 special set $\mathcal{Q}$, and $Q\in\mathcal{Q}$, then $\mathcal{P}\quotient Q$
  is generated by a special set of dimension $m-1$.
\item If $\mathcal{P}\cong\mathfrak{D}(T,m)$ is a diagonal semilattice, then
$\mathcal{P}\quotient Q\cong\mathfrak{D}(T,m-1)$.
\end{enumerate}
\end{prop}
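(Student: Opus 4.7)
The plan is to prove the three parts in order. Parts (a) and (b) reduce to essentially formal bookkeeping with Cartesian lattices once the right identifications are in place, while part (c) requires an explicit coset computation inside $T^m$.

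For part (a), without loss of generality let $Q = Q_1$ and write a general element of the $m$-dimensional Cartesian lattice $\mathcal{P}$ as $Q_I = \bigvee_{i \in I} Q_i$ with $I \subseteq \{1, \ldots, m\}$. The elements lying above $Q_1$ are precisely those with $1 \in I$, giving $2^{m-1}$ elements. Via Proposition~\ref{prop:CDbij}, identify $\Omega$ with $Q_1 \times Q_2 \times \cdots \times Q_m$, so that the set of parts of $Q_1$ identifies with $Q_2 \times \cdots \times Q_m$. Under this identification, the partitions $Q_{\{1,i\}} = Q_1 \vee Q_i$ for $i = 2, \ldots, m$ become the $m-1$ minimal non-trivial partitions of the Cartesian lattice on $Q_2 \times \cdots \times Q_m$, and their join-closure is the full $(m-1)$-dimensional Cartesian lattice.

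For part (b), set $Q = Q_j$. For each index $i \in \{0,\ldots,m\} \setminus \{j\}$, the $m$-element set $\{Q_0, \ldots, Q_m\} \setminus \{Q_i\}$ generates, by hypothesis, an $m$-dimensional Cartesian lattice $\mathcal{L}_i$ which contains $Q_j$; part~(a) therefore gives that $\mathcal{L}_i \quotient Q_j$ is an $(m-1)$-dimensional Cartesian lattice whose minimal non-trivial elements are $\{Q_j \vee Q_k : k \in \{0, \ldots, m\} \setminus \{i, j\}\}$. Letting $i$ range over $\{0, \ldots, m\} \setminus \{j\}$, the $m$ partitions $\{Q_j \vee Q_k : k \ne j\}$ of the set of parts of $Q_j$ have the property that any $m-1$ of them are the minimal partitions of a Cartesian lattice; this is exactly the definition of a special set of dimension $m-1$. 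Its join-closure is $\mathcal{P} \quotient Q_j$ since every element of $\mathcal{P}$ above $Q_j$ is the join of $Q_j$ with some subset of the remaining $Q_k$.

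For part (c), take $\mathcal{P} = \mathfrak{D}(T, m)$. By the $S_{m+1}$-action from Proposition~\ref{p:diagsemi}(a), we may assume $Q = Q_0$, the partition into right cosets of the diagonal $T_0 = \delta(T,m)$. Identify the set of right cosets with $T^{m-1}$ by sending the coset of $(t_1, \ldots, t_m)$ to $(t_1^{-1}t_2, \ldots, t_1^{-1}t_m)$. Each join $Q_0 \vee Q_i$ is the partition into right cosets of $T_0 T_i$, which is exactly the set of tuples in $T^m$ whose entries agree in all positions outside $i$. A short coset calculation then shows that, under the identification above, $Q_0 \vee Q_1$ becomes the partition into right cosets of $\delta(T, m-1) \leqslant T^{m-1}$, while $Q_0 \vee Q_i$ for $i \geqslant 2$ becomes the partition into right cosets of the coordinate subgroup at position $i-1$. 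These are precisely the $m$ minimal non-trivial partitions of $\mathfrak{D}(T, m-1)$; since both $\mathcal{P} \quotient Q_0$ and $\mathfrak{D}(T, m-1)$ are the join-semilattices generated by their minimal partitions, the bijection extends to the required isomorphism. The main obstacle here is the coset bookkeeping: one must keep straight left versus right cosets and verify that the distinguished index $0$ on the large side corresponds correctly to the diagonal index on the smaller side.
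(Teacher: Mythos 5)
Your proof is correct and, for parts (a) and (b), takes essentially the same route as the paper, which dispatches them in one sentence each by citing Proposition~\ref{p:antiiso}; your version usefully spells out why the $m$ partitions $Q_j\vee Q_k$ ($k\ne j$) form an $(m-1)$-dimensional special set on the parts of $Q_j$. Where you genuinely diverge is part (c). The paper reduces (by transitivity of $D(T,m)$ on $\{Q_0,\ldots,Q_m\}$) to $Q=Q_m$, a coordinate subgroup, and then argues at the group level: $T_m$ is normal in $T^{m+1}\rtimes\Aut(T)$ with quotient $T^m\rtimes\Aut(T)$, and since the $T_i$ commute pairwise the parts of $Q_i\vee Q_m$ are the orbits of $T_iT_m$, which descend to the minimal partitions of $\mathfrak{D}(T,m-1)$. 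You instead reduce to $Q=Q_0$, the diagonal, and identify its coset space with $T^{m-1}$ explicitly via $(t_1,\ldots,t_m)\mapsto(t_1^{-1}t_2,\ldots,t_1^{-1}t_m)$; your claims check out ($Q_0\vee Q_1$ becomes the diagonal coset partition of $T^{m-1}$, and $Q_0\vee Q_i$ for $i\geqslant2$ the $(i-1)$st coordinate partition). The paper's choice makes the bookkeeping lighter, since quotienting by a coordinate subgroup amounts to forgetting a coordinate; yours buys a completely explicit isomorphism without invoking the group action. One notational slip in part (a): Proposition~\ref{prop:CDbij} identifies $\Omega$ with $P_1\times\cdots\times P_m$, the product of the \emph{maximal} partitions of the Cartesian lattice, not $Q_1\times\cdots\times Q_m$ (these have the wrong cardinality for $m>2$), and correspondingly the set of parts of $Q_1$ is in bijection with $P_2\times\cdots\times P_m$; the argument is unaffected.
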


\begin{proof}
  \begin{enumerate}
  \item This follows from Proposition~\ref{p:antiiso}, because if $Q=P_I$
where $I = \{1, \ldots, m\} \setminus \{i\}$
    then we are effectively just limiting the set of indices to~$I$.
  \item
    This follows from part~(a).
  \item Assume that $\mathcal P=\mathfrak D(T,m)$. Then, since $\Aut(\mathcal P)$ 
  contains $D(T,m)$, which is transitive on $\{Q_0,\ldots,Q_m\}$, we may assume that $Q=Q_m$.
  Thus $\mathcal P\quotient Q$ is a set of partitions of $Q_m$. 
  In the group $T^{m+1}\rtimes\Aut(T)$ generated by elements of types
(I)--(III) in Remark~\ref{rem:diaggens}, the subgroup $T_m$ generated
by right multiplication of the last coordinate by elements of $T$ is normal,
and the quotient is $T^m\rtimes\Aut(T)$. Moreover, the subgroups $T_i$ commute
pairwise, so the parts of $Q_i\vee Q_m$ are the orbits of $T_iT_m$ (for
$i<m$) and give rise to a minimal partition in $\mathfrak{D}(T,m-1)$. 
    \end{enumerate}
\end{proof}

\subsection{Automorphism groups}
\label{sec:dag}
In the cases $m=2$ and $m=3$, we showed that the automorphism group of the
diagonal semilattice $\mathfrak{D}(T,m)$ is the diagonal group $D(T,m)$. The
same result holds for arbitrary $m$; but this time, we prove this result first,
since it is needed in the proof of the main theorem. The proof below also
handles the case $m=3$.

\begin{theorem}
For $m\geqslant2$, and any non-trivial group $T$, the automorphism group of the
diagonal semilattice $\mathfrak{D}(T,m)$ is the diagonal group $D(T,m)$.
\label{t:autDTm}
\end{theorem}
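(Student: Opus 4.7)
The plan is to establish both inclusions separately. The inclusion $D(T,m)\leqslant\Aut(\mathfrak{D}(T,m))$ is immediate from Proposition~\ref{p:diagsemi}(a), since that proposition shows the generating set $\{Q_0,\ldots,Q_m\}$ is preserved by $D(T,m)$, and $\mathfrak{D}(T,m)$ is generated by this set under joins.

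For the reverse inclusion, I would mimic the two-stage use of Proposition~\ref{p:subgp} employed in the proof of Theorem~\ref{t:autDT2}. First, Proposition~\ref{p:diagsemi}(a) shows that $D(T,m)$ induces the full symmetric group $S_{m+1}$ on the set $\{Q_0,\ldots,Q_m\}$ of minimal partitions, and this set is preserved by every element of $\Aut(\mathfrak{D}(T,m))$. Proposition~\ref{p:subgp} then reduces the problem to showing that the group $\SAut(\mathfrak{D}(T,m))$ of strong automorphisms (those fixing each $Q_i$) equals the subgroup of $D(T,m)$ generated by elements of types (I)--(III) in Remark~\ref{rem:diaggens}. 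Second, the type~(I) right translations by $T^m$ are strong and act transitively on $\Omega=T^m$, so $\SAut(\mathfrak{D}(T,m))$ is also transitive on $\Omega$. A further application of Proposition~\ref{p:subgp} reduces the task to showing that the stabilizer of the point $\mathbf{1}=[1,\ldots,1]$ in $\SAut(\mathfrak{D}(T,m))$ coincides with the diagonal copy of $\Aut(T)$.

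To finish, let $g\in\SAut(\mathfrak{D}(T,m))$ fix $\mathbf{1}$. Since $g$ fixes each of $Q_1,\ldots,Q_m$ and these form a Cartesian decomposition of $T^m$, the description of the stabiliser of a Cartesian decomposition in Theorem~\ref{th:cdham} and the corollary after it (restricted to the kernel of the action on factors) forces $g$ to act coordinatewise as $(g_1,\ldots,g_m)\in\Sym(T)^m$, with each $g_i$ fixing~$1$. Now $g$ must also preserve $Q_0$, whose parts are the right cosets of $\delta(T,m)$. The image of the coset $\delta(T,m)$ itself is $\{[g_1(t),\ldots,g_m(t)] : t \in T\}$, which is a $\delta(T,m)$-coset containing $\mathbf{1}$ and hence equal to $\delta(T,m)$; this forces $g_1=\cdots=g_m=:\alpha$. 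Finally, for each $s\in T$, applying $g$ to the coset $\delta(T,m)\cdot[1,s,1,\ldots,1]$ and demanding that the image again be such a coset yields $\alpha(ts)=\alpha(t)\alpha(s)$ for all $t,s\in T$, so $\alpha\in\Aut(T)$.

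The main obstacle is the last step, where one has to pick the right auxiliary cosets of $\delta(T,m)$ to squeeze out both the synchronization of the coordinate maps $g_i$ and the multiplicativity of the common $\alpha$; no induction is required, and the argument simultaneously handles the case $m=3$ mentioned in the introduction.
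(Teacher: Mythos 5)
Your proposal is correct, but it takes a genuinely different route from the paper. The paper proves this theorem by induction on $m$, with base case $m=2$ (Theorem~\ref{t:autDT2}, which itself passes through loops and Proposition~\ref{p:autlatin}); after the same two-stage reduction via Proposition~\ref{p:subgp}, it passes to the quotient action on the parts of $Q_m$, invokes the induction hypothesis to control that action, and then shows that the subgroup $H$ fixing all parts of $Q_m$ and a point is trivial, by a propagation argument using the Cartesian decompositions $\{P_{ij}\}$ formed by the maximal partitions and the fact that a point is determined by the parts containing it. You instead argue directly, with no induction: fixing each of $Q_1,\ldots,Q_m$ puts the point stabiliser inside the base group $\Sym(T)^m$ of the wreath product stabilising the corresponding Cartesian decomposition (the Corollary to Theorem~\ref{th:cdham}, i.e.\ \cite[Lemma~5.1]{ps:cartesian}), and then $Q_0$ is used twice --- once on the coset $\delta(T,m)$ itself to synchronise the coordinate permutations into a single $\alpha$, and once on the cosets $\delta(T,m)\cdot[1,s,1,\ldots,1]$ to extract $\alpha(ts)=\alpha(t)\alpha(s)$. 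Both of these computations check out. What your approach buys is uniformity and brevity: it handles all $m\geqslant2$ at once, so it subsumes Theorems~\ref{t:autDT2} and~\ref{t:autDT3} rather than relying on them as base cases, at the cost of invoking the full description of the stabiliser of a Cartesian decomposition rather than only the fact that points are determined by their coordinates. What the paper's route buys is that its inductive framework (the quotient construction $\mathcal{P}\quotient Q$ and Corollary~\ref{c:forinduction}) is exactly the machinery reused in the proof of the main classification theorem, so the inductive proof comes almost for free once that apparatus is set up.
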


\begin{proof}
Our proof will be by induction on $m$. The cases $m=2$ and $m=3$ are given by
Theorems~\ref{t:autDT2} and~\ref{t:autDT3}. However, we base the induction at
$m=2$, so we provide an alternative proof for Theorem~\ref{t:autDT3}. So in
this proof we assume that $m>2$ and that the result holds with $m-1$
replacing~$m$.

Recall from Section~\ref{sect:diaggroups} that $\widehat D(T,m)$ denotes the 
pre-diagonal group, so that
$D(T,m)\cong \widehat D(T,m)/ \widehat K$, with $ \widehat K$
as in~\eqref{eq:K}.
Suppose that $\sigma:\widehat D(T,m)\to D(T,m)$ is the natural projection
with $\ker\sigma=\widehat K$. 

By Proposition~\ref{p:diagsemi}, we know that $D(T,m)$ is a subgroup of $\Aut(\mathfrak{D}(T,m))$, and we have
to show that equality holds. Using the principle of Proposition~\ref{p:subgp},
it suffices to show that the group $\SAut(\mathfrak{D}(T,m))$ of strong
automorphisms of $\mathfrak{D}(T,m)$ is the group $\sigma(T^{m+1}\rtimes\Aut(T))$ 
generated by
the images of the elements of the pre-diagonal group of types (I)--(III), as given in
Remark~\ref{rem:diaggens}.

Consider $Q_m$,  one of the minimal partitions in $\mathfrak{D}(T,m)$, and let
$\overline\Omega$ be the set of parts of $Q_m$. For $i<m$, the collection of
subsets of $\overline\Omega$ which are the parts of $Q_m$ inside a part of
$Q_i\vee Q_m$ is a partition $\overline Q_i$ of $\overline\Omega$.
Proposition~\ref{p:quots}(c) shows that the $\overline Q_i$ are the minimal
partitions of $\mathfrak{D}(T,m-1)$, a diagonal semilattice
on~$\overline\Omega$. 
Moreover, the group $\sigma(T_m)$ is the
kernel of the action of $\sigma(T^{m+1}\rtimes\Aut(T))$ on~$\overline\Omega$.
Further, since $T_m\cap \widehat K=1$, $\sigma(T_m)\cong T_m\cong T$. 
As in Section~\ref{sect:diaggroups}, let $\widehat H$ be the 
stabiliser in $\widehat D(T,m)$ of the element $[1,\ldots,1]$:
then  $T_m\cap \widehat H=1$ and so $T_m$ acts faithfully and
regularly on each part of $Q_m$.

So it suffices to show that the same is true of $\SAut(\mathfrak{D}(T,m))$;
in other words, it is enough to show that the subgroup $H$ of $\SAut(\mathfrak{D}(T,m))$ 
fixing setwise  all parts of $Q_m$
and any given point $\alpha$ of $\Omega$ is trivial. 

Any $m$ of the partitions $Q_0,\ldots,Q_m$ are the minimal partitions
in a Cartesian lattice of partitions of $\Omega$. Let $P_{ij}$ denote the supremum of the partitions
$Q_k$ for $k\notin\{i,j\}$. Then, for fixed $i$, the partitions $P_{ij}$
(as $j$ runs over $\{0,\ldots,m\}\setminus\{i\}$) are the maximal partitions
of the Cartesian lattice 
generated by $\{ Q_j \mid 0\leqslant j\leqslant~m \mbox{ and } j\ne i\}$
and form a Cartesian decomposition of~$\Omega$.
Hence  each  point of $\Omega$ is uniquely determined
by the parts of these partitions which contain it
(see Definition~\ref{def:cart}).

For distinct $i,j<m$, all parts of $P_{ij}$ are fixed by $H$, since each is a union of
parts of $Q_m$. Also, for $i<m$, the part of $P_{im}$ containing $\alpha$ is
fixed by $H$. By the defining property of the Cartesian decomposition 
$\{P_{ij}\mid 0\leqslant j\leqslant m\mbox{ and }j\neq i\}$, we conclude that $H$ fixes every point lying in
the same part of $P_{im}$ as $\alpha$ and this holds for all $i<m$.

Taking $\alpha=[1,\ldots,1]$, the argument in the last two paragraphs shows 
in particular that 
$H$ fixes pointwise the part $P_{0m}[\alpha]$ of $P_{0m}$ and the part 
$P_{1m}[\alpha]$ of $P_{1m}$ containing 
$\alpha$. In other words, $H$ fixes pointwise the sets 
\begin{align*}
  P_{0m}[\alpha]&=\{[t_1,\ldots,t_{m-1},1]\mid t_1,\ldots,t_{m-1}\in T\}\mbox{ and}\\
  P_{1m}[\alpha]&=\{[t_1,\ldots,t_{m-1},t_1]\mid t_1,\ldots,t_{m-1}\in T\}.
\end{align*}
Applying, for a given $t\in T$, the same argument to the element $\alpha'=[t,1,\ldots,1,t]$ 
of $P_{1m}[\alpha]$, we obtain that $H$ fixes pointwise the set 
\[
  P_{0m}[\alpha']=\{[t_1,\ldots,t_{m-1},t]\mid t_1,\ldots,t_{m-1}\in T\}.
\]
Letting $t$ run through the elements of $T$, the union of the 
parts $P_{0m}[\alpha']$ is $\Omega$, and 
this implies that  $H$ fixes all elements of $\Omega$ and we are done. 
\end{proof}

The particular consequence of Theorem~\ref{t:autDTm} that we require in the proof of the 
main theorem is the following.

\begin{cor}\label{c:forinduction}
Suppose that $m\geqslant3$. Let $\mathcal P$ and $\mathcal P'$ be 
diagonal semilattices isomorphic to $\mathfrak D(T,m)$, and let $Q$ and 
$Q'$ be minimal partitions in
$\mathcal P$ and $\mathcal P'$, respectively. 
Then each isomorphism $\psi:\mathcal P\quotient Q\to \mathcal P'\quotient Q'$
is induced by an isomorphism $\overline{\psi}: \mathcal P\to \mathcal P'$ 
mapping $Q$ to $Q'$. 
\end{cor}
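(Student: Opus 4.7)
The plan is to lift $\psi$ to the full semilattices by exhibiting a suitable element of $D(T,m)$. Since $\Aut(\dsl Tm)=D(T,m)$ by Theorem~\ref{t:autDTm} and, by Proposition~\ref{p:diagsemi}, this group permutes the minimal non-trivial partitions of $\dsl Tm$ transitively (through its $S_{m+1}$-quotient), I may fix a distinguished minimal partition $Q_m\in\dsl Tm$ and choose isomorphisms $\alpha:\dsl Tm\to\mathcal{P}$ and $\alpha':\dsl Tm\to\mathcal{P}'$ satisfying $\alpha(Q_m)=Q$ and $\alpha'(Q_m)=Q'$, after adjusting by automorphisms of $\dsl Tm$ if necessary. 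These descend to isomorphisms $\overline\alpha$ and $\overline{\alpha'}$ of the quotients, and Proposition~\ref{p:quots}(c) supplies an isomorphism $\iota:\dsl Tm\quotient Q_m\to\dsl T{m-1}$.

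Setting $g:=\iota\circ\overline{\alpha'}^{-1}\circ\psi\circ\overline\alpha\circ\iota^{-1}$ yields an automorphism of $\dsl T{m-1}$, which by Theorem~\ref{t:autDTm} applied in dimension $m-1$ is an element of $D(T,m-1)$. If I can find $\tilde g\in D(T,m)$ preserving $Q_m$ setwise and inducing $g$ on $\dsl Tm\quotient Q_m$ (under $\iota$), then $\overline\psi:=\alpha'\circ\tilde g\circ\alpha^{-1}:\mathcal{P}\to\mathcal{P}'$ will have the required properties: it sends $Q$ to $Q'$, and a direct diagram chase shows it induces $\psi$ on the quotients.

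The heart of the proof is therefore the surjectivity of the restriction map $D(T,m)_{Q_m}\to\Aut(\dsl Tm\quotient Q_m)=D(T,m-1)$ (the equality again by Theorem~\ref{t:autDTm}). I would verify this on the standard generators of $D(T,m-1)$ listed in Remark~\ref{rem:diaggens}. Right multiplications on coordinates $1,\ldots,m-1$, the $T_0$-action by simultaneous left multiplication, the diagonal $\Aut(T)$-action, and the subgroup $S_{m-1}\leqslant S_m$ permuting coordinates $\{1,\ldots,m-1\}$ all preserve $Q_m$ setwise and restrict to the corresponding generator of $D(T,m-1)$ after the $m$-th coordinate is forgotten via $\iota$. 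The only point requiring care is the $(0,1)$-transposition, acting as $[t_1,\ldots,t_m]\mapsto[t_1^{-1},t_1^{-1}t_2,\ldots,t_1^{-1}t_m]$: this does not fix each part of $Q_m$, but it does permute these parts, and a short calculation shows the induced action on the quotient matches the type~(V) generator of $D(T,m-1)$. Beyond this routine generator check, I anticipate no substantive obstacle; conceptually, the surjectivity says that every symmetry of $\dsl T{m-1}$ extends in an obvious way to a symmetry of $\dsl Tm$ that preserves the distinguished ``last coordinate''.
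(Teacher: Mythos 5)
Your proposal is correct and follows essentially the same route as the paper: reduce to the standard model with $Q=Q'=Q_m$, invoke Theorem~\ref{t:autDTm} in dimension $m-1$ to identify $\Aut(\dsl Tm\quotient Q_m)$ with $D(T,m-1)$, and lift through the stabiliser of $Q_m$ in $D(T,m)$. The only difference is presentational: where you verify surjectivity of the restriction map $D(T,m)_{Q_m}\to D(T,m-1)$ generator by generator, the paper gets the same conclusion by observing directly that this stabiliser $X$, modulo the kernel $Y=\sigma(T_m)$ of its action on the quotient, is isomorphic to $D(T,m-1)$.
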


\begin{proof}
We may assume without loss of generality that $\mathcal P=\mathcal P'=\mathfrak D(T,m)$ and, 
since $\Aut(\mathfrak D(T,m))$ induces $S_{m+1}$ on the minimal partitions
$Q_0,\ldots,Q_m$
of $\mathfrak D(T,m)$, we can also suppose that $Q=Q'=Q_m$. 
Thus $\mathcal P\quotient Q= \mathcal P'\quotient Q'\cong \mathfrak D(T,m-1)$.
Let $\sigma:\widehat D(T,m)\to D(T,m)$ be the natural projection map, 
as in the proof of 
Theorem~\ref{t:autDTm}.
The subgroup of $\Aut(\mathfrak D(T,m))$ fixing $Q_m$ is the image 
$X=\sigma(T^{m+1}\rtimes (\Aut(T)\times S_m))$ where the subgroup $S_m$ of $S_{m+1}$ 
is 
the stabiliser of the point $m$ in the action on $\{0,\ldots,m\}$. 
Moreover, the subgroup $X$ contains $\sigma(T_m)$, the copy of $T$ acting on the last
coordinate of the $m$-tuples, which is regular on each part
of $Q_m$.  Put $Y=\sigma(T_m)$. Then $Y$~is the kernel of the induced action
of $X$ on $\mathcal P\quotient Q_m$, which is isomorphic to $
\mathfrak D(T,m-1)$, and so $X/Y\cong D(T,m-1)$. Moreover since $m\geqslant 3$,
it follows from Theorem~\ref{t:autDTm} that 
$X/Y = \Aut(\mathfrak D(T,m-1))$. Thus the given map $\psi$ in
$\Aut(\mathfrak D(T,m-1))$ 
lies in $X/Y$, and we may choose $\overline{\psi}$ as any pre-image of $\psi$ in $X$. 
\end{proof}

\subsection{Proof of the main theorem}\label{s:mtinduction}

Now we begin the proof of Theorem~\ref{th:main}. The proof is by induction
on $m$. As we remarked in 
Section~\ref{sect:mt}, 
there is nothing to prove for $m=2$, and the case $m=3$ follows from 
Theorem~\ref{thm:bingo}. Thus we assume that $m\geqslant4$. The induction
hypothesis yields that the main theorem is true for dimensions~$m-1$ and~$m-2$.
Given a special set $\{Q_0,\ldots,Q_m\}$ generating a semilattice $\mathcal{P}$,
we know, by Proposition~\ref{p:quots}, that, for each $i$, $\mathcal{P}\quotient Q_i$ 
is generated by a special
set of dimension $m-1$, and so is isomorphic to $\mathfrak{D}(T,m-1)$ for
some group $T$. Now, $T$ is independent of the choice of $i$; for, if
$\mathcal{P}\quotient Q_i\cong\mathfrak{D}(T_i,m-1)$, and
$\mathcal{P}\quotient Q_j\cong\mathfrak{D}(T_j,m-1)$, then, 
by Proposition~\ref{p:quots}(c),
\[
\mathfrak{D}(T_i,m-2)\cong\mathcal{P} \quotient (Q_i\vee Q_j)
\cong\mathfrak{D}(T_j,m-2),
\]
so by induction $T_i\cong T_j$.
(This proof works even when $m=4$, because it is the reduction to $m=3$ that
gives the groups $T_i$ and $T_j$, so that the Latin squares 
$\mathfrak{D}(T_i,2)$ and $\mathfrak{D}(T_j,2)$ are both Cayley tables of groups, 
and so Theorem~\ref{thm:albert} implies that $T_i\cong T_j$.)

We call $T$ the \emph{underlying group} of the special set.

\begin{theorem}
  \label{th:QQ}
Let $\mathcal{Q}$ and $\mathcal{Q}'$ be special sets of dimension $m\geqslant4$
on sets $\Omega$ and $\Omega'$ with the same underlying group $T$.
Then $\mathcal{Q}$ and $\mathcal{Q'}$ are isomorphic in the sense of 
Definition~\ref{def:isomsl}.
\end{theorem}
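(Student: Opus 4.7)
I proceed by induction on $m\geqslant 3$, with base case $m=3$ supplied by Theorem~\ref{thm:bingo}. For the inductive step suppose $m\geqslant 4$ and that Theorem~\ref{th:main} holds in dimensions $m-1$ and $m-2$. Let $\mathcal P$ and $\mathcal P'$ be the join-semilattices generated by $\mathcal Q$ and $\mathcal Q'$. Choose minimal partitions $Q\in\mathcal Q$ and $Q'\in\mathcal Q'$. By Proposition~\ref{p:quots}(b), $\mathcal P\quotient Q$ and $\mathcal P'\quotient Q'$ are generated by special sets of dimension $m-1$, and by the very definition of the underlying group both have underlying group~$T$; the induction hypothesis in dimension $m-1$ therefore yields $\mathcal P\quotient Q\cong \dsl T{m-1}\cong \mathcal P'\quotient Q'$, which compose to an isomorphism $\psi: \mathcal P\quotient Q\to \mathcal P'\quotient Q'$.

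My plan is to lift $\psi$ to an isomorphism $\mathcal P\to\mathcal P'$ (which will induce the required bijection $\mathcal Q\to\mathcal Q'$) by invoking Corollary~\ref{c:forinduction}. That corollary, however, requires $\mathcal P$ and $\mathcal P'$ themselves to be diagonal semilattices, so the bulk of the work is to prove $\mathcal P\cong\dsl Tm$ (and, by symmetry, $\mathcal P'\cong\dsl Tm$). Once these are in hand, Corollary~\ref{c:forinduction} directly delivers an isomorphism $\mathcal P\to\mathcal P'$ lifting $\psi$ and sending $Q$ to $Q'$.

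To show $\mathcal P\cong\dsl Tm$, I select a second minimal partition $R\in\mathcal Q$ with $R\neq Q$. By induction $\mathcal P\quotient R\cong\dsl T{m-1}$; and by Proposition~\ref{p:quots}(c) together with the induction hypothesis in dimension $m-2$, the double quotient $\mathcal P\quotient(Q\vee R)\cong \dsl T{m-2}$ is a common further quotient of both $\mathcal P\quotient Q$ and $\mathcal P\quotient R$. Using Theorem~\ref{t:autDTm} and the transitivity of $D(T,m-1)$ on minimal partitions of $\dsl T{m-1}$, I first adjust the two induction isomorphisms by automorphisms of $\dsl T{m-1}$ so that they induce the \emph{same} identification on this common double quotient. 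I then construct an explicit bijection $\phi:\Omega\to T^m$ by viewing $\Omega$ as fibered over the parts of $Q$: the base is identified with $T^{m-1}$ via the adjusted $Q$-quotient isomorphism, and within each fiber the $R$-quotient identification together with the restrictions of the remaining partitions forces, after fixing a single global basepoint, a labelling of the fiber by $T$ compatible with $\dsl Tm$. The alignment above is precisely what makes these fiberwise labellings glue coherently into a well-defined bijection.

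The hardest step will be verifying that $\phi$ sends \emph{every} $Q_i\in\mathcal Q$, and not only $Q$ and $R$, to the corresponding canonical minimal partition of $\dsl Tm$. This is done by checking, for each remaining $Q_i$, that $\phi$ transports the Cartesian lattice whose minimal partitions are $\{Q_k:k\neq i\}$ (one of the $m+1$ such lattices afforded by the axioms of Theorem~\ref{th:main}) to its canonical counterpart in $\dsl Tm$; this follows from a further appeal to the induction hypothesis applied to $\mathcal P\quotient Q_i$, together with the rigidity provided by Theorem~\ref{t:autDTm}.
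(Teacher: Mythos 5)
Your overall architecture is close to the paper's: both arguments pick two minimal partitions, use the induction hypothesis to identify the two quotients with $\dsl T{m-1}$, align these identifications over the common double quotient $\mathcal P\quotient(Q\vee R)\cong\dsl T{m-2}$ via Corollary~\ref{c:forinduction}, and then reconstruct a point bijection from the grid that $Q$ and $R$ induce on each part of $Q\vee R$ (your ``fibering'' is exactly this grid, since a point is the unique intersection of its $Q$-part and its $R$-part inside a part of $Q\vee R$). The reorganisation --- proving $\mathcal P\cong\dsl Tm$ outright and composing, rather than mapping $\mathcal Q$ directly onto $\mathcal Q'$ --- is harmless, though it makes the final appeal to Corollary~\ref{c:forinduction} unnecessary for the statement actually being proved.

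The genuine gap is in the step you yourself flag as hardest: showing that the bijection $\phi$, which was built using only $Q$ and $R$, carries each remaining minimal partition $Q_i$ to the correct place. Your proposed justification --- the induction hypothesis applied to $\mathcal P\quotient Q_i$ plus ``the rigidity provided by Theorem~\ref{t:autDTm}'' --- does not do this. The induction hypothesis only gives the abstract isomorphism type of $\mathcal P\quotient Q_i$; it says nothing about where your particular $\phi$ sends $Q_i$, and Theorem~\ref{t:autDTm} concerns automorphisms of a diagonal semilattice, which $\mathcal P$ has not yet been shown to be. Since $Q_i$ lies above neither $Q$ nor $R$, the fact that $\phi$ induces the two chosen quotient isomorphisms gives no direct control over $\phi(Q_i)$; and your plan to check that $\phi$ transports the Cartesian lattice generated by $\{Q_k: k\neq i\}$ is circular, because identifying that lattice requires knowing where the other $Q_k$ with $k\notin\{1,2\}$ go. The paper closes this gap with a meet argument: writing $Q=Q_1$, $R=Q_2$, for $I$ with $I\cap\{1,2\}=\emptyset$ and $|I|\leqslant m-2$ one may assume $0\notin I$, and then $Q_I=Q_{I\cup\{1\}}\wedge Q_{I\cup\{2\}}$ inside the Cartesian lattice generated by $\{Q_1,\ldots,Q_m\}$; a bijection of underlying sets preserves meets of partitions, and both $Q_{I\cup\{1\}}$ and $Q_{I\cup\{2\}}$ are above $Q_1$ or $Q_2$ and hence already known to be mapped correctly, so $Q_I$ is forced (the case $|I|=m-1$ is then a join of two such). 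Taking $I=\{i\}$ settles the remaining minimal partitions. You need this identity, or an equivalent device, for the proof to be complete; the fiberwise gluing claim is also only asserted, but that part matches the paper's grid argument and is routine to fill in.
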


\begin{proof}
  Let $\mathcal{P}$ and $\mathcal{P}'$ be the join-semilattices
  generated by $\mathcal{Q}$ and $\mathcal{Q}'$ respectively,
  where $\mathcal{Q} = \{Q_0, \ldots, Q_m\}$ and
  $\mathcal{Q}' = \{Q'_0, \ldots, Q'_m\}$.

We consider the three partitions $Q_1$, $Q_2$, and
$Q_1\vee Q_2$. Each part of $Q_1\vee Q_2$ is partitioned by $Q_1$ and $Q_2$;
these form a $|T|\times|T|$ grid, where the parts of $Q_1$ are the rows and
the parts of $Q_2$ are the columns. We claim that
\begin{itemize}
\item There is a bijection $F_1$ from the set of parts of $Q_1$ to the set of
parts of $Q_1'$ which induces an isomorphism from $\mathcal{P} \quotient Q_1$ to
$\mathcal{P}' \quotient Q_1'$.
\item There is a bijection $F_2$ from the set of parts of $Q_2$ to the set of
parts of $Q_2'$ which induces an isomorphism from $\mathcal{P} \quotient Q_2$ to
$\mathcal{P}' \quotient Q_2'$.
\item There is a bijection $F_{12}$ from the set of parts of $Q_1\vee Q_2$ to
the set of parts of $Q_1'\vee Q_2'$ which induces an isomorphism from
$\mathcal{P} \quotient (Q_1\vee Q_2)$ to $\mathcal{P}' \quotient (Q_1'\vee Q_2')$;
moreover, each of $F_1$ and $F_2$, restricted to the partitions of
$\mathcal{P}\quotient (Q_1\vee Q_2)$, agrees with $F_{12}$.
\end{itemize}

The proof of these assertions is as follows. 
As each part of $Q_1 \vee Q_2$ is a union of parts of $Q_1$, 
the partition $Q_1 \vee Q_2$ determines a  partition $R_1$ of 
$Q_1$ which is a minimal partition of $\mathcal P\quotient Q_1$. 
Similarly  $Q'_1 \vee Q'_2$ determines a minimal partition $R_1'$ of  $\mathcal P'\quotient 
Q_1'$. 
Then since $\mathcal P\quotient Q_1\cong \mathcal P'\quotient Q_1'\cong \mathfrak D(T,m-1)$, 
by the induction hypothesis, as discussed above,
we may choose an isomorphism 
$F_1: \mathcal P\quotient Q_1\to \mathcal P'\quotient Q_1'$ 
in the first bullet point such that $R_1$ is mapped to $R_1'$.
Now $F_1$ induces an isomorphism  
$(\mathcal P\quotient Q_1)\quotient R_1 \to (\mathcal P'\quotient Q'_1)\quotient R_1'$,
and since there are natural isomorphisms from 
$(\mathcal P\quotient Q_1)\quotient R_1$ to
$\mathcal P\quotient (Q_1 \vee Q_2)$ and  from
$(\mathcal P'\quotient Q'_1)\quotient R_1'$ to
$\mathcal P'\quotient (Q'_1 \vee Q'_2)$, 
$F_1$ induces an isomorphism 
\[F_{12}: \mathcal P\quotient (Q_1 \vee Q_2) \to 
\mathcal P'\quotient (Q'_1 \vee Q'_2).
\]
The join $Q_1 \vee Q_2$ determines a partition 
$R_2$ of $Q_2$ which is a minimal partition of $\mathcal P\quotient Q_2$, and   
$Q'_1 \vee Q'_2$ determines a minimal partition $R'_2$ of  
$\mathcal P'\quotient Q_2'$. Further,  we have natural isomorphisms from  
$(\mathcal P\quotient Q_2)\quotient R_2$ to $\mathcal P\quotient (Q_1 \vee Q_2)$ and from
$(\mathcal P'\quotient Q'_2)\quotient R'_2$ to $\mathcal P'\quotient (Q'_1 \vee Q'_2)$, 
so we may view  $F_{12}$ as an isomorphism from 
$(\mathcal P\quotient Q_2)\quotient R_2$ to $(\mathcal P'\quotient Q'_2)\quotient R'_2$. 
By Corollary~\ref{c:forinduction}, the isomorphism $F_{12}$ is induced by an
isomorphism from $\mathcal{P} \quotient Q_2$ to $\mathcal{P}' \quotient Q_2'$,
and we take $F_2$ to be this isomorphism.

Thus, $F_{12}$ maps each part $\Delta$ of $Q_1\vee Q_2$ to a part $\Delta'$ of
$Q_1'\vee Q_2'$, and $F_1$ maps the rows of the grid on $\Delta$ described above to the rows of
the grid on $\Delta'$, and similarly $F_2$ maps the columns.

Now the key observation is that there is a unique bijection~$F$ from the points
of $\Delta$ to the points of $\Delta'$ which maps rows to rows (inducing~$F_1$)
and columns to columns (inducing~$F_2$). For each point of $\Delta$ is the
intersection of a row and a column, and can be mapped to the
intersection of the image row and column in $\Delta'$.

Thus, taking these maps on each part of $Q_1\vee Q_2$ and combining them,
we see that there is a unique bijection $F\colon\Omega\to\Omega'$ which induces $F_1$
on the parts of~$Q_1$ and $F_2$ on the parts of~$Q_2$. Since $F_1$ is an
isomorphism from $\mathcal{P} \quotient Q_1$ to $\mathcal{P}' \quotient Q_1'$,
and similarly for $F_2$, we see that
\begin{quote}
$F$ maps every element of $\mathcal{P}$ which is above \emph{either}
$Q_1$ or $Q_2$ to the corresponding element of $\mathcal{P}'$.
\end{quote}

To complete the proof, we have to deal with the remaining partitions of $\mathcal P$
and $\mathcal P'$. 
We note that every partition in $\mathcal{P}$ has the form
\[Q_I=\bigvee_{i\in I}Q_i\]
for some  $I\subseteq\{0,\ldots,m\}$. By the statement proved in the previous paragraph, 
we may assume that $I\cap\{1,2\}=\emptyset$ and in particular that 
$|I|\leqslant m-1$. 

Suppose first  that $|I|\leqslant m-2$. Then there is some $k\in\{0,3,\ldots,m\}$
such that $k\not\in I$. Without loss of generality we may assume that 
$0\not\in I$.
Since $\{Q_1,\ldots,Q_m\}$ generates a Cartesian lattice, which is closed
under meet, we have
\[Q_I=Q_{I\cup\{1\}}\wedge Q_{I\cup\{2\}},\]
and since the partitions on the right are mapped by $F$ to $Q'_{I\cup\{1\}}$ and
$Q'_{I\cup\{2\}}$, it follows that $F$ maps $Q_I$ to $Q'_I$.

Consider finally the case when $|I|=m-1$; that is, $I=\{0,3,4,\ldots,m\}$. 
As $m\geqslant 4$, we have   $0, 3\in I$ and may put
$J = I\setminus \{0,3\}=\{4,\ldots,m\}$.
Then, for $i\in\{0,3\}$, $\left| J \cup \{i\} \right|= m-2$, so
the argument in the previous paragraph  shows that  $F$ maps $Q_{J \cup \{i\}}$ 
to $Q'_{J \cup \{i\}}$. 
Since $Q_I = Q_{J \cup \{0\}} \vee Q_{J \cup \{3\}}$, it follows
that $F$ maps $Q_I$ to $Q'_I$. \
\end{proof}

Now the proof of the main theorem follows. For let $\mathcal{Q}$ be a special
set of partitions of $\Omega$ with underlying group $T$.
By Proposition~\ref{p:diagsemi},
the set of minimal partitions in $\mathfrak{D}(T,m)$ has the same property.
By Theorem~\ref{th:QQ}, $\mathcal{Q}$~is isomorphic to this special set,
so the
join-semilattice it generates is isomorphic to~$\mathfrak{D}(T,m)$.

\section{Primitivity and quasiprimitivity}\label{s:pqp}

A permutation group is said to be \emph{quasiprimitive} if all its non-trivial
normal subgroups are transitive. In particular, primitive groups are
quasiprimitive, but a quasiprimitive group may be imprimitive. If $T$ is a
(not necessarily finite) simple group and $m\geqslant 2$, then the diagonal group
$D(T,m)$ is a primitive permutation group of simple diagonal type;
see~\cite{aschsc}, \cite{kov:sd}, or~\cite[Section~7.4]{ps:cartesian}.
In this section, we investigate the primitivity and quasiprimitivity of diagonal
groups for an arbitrary~$T$; our conclusions are in Theorem~\ref{th:primaut} in
the introduction.

The proof requires some preliminary lemmas.

A subgroup  of a group~$G$ is \emph{characteristic} if it is
invariant under $\Aut(G)$. We say that $G$~is \emph{characteristically simple}
if its only characteristic subgroups are itself and $1$. We require some
results about abelian characteristically simple groups.

An abelian group $(T,+)$ is said to be  \emph{divisible}
if, for every positive integer~$n$ and every $a\in T$,
there exists $b\in T$ such that $nb=a$. The group $T$ is
\emph{uniquely divisible} if, 
for all $a\in T$ and $n\in\mathbb{N}$, the element $b\in T$ is unique. Equivalently,
an abelian group $T$ is divisible if and only if
the map $T\to T$, $x\mapsto n x$ is surjective for all $n\in\mathbb{N}$, while
$T$ is uniquely divisible if and only if the same map is bijective
for all $n\in\mathbb{N}$. Uniquely divisible groups are also referred to as
\emph{$\mathbb{Q}$-groups}.  If $T$ is a uniquely divisible group,
$p\in\mathbb{Z}$, $q\in \mathbb{Z}\setminus\{0\}$ and $a\in T$, then there is
a unique $b\in T$ such that $qb=a$ and we define $(p/q)a=pb$. 
This defines a $\mathbb{Q}$-vector space
structure on~$T$. Also note that any non-trivial uniquely divisible group is
torsion-free.

In the following lemma, elements of $T^{m+1}$ are written as 
$(t_0,\ldots,t_m)$ with $t_i\in T$,
and $S_{m+1}$ is considered as the symmetric group
acting on the set $\{0,\ldots,m\}$. Moreover, we let $H$ denote
the group $\Aut(T)\times S_{m+1}$; then $H$ acts on $T^{m+1}$ by
\begin{equation}\label{eq:Gomegaact}
(t_0,\ldots,t_m)(\varphi,\pi)=(t_{0\pi^{-1}}\varphi,\ldots,t_{m\pi^{-1}}\varphi)  
\end{equation}
for all $(t_0,\ldots,t_m)$ in $T^{m+1}$, $\varphi$ in $\Aut(T)$,
and $\pi$ in $S_{m+1}$. 
The proof of statements (b)--(c) depends on the 
assertion that bases exist in an arbitrary vector space, which is a well-known
consequence of the Axiom
of Choice. Of course, in special cases, for instance when $T$ is finite-dimensional
over $\mathbb{F}_p$ or over $\mathbb{Q}$, then the use of the Axiom of Choice can be avoided.

\begin{lem}\label{lem:charab}
  The following statements  hold for any non-trivial abelian
  characteristically simple group~$T$.
  \begin{enumerate}
  \item Either $T$ is  an elementary abelian $p$-group or
   $T$ is a uniquely divisible group. Moreover, $T$ can be considered
   as an $\mathbb{F}$-vector space, where $\mathbb{F}=\mathbb{F}_p$ in the first
   case, while $\mathbb F=\mathbb{Q}$ in the second case.
 \item $\Aut (T)$ is transitive on the set $T\setminus\{0\}$.
  \item Suppose that $m\geqslant 1$ and put 
    \begin{align*}
    \Delta&=\delta(T,m+1)=\{(t,\ldots,t)\in T^{m+1}\mid t\in T\}\mbox{ and }\\
    \Gamma&=\left\{(t_0,\ldots,t_m)\in T^{m+1}\mid \sum_{i=0}^mt_i=0\right\}.
    \end{align*}
    Then $\Delta$ and $\Gamma$  are $H$-invariant subgroups of $T^{m+1}$. 
    Furthermore,  precisely one of the following holds.
    \begin{enumerate}
      \item $T$ is an elementary abelian $p$-group where $p\mid(m+1)$,
so that $\Delta\leqslant \Gamma$. In particular, $\Gamma/\Delta$ is an
$H$-invariant subgroup of $T^{m+1}/\Delta$, which is proper if $m\geqslant2$
\item Either $T$ is uniquely divisible or $T$ is an elementary
  abelian $p$-group with $p\nmid (m+1)$.  Further, in this  case, 
  $T^{m+1}=\Gamma\oplus \Delta$ and $\Gamma$ has no  proper, non-trivial
  $H$-invariant subgroup.
    \end{enumerate} 
  \end{enumerate}  
\end{lem}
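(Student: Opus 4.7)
For part (a), the plan is to exploit that, for every prime $p$, multiplication by $p$ is a group endomorphism of $T$ whose image $pT$ and kernel $T[p] = \{t \in T : pt = 0\}$ are characteristic subgroups of~$T$. By characteristic simplicity each is $0$ or~$T$. If some $T[p] = T$, then $T$ is an $\mathbb F_p$-vector space with the natural scalar action, hence elementary abelian of exponent~$p$. Otherwise every $T[p] = 0$, so $T$ is torsion-free; since $pT$ is characteristic and $T \neq 0$, we must have $pT = T$ for every prime~$p$. Combined with injectivity of multiplication by~$p$, this gives unique divisibility by every prime, hence by every nonzero integer, and therefore a canonical $\mathbb Q$-vector space structure on~$T$.

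For part (b), the key observation is that every $\mathbb F$-linear bijection of $T$ is a group automorphism: automatic in the $\mathbb F_p$-case, and in the $\mathbb Q$-case because unique divisibility forces any additive bijection to be $\mathbb Q$-linear. Using Zorn's Lemma to choose bases, the group $\mathrm{GL}_\mathbb F(T)$ acts transitively on $T \setminus \{0\}$: any two nonzero vectors extend to bases between which we can define a linear isomorphism. Hence so does $\Aut(T)$.

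For part (c), the $H$-invariance of $\Delta$ and $\Gamma$ is immediate from~\eqref{eq:Gomegaact}, since $S_{m+1}$ permutes coordinates and $\Aut(T)$ acts $\mathbb F$-linearly, so both actions preserve the defining conditions. In case~(i), the coordinate sum of $(t, \ldots, t) \in \Delta$ equals $(m+1)t = 0$ because $p \mid (m+1)$, so $\Delta \leqslant \Gamma$; and for $m \geqslant 2$, a dimension count as $\mathbb F_p$-vector spaces shows $\Delta \subsetneq \Gamma \subsetneq T^{m+1}$. In case~(ii), $m+1$ is invertible in $\mathbb F$, so setting $d = (m+1)^{-1}\sum_i t_i$ gives the decomposition
\begin{equation*}
(t_0, \ldots, t_m) = (t_0 - d, \ldots, t_m - d) + (d, \ldots, d) \in \Gamma + \Delta,
\end{equation*}
while $\Delta \cap \Gamma = 0$ follows because $(m+1)t = 0$ forces $t = 0$.

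The main obstacle is to show that $\Gamma$ has no proper nonzero $H$-invariant subgroup in case~(ii). The plan is to let $N$ be such a subgroup, pick $0 \neq v \in N$, and exploit the action of $H$ to enlarge $N$ step by step. Since $N \cap \Delta = 0$, the coordinates of $v$ cannot all be equal; say $v_i \neq v_j$. Then $v$ minus its image under the transposition $(i\,j) \in S_{m+1}$ produces a nonzero $w \in N$ supported on positions $\{i,j\}$ with $w_i = -w_j \neq 0$. Part~(b), applied diagonally via $\Aut(T)$, lets us replace $w_i$ by any nonzero element of~$T$, and further elements of $S_{m+1}$ place the two-coordinate support on any pair of positions. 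Finally, any $(t_0, \ldots, t_m) \in \Gamma$ is realised as
\begin{equation*}
(t_0, t_1, \ldots, t_m) = \sum_{i=1}^{m} w^{(i)},
\end{equation*}
where $w^{(i)}$ has $-t_i$ in position~$0$, $t_i$ in position~$i$, and $0$ elsewhere; the $0$-th coordinate works out as $-(t_1 + \dots + t_m) = t_0$ because $\sum_j t_j = 0$. Since each $w^{(i)} \in N$ by the previous step, this forces $N = \Gamma$ as required.
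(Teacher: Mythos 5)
Your proof is correct and follows essentially the same route as the paper's: characteristic subgroups $nT$ and $T[n]$ for part (a), extension of $\{a\}$ and $\{b\}$ to bases for part (b), and for part (c)(ii) the subtraction of a transposed tuple to produce an element supported on two coordinates, then transitivity of $\Aut(T)$ and $S_{m+1}$ to generate all of $\Gamma$. The only differences are cosmetic — you make the splitting $T^{m+1}=\Gamma\oplus\Delta$ and the final generation step slightly more explicit than the paper does.
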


\begin{proof}
  \begin{enumerate}
    \item
   First note that, for $n\in\mathbb{N}$,  both the image $nT$
   and the kernel $\{t\in T\mid nt=0\}$ of the map $t\mapsto nt$ are
   characteristic subgroups   of $T$.

   If $T$ is not a divisible group, then there exist $n\in\mathbb{N}$ and
   $a\in T$ such that $a \notin nT$. Thus 
  $nT\neq T$, and hence, since $T$ is characteristically simple, $nT=0$. 
  In particular,  $T$ contains a non-zero element of finite order,
  and hence $T$ also contains an element of order $p$ for some prime~$p$.
  Since $T$ is abelian, the set $Y=\{t\in T\mid pt=0\}$ is a non-trivial
  characteristic subgroup, and so $Y=T$; that is, $T$ is an
  elementary abelian $p$-group and it can be
  regarded as an $\mathbb F_p$-vector space. 

Hence we may assume that $T$ is a non-trivial divisible group. That is,
$nT=T$ for all $n\in\mathbb{N}$, but also, as $T$ is characteristically simple, 
  $\{t\in T\mid nt=0\}=\{0\}$
  for all $n\in \mathbb{N}$. Hence $T$ is uniquely divisible. In this case, $T$ can be viewed
  as a $\mathbb{Q}$-vector space, as explained before the statement of this lemma.
  \item
   By part~(a), $T$ can be considered as a vector space over some field
  $\mathbb F$. If $a,b\in T\setminus\{0\}$, then, by extending the sets $\{a\}$ and
  $\{b\}$ into $\mathbb F$-bases, we can construct an $\mathbb F$-linear
  transformation that takes $a$ to $b$.

  \item
    The definition of $\Delta$ and $\Gamma$ implies that they are
    $H$-invariant, and also that, if $T$ is an elementary abelian $p$-group
    such that $p$ divides $m+1$, then $\Delta<\Gamma$, and so $\Gamma/\Delta$ is a
    proper $H$-invariant subgroup of $T^{m+1}/\Delta$. 

Assume now that
    either $T$ is uniquely divisible or  $T$ is a $p$-group with $p\nmid(m+1)$.
Then $T^{m+1}=\Delta\oplus \Gamma$ where the decomposition is into the direct
sum of $H$-modules.  It suffices to show that,
if $\mathbf{a}=(a_0,\ldots,a_m)$ is a non-trivial element of $\Gamma$,
then the  smallest
$H$-invariant subgroup $X$ that contains
$\mathbf{a}$ is  equal to $\Gamma$. 
  The non-zero element $\mathbf a$ of $\Gamma$ cannot be of the form $(b,\ldots,b)$ 
  for $b\in T\setminus\{0\}$, 
  because $(m+1)b\neq 0$ whether $T$ is uniquely divisible or $T$ is a $p$-group 
  with $p\nmid(m+1)$. In 
  particular there exist distinct $i,j$ in $\{0,\ldots,m\}$ 
  such that $a_i\neq a_j$.
  Applying an element $\pi$ in $S_{m+1}$,
  we may assume without loss of generality
  that $a_0\neq a_1$. Applying the transposition $(0,1)\in S_{m+1}$,
  we have that
  $(a_1,a_0,a_2,\ldots,a_m)\in X$, and so
  \[
  (a_0,a_1,a_2,\ldots,a_m)-(a_1,a_0,a_2,\ldots,a_m)=(a_0-a_1,a_1-a_0,0,\ldots,0)\in X.
  \]
  Hence there is a non-zero element $a\in T$ such that $(a,-a,0,\ldots,0)\in X$.
  By part~(b),  $\Aut(T)$ is transitive on non-zero
  elements of $T$ and hence  $(a,-a,0,\ldots,0)\in X$ for
  all $a\in T$. As $S_{m+1}$ is transitive on pairs of indices $i,j\in\{0,\ldots,m\}$ with 
  $i\neq j$, this implies that
  all elements of the form $(0,\ldots,0,a,0,\ldots,0,-a,0,\ldots,0)\in T^{m+1}$ belong
  to $X$, but these elements generate $\Gamma$, and so $X=\Gamma$, as required.
  \end{enumerate}
\end{proof}

Non-abelian characteristically simple groups are harder to describe.
A direct product of pairwise isomorphic non-abelian simple groups is 
characteristically simple.
Every finite characteristically simple group is of this form, but in the
infinite case this is not true; the first example of a
characteristically simple group not of this form was published by
McLain~\cite{mclain} in 1954, see also Robinson~\cite[(12.1.9)]{djsr}.

\medskip

Now we work towards the main result of this section, the classification of
primitive or quasiprimitive diagonal groups. First we do the case where $T$ is
abelian.

\begin{lem}\label{lem:prabreg}
  Let $G$ be a permutation group on a set $\Omega$ and let $M$ be an
  abelian regular normal subgroup of $G$. If $\omega\in\Omega$, then
   $G=M\rtimes G_\omega$ and the following are
  equivalent:
  \begin{enumerate}
  \item $G$ is primitive;
  \item $G$ is quasiprimitive;
  \item $M$ has no proper non-trivial subgroup which is invariant under
    conjugation by elements of $G_\omega$.
  \end{enumerate}
  \end{lem}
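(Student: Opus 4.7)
The plan is to first dispose of the semidirect product statement, then prove the cycle (a)$\Rightarrow$(b)$\Rightarrow$(c)$\Rightarrow$(a). Since $M$ is regular on $\Omega$, the point stabiliser $M_\omega$ is trivial and $M$ is transitive, so $M\cap G_\omega=1$ and $G=MG_\omega$; together with $M\triangleleft G$ this gives $G=M\rtimes G_\omega$. The implication (a)$\Rightarrow$(b) is immediate from the definitions.

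For (b)$\Rightarrow$(c), I will argue by contrapositive. Given a proper non-trivial subgroup $N$ of $M$ that is invariant under conjugation by $G_\omega$, I need to show that $N$ is a non-trivial intransitive normal subgroup of $G$. Normality is the key point: $N$ is normalised by $G_\omega$ by hypothesis, and it is normalised by $M$ because $M$ is abelian; since $G=MG_\omega$, $N\triangleleft G$. Intransitivity then follows from $|N|<|M|=|\Omega|$ (using regularity of $M$).

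For (c)$\Rightarrow$(a), I use the standard block--subgroup correspondence for a regular normal subgroup. Identify $\Omega$ with $M$ via $\omega\cdot m\leftrightarrow m$, so that $M$ acts by right multiplication and $G_\omega$ acts on $M$ by conjugation. Given a block $B$ of imprimitivity containing $\omega$, set $N_B=\{m\in M\mid \omega m\in B\}$. The block property $B\cap Bm\neq\emptyset\Rightarrow B=Bm$ shows that $N_B$ is a subgroup, and $G_\omega$-invariance of $N_B$ follows from $G_\omega\leqslant G_B$ together with the calculation $\omega(g^{-1}mg)=(\omega m)g$ for $g\in G_\omega$. Conversely, if $N\leqslant M$ is $G_\omega$-invariant then $\omega N$ is easily verified to be a block (using that $M$ is abelian so $N\triangleleft M$, and that the $G_\omega$-invariance gives $B$-invariance under $G_\omega$). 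Under this correspondence, $N=1$ gives the trivial block $\{\omega\}$ and $N=M$ gives $\Omega$, so condition (c) is exactly the statement that there is no non-trivial block system, i.e.\ primitivity.

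The proof is largely routine; the only point that needs care is the block--subgroup correspondence in (c)$\Rightarrow$(a), specifically checking that the set $N_B$ really is a subgroup and really is $G_\omega$-invariant. Both verifications are short, so I do not anticipate a serious obstacle.
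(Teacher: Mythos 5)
Your proof is correct, and it is genuinely more self-contained than the paper's: the paper disposes of both (b)$\Rightarrow$(c) and (c)$\Rightarrow$(a) by citing Theorem~3.12(ii) of Praeger--Schneider \emph{Permutation Groups and Cartesian Decompositions} (noting only that an abelian $M$ has no non-trivial inner automorphisms), whereas you carry out the underlying arguments directly. Your (b)$\Rightarrow$(c) via the contrapositive (a proper non-trivial $G_\omega$-invariant subgroup $N$ of $M$ is normal in $G=MG_\omega$ because $M$ is abelian, and is intransitive) and your (c)$\Rightarrow$(a) via the block--subgroup correspondence are exactly the content of the cited result, so the mathematics agrees; what your version buys is independence from the reference, at the cost of length. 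One small point to tighten: in the intransitivity step you write that $|N|<|M|=|\Omega|$, which is not literally valid when $M$ is infinite (a proper subgroup can have the same cardinality, and the lemma is applied in this paper to infinite $T$, e.g.\ uniquely divisible groups). The correct statement is that regularity gives a bijection $m\mapsto\omega m$ from $M$ to $\Omega$ under which the $N$-orbit $\omega N$ corresponds to the proper subset $N\subsetneq M$, hence is a proper subset of $\Omega$; your parenthetical ``using regularity of $M$'' shows you have this in mind, but the inequality of cardinalities should be replaced by proper containment of the orbit.
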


\begin{proof}
  The product decomposition $G=MG_\omega$ follows from the transitivity of $M$, while 
  $M\cap G_\omega=1$ follows from the regularity of $M$. Hence $G=M\rtimes G_\omega$. 
  Assertion~(a) clearly implies assertion~(b). The fact that (b) implies (c) follows
  from~\cite[Theorem~3.12(ii)]{ps:cartesian} by noting that $M$, being abelian, has no non-trivial inner automorphisms.
  Finally, that (c) implies (a) follows directly from~\cite[Theorem~3.12(ii)]{ps:cartesian}. 
\end{proof}

%

To handle the case where $T$ is non-abelian, we need the following definition
and lemma.

A group $X$ is said to be \emph{perfect} if $X'=X$,
where $X'$ denotes the commutator subgroup.
The following lemma is Lemma 2.3 in \cite{charfact}, where the proof can be
found. For $X=X_1\times\cdots\times X_k$ a direct product of groups and
$S\subseteq\{1,\ldots,k\}$, we denote by $\pi_S$  the projection
from $X$ onto $\prod_{i\in S}X_i$.

\begin{lem}\label{comminside}
Let $k$ be a positive integer, let $X_1,\ldots,X_k$ be groups, and suppose, for
$i\in \{1,\ldots,k\}$, that $N_i$ is a perfect subgroup of $X_i$.
Let $X=X_1\times\cdots\times X_k$ and let $K$ be a subgroup of $X$ such that for
all $i$, $j$ with $1\leqslant i<j\leqslant k$, we have
$N_i\times N_j\leqslant \pi_{\{i,j\}}(K)$.  Then $N_1\times\cdots\times N_k\leqslant K$.

\end{lem}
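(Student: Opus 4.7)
The plan is to show that for every $i\in\{1,\ldots,k\}$ the subgroup
$\{1\}\times\cdots\times\{1\}\times N_i\times\{1\}\times\cdots\times\{1\}$
(with $N_i$ in the $i$th slot) lies inside~$K$. Once this is established for
each~$i$, the conclusion will follow immediately, since these $k$ subgroups lie
in different direct factors of $X$, hence commute pairwise, so their product is
precisely $N_1\times\cdots\times N_k$ and is therefore contained in~$K$.

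Fix $i$; by the symmetry of the hypotheses we may assume $i=1$. Enumerate
$\{2,\ldots,k\}$ as $j_1,\ldots,j_{k-1}$. The pairwise hypothesis will furnish,
for each $l\in\{1,\ldots,k-1\}$ and each $a\in N_1$, an element
$p_l(a)\in K$ with $\pi_1(p_l(a))=a$ and $\pi_{j_l}(p_l(a))=1$; the remaining
coordinates are uncontrolled. Given $a_1,\ldots,a_{k-1}\in N_1$, I would form
the left-normed iterated commutator
\[
w(a_1,\ldots,a_{k-1})
=[\ldots[[p_1(a_1),p_2(a_2)],p_3(a_3)],\ldots,p_{k-1}(a_{k-1})]\in K.
\]
Since each $\pi_j$ is a homomorphism, $\pi_j(w)$ is the corresponding iterated
commutator of the entries $\pi_j(p_l(a_l))$. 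For $j=j_l$ the $l$th entry of
this commutator is trivial, and a commutator with a trivial argument is
trivial, so the triviality propagates through every subsequent bracket; hence
$\pi_{j_l}(w)=1$ for every~$l$, and $w$ projects trivially to every slot
other than the first. On the other hand,
$\pi_1(w)=[\ldots[[a_1,a_2],a_3],\ldots,a_{k-1}]$. As $a_1,\ldots,a_{k-1}$ range
over~$N_1$, these left-normed commutators of length $k-1$ generate
$\gamma_{k-1}(N_1)$, the $(k-1)$st term of the lower central series of~$N_1$;
and because $N_1$ is perfect, $\gamma_r(N_1)=N_1$ for every $r\geqslant 1$.
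Therefore the subgroup of $K$ generated by the $w(a_1,\ldots,a_{k-1})$ is
exactly the ``slot-one copy'' $\{(a,1,\ldots,1)\mid a\in N_1\}$ of~$N_1$, as
required.

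The main obstacle is that the hypothesis only permits \emph{silencing a single
coordinate at a time}: for a given $a\in N_1$ and a single $j\neq 1$ one can
realise $a$ in slot~$1$ and $1$ in slot~$j$ inside $K$, but with no control
over the remaining slots. The iterated-commutator construction will parlay
this pairwise data into simultaneous control over all $k-1$ other coordinates,
at the cost of forming a left-normed commutator of length $k-1$; perfectness
of each~$N_i$ is precisely what makes this cost free, since the lower central
series of~$N_i$ does not descend.
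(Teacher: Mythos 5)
Your proof is correct. Note that the paper itself gives no argument for this lemma: it is quoted as Lemma~2.3 of the cited paper of Praeger and Schneider on factorisations of characteristically simple groups, so there is no in-text proof to match yours against. The two ingredients you rely on are both valid and correctly applied: a left-normed iterated commutator dies under any projection that sends one of its entries to the identity (since $[x,1]=[1,y]=1$ and the triviality then propagates through the remaining brackets), and $\gamma_c(N_1)$ is generated by the simple left-normed commutators of weight $c$ in elements of $N_1$, which equals $N_1$ for every $c$ because $N_1$ is perfect. The more usual proof of statements of this type (and the shape of the argument in the cited source) runs by induction on $k$: apply the inductive hypothesis to $\pi_{\{1,\ldots,k-1\}}(K)$ to obtain an element of $K$ of the form $(a,1,\ldots,1,{*})$, take a second element $(b,{*},\ldots,{*},1)$ supplied by the hypothesis for the pair $\{1,k\}$, and note that their commutator is $([a,b],1,\ldots,1)$; perfectness of $N_1$ then yields the slot-one copy of $N_1$, and symmetry and commuting of the distinct slots finish the proof. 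Your one-shot iterated commutator is essentially that induction unrolled: it avoids induction on $k$ at the price of invoking the (true, but slightly less elementary) generation of $\gamma_{k-1}$ by simple commutators, whereas the inductive version only ever needs weight-two commutators. One pedantic caveat, which is a defect of the statement rather than of your argument: for $k=1$ the hypothesis is vacuous and the conclusion can fail (take $K=1$ and $N_1\ne 1$), so both the lemma and your construction implicitly assume $k\geqslant 2$; this is harmless, since the lemma is applied with $k=m+1\geqslant 3$.
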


Now we are ready to prove Theorem~\ref{th:primaut}. In this proof, $G$ denotes
the group $D(T,m)$ with $m\geqslant2$. As defined earlier in this section, we let
$H=A\times S$, where $A=\Aut(T)$ and $S=S_{m+1}$.
Various properties of diagonal groups whose proofs are straightforward are 
used without further comment.

\begin{proof}[Proof of Theorem~\ref{th:primaut}]
We prove (a)~$\Rightarrow$~(b)~$\Rightarrow$~(c)~$\Rightarrow$~(a).
\begin{itemize}
\item[(a)$\Rightarrow$(b)] Clear.
\item[(b)$\Rightarrow$(c)] We show that $T$ is characteristically simple
by proving the contrapositive. Suppose that $N$ is
a non-trivial proper characteristic subgroup of $T$.
Then $N^{m+1}$ is a normal subgroup of $G$, as is readily
checked. We claim that the orbit of the point $[1,1,\ldots,1]\in\Omega$ 
under $N^{m+1}$ is $N^m$. We have to check that this set is fixed by right
multiplication by $N^m$ (this is clear, and it is also clear that it is a
single orbit), and that
left multiplication of every coordinate by a fixed element
of $N$ fixes $N^m$ (this is also clear). So $D(T,m)$ has an intransitive
normal subgroup, and is not quasiprimitive.

If $T$ is abelian, then it is either an elementary abelian $p$-group or
uniquely divisible. In the former case, if $p\mid(m+1)$, the subgroup
$\Gamma$ from Lemma~\ref{lem:charab} acts intransitively
on $\Omega$, and is normalised by
$H$; so $G$ is not
quasiprimitive, by Lemma~\ref{lem:prabreg}. (The image of $[0,\ldots,0]$
under the element $(t_0,\ldots,t_m)\in\Gamma$ is 
$[t_1-t_0,t_2-t_0,\ldots,t_m-t_0]$, which has coordinate sum zero since
$-mt_0=t_0$. So the orbit of $\Gamma$ consists of $m$-tuples with coordinate
sum zero.)
\item[(c)$\Rightarrow$(a)] Assume that $T$ is characteristically simple, and
not an elementary abelian $p$-group for which $p\mid(m+1)$.

If $T$ is abelian, then it is either uniquely divisible or an elementary 
abelian $p$-group with $p\nmid(m+1)$. Then
Lemma~\ref{lem:charab}(c) applies; $T^{m+1}=\Gamma\oplus\Delta$, where
$\Delta$ is the kernel of the action of $T^{m+1}$ on $\Omega$, and $\Gamma$ contains no
proper non-trivial $H$-invariant subgroup; so by Lemma~\ref{lem:prabreg},
$G$ is primitive.

So we may suppose that $T$ is non-abelian and characteristically simple.
Then $Z(T)=1$, and so $T^{m+1}$ acts faithfully on $\Omega$,
and its subgroup $R=T^m$ (the set of elements of $T^{m+1}$ of the form
$(1,t_1,\ldots,t_m)$) acts regularly.

Let $L=\{(t_0,1,\ldots,1) \mid t_0\in T\}$.
Put $N=T^{m+1}$. Then $RL=LR=N \cong L\times R$.
We identify $L$ with $T_0$ and $R$ with $T_1 \times \cdots \times T_m$.
Then $N$ is normal in $G$, and $G=NH$.

Let $\omega=[1,\ldots,1]\in\Omega$ be fixed. Then
$G_\omega=H$ and $N_\omega=I$, where $I$ is the subgroup of $A$
consisting of inner automorphisms of~$T$.

To show that $G$ is primitive on $\Omega$, we show that $G_\omega$ is a
maximal subgroup of $G$. So let $X$ be a  subgroup of $G$ that properly
contains $G_\omega$. We will show that $X=G$.

Since $S\leqslant X$, we have that $X=(X\cap (NA))S$.
Similarly, as $N_\omega A \leqslant X \cap (NA)$, we find that
$X \cap (N A) = (X \cap N) A$.
So $X = (X \cap N) (A S) = (X \cap N) G_\omega$.
Then, since $G_\omega$ is a proper subgroup of $X$ and $G_\omega \cap N = N_\omega$,
it follows that $X \cap N$ properly contains $N_\omega$.
Set $X_0=X\cap N$.
  Thus there exist some pair $(i,j)$ of distinct indices
  and an element $(u_0,u_1,\ldots,u_m)$ in $X_0$ such that $u_i\neq u_j$. Since
  $(u_i^{-1},\ldots,u_i^{-1}) \in X_0$, it follows that there exists an
  element $(t_0,t_1,\ldots,t_m)\in X_0$ such that $t_i=1$ and $t_j\neq~1$.
  Since $S\cong S_{m+1}$ normalises $N_\omega A$ and permutes the
  direct factors of $N=T_0\times T_1\times \cdots \times T_m$ naturally, 
  we may assume without loss of generality that $i=0$ and $j=1$, and hence that
  there exists an
  element $(1,t_1,\ldots,t_m)\in X_0$ with $t_1\neq 1$; that is,
  $T_1\cap\pi_{0,1}(X_0)\neq 1$,
  where $\pi_{0,1}$ is the projection from $N$ onto $T_0\times T_1$.

  If $\psi\in A$, then $\psi$ normalises $X_0$ and acts
coordinatewise on $T^{m+1}$; so $(1,t_1^\psi,\ldots,t_m^\psi)\in X_0$, so that
$t_1^\psi\in T_1\cap \pi_{0,1}(X_0)$. Now,
$\{t_1^\psi \mid \psi \in A\}$ generates a characteristic subgroup of~$T_1$.
Since $T_1$ is characteristically simple, $T_1\leqslant\pi_{0,1}(X_0)$. A
similar argument shows that $T_0\leqslant \pi_{0,1}(X_0)$. Hence
$T_0\times T_1=\pi_{0,1}(X_0)$. Since the group $S\cong S_{m+1}$ acts 
$2$-transitively on the direct factors of $N$,  and since $S$ normalises $X_0$
(as $S< G_\omega<X$), we
obtain, for all distinct $i,\ j\in\{1,\ldots,m\}$,  that
$\pi_{i,j}(X_0)=T_i\times T_j$ (where $\pi_{i,j}$ is the projection onto
$T_i\times T_j$).

Since the $T_i$ are non-abelian characteristically simple groups, they are
perfect. Therefore Lemma~\ref{comminside} implies that $X_0=N$, and hence
$X=(X_0A)S=G$. Thus $G_\omega$ is a maximal subgroup of $G$, and $G$ is 
primitive, as required. 
\end{itemize}
\end{proof}

In the case $m=1$, diagonal groups behave a little differently. If $T$ is
abelian, then the diagonal group is simply the holomorph of $T$, which is
primitive (and hence quasiprimitive) if and only if $T$ is characteristically
simple. The theorem is true as stated if $T$ is non-abelian, in which case
the diagonal group is the permutation group on $T$ generated by left and right
multiplication, inversion, and automorphisms of~$T$.

\section{The diagonal graph}\label{s:diaggraph}

The diagonal graph is a graph which stands in a similar relation to the
diagonal semilattice as the Hamming graph does to the Cartesian lattice.
In this section, we define it, show that apart from a few small cases its
automorphism group is the diagonal group, and investigate some of its
properties, including its connection with the permutation group property 
of \emph{synchronization}.

We believe that this is an interesting class of graphs, worthy of study by
algebraic graph theorists. The graph $\Gamma_D(T,m)$ has appeared in some
cases: when $m=2$ it is the Latin-square graph associated with the Cayley
table of~$T$, and when $T=C_2$ it is the \emph{folded cube}, a
distance-transitive graph.

\subsection{Diagonal graph and diagonal semilattice}
\label{sec:dgds}

In this subsection we define the \emph{diagonal graph} $\Gamma_D(T,m)$ associated
with a diagonal semilattice $\mathfrak{D}(T,m)$. We show that, except for five
small cases (four of which we already met in the context of Latin-square graphs
in Section~\ref{sect:lsautgp}), the
diagonal semilattice and diagonal graph determine each other, and so they have
the same automorphism group, namely $D(T,m)$.

Let $\Omega$ be the underlying set of a diagonal semilattice
$\mathfrak{D}(T,m)$, for $m\geqslant2$ and for a not necessarily finite group $T$. Let $Q_0,\ldots,Q_m$ be the minimal partitions
of the semilattice  (as in Section~\ref{sec:diag1}). We define the diagonal graph as follows.
The vertex set is $\Omega$; two vertices are joined if they lie in the same
part of $Q_i$ for some $i$ with $0\leqslant i\leqslant m$. Since parts of distinct $Q_j$, $Q_{j'}$ intersect in at most one point, the value of $i$ is unique.  Clearly
the graph is regular with valency $(m+1)(|T|-1)$ (if $T$ is finite).

We represent the vertex set by $T^m$, with $m$-tuples in square brackets.
Then $[t_1,\ldots,t_m]$ is joined to all vertices obtained by changing one
of the coordinates, and to all vertices $[xt_1,\ldots,xt_m]$ for $x\in T$,
$x\ne1$. We say that the adjacency of two vertices differing in the $i$th
coordinate is of \emph{type $i$}, and that of two vertices differing by a
constant left factor is of \emph{type $0$}.

The semilattice clearly determines the graph. So, in particular, the group
$D(T,m)$ acts as a group of graph automorphisms.

If we discard one of the partitions $Q_i$, the remaining partitions form the
minimal partitions in a Cartesian lattice; so the corresponding edges
(those of all types other than~$i$) form a
Hamming graph (Section~\ref{sec:HGCD}). So the diagonal graph is the
edge-union of $m+1$ Hamming graphs $\Ham(T,m)$ on the same set of vertices.
Moreover, two vertices lying in a part of $Q_i$ lie at
maximal distance~$m$ in the Hamming graph obtained by removing $Q_i$.

\begin{theorem}
If $(T,m)$ is not $(C_2,2)$, $(C_3,2)$, $(C_4,2)$, $(C_2\times C_2,2)$, or
$(C_2,3)$, then the diagonal graph determines uniquely the diagonal semilattice.
\label{t:autdiaggraph}
\end{theorem}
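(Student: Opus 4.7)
The plan is to show that, in each of the non-exceptional cases, the $m+1$ minimal partitions $Q_0,\ldots,Q_m$ of $\dsl Tm$ can be recovered from $\Gamma_D(T,m)$ alone; combined with Theorem~\ref{t:autDTm}, this identifies $\Aut(\Gamma_D(T,m))$ with $D(T,m)$. For $m=2$, the graph $\Gamma_D(T,2)$ is by construction the Latin-square graph of the Cayley table of $T$, so Proposition~\ref{p:lsgraphaut} delivers the reconstruction of the Latin square, and hence of $\dsl T2$, whenever $|T|>4$; the remaining orders $|T|\leqslant 4$ account exactly for the four exceptional pairs with $m=2$.

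For $m\geqslant 3$ and $|T|\geqslant 3$, the key step is the following lemma, to be proved by a short coordinate computation in $T^m$: for any vertex $v$ and any two neighbours $w_1, w_2$ of $v$ lying in different parts through~$v$, the vertices $w_1$ and $w_2$ are non-adjacent. The proof splits according to whether both of the two part types lie in $\{1,\ldots,m\}$ or one of them is~$0$; in either case, the hypothetical $Q_k$-relation between $w_1$ and $w_2$ is ruled out by evaluating it on a coordinate index $\ell$ distinct from the two given part types, and the availability of such an~$\ell$ is exactly where the assumption $m\geqslant 3$ enters (so the Latin-square ``intercalates'' of the $m=2$ case do not arise). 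It follows that the subgraph induced on $N(v)$ is a vertex-disjoint union of $m+1$ cliques of size $|T|-1$, and because $|T|-1\geqslant 2$ these cliques are exactly the connected components of $N(v)$; adjoining $v$ recovers the $m+1$ parts through $v$, and doing this for every vertex yields the full collection of parts of $Q_0\cup\cdots\cup Q_m$.

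To partition this collection into the $m+1$ classes $Q_0,\ldots,Q_m$, I would work in the auxiliary graph $H$ on the recovered parts, with two parts adjacent in $H$ iff they share a vertex of $\Omega$. The $m+1$ parts through any vertex of $\Omega$ form an $(m+1)$-clique in $H$, so any proper colouring of $H$ uses at least $m+1$ colours, and $\{Q_0,\ldots,Q_m\}$ provides one with exactly $m+1$ colours. I would then argue uniqueness of the $(m+1)$-colouring up to permutation of colours by fixing a reference vertex $v_0$ and propagating the colour assignment along edges of $\Gamma_D(T,m)$: adjacent vertices $v\sim v'$ share exactly one part, which forces the two local labellings at $v$ and $v'$ to agree on that part's colour, and connectivity of $\Gamma_D(T,m)$ then pins down the global permutation.

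Finally, $T=C_2$ with $m\geqslant 4$ has to be handled separately because $|T|-1=1$ makes $N(v)$ an independent set, so the clique-decomposition argument degenerates. Here the parts are the edges of the folded $(m+1)$-cube $\Gamma_D(C_2,m)$, and my proposed criterion is that two edges lie in the same $Q_i$ iff they appear as opposite edges of some $4$-cycle of $\Gamma_D(C_2,m)$. A short computation shows that every $4$-cycle uses exactly two distinct ``directions'' from $\{e_1,\ldots,e_m,\mathbf{1}\}$, since four distinct directions can sum to zero in $\mathbb{F}_2^m$ only when $m=3$ (because $\mathbf{1}=e_1+\cdots+e_m$); so for $m\geqslant 4$ the criterion yields a clean partition of the edge set into $m+1$ parallel classes. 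The remaining pair $(C_2,3)$ is genuinely exceptional: $\Gamma_D(C_2,3)\cong K_{4,4}$ admits additional ``four-direction'' $4$-cycles, and indeed $\Aut(K_{4,4})$ is strictly larger than $D(C_2,3)$. The main technical obstacles are the coordinate verification of the key lemma, the uniqueness of the colouring of $H$, and the $4$-cycle analysis in the folded cube.
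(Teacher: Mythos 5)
Your reduction for $m=2$, your key lemma (that two neighbours of $v$ in different parts through $v$ are non-adjacent once $m\geqslant 3$, so that the parts are recovered as $\{v\}\cup C$ for the components $C$ of $N(v)$), and your $4$-cycle analysis of the folded cube are all sound, and the first and last of these are essentially what the paper does. The gap is in the central step: the classification of the recovered lines into the $m+1$ types. Your claim that the auxiliary graph $H$ has a unique proper $(m+1)$-colouring up to permutation of colours is false. Here is a counterexample for any $m\geqslant 3$ and any $T$: the parts of $Q_1\vee Q_2$ are $|T|^{m-2}\geqslant 2$ disjoint $|T|\times|T|$ grids, each partitioned into ``rows'' (type-$1$ lines) and ``columns'' (type-$2$ lines); pick one such grid $G_0$ and form new colour classes by exchanging the roles of rows and columns inside $G_0$ only. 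The modified class (all type-$1$ lines outside $G_0$ together with the type-$2$ lines of $G_0$) is still an independent set of $H$, since lines lying in different parts of $Q_1\vee Q_2$ are automatically disjoint; so one obtains a proper $(m+1)$-colouring of $H$ whose classes are not the $Q_i$. (For $T=C_2$, $m\geqslant 4$, the same phenomenon is the existence of proper $5$-edge-colourings of the folded cube other than the direction colouring, obtained by Kempe swaps on $4$-cycles.) Correspondingly, your propagation argument cannot work as stated: an edge $v\sim v'$ forces agreement of the two local labellings only on the single shared part, and places no constraint on the other $m$ colours, which is exactly the freedom the counterexample exploits.

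What is needed — and what the paper supplies — is a graph-theoretic criterion for when two \emph{disjoint} lines have the same type, not a global colouring argument. The paper calls two disjoint lines adjacent if some vertex of one is joined to some vertex of the other, and then counts the edges between them: two adjacent lines of the same type are joined by a perfect matching ($|T|$ edges), whereas adjacent lines of different types are joined by at most two edges (exactly one when $m>3$). Provided $|T|>2$ or $m>3$, these two kinds of adjacency are distinguished by the graph, and ``same type'' is the connected-component relation of first-kind adjacency. Your write-up would be repaired by replacing the colouring step with such a local edge-count (your key lemma already contains most of the computation needed), and by noting that your $4$-cycle criterion in the $C_2$ case defines ``same type'' only after taking transitive closure, since two parallel edges of the folded cube need not themselves lie on a common $4$-cycle.
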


\begin{proof}
We handled the case $m=2$ in Proposition~\ref{p:lsgraphaut} and the following
comments, so we can assume that $m\geqslant3$. 

The assumption that $m\geqslant3$ has as a consequence that the parts of the
partitions $Q_i$ are the maximal cliques of the graph. For clearly they are
cliques. Since any clique  of size $2$ or $3$ is contained in a Hamming graph,
we see that any clique of size greater than~$1$ is contained in a 
maximal clique, which has this form; and it is the unique maximal clique
containing the given clique. (See the discussion of cliques in Hamming
graphs in the proof of Theorem~\ref{th:cdham}.)

So all the parts of the partitions $Q_i$ are determined by the graph; we
need to show how to decide when two cliques are parts of the same partition.
We call each  maximal clique a \emph{line}; we say it is an \emph{$i$-line},
or has \emph{type~$i$}, if it is a part of $Q_i$. (So an $i$-line is a maximal
set any two of whose vertices are type-$i$ adjacent.) We have to show that the
partition of lines into types is determined by the graph structure. This
involves a closer study of the graph.

Since the graph admits $D(T,m)$, which induces the symmetric group $S_{m+1}$
on the set of types of line, we can assume (for example) that if we have
three types involved in an argument, they are types $1$, $2$ and $3$.

Call lines $L$ and $M$ \emph{adjacent} if they are disjoint but there are
vertices $x\in L$ and $y\in M$ which are adjacent. Now the following holds:

\begin{quote}
Let $L$ and $M$ be two lines.
\begin{itemize}\itemsep0pt
\item If $L$ and $M$ are adjacent $i$-lines, then every vertex in $L$ is
adjacent to a vertex in $M$.
\item If $L$ is an $i$-line and $M$ a $j$-line adjacent to $L$, with $i\ne j$,
then there are at most two vertices in $L$ adjacent to a vertex in $M$, and
exactly one such vertex if $m>3$.
\end{itemize}
\end{quote}

For suppose that two lines $L$ and $M$ are adjacent, and suppose first that
they have the same type, say type $1$, and that $x\in L$ and $y\in M$ are
on a line of type~$2$. Then $L=\{[*,a_2,a_3,\ldots,a_m]\}$ and
$M=\{[*,b_2,b_3,\ldots,b_m]\}$, where $*$ denotes an arbitrary element of $T$.
We have $a_2\ne b_2$ but $a_i=b_i$ for
$i=3,\ldots,m$. The common neighbours on the two lines
are obtained by taking the entries $*$ to be equal in the two lines.
(The conditions show that there cannot be an adjacency of type $i\ne 2$ between
them.)

Now suppose that $L$ has type~$1$ and $M$ has type~$2$, with a line of
type~$3$ joining vertices on these lines. Then we have $L=\{[*,a_2,a_3,\ldots,a_m]\}$ and
$M=\{[b_1,*,b_3,\ldots,b_m]\}$, where $a_3\ne b_3$ but $a_i=b_i$ for $i>3$;
the adjacent vertices are obtained
by putting ${*}=b_1$ in $L$ and ${*}=a_2$ in $M$.
If $m>3$, there is no adjacency of any other type between the lines.

If $m=3$, things are a little different. There is one type~$3$ adjacency between
the lines $L=\{[*,a_2,a_3]\}$ and $M=\{[b_1,*,b_3]\}$ with $a_3\ne b_3$, namely
$[b_1,a_2,a_3]$ is adjacent to $[b_1,a_2,b_3]$. There is also one type-$0$
adjacency, corresponding to multiplying $L$ on the left by $b_3a_3^{-1}$:
this makes $[x,a_2,a_3]$ adjacent to $[b_1,y,b_3]$ if and only if
$b_3a_3^{-1}x=b_1$ and $b_3a_3^{-1}a_2=y$, determining $x$ and $y$ uniquely.

So we can split adjacency of lines into two kinds: the first kind when the
edges between the two lines form a perfect matching
(so there are $|T|$ such edges); the second kind where
there are at most two such edges (and, if $m>3$, exactly one). Now two
adjacent lines have the same type if and only if the adjacency is of the first
kind. So, if either $m>3$ or $|T|>2$, the two kinds of adjacency are
determined by the graph.

Make a new graph whose vertices are the lines, two lines adjacent if their
adjacency in the preceding sense is of the first kind. Then lines in the
same connected component of this graph have the same type. The converse is
also true, as can be seen within a Hamming subgraph of the diagonal graph.

Thus the partition of lines into types is indeed determined by the graph
structure, and is preserved by automorphisms of the graph.

Finally we have to consider the case where $m=3$ and $T=C_2$. In general,
for $T=C_2$, the Hamming graph is the $m$-dimensional cube, and has a unique
vertex at distance $m$ from any given vertex; in the diagonal graph, these
pairs of antipodal vertices are joined. This is the graph known as the
\emph{folded cube} (see \cite[p.~264]{bcn}). The arguments given earlier apply
if $m\geqslant4$; but, if $m=3$, the graph is the complete bipartite graph $K_{4,4}$,
and any two disjoint edges are contained in a $4$-cycle. 
\end{proof}

\begin{cor}\label{c:sameag}
Except for the cases $(T,m)=(C_2,2)$, $(C_3,2)$, $(C_2\times C_2,2)$, and
$(C_2,3)$, the diagonal semilattice $\mathfrak{D}(T,m)$ and the
diagonal graph $\Gamma_D(T,m)$ have the same automorphism group, namely
the diagonal group $D(T,m)$.
\end{cor}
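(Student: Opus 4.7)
The plan is to deduce the corollary directly by combining Theorems~\ref{t:autDTm} and~\ref{t:autdiaggraph}. The inclusion $D(T,m) \leqslant \Aut(\Gamma_D(T,m))$ is immediate from the definition of the diagonal graph: by Theorem~\ref{t:autDTm}, $D(T,m) = \Aut(\mathfrak D(T,m))$, and since the edges of $\Gamma_D(T,m)$ are precisely the pairs lying in a common part of some $Q_i$, any permutation preserving the set $\{Q_0,\ldots,Q_m\}$ preserves the edge set. So the task reduces to the reverse inclusion $\Aut(\Gamma_D(T,m)) \leqslant \Aut(\mathfrak D(T,m))$.

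For this, I would invoke Theorem~\ref{t:autdiaggraph}: outside the five listed exceptions, the graph $\Gamma_D(T,m)$ determines the diagonal semilattice, since the parts of the $Q_i$ are recovered as the maximal cliques of $\Gamma_D(T,m)$ and the partition of these lines into types is, as shown in the proof of that theorem, an invariant of the graph. Consequently, any graph automorphism permutes the $Q_i$ among themselves and therefore lies in $\Aut(\mathfrak D(T,m)) = D(T,m)$.

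It remains to account for the one value of $(T,m)$ excluded in Theorem~\ref{t:autdiaggraph} but not in the corollary, namely $(C_4,2)$. Here the diagonal graph is the Latin-square graph of the Cayley table of $C_4$, which was identified in Section~\ref{sect:lsautgp} as the Shrikhande graph. As noted there, the automorphism group of the Shrikhande graph has order $192$ and contains $D(C_4,2)$, which also has order $192$; hence the two coincide. The step that needs care is this bookkeeping between the exceptional lists of the theorem and of the corollary: the other four cases $(C_2,2)$, $(C_3,2)$, $(C_2\times C_2,2)$, $(C_2,3)$ really do have graph automorphism groups strictly larger than $D(T,m)$ (the complete graph $K_4$, the complete tripartite graph $K_{3,3,3}$, the line graph $L(K_{4,4})$, and the complete bipartite graph $K_{4,4}$ respectively, as discussed in Sections~\ref{sect:lsautgp} and in the proof of Theorem~\ref{t:autdiaggraph}), so they are genuine exceptions and must be excluded. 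No further work is needed; assembling these observations yields the corollary.
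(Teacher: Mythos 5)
Your proposal is correct and follows essentially the same route as the paper: the paper's proof likewise combines Theorem~\ref{t:autdiaggraph} (the graph determines the semilattice outside the five listed cases) with Theorem~\ref{t:autDTm}, and disposes of the one discrepancy $(C_4,2)$ by noting that $\Gamma_D(C_4,2)$ is the Shrikhande graph with automorphism group $D(C_4,2)$ of order $192$. Your additional remarks confirming that the four remaining cases are genuine exceptions are consistent with the discussion in Section~\ref{sect:lsautgp}.
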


\begin{proof}
This follows from Theorem~\ref{t:autdiaggraph} and the fact that
$\Gamma_D(C_4,2)$ is the Shrikhande graph, whose automorphism group is
$D(C_4,2)$: see Section~\ref{sect:lsautgp}. 
\end{proof}

\subsection{Properties of finite diagonal graphs}

We have seen some graph-theoretic properties of $\Gamma_D(T,m)$ above.
In this subsection we assume that $T$ is finite and $m\geqslant2$, though we often have to exclude
the case $m=|T|=2$ (where, as we have seen, the diagonal graph is the complete
graph $K_4$).

The \emph{clique number} $\omega(\Gamma)$ of a graph~$\Gamma$
is the number of vertices in its largest clique; the
\emph{clique cover number} $\theta(\Gamma)$ is the smallest number of cliques
whose union contains every vertex; and the \emph{chromatic number}
$\chi(\Gamma)$ is the smallest number of colours required to colour the
vertices so that adjacent vertices receive different colours.

The following properties are consequences of Section~\ref{sec:dgds}, 
especially the proof of Theorem~\ref{t:autdiaggraph}. We give brief
explanations or pointers to each claim.
\begin{itemize}
\item There are $|T|^m$ vertices, and the valency is $(m+1)(|T|-1)$. (The
number of vertices is clear; each point $v$ lies in a unique part of size 
$|T|$ in each of the $m+1$ minimal partitions of the diagonal semlattice. 
Each of these parts is a maximal clique, the parts pairwise intersect 
only in $v$, and the union of the parts contains all the neighbours of $v$.)
\item Except for the case $m=|T|=2$, the clique number is $|T|$, and the
clique cover number is $|T|^{m-1}$. (The parts of each minimal partition
carry maximal cliques, and thus each minimal partition realises a minimal-size
partition of the vertex set into cliques.)
\item $\Gamma_D(T,m)$ is isomorphic to $\Gamma_D(T',m')$ if and only if
$m=m'$ and $T\cong T'$. (The graph is constructed from the semilattice; and
if $m>2$, or $m=2$ and $|T|>4$, the semilattice is recovered from the graph as
in Theorem~\ref{t:autdiaggraph}; for the remaining cases, see the discussion
after Proposition~\ref{p:lsgraphaut}.)

\end{itemize}

Distances and diameter can be calculated as follows. We define two sorts of
adjacency: (A1) is $i$-adjacency for $i\ne0$, while (A2) is $0$-adjacency.

\subsubsection*{Distances in $\Gamma_D(T,m)$} We observe first that, in any
shortest path, adjacencies of fixed type occur
at most once. This is because different factors of $T^{m+1}$ commute, so
we can group those in each factor together.

We also note that distances cannot exceed $m$, since any two vertices are
joined by a path of length at most $m$ using only edges of sort (A1) (which
form a Hamming graph). So a path of smallest length is contained within a
Hamming graph.

Hence, for any two vertices $t=[t_1,\ldots,t_m]$ and $u=[u_1,\ldots,u_m]$, we
compute the distance in the graph by the following procedure:
\begin{itemize}
\item[(D1)] Let $d_1=d_1(t,u)$ be the Hamming distance between the vertices 
$[t_1,\ldots,t_m]$
and $[u_1,\ldots,u_m]$. (This is the length of the shortest path not using a
$0$-adjacency.)
\item[(D2)] Calculate the quotients $u_it_i^{-1}$ for $i=1,\ldots,m$. Let
$\ell$ be the maximum number of times that a non-identity element of $T$ occurs
as one of these quotients, and set $d_2=m-\ell+1$. (We can apply left
multiplication by this common quotient to find a vertex at distance one from
$t$; then use right multiplication by $m-\ell$ appropriate elements to make the
remaining elements agree. This is the length of the shortest path using a
$0$-adjacency.)
\item[(D3)] Now the graph distance $d(u,v)=\min\{d_1,d_2\}$.
\end{itemize}

\subsubsection*{Diameter of $\Gamma_D(T,m)$} An easy argument shows that the diameter of the graph is 
$m+1-\lceil (m+1)/|T|\rceil$ which is at most
$m$, with equality if and only if $|T|\geqslant m+1$. The bound $m$ also follows
directly from the fact that, in the previous procedure, both $d_1$
and $d_2$ are at most $m$.

If $|T|\geqslant m+1$, let $1,t_1,t_2,\ldots,t_m$ be pairwise distinct elements
of~$T$. It is easily
checked that $d([1,\ldots,1],[t_1,\ldots,t_m])=m$. For clearly $d_1=m$;
and for $d_2$ we note that all the ratios are distinct so $l=1$.

\subsubsection*{Chromatic number} 
This has been investigated in two special cases: the case $m=2$ (Latin-square
graphs) in \cite{ghm}, and the case where $T$ is a non-abelian finite simple
group in \cite{bccsz} in connection with synchronization. 
We have not been able to compute the chromatic number in all cases;
this section describes
what we have been able to prove.

The argument in~\cite{bccsz} uses the truth of the
\emph{Hall--Paige conjecture} by
Wilcox~\cite{wilcox}, Evans~\cite{evans}
and Bray et al.~\cite{bccsz}, 
which we briefly discuss.
(See \cite{bccsz} for the history of the proof of this conjecture.)
  
\begin{defn}
A \emph{complete mapping} on a group $G$ is a bijection $\phi:G\to G$ for
which the map $\psi:G\to G$ given by $\psi(x)=x\phi(x)$ is also a bijection.
The map $\psi$ is the \emph{orthomorphism} associated with $\phi$.
\end{defn}
    
In a Latin square, a \emph{transversal} is a set of cells, one in each row,
one in each column, and one containing each letter; an \emph{orthogonal mate}
is a partition of the cells into transversals.
It is well known
(see also~\cite[Theorems~1.4.1 and 1.4.2]{DK:book})
that the following three conditions on a finite group $G$ are
equivalent. (The original proof is in \cite[Theorem~7]{paige}.)
\begin{itemize}\itemsep0pt
    \item $G$ has a complete mapping;
    \item the Cayley table of $G$ has a transversal;
    \item the Cayley table of $G$ has an orthogonal mate.
\end{itemize}
    
The \emph{Hall--Paige conjecture} \cite{hp} (now, as noted, a theorem), 
asserts the following:
    
\begin{theorem}\label{th:hp}
The finite group $G$ has a complete mapping if and only if either $G$ has odd
order or the Sylow $2$-subgroups of $G$ are non-cyclic. 
\end{theorem}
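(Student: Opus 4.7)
The plan is to address the two directions separately. Necessity (that a complete mapping forces $|G|$ odd or the Sylow $2$-subgroups of $G$ non-cyclic) is elementary and was proved by Hall and Paige themselves. Sufficiency is the content of the conjecture proper; its proof is deep and leans essentially on the Classification of Finite Simple Groups, being assembled from the work of several authors over many years.

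For necessity, I would work in the abelianisation $G/G'$ and compare the products of the images of $x$, of $\phi(x)$, and of $x\phi(x)$ as $x$ ranges over $G$. Writing $s$ for the image in $G/G'$ of $\prod_{x\in G}x$, and using that both $\phi$ and $\psi(x)=x\phi(x)$ are bijections of $G$, one computes in the abelian quotient
\[
s \;=\; \prod_{x\in G}\overline{\psi(x)} \;=\; \prod_{x\in G}\bar{x}\,\overline{\phi(x)} \;=\; \Bigl(\prod_{x\in G}\bar{x}\Bigr)\!\Bigl(\prod_{x\in G}\overline{\phi(x)}\Bigr) \;=\; s\cdot s,
\]
so $s=1$. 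A classical pairing argument shows that the product of all elements of a finite abelian group equals the product of its involutions, and this product is trivial unless the group has a unique involution (equivalently, a cyclic Sylow $2$-subgroup of even order), in which case it equals that involution. If $G$ itself has cyclic Sylow $2$-subgroups of order at least~$2$, then by Burnside's normal $p$-complement theorem the abelianisation $G/G'$ inherits a non-trivial cyclic Sylow $2$-subgroup, so $s$ is the unique involution of $G/G'$ and hence $s\ne 1$, a contradiction. This is essentially the argument of~\cite[Theorem~7]{paige}.

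For sufficiency the strategy proceeds in two layers. First, Hall and Paige proved a reduction: if $N\trianglelefteq G$ and both $N$ and $G/N$ admit complete mappings, then so does~$G$. Combined with the trivial fact that a group of odd order admits the identity as a complete mapping (since squaring is bijective), this reduces matters to showing that every non-abelian finite simple group whose Sylow $2$-subgroups are non-cyclic admits a complete mapping. Second, this last step must be carried out family by family using CFSG: the alternating groups and several classical families were settled classically; Wilcox~\cite{wilcox} handled the groups of Lie type and the majority of sporadic groups by character-theoretic estimates (in essence, showing that a class-algebra sum counting pairs $(x,\phi(x))$ with prescribed product is strictly positive); Evans~\cite{evans} disposed of all remaining sporadic groups except~$J_4$; and Bray, as reported in~\cite{bccsz}, completed~$J_4$.

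The principal obstacle is the second layer. The reduction to simple groups, and the necessity direction, are comparatively routine; but there is no known uniform argument at the simple-group level, and each family of simple groups requires its own analysis, typically resting on delicate positivity estimates for character sums or structure constants of the group algebra. It is this case-by-case dependence on CFSG that makes the theorem deep, and the proof cannot at present be disentangled from the classification.
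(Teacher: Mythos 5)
The paper does not actually prove Theorem~\ref{th:hp}: it is imported as an external result, with the proof credited to Wilcox~\cite{wilcox}, Evans~\cite{evans} and Bray et al.~\cite{bccsz}, so there is no internal argument to compare yours against. Your account of the literature is accurate in outline, and your necessity argument is complete and correct: the computation $s=s^2$ in $G/G'$, the classical fact that the product of all elements of a finite abelian group is non-trivial precisely when its Sylow $2$-subgroup is non-trivial cyclic, and the passage from $G$ to $G/G'$ via Burnside's normal $2$-complement theorem together constitute a genuine proof of the ``only if'' direction. (A small attribution point: that argument is due to Hall and Paige~\cite{hp}; the paper cites \cite[Theorem~7]{paige} only for the equivalence of complete mappings, transversals and orthogonal mates.)

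Two caveats on the sufficiency side. First, the reduction to simple groups is not the formal consequence of the extension lemma that your phrasing suggests: a group satisfying the Hall--Paige condition can have normal subgroups or quotients that fail it (for instance a quotient isomorphic to $C_2$), so one cannot simply induct along a normal series using ``$N$ and $G/N$ admissible implies $G$ admissible'' together with odd-order admissibility. The actual reduction, due to Aschbacher and Wilcox, requires a considerably more delicate analysis of a minimal counterexample. Second, everything beyond the reduction is deferred to citations; since the real content of the theorem is the CFSG-based verification for the simple groups, your proposal is a correct road map rather than a proof --- but that is precisely the status the result has in the paper, which likewise treats it as a black box.
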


Now let $T$~be a finite group and let $m$~be an integer greater
than~$1$, and consider the diagonal graph $\Gamma_D(T,m)$. The chromatic
number of a graph cannot be smaller than its clique number. We saw at the 
start of this section that the clique number is $|T|$ unless $m=2$ and $|T|=2$.
\begin{itemize}
\item Suppose first that $m$ is odd. We give the vertex $[t_1,\ldots,t_m]$
  the colour
  $u_1u_2 \cdots u_m$ in $T$, where $u_i=t_i$ if $i$~is odd
  and $u_i=t_i^{-1}$ if $i$~is even.
  If two vertices lie in a part
of $Q_i$ with $i>0$, they differ only in the $i$th coordinate, and clearly
their colours differ. Suppose that $[t_1,\ldots,t_m]$ and $[s_1,\ldots,s_m]$
lie in the same part of $Q_0$, so that $s_i=xt_i$ for $i=1,\ldots,m$,
where $x\ne1$. Put $v_i=s_i$ if $i$ is odd and $v_i=s_i^{-1}$ if $i$~is even.
Then $v_iv_{i+1} = u_iu_{i+1}$ whenever $i$ is even, so
the colour of the second vertex is
\[v_1v_2 \cdots v_m = v_1 u_2 \cdots u_m =xu_1 u_2 \cdots u_m,\] 
which is different from that of the first vertex since $x\ne1$.
\item Now suppose that $m$ is even and assume in this case that the Sylow
$2$-subgroups of $T$ are are trivial or non-cyclic. Then, by
Theorem~\ref{th:hp}, $T$~has a complete mapping~$\phi$. Let $\psi$ be
the corresponding orthomorphism. We define the colour of the vertex
$[t_1,\ldots,t_m]$ to be 
\[t_1^{-1}t_2t_3^{-1}t_4\cdots t_{m-3}^{-1}t_{m-2}t_{m-1}^{-1}\psi(t_m).\]
An argument similar to but a little more elaborate than in the other case
shows that this is a proper colouring. We refer to \cite{bccsz} for details.
\end{itemize}

With a little more work we get the following theorem, a
contribution to the general question concerning the chromatic number of 
the diagonal graphs. Let $\chi(T,m)$ denote the chromatic
number of $\Gamma_D(T,m)$. 

\begin{theorem}\label{thm:chrom}
\begin{enumerate}
\item If $m$ is odd, or if $|T|$ is odd, or if the Sylow $2$-subgroups of
$T$ are non-cyclic, then $\chi(T,m)=|T|$.
\item If $m$ is even, then $\chi(T,m)\leqslant\chi(T,2)$.
\end{enumerate}
\end{theorem}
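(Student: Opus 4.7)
In both parts the lower bound $\chi(T,m)\geqslant|T|$ is immediate, because every part of a minimal partition $Q_i$ of $\mathfrak{D}(T,m)$ is a clique of size~$|T|$ in $\Gamma_D(T,m)$.

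For part~(a) the plan is to verify carefully the two explicit colorings already sketched in the excerpt. When $m$ is odd, assign to $[t_1,\ldots,t_m]$ the element $u_1u_2\cdots u_m\in T$ with $u_i=t_i$ for $i$ odd and $u_i=t_i^{-1}$ for $i$ even; any type-$i$ edge with $i\geqslant 1$ alters just one factor of the product, and the type-$0$ calculation in the excerpt shows the product is left-multiplied by the non-identity~$x$. When $m$ is even and either $|T|$ is odd or the Sylow $2$-subgroups of $T$ are non-cyclic, the Hall--Paige theorem~\cite{wilcox,evans,bccsz} furnishes a complete mapping $\phi$ of $T$ with orthomorphism $\psi(t)=t\phi(t)$; color $[t_1,\ldots,t_m]$ by $t_1^{-1}t_2t_3^{-1}\cdots t_{m-1}^{-1}\psi(t_m)$, and verify properness by analogous case analysis. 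The type-$m$ case uses only that $\psi$ is a bijection, while the type-$0$ case reduces, after telescoping the internal $x$'s, to $\phi(xt_m)\neq\phi(t_m)$, which is immediate from the bijectivity of~$\phi$.

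For part~(b), write $m=2k$ with $k\geqslant 2$. The plan is to construct a graph homomorphism $f\colon\Gamma_D(T,m)\to\Gamma_D(T,2)$; then pre-composing any proper $\chi(T,2)$-coloring of $\Gamma_D(T,2)$ with~$f$ produces a proper coloring of $\Gamma_D(T,m)$ using only $\chi(T,2)$ colors. I would take
\[
f([t_1,\ldots,t_m])=\bigl[\,t_1,\;t_2t_3^{-1}t_4\cdots t_{m-1}^{-1}t_m\,\bigr],
\]
whose second coordinate is the alternating product of the last $2k-1$ entries, beginning and ending with a positive exponent. The easy adjacencies are of types $1,\ldots,m$: exactly one $t_i$ changes, which perturbs exactly one coordinate of $f([t])$ and so yields a type-$1$ or type-$2$ edge in~$\Gamma_D(T,2)$.

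The main obstacle is the type-$0$ case, where every $t_i$ is simultaneously replaced by $xt_i$. I would expand the new second coordinate as $(xt_2)(xt_3)^{-1}(xt_4)\cdots(xt_{m-1})^{-1}(xt_m)$ and regroup the $k-1$ interior consecutive pairs as $(xt_{2j}t_{2j+1}^{-1}x^{-1})$ for $j=1,\ldots,k-1$; the $x^{-1}$ ending each pair cancels with the $x$ beginning the next, and the last trailing $x^{-1}$ absorbs into the final singleton $xt_m$, leaving $x\cdot t_2t_3^{-1}\cdots t_{2k-1}^{-1}t_m$. Combined with the first coordinate $xt_1$, this exhibits the image of the translated vertex as the left translate of $f([t])$ by~$x$, i.e.\ a type-$0$ edge in $\Gamma_D(T,2)$. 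The telescoping depends essentially on the parity of~$m$: if $m$ were odd the product would end with an inverted factor and a stray $x^{-1}$ would spoil the cancellation, which is exactly why the statement requires $m$ even.
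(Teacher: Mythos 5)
Your proposal is correct and takes essentially the same route as the paper: part (a) rests on the same two explicit colourings (the alternating product for odd $m$, and the orthomorphism colouring supplied by Hall--Paige for even $m$, with exactly the reduction to $\phi(xt_m)\neq\phi(t_m)$ for type-$0$ edges), and part (b) is obtained by pulling a proper colouring of $\Gamma_D(T,2)$ back along a graph homomorphism. The only difference is that you write a single homomorphism $\Gamma_D(T,m)\to\Gamma_D(T,2)$ directly, whereas the paper iterates a homomorphism $\Gamma_D(T,m)\to\Gamma_D(T,m-2)$ (Theorem~\ref{thm:diagepi}); your telescoping verification for type-$0$ adjacencies is the same calculation carried out in one step, so this is a presentational rather than a substantive difference.
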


All cases in (a) were settled above; we turn to~(b).

A \emph{graph homomorphism} from $\Gamma$ to $\Delta$ is a map from the
vertex set of $\Gamma$ to that of $\Delta$ which maps edges to edges.
A proper $r$-colouring of a graph $\Gamma$ is a homomorphism from $\Gamma$
to the complete graph $K_r$. Since the composition of homomorphisms is a
homomorphism, we see that if there is a homomorphism from $\Gamma$ to
$\Delta$ then there is a colouring of $\Gamma$ with $\chi(\Delta)$ colours,
so $\chi(\Gamma)\leqslant\chi(\Delta)$. 

\begin{theorem}\label{thm:diagepi}
For any $m\geqslant 3$ and non-trivial finite group $T$, there is a homomorphism from $\Gamma_D(T,m)$
to $\Gamma_D(T,m-2)$.
\end{theorem}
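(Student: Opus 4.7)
The plan is to exhibit an explicit graph homomorphism $\phi\colon \Gamma_D(T,m)\to\Gamma_D(T,m-2)$, given by the formula
\[
\phi([t_1,t_2,t_3,t_4,\ldots,t_m])=[t_1 t_2^{-1}t_3,\,t_4,\,t_5,\,\ldots,\,t_m].
\]
The key algebraic observation that motivates this choice is that, for any $x\in T$,
\[
(xt_1)(xt_2)^{-1}(xt_3)=x t_1 t_2^{-1}x^{-1} x t_3 = x t_1 t_2^{-1} t_3,
\]
so that simultaneous left multiplication of all three of $t_1,t_2,t_3$ by $x$ results in left multiplication of $t_1 t_2^{-1} t_3$ by $x$. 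This is exactly what is needed to convert a type-$0$ edge of $\Gamma_D(T,m)$ into a type-$0$ edge of $\Gamma_D(T,m-2)$.

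With the map in hand, the proof reduces to a routine case-check that $\phi$ sends each of the $m+1$ types of adjacency in $\Gamma_D(T,m)$ to an edge of $\Gamma_D(T,m-2)$. For type-$i$ adjacency with $i\in\{4,\ldots,m\}$ only coordinate $t_i$ changes, so in the image only the $(i-2)$th coordinate changes, producing a type-$(i-2)$ edge in the target. For type-$i$ adjacency with $i\in\{1,2,3\}$, exactly one of $t_1$, $t_2$, $t_3$ changes; since for each fixed $j\in\{1,2,3\}$ the map $t_j\mapsto t_1 t_2^{-1}t_3$ (with the other two coordinates frozen) is a bijection of $T$, the first coordinate of the image strictly changes while $t_4,\ldots,t_m$ are untouched, yielding a type-$1$ edge in the target. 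Finally, for type-$0$ adjacency the algebraic observation above shows that the two images differ by simultaneous left multiplication by the same $x\neq 1$, which is precisely type-$0$ adjacency in $\Gamma_D(T,m-2)$.

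The only genuine obstacle is recognising the correct form of $\phi$, and this is motivated by the proper colouring $[t_1,t_2,t_3]\mapsto t_1 t_2^{-1}t_3$ of $\Gamma_D(T,3)$ used in the proof of Theorem~\ref{thm:chrom}(a) for odd~$m$. Indeed, when $m=3$ the target $\Gamma_D(T,1)$ is the complete graph $K_{|T|}$, because both $Q_0$ and $Q_1$ reduce to the universal partition of $T$; so our homomorphism specialises in that case to exactly that colouring, and iterating $\phi$ a total of $(m-1)/2$ times for odd $m$ recovers the alternating-product colouring of Theorem~\ref{thm:chrom}(a). For even $m$, iterating $\phi$ lands in $\Gamma_D(T,2)$, which is precisely the ingredient required for the inequality $\chi(T,m)\leqslant\chi(T,2)$ of Theorem~\ref{thm:chrom}(b). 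No restriction on the parity of $m$ is needed, and the hypothesis $m\geqslant 3$ is used only to guarantee that the first three coordinates exist and that $m-2\geqslant 1$ so the target graph is defined.
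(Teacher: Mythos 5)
Your proposal is correct and uses exactly the same map $[t_1,\ldots,t_m]\mapsto[t_1t_2^{-1}t_3,t_4,\ldots,t_m]$ and the same verification (coordinate changes go to coordinate changes, and simultaneous left multiplication by $x$ passes through to left multiplication of $t_1t_2^{-1}t_3$ by $x$) as the paper's proof. The extra remarks about injectivity in each of the first three coordinates and the connection to the colourings in Theorem~\ref{thm:chrom} are consistent with the paper but not needed beyond what is already there.
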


\begin{proof} We define a map by mapping a vertex $[t_1,t_2,\ldots,t_m]$ of 
$\Gamma_D(T,m)$ to the vertex $[t_1t_2^{-1}t_3,t_4,\ldots,t_m]$ of
$\Gamma_D(T,m-2)$, and show that this map is a homomorphism. If
two vertices of $\Gamma_D(T,m)$ agree in all but position~$j$, then their
images agree in all but position $1$ (if $j\le 3$) or $j-2$ (if $j>3$).
Suppose that $t_i=xs_i$ for $i=1,\ldots,m$. Then
$t_1t_2^{-1}t_3=xs_1s_2^{-1}s_3$, so the images of $[t_1,\ldots,t_m]$ and
$[s_1,\ldots,s_m]$ are joined. This completes the proof. 
\end{proof}

This also completes the proof of Theorem~\ref{thm:chrom}. 

\medskip

The paper \cite{ghm} reports new results on the chromatic number of a
Latin-square graph, in particular, if $|T|\geqslant 3$ then
$\chi(T,2)\leqslant 3|T|/2$. They also report a conjecture of Cavenagh,
which claims that $\chi(T,2)\leqslant |T|+2$,
and prove this conjecture in the case where $T$ is abelian.

Payan~\cite{Payan} showed that graphs in a class he called ``cube-like''
cannot have chromatic number~$3$. Now $\Gamma_D(C_2,2)$,
which is the complete graph~$K_4$, has chromatic number~$4$; and the 
folded cubes $\Gamma_D(C_2,m)$ are ``cube-like'' in Payan's sense.
It follows from Theorems~\ref{thm:chrom} and~\ref{thm:diagepi} that the
chromatic number of the folded cube $\Gamma_D(C_2,m)$ is $2$ if $m$~is odd and
$4$ if $m$~is even. So the bound in Theorem~\ref{thm:chrom}(b) is attained if
$T\cong C_2$.

\subsection{Synchronization}\label{sec:Synch}

A permutation group $G$ on a finite set $\Omega$ is said to be
\emph{synchronizing} if, for any map $f:\Omega\to\Omega$ which is not a
permutation, the transformation monoid $\langle G,f\rangle$ on $\Omega$
generated by $G$ and $f$ contains a map of rank~$1$ (that is, one which maps
$\Omega$ to a single point). For the background of this notion in automata
theory, we refer to \cite{acs:synch}.

The most important tool in the study of synchronizing groups is the following
theorem \cite[Corollary 4.5 ]{acs:synch}.  A graph is \emph{trivial} if it
is complete or null.
  
\begin{theorem}\label{th:nonsynch}
A permutation group $G$ is synchronizing if and only if no non-trivial
$G$-invariant graph has clique number equal to chromatic number. 
\end{theorem}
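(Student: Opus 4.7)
The plan is to prove both directions by constructing explicit bridges between the combinatorial structure (a non-trivial $G$-invariant graph with $\omega=\chi$) and the transformation-monoid structure (a witness to non-synchronization).

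For the direction ``if there is a non-trivial $G$-invariant graph $\Gamma$ with $\omega(\Gamma) = \chi(\Gamma) = k$, then $G$ is not synchronizing'', I would fix a $k$-clique $K$ in $\Gamma$ together with a proper $k$-colouring $c\colon\Omega\to K$ of $\Gamma$ which is the identity on $K$ (relabel the colour set as $K$). Define $f\colon\Omega\to\Omega$ by $f(\omega)=c(\omega)$. Since $\Gamma$ is non-trivial we have $|\Omega|>k$, so $f$ is not a permutation. The crucial point is that $f$ maps every edge of $\Gamma$ to an edge (adjacent vertices receive distinct colours in $K$, and any two distinct vertices of $K$ are joined since $K$ is a clique), and every $g\in G$ preserves edges of $\Gamma$ by $G$-invariance. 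Hence every element of $\langle G,f\rangle$ is a graph endomorphism of $\Gamma$. Since $\Gamma$ has no loops, no constant map can be such an endomorphism, so $\langle G,f\rangle$ contains no rank-$1$ map.

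For the converse, assume $G$ is not synchronizing, so there is a non-bijective $f\colon\Omega\to\Omega$ for which $M:=\langle G,f\rangle$ contains no rank-$1$ element. Choose $h\in M$ of minimum rank $r$; necessarily $r\geqslant 2$. Define a graph $\Gamma$ on vertex set $\Omega$ by declaring $x\sim y$ if and only if no element of $M$ identifies $x$ and $y$. Then $G$ acts as automorphisms of $\Gamma$, since adjacency is invariant under pre-composition by any permutation belonging to $M$. The image $h(\Omega)$ is an $r$-clique of $\Gamma$: if some $h'\in M$ identified two points of this image, then $h'h$ would have rank less than $r$, contradicting minimality. Moreover, $h$ itself gives a proper $r$-colouring, for if $x\sim y$ then no element of $M$ (in particular not $h$) identifies $x$ and $y$. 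Combining, $r\leqslant \omega(\Gamma)\leqslant\chi(\Gamma)\leqslant r$, giving equality. Non-triviality of $\Gamma$ follows because $f\in M$ collapses some pair (yielding a non-edge) and $h$ has rank at least $2$ (so the clique $h(\Omega)$ witnesses at least one edge).

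The main obstacle is the converse direction, specifically recognising that a \emph{single} transformation of minimum rank simultaneously witnesses $\omega(\Gamma)\geqslant r$ through its image and $\chi(\Gamma)\leqslant r$ through its kernel. This self-dual role of minimum-rank maps is precisely what forces the numerical equality of the two graph parameters, and the minimality hypothesis is invoked in both halves through the absence of a strictly smaller-rank composition. The remaining verifications, namely $G$-invariance and non-triviality of the constructed graph, are routine.
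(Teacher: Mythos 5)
Your proof is correct. Note that the paper does not actually prove this statement: it is quoted from the survey of Ara\'ujo, Cameron and Steinberg (cited there as Corollary~4.5), so there is no internal proof to compare against. Your argument is essentially the standard one from that source: in one direction you turn a proper $\chi$-colouring with values in a maximum clique into a non-permutation endomorphism of the graph, and observe that a monoid of endomorphisms of a graph with at least one edge contains no constant map; in the other direction you form the ``non-synchronizing pairs'' graph, whose adjacency is defined by no element of $M=\langle G,f\rangle$ collapsing the pair, and let a minimum-rank element $h$ of $M$ play its self-dual role, its image giving an $r$-clique and its kernel a proper $r$-colouring, so that $r\leqslant\omega\leqslant\chi\leqslant r$. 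All the side conditions (that $G$ preserves the constructed graph, that it is neither complete nor null, and that the minimum rank $r$ is at least $2$) are checked correctly, so the proposal is a complete and accurate self-contained proof of the quoted theorem.
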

  
From this it immediately follows that a synchronizing group is transitive
(if $G$ is intransitive, take a complete graph on one orbit of~$G$), and
primitive (take the disjoint union of complete graphs on the blocks in a
system of imprimitivity for~$G$). Now, by the O'Nan--Scott theorem 
(Theorem~\ref{thm:ons}), a
primitive permutation group preserves a Cartesian or diagonal semilattice or
an affine space, or else is almost simple.

\begin{theorem}
If a group $G$ preserves a Cartesian decomposition, then it is non-synchro\-nizing.
\end{theorem}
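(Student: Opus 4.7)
The plan is to exhibit a non-trivial $G$-invariant graph whose clique number equals its chromatic number, and then invoke Theorem~\ref{th:nonsynch}. Let $\mathcal{E}=\{P_1,\ldots,P_n\}$ be the (non-trivial) Cartesian decomposition of $\Omega$ preserved by $G$, so $n\geqslant 2$ and each $|P_i|\geqslant 2$. By Proposition~\ref{prop:CDbij} we may identify $\Omega$ with $A_1\times\cdots\times A_n$, where $A_i=P_i$. My candidate is the mixed-alphabet Hamming graph $\Gamma=\Ham(A_1,\ldots,A_n)$ from Section~\ref{sec:HGCD}, in which two tuples are joined precisely when they differ in a single coordinate. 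Since $G$ permutes the minimal partitions $Q_i=\bigwedge_{j\ne i}P_j$ of the Cartesian lattice generated by $\mathcal{E}$, and the edge set of $\Gamma$ consists exactly of the pairs lying in a common part of some $Q_i$, the graph $\Gamma$ is $G$-invariant. It is non-trivial: it has edges because $|A_1|\geqslant 2$, and it has non-edges because $n\geqslant 2$ and $|A_1|,|A_2|\geqslant 2$ allow two tuples to differ in both of the first two coordinates.

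Next I would pin down the two graph invariants. Set $q=\max_i|A_i|$. As in the clique analysis carried out in the proof of Theorem~\ref{th:cdham}, any two edges from a common vertex of $\Gamma$ that involve distinct coordinates cannot close up to a triangle; hence every maximal clique is a ``line'' $C(a,i)=\{b:b_j=a_j\text{ for all }j\ne i\}$, and so $\omega(\Gamma)=q$. For the chromatic number, fix any injections $A_i\hookrightarrow\mathbb{Z}/q\mathbb{Z}$ sending $A_i$ to $\{0,1,\ldots,|A_i|-1\}$, and colour the vertex $(a_1,\ldots,a_n)$ by $a_1+\cdots+a_n\pmod q$. If two vertices differ only in coordinate $i$, with entries $a_i\ne a_i'$ drawn from $\{0,\ldots,q-1\}$, the colour difference $a_i-a_i'\pmod q$ is nonzero. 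This is a proper $q$-colouring, so
\[
\omega(\Gamma)\leqslant\chi(\Gamma)\leqslant q=\omega(\Gamma),
\]
and the three quantities coincide.

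Applying Theorem~\ref{th:nonsynch} to the non-trivial $G$-invariant graph $\Gamma$ then yields that $G$ is not synchronizing. I do not expect a serious obstacle in this argument; the only mildly delicate point is handling possibly unequal alphabet sizes, which is resolved by the padding embedding into $\mathbb{Z}/q\mathbb{Z}$ used in the colouring.
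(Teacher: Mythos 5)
Your proof is correct and follows essentially the same route as the paper: exhibit the Hamming graph attached to the Cartesian decomposition as a non-trivial $G$-invariant graph, observe that its clique number equals the alphabet size, produce a proper colouring by summing coordinates, and invoke Theorem~\ref{th:nonsynch}. The only difference is that you treat the mixed-alphabet case explicitly via the padding embedding into $\mathbb{Z}/q\mathbb{Z}$, whereas the paper's parenthetical argument simply endows a single alphabet with an abelian group structure; this is a harmless refinement of the same idea.
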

  
This holds because the Hamming graph has clique number equal to chromatic
number. (We saw in the proof of Theorem~\ref{th:cdham} that the clique number of
the Hamming graph is equal to the
cardinality of the alphabet. Take the alphabet $A$ to be an abelian group;
also use $A$ for the set of colours, and give the $n$-tuple
$(a_1,\ldots,a_n)$ the colour $a_1+\cdots+a_n$. If two $n$-tuples are 
adjacent in the Hamming graph, they differ in just one coordinate, and so
get different colours.)

In \cite{bccsz}, it is shown that a primitive diagonal group whose socle
contains $m+1$ simple factors with $m>1$ is non-synchronizing.
In fact, considering Theorem~\ref{th:primaut}, the following more general result 
is valid.
  
\begin{theorem}
If $G$ preserves a diagonal semilattice $\mathfrak{D}(T,m)$ with $m>1$ and $T$
a finite group of order greater than~$2$, then $G$ is non-synchronizing.
\end{theorem}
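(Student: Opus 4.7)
The plan is to apply Theorem~\ref{th:nonsynch}: $G$ is non-synchronizing if and only if there exists a non-trivial $G$-invariant graph on $\Omega$ whose clique number equals its chromatic number. By Theorem~\ref{t:autDTm}, $G\leqslant D(T,m)$, so any $D(T,m)$-invariant graph is automatically $G$-invariant; it therefore suffices to exhibit such a graph that is invariant under the full diagonal group $D(T,m)$.

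The natural candidate is $\Gamma_D(T,m)$ itself. Since $|T|>2$, its clique number is $|T|$, and Theorem~\ref{thm:chrom}(a) gives $\chi(\Gamma_D(T,m))=|T|$ whenever $m$ is odd, $|T|$ is odd, or the Sylow $2$-subgroup of $T$ is non-cyclic; in all these cases the diagonal graph itself serves as the witness. The obstacle arises in the remaining case, where $m$ is even, $|T|$ is even, and the Sylow $2$-subgroup of $T$ is cyclic: here $\Gamma_D(T,m)$ need not have $\chi=\omega$ (for instance, when $T=C_4$ and $m=2$, Theorem~\ref{th:hp} shows that $C_4$ has no complete mapping, so the Latin-square graph has independence number strictly less than $4$ and hence $\chi>4=\omega$), and a different invariant graph must be constructed.

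For this remaining case, observe that a group of even order with cyclic Sylow $2$-subgroup contains a unique element $z$ of order $2$; since every conjugate of $z$ is another involution, $z$ is central. Let $N=\langle z\rangle$; then $N$ is a central characteristic subgroup of $T$ with $1<|N|=2<|T|$. Define a graph $\Gamma^*$ on $T^m$ by declaring $u\ne v$ adjacent precisely when $u$ and $v$ lie in the same coset of the normal subgroup $N^m$ of $T^m$. Then $\Gamma^*$ is a disjoint union of $[T:N]^m$ copies of the complete graph $K_{2^m}$; it is non-trivial because $1<|N|<|T|$, and plainly $\chi(\Gamma^*)=\omega(\Gamma^*)=2^m$.

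What remains, and what I expect to be the main technical step, is the verification that $\Gamma^*$ is $D(T,m)$-invariant, equivalently that $D(T,m)$ permutes the cosets of $N^m$ in $T^m$. Running through the generators (I)--(V) of Remark~\ref{rem:diaggens}, preservation under (I)--(IV) is routine from normality of $N$ in $T$ (hence of $N^m$ in $T^m$) together with the fact that $N$ is characteristic. The only delicate case is the $0$--$1$ transposition~(V); writing $s_i=n_i t_i$ with $n_i\in N$ and using centrality of $N$, a short direct calculation shows that the coordinatewise ratios between the images of $[s_1,\ldots,s_m]$ and $[t_1,\ldots,t_m]$ under (V) all lie in $N$, so cosets of $N^m$ are preserved. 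Once this invariance is established, Theorem~\ref{th:nonsynch} yields the non-synchronization of $G$.
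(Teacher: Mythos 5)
Your overall strategy is sound and runs parallel to the paper's: exhibit a non-trivial $D(T,m)$-invariant graph with clique number equal to chromatic number, using the diagonal graph when the Hall--Paige conditions hold and a disjoint union of cliques coming from a coset partition otherwise. (The paper organises the cases differently: it first disposes of non-characteristically-simple $T$ via imprimitivity of $D(T,m)$, using Theorem~\ref{th:primaut}, and then observes that a characteristically simple group of order greater than $2$ always satisfies the Hall--Paige conditions, so the diagonal graph alone finishes the job.) However, your treatment of the remaining case contains a genuine error. It is false that a finite group of even order with cyclic Sylow $2$-subgroups has a unique element of order $2$: the group $S_3$ (or any dihedral group of order $2n$ with $n$ odd) has cyclic Sylow $2$-subgroups of order $2$ but three distinct, non-central involutions. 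An involution lies in \emph{some} Sylow $2$-subgroup, and each cyclic Sylow $2$-subgroup contains exactly one, but there may be many Sylow $2$-subgroups, hence many involutions. Consequently your subgroup $N=\langle z\rangle$ is in general neither well defined nor characteristic, and the graph $\Gamma^*$ is not $D(T,m)$-invariant; your argument genuinely fails for, say, $T=S_3$ and $m$ even, which is a legitimate instance of the theorem.

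The gap is repairable, and the repair shows why the paper's case division is the more natural one. In your remaining case ($m$ even, $|T|$ even, cyclic Sylow $2$-subgroups) the group $T$ cannot be characteristically simple: a finite characteristically simple group is either elementary abelian (and an elementary abelian $2$-group of order greater than $2$ has non-cyclic Sylow $2$-subgroup) or a direct power of a non-abelian simple group (whose Sylow $2$-subgroups are non-cyclic by Burnside's transfer theorem). Hence $T$ has \emph{some} proper non-trivial characteristic subgroup $N$ --- for instance the normal $2$-complement furnished by Burnside's theorem when that is non-trivial, or the unique subgroup of order $2$ when $T$ is cyclic of $2$-power order --- and your coset construction then goes through verbatim: as your own computation with the generators of Remark~\ref{rem:diaggens} shows, only normality of $N$ (automatic for a characteristic subgroup), not centrality, is needed to see that the right cosets of $N^m$ are permuted, including under the type~(V) transposition. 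Equivalently, and more simply, Theorem~\ref{th:primaut} shows $D(T,m)$ is imprimitive in this situation, and the disjoint union of complete graphs on the blocks is the standard witness for non-synchronization via Theorem~\ref{th:nonsynch}; this is exactly the paper's first case.
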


\begin{proof}
If $T$ is not characteristically simple then Theorem~\ref{th:primaut} implies
that $G$~is imprimitive and so it is non-synchronizing. Suppose that $T$ is
characteristically simple and  let $\Gamma$ be the diagonal graph 
$\Gamma_D(T,m)$. Since we have excluded the case $|T|=2$, the clique number of
$\Gamma$ is $|T|$, as we showed in the preceding subsection. Also, either $T$
is an elementary abelian group of odd order or the Sylow 2-subgroups of $T$ 
are non-cyclic. (This is clear unless $T$ is simple, in which case it follows
from Burnside's Transfer Theorem, see \cite[(39.2)]{asch}.) So, by
Theorem~\ref{thm:chrom}, $\chi(\Gamma)=|T|$. Now Theorem~\ref{th:nonsynch}
implies that  $D(T,m)$ is non-synchronizing;
since $G\leqslant D(T,m)$, also $G$~is non-synchronizing. 
\end{proof}

\begin{remark}
It follows from the above that a synchronizing permutation group must be of one
of the following types: affine (with the point stabiliser a primitive linear
group); simple diagonal with socle the product of two copies of a non-abelian
simple group; or almost simple. In the first and third cases, some but not all
such groups are synchronizing; in the second case, no synchronizing example
is known.
\end{remark}

\section{Open problems}\label{s:problems}

Here are a few problems that might warrant further investigation.

For $m\geqslant 3$, Theorem~\ref{th:main} characterised $m$-dimensional special sets of 
partitions as minimal partitions in join-semilattices $\mathfrak D(T,m)$ for a 
group $T$. However, for $m=2$, such special sets arise from an arbitrary quasigroup $T$.
The automorphism group of the join-semilattice generated by a 2-dimensional special 
set is the autoparatopism group of the quasigroup $T$ and, for $|T|>4$,  
it also coincides with 
the automorphism group of the corresponding Latin-square graph 
(Proposition~\ref{p:autlsg}).

Since we wrote the first draft of the paper, Michael Kinyon has pointed out to
us that the Paige loops~\cite{paige:loops} (which were shown by
Liebeck~\cite{liebeck} to be the only finite simple Moufang loops which are not
groups) have vertex-primitive autoparatopism groups.

\begin{problem}
Determine whether there exists a quasigroup $T$, not isotopic to a group or
a Paige loop, whose
autoparatopism group is primitive.
This is equivalent to requiring that the automorphism group of the corresponding
Latin-square graph is vertex-primitive; see Proposition~\ref{p:autlsg}.
\end{problem}

    If $T$ is a non-abelian finite simple group and $m\geqslant 3$, 
    then the diagonal group $D(T,m)$ is a  maximal subgroup of the 
    symmetric or alternating group~\cite{LPS}.  What happens in the infinite
    case?  
    
    \begin{problem}
    Find a maximal subgroup of $\Sym(\Omega)$ that contains 
    the diagonal group $D(T,m)$ if $T$ is an infinite simple group. If $\Omega$
is countably infinite, then by~\cite[Theorem~1.1]{macpr},
     such a maximal subgroup exists. 
    (For a countable set, \cite{covmpmek} describes maximal subgroups 
    that stabilise a Cartesian lattice.)
    \end{problem}

\begin{problem}
Investigate the chromatic number $\chi(T,m)$ of the
diagonal graph $\Gamma_D(T,m)$ if $m$ is even and $T$ has no complete mapping.
In particular, either show that the bound in Theorem~\ref{thm:chrom}(b)
is always attained (as we noted, this is true for $T=C_2$) or improve this bound. 
\end{problem}

For the next case where the Hall--Paige conditions fail, namely $T=C_4$,
the graph $\Gamma_D(T,2)$ is the complement of the Shrikhande graph, and has
chromatic number $6$; so, for any even $m$, the chromatic number of
$\Gamma_D(T,m)$ is $4$, $5$ or $6$, and the sequence of chromatic numbers is
non-increasing.

If $T$ is a direct product of $m$ pairwise isomorphic non-abelian simple groups, 
with $m$ an integer and $m>1$, then $D(T,m)$ preserves a Cartesian lattice 
by \cite[Lemma~7.10(ii)]{ps:cartesian}. Here $T$ is  not necessarily finite, 
and groups with this property are called FCR (finitely completely reducible) groups.  
However there are other infinite characteristically simple groups, 
for example the McLain group~\cite{mclain}.

\begin{problem}
Determine whether there exist characteristically simple (but not simple) groups $T$ 
which are not FCR-groups,  and integers $m>1$, such that $D(T,m)$ preserves a 
Cartesian lattice.  
It is perhaps the case that $D(T,m)$ does not preserve a Cartesian lattice 
for these groups $T$; and we ask further whether $D(T,m)$ might still preserve some 
kind of structure that has more automorphisms than the diagonal semilattice.
\end{problem}

\begin{problem}\label{p2}
Describe sets of more than $m+1$ partitions of
$\Omega$, any $m$ of which are the minimal elements in a Cartesian lattice.
\end{problem}

For $m=2$, these are equivalent to sets of mutually orthogonal Latin squares.

For $m>2$, any $m+1$ of the partitions are the minimal elements in a
diagonal semilattice $\mathcal{D}(T,m)$. Examples are known when $T$ is
abelian. One such family is given as follows. Let $T$ be the additive group
of a field $F$ of order $q$, where $q>m+1$; let $F=\{a_1,a_2,\ldots,a_q\}$.
Then let $W=F^m$. For $i=1,\ldots,q$, let $W_i$ be the subspace
spanned by $(1,a_i,a_i^2,\ldots,a_i^{m-1})$, and let $W_0$ be the subspace
spanned by $(0,0,\ldots,0,1)$. The coset partitions of $W$ given by these
$q+1$ subspaces have the property that any $m$ of them are the minimal elements
in a Cartesian lattice of dimension $m$ (since any $m$ of the given vectors
form a basis of $W$.) Note the connection with MDS codes and geometry: the
$1$-dimensional subspaces are the points of a normal rational curve in
$\mathrm{PG}(m-1,F)$. See~\cite{btb}.

For which non-abelian groups $T$ do examples with $m>2$ exist?

\begin{problem}
With the hypotheses of Problem~\ref{p2}, find a good upper bound
for the number of partitions, in terms of $m$ and $T$.
\end{problem}

We note one trivial bound: the number of such partitions cannot exceed
$m+|T|-1$. This is well-known when $m=2$ (there cannot be more than $|T|-1$
mutually orthogonal Latin squares of order $|T|)$. Now arguing inductively
as in the proof of Proposition~\ref{p:quots}, we see that increasing $m$ by
one can increase the number of partitions by at most one.

\medskip

Since the first draft of this paper was written, three of the authors and
Michael Kinyon have written a paper \cite{bckp} addressing (but by no means
solving) the last two problems above.

\section*{Acknowledgements} Part of the work was done while the authors were visiting
the South China University of Science and Technology (SUSTech), Shenzhen, in 2018, and 
we are grateful (in particular to Professor Cai Heng Li) 
for the hospitality that we received. 
The authors would like to thank the 
Isaac Newton Institute for Mathematical Sciences, Cambridge, 
for support and hospitality during the programme
\textit{Groups, representations and applications: new perspectives}
(supported by \mbox{EPSRC} grant no.\ EP/R014604/1), 
where further work on this paper was undertaken.
In particular we acknowledge a Simons Fellowship (Cameron) and a Kirk Distinguished 
Visiting Fellowship (Praeger) during this programme.  Schneider thanks the Centre for the Mathematics of Symmetry
and Computation 
of The University of Western Australia and 
Australian Research Council Discovery Grant DP160102323
for  hosting his visit 
in 2017 and 
acknowledges the 
support of the CNPq projects \textit{Produtividade em Pesquisa}
(project no.: 308212/2019-3)  
and \textit{Universal} (project no.: 421624/2018-3).

We are grateful to Michael Kinyon for comments on an earlier version of the
paper and to the anonymous referee for his or her careful reading of the manuscript.


\end{document}